\date{\today}
\newcounter{assu}
\theoremstyle{definition} \newtheorem{definition}{Definition}[section]
\theoremstyle{definition} \newtheorem{remark}[definition]{Remark}
\theoremstyle{plain} \newtheorem{lemma}[definition]{Lemma}
\theoremstyle{plain} \newtheorem{proposition}[definition]{Proposition}
\theoremstyle{plain} \newtheorem{theorem}[definition]{Theorem}
\theoremstyle{plain} \newtheorem{corollary}[definition]{Corollary}
\theoremstyle{definition} 
\theoremstyle{plain} 
\newtheorem{theorem01}[assu]{Theorem}
\newtheorem{theorem02}[assu]{Theorem}
\newtheorem{theorem03}[assu]{Theorem}
\newtheorem{theorem04}[assu]{Theorem}
\DeclareMathOperator*{\conv}{conv}
\DeclareMathOperator{\sign}{sign}
\DeclareMathOperator{\dist}{dist}
\DeclareMathOperator{\graph}{Graph}
\newcommand{\R}{\mathbb{R}}
\newcommand{\Q}{\mathbb{Q}}
\newcommand{\N}{\mathbb{N}}
\newcommand{\Z}{\mathbb{Z}}
\newcommand{\TV}{\text{\rm Tot.Var.}}
\DeclareMathOperator{\BV}{BV}
\DeclareMathOperator{\SBV}{SBV}
\newcommand{\I}{\textbf{I}}
\newcommand{\Id}{\mathbbm{I}}
\newcommand{\K}{\mathcal{K}}
\newcommand{\e}{\varepsilon}
\newcommand{\loc}{\mathrm{loc}}
\newcommand{\lip}{\mathrm{Lip}}
\newcommand{\supp}{\mathrm{supp}\,}
\newcommand{\diff}{\mathrm{diff}}
\newcommand{\jump}{\mathrm{jump}}
\newcommand{\Graph}{\mathrm{Graph}}
\newcommand{\X}{\mathtt{X}}
\newcommand{\U}{\mathtt{u}}
\numberwithin{equation}{section} 
\title[Structure of $L^\infty$-entropy solutions]{On the structure of $L^\infty$-entropy solutions to scalar conservation laws in one-space dimension}
\author{S. Bianchini, E. Marconi}
\address{SISSA, via Bononmea 265, I-34136 Trieste (ITALY)}
\thanks{The authors thank the CMSA at Harvard where part of this work has been written. This research has been partially supported by MIUR PRIN project nr. 2012L5WXHJ.}
\begin{document}

\begin{abstract}
We prove that if $u$ is the entropy solution to a scalar conservation law in one space dimension, then the entropy dissipation is a measure concentrated on countably many Lipschitz curves. This result is a consequence of a detailed analysis of the structure of the characteristics. \\
In particular the characteristic curves are segments outside a countably 1-rectifiable set and the left and right traces of the solution exist in a $C^0$-sense up to the degeneracy due to the segments where $f''=0$.

We prove also that the initial data is taken in a suitably strong sense and we give some counterexamples which show that these results are sharp.
\end{abstract}

\maketitle

\begin{center}
Preprint SISSA 43/2016/MATE
\end{center}

\section{Introduction}
\label{S_intro}

We consider the following problem: let $u$ be a bounded entropy solution to the scalar conservation law
\begin{equation}
\label{E_scala_cons_law_intro}
u_t + f(u)_x = 0, \qquad u \in [-M,M], \ f : \R \to \R \ \text{smooth},
\end{equation}
with initial datum $u_0(x)$. Being an entropy solution, by definition for all convex entropies $\eta$ it holds in distributions
\begin{equation}
\label{E_entropy_diss_intro}
\eta(u)_t + q(u)_x \leq 0,
\end{equation}
where $q'(u) = f'(u) \eta'(u)$ is the entropy flux. In particular the r.h.s. of \eqref{E_entropy_diss_intro} is a negative locally bounded measure $\mu$, with the additional property that $\mu(B) = 0$ for all Borel sets $B$ such that $\mathcal H^1(B) = 0$: this last property is a consequence of being the divergence of an $L^\infty$ vector field.

For BV solutions, Volpert's formula together with the definition of the entropy flux $q$ gives that
\begin{equation*}
\begin{split}
&\eta(u)_t + q(u)_x \\
& \quad = \eta'(u) \big( D^\mathrm{cont}_t u + f'(u) D^\mathrm{cont}_x u \big) \\
& \quad \quad + \sum_{i \in \N} \bigg\{ - \dot \gamma_i(t) \Big[ \eta(u(t,x+)) - \eta(u(t,x-)) \Big] + \Big[ q(u(t,x+)) - q(u(t,x-)) \Big] \bigg\} g_i(t) \mathcal H^1 \llcorner_{\Graph(\gamma_i)} \\
& \quad = \sum_{i \in \N} \bigg\{ - \dot \gamma_i(t) \Big[ \eta(u(t,x+)) - \eta(u(t,x-)) \Big] + \Big[ q(u(t,x+)) - q(u(t,x-)) \Big] \bigg\} g_i(t) \mathcal H^1 \llcorner_{\Graph(\gamma_i)},
\end{split}
\end{equation*}
where
\begin{enumerate}
\item $D^\mathrm{cont} u = (D^\mathrm{cont}_t u, D^\mathrm{cont}_x u)$ is the continuous part of the measure $Du$,
\item $u(t,x\pm)$ is the right/left limit of $u(t)$ at the point $x$,
\item the curves $\gamma_i$ are such that
\begin{equation*}
D^\mathrm{jump} u = \sum_i \big( u(t,x+) - u(t,x-) \big) \left( \begin{array}{c} 1 \\ - \dot \gamma_i(t) \end{array} \right) g_i(t) \mathcal H_i\llcorner_{\Graph(\gamma_i)}, \quad  g_i(t) = \frac{1}{\sqrt{1 + |\dot \gamma_i(t)|^2}}.
\end{equation*}
\end{enumerate}
In short we will say that \emph{the entropy dissipation is concentrated}, meaning that the measure $\mu$ is concentrated on a countably $1$-rectifiable set $J$. A simple superposition argument implies that $J$ can be chosen to be independent on $\eta$.

For general $L^\infty$-entropy solutions, in the case the flux is uniformly convex, the solution is BV for all positive times due to Oleinik estimate \cite{Oleinik}
\begin{equation*}
D_x u(t) \leq \frac{\mathcal L^1}{c t},
\end{equation*}
and then the above computation applies.

For more general flux functions, in \cite{dLR} it has been proved that under the assumption that $f$ has finitely many inflection points (together with a regularity assumption on the local behavior of $f$ about an inflection point), then again the entropy is concentrated: here the set $J$ is the set where the characteristic speed $f'(u(t,x))$ jumps, which has been proved to be a BV function in \cite{Cheng}. A counterexample to the BV regularity for $f'(u)$ in the case $f'$ does not satisfies the assumptions of \cite{Cheng} is given at the end of the paper (Section \ref{Ss_counter_2}).

A short way to state the main result of this paper is the following:

\begin{theorem01}
\label{T_main_theorem_conc}
If $u$ is a bounded entropy solution of  the scalar conservation law \eqref{E_scala_cons_law_intro}, then the entropy dissipation is concentrated.
\end{theorem01}

No assumption on the flux function $f$ have been made, i.e. it can have Cantor-like sets where $f'' = 0$. However such a statement is a corollary of a detailed description of the regularity of bounded entropy solutions, description which is at the core of this paper.

The first important result is that to every entropy solutions it is possible to associate a family of Lipschitz curves $t \mapsto \gamma(t)$ covering all $\R^+ \times \R$ with associated a set of values $w$, and a time function $\mathtt T = \mathtt T(\gamma,w)$ such that $w$ is an entropy admissible boundary value on $\Graph(\gamma \llcorner_{[0,\mathtt T(\gamma,w)]})$. The set $\mathcal K \subset \mathrm{Lip}(\R^+,\R) \times \R$ made of the couples $(\gamma,w)$ together with the function $\mathtt T(\gamma,w)$ is called \emph{complete family of boundaries}: the precise definition is Definition \ref{D_bd_fam}, which contains also additional monotonicity, connectedness and regularity property of the set of boundaries. In other words, the following diagram is commutative:

\begin{figure}[h]
\hspace{-1cm} \resizebox{6cm}{3cm}{\input{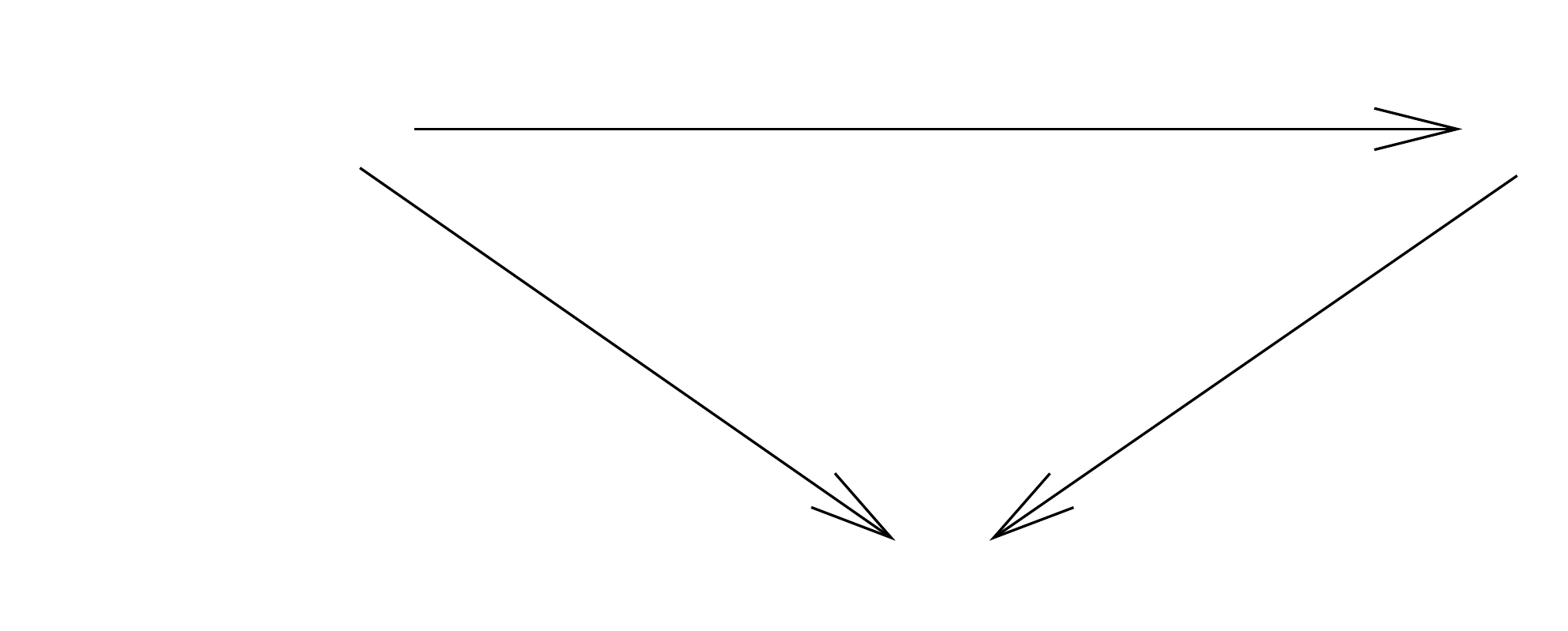_t}}
\end{figure}

The existence of a complete family of boundaries follows from quite easy compactness arguments, due to the stability of admissible boundaries under the convergence of the boundary value and the boundary set \cite{MR996910}. The only technicality here is to prove that for a dense sets of initial data we can actually construct such a family of boundaries conditions: this is done by hand for wavefront tracking solutions, and then passed to the limit. It is interesting that the requirement to be admissible on both sides forces the curve $\gamma(t)$ to be a characteristic in the BV setting, where a suitable pointwise definition of speed is available by the Rankine-Hugoniot condition. Moreover, up to a $\mathcal H^1$-negligible set of points (and assuming for simplicity that $f$ has not flat parts), the admissible boundary values of $\gamma$ at time $t$ are equal to the segment with extremal $u(t,\gamma(t)\pm)$, as the standard entropy conditions requires. In the general case, the admissible boundary values contain the previous segment, but it may have as well parts which are in the flat part of $f$ to which $u(t,\gamma(t)\pm)$ belongs (see Lemma \ref{L_BV_case}).

There are some important properties of the set $\mathcal K$ and the function $\mathtt T$ which play a role in describing the structure of the solution $u(t)$. One almost obvious requirement is that
\begin{equation}
\label{E_complet_intor}
\Graph \ u \subset K := \Big\{ (t,x,w) : \exists (\gamma,w) \in \mathcal K, \gamma(t) = x, \mathtt T(\gamma,w) > t \Big\},
\end{equation}
i.e. there exists at least one admissible boundary for each point and the value $u(t,x)$ is one of these admissible boundary values (up to redefining $u$ in a $\mathcal L^2$-negligible set).

Consider moreover a region $\Omega$ bounded by two admissible curves $\gamma_1$, $\gamma_2$, such that
\begin{equation}
\label{E_BV_reg_intro}
\gamma_1(\bar t) = \gamma_2(\bar t) \qquad \text{and} \qquad \forall t \in (\bar t,T] \, \big( \gamma_1(t) < \gamma_2(t) \big).
\end{equation}
Then the function $u$ inside $\Omega$ solves a boundary problem where the only data are the two boundary values $w_1, w_2$ associated to $\gamma_1,\gamma_2$ respectively (we need to assume that $\mathtt T(\gamma_i,w_i) > T$, $i=1,2$, but this is not restrictive due to \eqref{E_complet_intor} above). It is a simple generalization of the construction of the Riemann solver to give explicitly the unique monotone solution in $\Omega$ (Lemma \ref{L_cov}). In particular $u \llcorner_\Omega$ is a BV function.

A second important property is that the curves $\gamma$ can be taken totally ordered:
\begin{equation*}
\exists s \in \R^+ \big( \gamma(s) < \gamma'(s) \big) \quad \Longrightarrow \quad \forall t \in \R^+ \big( \gamma(t) \leq \gamma'(t) \big).
\end{equation*}
In particular they generate a monotone flow $\mathtt X(t,y)$, with $y \in \R$ not necessarily the initial position due possible future rarefactions: denote the curve $t \mapsto \mathtt X(t,y)$ by $\gamma_y$. \\
This monotonicity allows us to define a maximal and minimal characteristic $\gamma^\pm_{t,x}$ passing through a point given $(t,x)$, and then to show that if $\gamma_{\bar y}$ is an admissible boundary and $\gamma_{\bar y}(t) \not= \gamma_y(t)$ for all $y > \bar y$, then it is a segment in $[t_1,t_2]$ (Lemma \ref{L_segments}). A symmetric result holds in the case $y < \bar y$. \\
As a corollary, it is possible to split the half plane $\R^+ \times \R$ into 4 parts, depending on the behavior of the sequences $\gamma \to \gamma^\pm_{t,x}$:
\begin{enumerate}
\item a set $A_1$ such that for all $(t,x) \in A_1$
\begin{equation*}
\gamma^-_{t,x} < \gamma^+_{t,x};
\end{equation*}
\item an open set $B$ made of regions bounded by two admissible curves satisfying \eqref{E_BV_reg_intro}: inside each component $u \in BV$;
\item a set of segments $C$, made of all admissible boundaries $\gamma_{\bar y}$ such that in $[0,\bar t]$
\begin{equation*}
\forall t \in [0,\bar t], \forall y \not= \bar y\, \big( \gamma_{\bar y}(t) \not= \gamma_y(t) \big).
\end{equation*}
\item a residual set $A_2$, where either the condition (3) above holds on one side or it is the boundary of two BV regions.
\end{enumerate}
Set $A = A_1 \cup A_2$.

The main result about this decomposition is the following (see Figure \ref{Fi_struct_ABC} and Section \ref{S_struct}):

\begin{theorem02}
\label{T_decomposition}
The exists a disjoint partition $\R^+ \times \R = A \cup B \cup C$ such that
\begin{enumerate}
\item $A$ is countably $1$-rectifiable,
\item $B$ open and $u \llcorner_B$ is locally BV,
\item $C$ is made of disjoint segments starting from $0$.
\end{enumerate}
\end{theorem02}

\begin{figure}
\resizebox{11cm}{8cm}{\input{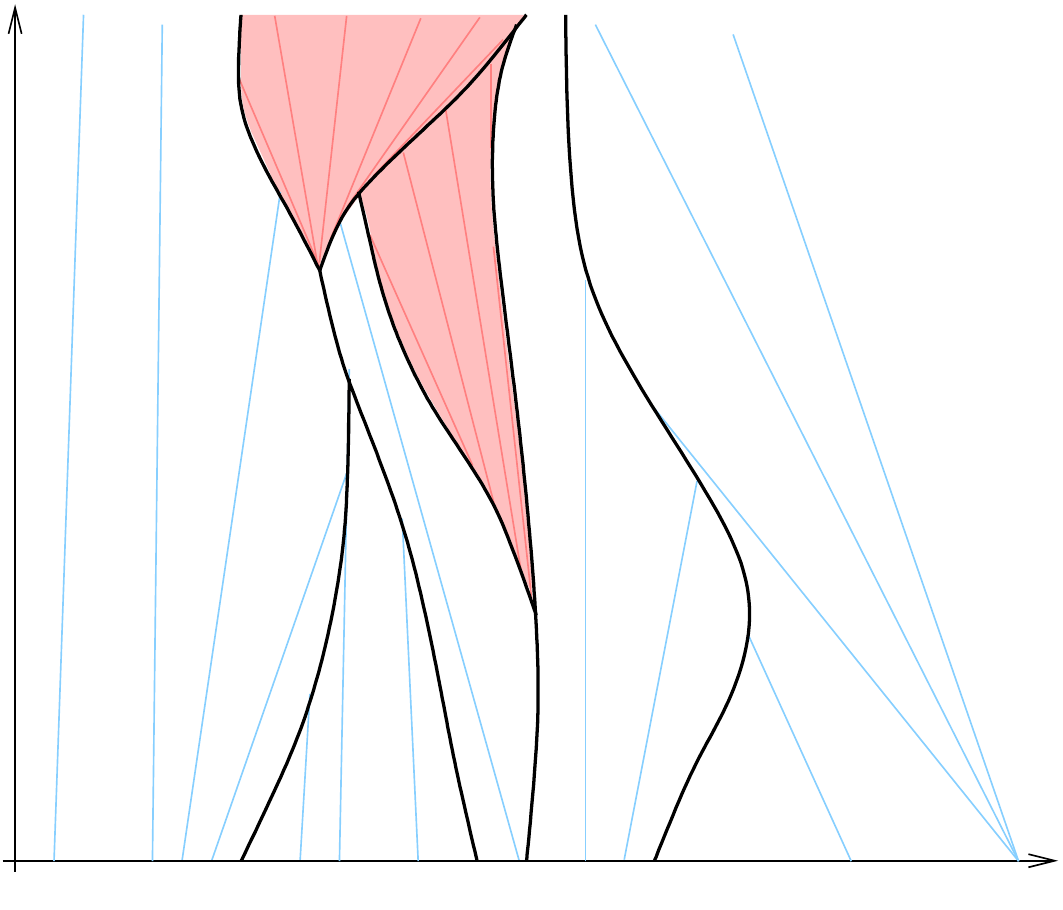_t}}
\caption{The structure of characteristics of an entropy solution $u$.}
\label{Fi_struct_ABC}
\end{figure}

It is possible to compute the right and left limits for a given point up to the linearly degenerate components of the flux $f$: these are the connected components of the compact set $\{f'' = 0\}$. It follows that the characteristic speed has a BV structure: it is continuous outside $A$ and it has $L^1$-right/left limits across every Lipschitz curve $\gamma(t)$ up to a $\mathcal L^1$-negligible set of $t$ (Remark \ref{R_traces}). In particular we conclude that the admissible boundaries are characteristics. \\
The jump set $J$ is the set $A$ together with the countably many segments in $C$ which can dissipate, even if no characteristic is entering on both sides.

At this point one can prove Theorem 1. The only case requiring a careful analysis is the set of segments $C$. By partitioning the segments according to their length, and using the elementary fact that the slope of non intersecting segments of length $2\epsilon$ is Lipschitz w.r.t. the distance of their middle points, the conjecture on the concentration of entropy dissipation is equivalent to require that the projection of the measure $\mu$ on the middle points has no continuous part. The Dirac deltas correspond to segment of $C$ which belongs to $J$. 

\begin{figure}
\resizebox{6cm}{6cm}{\input{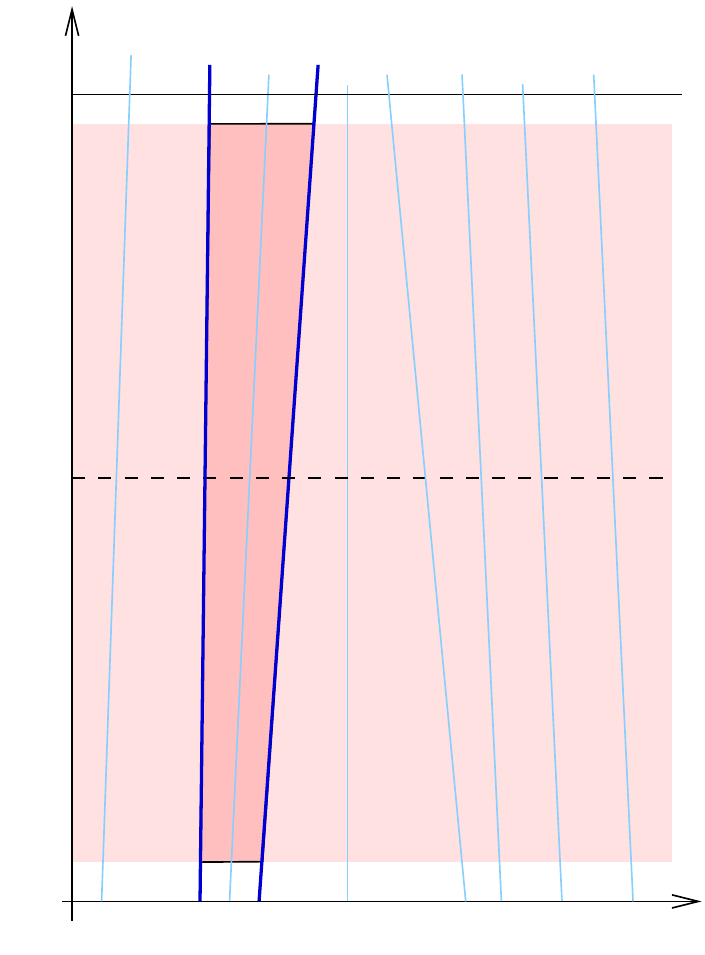_t}}
\caption{A model set of segments parameterized by their middle point and a cylinder.}
\label{Fi_model_set_segm_intro}
\end{figure}

The basic idea is to use the two balances
\begin{equation*}
u_t + f(u)_x = 0, \qquad \eta(u)_t + q(u)_x = \mu
\end{equation*}
on the cylinders made by segments in $C$ (see Figure \ref{Fi_model_set_segm_intro}). Using the regularity of the slopes $\lambda(y)$ of the segments, which makes the bottom and top base equivalent, one concludes that the fluxes
\begin{equation}
\label{E_flux_intro}
Q(u) = q(u(y)) - \lambda(y) \eta(u(y)), \qquad F(u) = f(u(y)) - \lambda(y) u(y),
\end{equation}
are BV and Lipschitz, respectively (Lemma \ref{L_Flip}). We note that $u(y)$ can be any value on the linearly degenerate component $I$ containing $u$, because the quantities in the r.h.s. of \eqref{E_flux_intro} are constant in $I$. \\
From the balance of $F$ one recovers also that $u(t,\gamma_y(t))$ is constant for $\mathcal L^1$-a.e. $y$, and then that the following holds (Lemma \ref{L_chain_rule}): denoting with $\mathtt d_y$ the limit of incremental ratio only using segments in $C$,
\begin{equation}
\label{E_chain_rule_intro}
\mathtt{d}_y F(y) = - \mathtt{d}_y \lambda(y) u(y),
\end{equation}
which is the correct version of the smooth chain rule $(f(u) - f'(u) u)_x = - (f'(u))_x u$. \\
A geometric lemma (Lemma \ref{L_geom}) based on the assumption that if $\eta(u)/u \leq C$, then the curve
\begin{equation*}
w \mapsto \left( \begin{array}{c}
f(w) - f'(w) w \\ q(w) - f'(w) \eta(w)
\end{array} \right),
\end{equation*}
is rectifiable with tangent of bounded slope, implies that if $D_y F(y)$ has no cantor part, then $D_y Q(y)$ has no cantor part too. The chain rule \eqref{E_chain_rule_intro} above gives also that
\begin{equation*}
\mathtt{d}_y Q(y) = - \mathtt d_y \lambda(y) \eta(u(y)),
\end{equation*}
which shows that the dissipation $\mu$ has no absolutely continuous part too (Lemma \ref{L_ac_part}).

To conclude the analysis, we have to study also the set of end points of segments which do not belongs to $A$: these are starting points of shocks. The analysis on cylinders allows to include some of these points: more precisely, the points which lie in the interior of a family of cylinders shrinking to the closed segment $\gamma_y([0,T_1(y)])$, where $T_1(y)$ is the last time before $\gamma_y$ enters in $A$. Hence one can repeat the analysis above and conclude that no dissipation occurs in these points. \\
The remaining points lies on a countably $1$-rectifiable set, because $T_1(y) <  T_1(\bar y) + |y - \bar y|$ for either $y < \bar y$ or $y > \bar y$ close to $\bar y$: hence a standard criterion for rectifiability applies. One can thus use a blow up techniques, and show that the limiting solution has parallel characteristics (otherwise a shock appears) and then no dissipation is possible.

This concludes the proof of Theorem 1.

A second application of the existence of a full set of admissible boundaries is the fact that the time traces for measure valued entropy solutions are taken in a strong sense. Let
\begin{equation*}
\nu = \int \nu_{t,x} dtdx
\end{equation*}
be a Young measure on $\R^+ \times \R$ such that
\begin{equation*}
\supp \ \nu_{t,x} \subset [M,M],
\end{equation*}
and for all convex entropies $\eta$ and corresponding entropy flux $q$ it holds
\begin{equation*}
\langle \nu_{t,x},\eta \rangle_t + \langle \nu_{t,x}, q \rangle \leq 0,
\end{equation*}
where for all continuous functions $g : \R \mapsto \R$ we have used the notation
\begin{equation*}
\langle \nu_{t,x},g \rangle := \int g(v) \nu_{t,x}(dv).
\end{equation*}
Now let $\nu_{0,x}^+$ be the trace of $\nu_{t,x}$ as $t \searrow 0$. We prove the following result:

\begin{theorem03}
If $\mathfrak{d}$ is any bounded distance metrizing the weak topology on probability measure, it holds
\begin{equation*}
\lim_{t \searrow 0} \int_{\R} \mathfrak{d} \big( \nu_{t,x},\nu_{0,x}^+ \big) dx = 0.
\end{equation*}
\end{theorem03}

\noindent In particular if $\nu_{0,x}^+ = \delta_{u(t,x)}$ is a Dirac solution, we recover the fact that $t \mapsto u(t)$ is continuous in $L^1$ also at $t=0$, extending the result of \cite{MR1771520}.

The final results is that a Lagrangian representation of the solution exists:

\begin{theorem04}
There exist a monotone flow $\mathtt X$ of characteristics and a functions $\mathtt u(y)$, $\mathtt T(y)$ such that,
\begin{equation*}
u(t,\mathtt X(t,y)) = \mathtt u(y), \qquad t \leq \mathtt T(y),
\end{equation*}
for $\mathcal L^1$-a.e. $x$.
\end{theorem04}

The difference w.r.t. a complete family of boundaries is that we remove the ambiguity of the linearly degenerate parts on segments.

\subsection{Structure of the paper}
\label{Ss_structure}

The paper is organized as follows.

We first recall in Section \ref{S_prel} the basic notions of convergence of sets in the sense of Kuratowski, which gives the compactness of a complete family of boundaries. \\
It seems natural to study the problem in the setting of measure valued entropy solutions $\R^+ \times \R \ni (t,x) \mapsto \nu_{t,x}$, with $\nu_{t,x}$ a probability in $\R$: this avoids additional computations when computing traces, due to the weak compactness of Young measure (Theorem \ref{T_Young}). After recalling the definition of measure valued solution, we show indeed the existence of traces along every Lipschitz curve $\gamma$ (Proposition \ref{P_traces}), and we define the right and left admissible boundaries, Definition \ref{D_adm_bound}. The key argument for proving the existence of a complete family of boundaries is the stability of admissible boundaries w.r.t. to convergence of the parameters: the boundary curve, the boundary value, the solution $\nu_{t,x}$ and the flux function $f$ (Proposition \ref{P_stab}). \\
The last preliminary is the analysis of the Riemann problem with two boundaries: more precisely, it is the unique entropy solution in the region delimited by two Lipschitz curves starting from the same point at $t=0$. The main result is complete description of the solution, with a construction similar to the standard Riemann problem, Proposition \ref{P_V_Riem}: the main properties is the uniqueness in the family of measure valued solutions, the strict monotonicity of the solution and of the characteristic speed in an inner region.

The next section is devoted to the proof of the existence of a complete family of boundaries for measure valued solutions which are constructed as weak limits of entropy BV solution. In some sense this structure has been already constructed for BV solution (and also solutions which are piecewise continuous \cite{BiaMar1}), but the interpretation of the values $\mathtt u(y)$ along the characteristic $\mathtt X(t,y)$ as an admissible boundary value is new. This  requires to repeat the convergence analysis, starting from the front-tracking solutions,  Lemma \ref{L_wft}, and showing the stability of the set of boundaries when the flux and the solution converge weakly, Proposition \ref{P_stability}. \\
The precise definition of complete families of boundaries is given in Definition \ref{D_bd_fam}: it may seems quite strange that the only relation with the PDE is the values of the speed of the Lipschitz curves in the continuity points of the solution, but the requirement of the total ordering of the boundary curves is a strong requirement too.

Having shown the existence of a complete family of boundaries, we next consider its regularity in Section \ref{S_struct}. The result is that there are boundary traces in the strong sense, up to the flat parts of $f$. We can split the results into two parts: existence of left/right traces in $1$-d for all $t$ fixed, and existence of left/right traces in $2$-d for $L^1$-a.e. $t \in \R^+$ on a given curve. A common property is that we can take traces in the strong sense, once we quotient the real line w.r.t. the linearly degenerate components of $f$: in particular the traces are in $C^0$ if $f$ is weakly genuinely nonlinear. \\
As we noticed in the introduction, the key observation is the decomposition of $\R^+ \times \R$ into the 3 sets $A$, $B$, $C$, page \pageref{P_page_decomp}, and define from these sets the set of jumps $J$, which is easily shown to be rectifiable (Lemma \ref{L_rect}). Outside this set, the blow up converges uniformly to a linearly degenerate component. \\
At this point the regularity results are completely similar to the BV case, by just replacing the $C^0$-convergence with the uniform convergence to a linearly degenerate component: the existence of strong traces for every fixed time $t$ (actually above the minimal/maximal characteristics, Lemma \ref{L_trace_max}), the existence of strong traces outside a small cone around $\gamma$ (Proposition \ref{P_struct_J}). We note here that this is the best we can expect, due to the example 5.1.1 in \cite{BYu} where a Cantor like shock is shown.

In Section \ref{S_conc} we prove Theorem 1. After selecting a family of segments $\gamma_y([0,T])$ in $C$ which exist for a uniform time $T > 0$, and using the parameterization given by the intersection of $\gamma_y \cap \{t = T/2\}$, first we prove that the fluxes
\begin{equation*}
Q(y) := q(u(y)) - f'(u(y)) \eta(u(y)), \qquad F(y) = f(u(y)) - f'(u(y)) u(y),
\end{equation*}
are well defined, being independent on the choice of $u(y)$ in the linearly degenerate component $I(y)$: here $f'(u(y)) = \lambda(y) = \dot \gamma_y$ by the properties of the complete family of boundaries. Moreover they are BV and Lipschitz continuous w.r.t. $y$, respectively (Lemma \ref{L_Flip}). \\
A geometric lemma (Lemma \ref{L_geom}) implies that for entropies such that $\eta \leq \mathcal O(1) u$ implies that the only discontinuities of $Q(y)$ are jumps, and from the choice of jump set $J$ this possibility is ruled out. One thus deduce that for solutions there is no dissipation in the inner part of the segments composing $C$, and for measure valued solutions the disintegration is a.c., Corollary \ref{C_Cantor_part}. In particular one obtains the chain rule formulas for $F$ and $Q$, Lemma \ref{L_chain_rule} and formula \eqref{E_chain_rule}. It is also possible to represent the dissipation measure for a measure valued solution along each segment in $C$ as the derivative of the BV function $t \mapsto \int \eta(w) d\nu_{t,\gamma_y(t)}(w)$ (Lemma \ref{L_ac_part}). \\
The analysis of the endpoints of the segments in $C$ is split into two parts: either the endpoints are contained in the interior of a shrinking family of cylinders with sides in $C$, or they belong to a rectifiable set. In both cases one first prove that the disintegration has still an a.c. image measure (Lemma \ref{L_endpoints}), and then in the case of entropy solution one deduce that no dissipation occurs (Theorem \ref{P_conc}).

In Section \ref{S_init} we prove that, in a suitable sense, the initial datum is taken strongly also for measure valued solutions. The key point is that the blow up around a constant state of a measure valued entropy solution is a constant young measure, Lemma \ref{L_mv_const}. At this point the argument is quite standard: find a suitable covering (Lemma \ref{L_const_blow}), show that the limit occurs in average sense (Lemma \ref{L_init_average}), strengthen the result to have pointwise time continuity (Proposition \ref{P_init}).

The last section shows that it is possible to construct a Lagrangian representation of a measure valued entropy solution with a complete family of boundaries, hence removing the ambiguity of the value $w$ on the linearly degenerate components, Proposition \ref{P_lagr_repr}. \\
Next we give two counterexamples: the first one proves that there is a set of positive measure which is not a starting point of segments, implying that the Lagrangian representation does not allow the reconstruction of the initial data by just tracing back the value function $u(t,x), t>0$. \\
The second instead shows that the characteristic speed is not BV, even if the results contained in this paper show that it still enjoys a BV structure similar to the $1$-dimensional case i.e. $C^0$-continuity of the left and right traces.

\section{Preliminaries}\label{S_prel}

\subsection{Convergence of sets}
We recall the notion of Kuratowski convergence. Let $(X,d)$ be a metric space and $(K_n)_{n\in \N}$ be a sequence of subsets of $X$.
\begin{definition}
We define the \emph{upper limit} and the \emph{lower} limit of the sequence $K_n$ respectively by the formulas
\begin{equation*}
\begin{split}
\limsup_{n\rightarrow +\infty}K_n = \Big\{x\in X : \liminf_{n\rightarrow +\infty}d(x,K_n)=0\Big\}, \\
\liminf_{n\rightarrow +\infty}K_n = \Big\{x\in X : \limsup_{n\rightarrow +\infty}d(x,K_n)=0\Big\}.
\end{split}
\end{equation*}
We say that $K_n$ converges to $K\subset X$ in the sense of Kuratowski if 
\begin{equation*}
K=\limsup_{n\rightarrow +\infty}K_n=\liminf_{n\rightarrow +\infty}K_n.
\end{equation*}
\end{definition}
Equivalently $\displaystyle{\limsup_{n\rightarrow+\infty}K_n}$ is the set of cluster points of the of sequences $x_n\in K_n$ and 
$\displaystyle{\liminf_{n\rightarrow +\infty}K_n}$ is the set of limits of sequences $x_n\in K_n$. 

A very general compactness result holds: see \cite{Beer}.
\begin{theorem}[Zarankiewicz]
Suppose that $X$ is a separable metric space. Then for every sequence $K_n$ of subsets of $X$ there exists a convergent subsequence in the sense of Kuratowski.
\end{theorem}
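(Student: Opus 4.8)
The plan is to combine separability with a diagonal extraction and to exploit that $x \mapsto d(x,K_n)$ is $1$-Lipschitz uniformly in $n$. First I would fix a countable dense subset $\{x_j\}_{j \in \N} \subset X$, which exists by separability. For each fixed $j$ the sequence $\big(d(x_j,K_n)\big)_n$ takes values in the compact space $[0,+\infty]$ (with the convention $d(x,\emptyset)=+\infty$), hence admits a convergent subsequence. Performing the usual diagonal argument over $j$, I extract a single subsequence $(K_{n_k})_k$ along which $d(x_j,K_{n_k})$ converges in $[0,+\infty]$ for every $j$; I denote the limit by $\phi(x_j)$.

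The key step is to upgrade this convergence from the dense set to all of $X$. For each $n$ the function $x \mapsto d(x,K_n)$ is $1$-Lipschitz, since $d(x,K_n) \le d(y,K_n) + d(x,y)$. Setting
\[
\underline{\phi}(x) := \liminf_{k} d(x,K_{n_k}), \qquad \overline{\phi}(x) := \limsup_{k} d(x,K_{n_k}),
\]
and passing to the $\liminf$ (resp. $\limsup$) in the Lipschitz inequality shows that both $\underline{\phi}$ and $\overline{\phi}$ are again $1$-Lipschitz, hence continuous. By construction they coincide on the dense set $\{x_j\}$, where both equal $\phi(x_j)$; two continuous functions agreeing on a dense set are equal, so $\underline{\phi} = \overline{\phi} =: \phi$ on all of $X$. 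In particular $\lim_k d(x,K_{n_k})$ exists for every $x \in X$.

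To conclude I would identify the Kuratowski limit. Define $K := \{x \in X : \phi(x)=0\}$. Unwinding the definitions given in the excerpt,
\[
\limsup_k K_{n_k} = \{x : \underline{\phi}(x)=0\}, \qquad \liminf_k K_{n_k} = \{x : \overline{\phi}(x)=0\},
\]
and since $\underline{\phi} = \overline{\phi} = \phi$ both sets equal $K$. Hence $\limsup_k K_{n_k} = \liminf_k K_{n_k} = K$, which is precisely Kuratowski convergence $K_{n_k} \to K$.

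The only genuinely delicate point I expect is the bookkeeping around possibly unbounded distances: one must work in the compactification $[0,+\infty]$ to guarantee that the subsequential limits $\phi(x_j)$ exist, and then check that the Lipschitz estimate and the $\liminf/\limsup$ passage remain valid there. The inequality $d(x,K_n) \le d(y,K_n) + d(x,y)$ is harmless when the right-hand side is $+\infty$, and the extended $\liminf/\limsup$ of $1$-Lipschitz functions is again $1$-Lipschitz on the set where it is finite, which is all that is needed for the density argument. Everything else is the standard diagonal-extraction-plus-density machinery.
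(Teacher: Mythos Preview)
Your argument is correct and is the standard proof of this sequential compactness result. The paper does not actually prove the theorem; it merely states it and cites Beer's monograph, so there is nothing substantive to compare your approach against.

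One small remark on the bookkeeping you flag at the end: the dichotomy can be made cleaner by observing that the inequality $|d(x,K_n)-d(y,K_n)|\le d(x,y)$ (valid whenever $K_n\neq\emptyset$, and trivially when $K_n=\emptyset$ since both sides are then handled by the $+\infty$ convention) forces each of $\underline\phi$ and $\overline\phi$ to be either identically $+\infty$ on $X$ or everywhere finite and $1$-Lipschitz. Since $\underline\phi$ and $\overline\phi$ agree on the dense set $\{x_j\}$, either both are identically $+\infty$ (and the Kuratowski limit is $\emptyset$), or both are finite $1$-Lipschitz functions coinciding on a dense set, hence equal. This removes any residual ambiguity about mixing finite and infinite values.
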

We remark that without any compactness assumption the Kuratowski limit may be empty.

\subsection{Measure valued solutions on bounded domains}
In this section we introduce the notion of measure valued (briefly mv) entropy solution for the scalar conservation law 
\begin{equation}\label{E_cl}
u_t+f(u)_x=0.
\end{equation}
Here the flux $f$ is smooth and $u:\R^+\times \R\rightarrow \R$ is the conserved quantity. We follow the reference \cite{MR775191}.

\begin{definition}
 A \emph{Young measure} on $\R^+\times \R$ is a measurable map $\nu: \R^+\times\R\rightarrow \mathcal P(\R)$, in the sense that for all continuous functions $g$ on $\R$
\begin{equation*}
\langle \nu, g \rangle: (t,x) \mapsto \int g d\nu_{t,x}
\end{equation*}
is $\mathcal L^2$-measurable. A measurable function $u:\R^+\times\R \rightarrow \R$ induces the Young measure $\nu_{t,x}=\delta_{u(t,x)}$.
\end{definition}

\begin{definition}
Let $\nu^n,\nu$ be Young measures. We say that $\nu^n\rightarrow \nu$ \emph{in the sense of Young measures} if for every $g \in C_c(\R)$,
the sequence $\langle \nu^n,g\rangle$ converges to $\langle \nu,g\rangle$ with respect to the weak* topology in $L^\infty(\R^+\times \R)$.
\end{definition}

This notion is motivated by the following compactness result: see \cite{MR1036070}.
\begin{theorem}[Young]\label{T_Young}
Let $\nu^n$ be a sequence of uniformly bounded Young measures. Then there exists a subsequence $\nu^{n_k}$ and a Young measure $\nu$ such that 
$\nu^{n_k}$ converges to $\nu$ in the sense of Young measures.
\end{theorem}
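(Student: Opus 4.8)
The plan is the standard diagonal-extraction argument combined with a Riesz representation to reconstruct the limiting disintegration. Let $K = [-M,M]$ be a fixed compact interval containing the supports of all the $\nu^n_{t,x}$, which exists by uniform boundedness. First I would fix a countable set $\{g_j\}_{j \in \N}$ dense in $C(K)$ (such a set exists since $K$ is a compact metric space, hence $C(K)$ is separable). For each fixed $j$, the functions $(t,x) \mapsto \langle \nu^n, g_j \rangle$ are bounded in $L^\infty(\R^+ \times \R)$ by $\|g_j\|_\infty$, uniformly in $n$. Since $L^\infty(\R^+ \times \R) = (L^1(\R^+ \times \R))^*$ and $L^1$ is separable, bounded sets are weak*-sequentially compact; thus for each $j$ I can extract a weak*-convergent subsequence, and a standard diagonal argument over $j$ produces a single subsequence, still denoted $\nu^{n_k}$, such that for every $j$ the sequence $\langle \nu^{n_k}, g_j \rangle$ converges weak* in $L^\infty$ to some limit $L_{g_j}$.

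Next I would pass from the dense family to all of $C(K)$. Given $g \in C(K)$ and $g_{j_m} \to g$ uniformly, the estimate $\| \langle \nu^{n_k}, g \rangle - \langle \nu^{n_k}, g_{j_m} \rangle \|_\infty \le \|g - g_{j_m}\|_\infty$, uniform in $k$, shows that $\langle \nu^{n_k}, g \rangle$ converges weak* to a limit $L_g \in L^\infty$ with $\|L_g\|_\infty \le \|g\|_\infty$. Moreover $g \mapsto L_g$ is linear (weak* limits respect finite linear combinations), positive (since $g \ge 0$ forces $\langle \nu^{n_k}, g \rangle \ge 0$, a property preserved under weak* limits), and normalized, $L_1 = 1$, because $\langle \nu^n, 1 \rangle \equiv 1$ on $K$.

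The core step is to realize the family $\{L_g\}$ as the action of a measurable Young measure. I would fix $L^\infty$-representatives of each $L_{g_j}$. The countably many identities expressing $\Q$-linearity on the $g_j$, together with positivity and normalization, each hold for a.e.\ $(t,x)$; discarding the (still null) union $N$ of the corresponding exceptional sets, I obtain a conull set on which, for every $(t,x)$, the map $g_j \mapsto L_{g_j}(t,x)$ is a positive, normalized, $\Q$-linear functional of norm $\le 1$ on $\{g_j\}$. This extends uniquely by continuity to a positive normalized linear functional on all of $C(K)$, so by the Riesz representation theorem it is integration against a unique probability measure $\nu_{t,x} \in \mathcal{P}(K) \subset \mathcal{P}(\R)$; on $N$ I set $\nu_{t,x}$ to be any fixed probability measure. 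Measurability is then immediate, since for each $g$ the function $(t,x) \mapsto \langle \nu_{t,x}, g \rangle = L_g(t,x)$ agrees a.e.\ with the $L^\infty$ function $L_g$, which is precisely the definition of a Young measure; and $\langle \nu^{n_k}, g \rangle \to L_g = \langle \nu, g \rangle$ weak* for all $g \in C(K)$, hence for all $g \in C_c(\R)$ since such $g$ restricts to an element of $C(K)$ and the $\nu^{n_k}_{t,x}$ are supported in $K$, which is exactly convergence in the sense of Young measures.

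The main obstacle is the gluing step: each $L_g$ is defined only up to a null set, so one must be careful to extract consistent pointwise representatives for which linearity, positivity and normalization hold \emph{simultaneously} for a.e.\ $(t,x)$. Restricting these conditions to the countable dense family $\{g_j\}$ reduces this to discarding a countable union of null sets, after which density and the Riesz theorem supply the pointwise probability measures and hence the limiting Young measure.
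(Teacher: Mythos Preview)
The paper does not give its own proof of this theorem: it is stated as a classical compactness result and simply attributed to the reference \cite{MR1036070}. Your argument is a correct, standard proof of Young's theorem (diagonal extraction on a countable dense family in $C(K)$ using weak* sequential compactness in $L^\infty$, followed by pointwise Riesz representation with the null-set bookkeeping you describe), so there is nothing in the paper to compare it against beyond the citation.
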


\begin{definition}
We say that $(\eta,q)$ is an \emph{entropy-entropy flux pair} if $\eta:\R\rightarrow \R$ is convex and $q:\R\rightarrow \R$ satisfies $q'=\eta'f'$.
In particular we will use the following notation: for every $k\in \R$ let
\begin{equation*}
\eta^+_k(u):=(u-k)^+,\qquad \eta^-_k(u):=(u-k)^-
\end{equation*}
and the relative fluxes
\begin{equation}\label{E_flux}
 q^+_k(u):=\chi_{[k,+\infty)}(u)\big(f(u)-f(k)\big),\qquad q^-_k(u):=\chi_{(-\infty,k]}(u)\big(f(k)-f(u)\big),
\end{equation}
where $\chi_E$ denotes the characteristic function of the set $E$:
\begin{equation*}
\chi_E(u):=
\begin{cases}
1 & \mbox{if }u\in E, \\
0 & \mbox{if }u \notin E.
\end{cases}
\end{equation*}

We say that an entropy-entropy flux pair $(\eta,q)$ is a \emph{boundary entropy-entropy flux pair} with value $w$ if $\eta=\eta_k^+$ for some $k\ge w$ or 
$\eta=\eta_k^-$ for some $k\le w$ and $q$ is the corresponding flux defined by  \eqref{E_flux}.
\end{definition}

In this framework the natural notion of solution for \eqref{E_cl} is
\begin{definition}\label{D_mv_sol}
 A bounded Young measure $\nu$ is a \emph{mv entropy solution} of \eqref{E_cl} if for all entropy-entropy flux pairs $(\eta,q)$ it holds
\begin{equation}\label{E_mv_sol}
\mu:= \partial_t \langle \nu, \eta \rangle + \partial_x \langle \nu, q\rangle \leq 0
\end{equation}
in the sense of distributions. We will say that $\nu$ is a \emph{Dirac entropy solution} of \eqref{E_cl} if $\nu_{t,x}=\delta_{u(t,x)}$ where $u$ is an entropy solution of \eqref{E_cl}.
\end{definition}

\begin{remark}
In Definition \ref{D_mv_sol} we require that $\nu$ is bounded so that \eqref{E_mv_sol} makes sense for every $(\eta,q)$.
\end{remark}

We observe that \eqref{E_mv_sol} implies that the vector field $(\langle\nu, \eta\rangle, \langle \nu, q\rangle)$ is a divergence measure field, in particular it has normal traces in the sense of Anzellotti \cite{MR750538}. The argument in the next proposition is taken from \cite{MR996910}.

\begin{proposition}\label{P_traces}
Let $\nu$ be a mv entropy solution of \eqref{E_cl} and $\gamma:[0,+\infty)\rightarrow \R$ be a Lipschitz curve. Denote by $\Omega^-$ the set
\begin{equation*}
\Omega^-:=\big\{(t,x)\in \R^+\times\R : x < \gamma(t)\big\}.
\end{equation*}
Then there exists a Young measure $\nu^-:\R^+\rightarrow \mathcal P(\R)$ such that for every Lipschitz $\varphi$
with compact support and every entropy-entropy flux pair $(\eta,q)$ it holds
\begin{equation*}
\int_{\Omega^-}\varphi d\mu + \int_{\Omega^-}\varphi_t\langle \nu,\eta\rangle + \varphi_x\langle \nu,q\rangle dxdt = \int_0^{+\infty}\left(-\dot\gamma (t)\langle \nu^-_t,\eta\rangle + \langle \nu^-_t,q\rangle\right)\varphi(t,\gamma(t))dt.
\end{equation*}
The same result (with a minus sign on the r.h.s.) holds on $\Omega^+:=\{(t,x)\in \R^+\times\R : x > \gamma(t)\}$ and defines the right traces $\nu^+$.
\end{proposition}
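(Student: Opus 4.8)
The plan is to run the divergence--measure field normal-trace argument (Anzellotti \cite{MR750538}, in the form of Otto \cite{MR996910}), which in this one-dimensional geometry becomes completely explicit. Fix an entropy--entropy flux pair $(\eta,q)$. Since $\nu$ is bounded, say $\supp\nu_{t,x}\subset[-M,M]$, and $\eta,q$ are continuous, the field $U_\eta:=(\langle\nu,\eta\rangle,\langle\nu,q\rangle)$ lies in $L^\infty_{\loc}(\R^+\times\R;\R^2)$ and, by Definition \ref{D_mv_sol}, its distributional divergence is the locally finite Radon measure $\mu$; only this, not the sign of $\mu$, will be used. Equivalently,
\begin{equation*}
\int\Phi\,d\mu+\int\big(\Phi_t\langle\nu,\eta\rangle+\Phi_x\langle\nu,q\rangle\big)\,dx\,dt=0
\end{equation*}
for every compactly supported Lipschitz $\Phi$ on $\R^+\times\R$ (the $C^\infty_c$ case being the definition of $\mu$, the Lipschitz case following by mollification since $\langle\nu,\eta\rangle,\langle\nu,q\rangle\in L^\infty_{\loc}$); denote this identity $(\star)$. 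The point of the proposition is that the normal trace of $U_\eta$ on $\Graph(\gamma)$ from the side $\Omega^-$ is produced by a \emph{single} Young measure $\nu^-$ serving all entropy pairs at once, so $\nu^-$ must be constructed before $(\eta,q)$ is fixed.

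First I build $\nu^-$. For $\e>0$ put $\nu^{-,\e}_t:=\frac1\e\int_{\gamma(t)-\e}^{\gamma(t)}\nu_{t,x}\,dx$, i.e. $\langle\nu^{-,\e}_t,g\rangle=\frac1\e\int_{\gamma(t)-\e}^{\gamma(t)}\langle\nu_{t,x},g\rangle\,dx$ for $g\in C_c(\R)$. Each $\nu^{-,\e}_t$ is a probability on $[-M,M]$ and $t\mapsto\nu^{-,\e}_t$ is measurable (Fubini, continuity of $\gamma$), so $\{\nu^{-,\e}\}_\e$ is a uniformly bounded family of Young measures on $\R^+$; by Young's compactness theorem (Theorem \ref{T_Young}, applied on $\R^+$) there are $\e_k\downarrow0$ and a bounded Young measure $\nu^-$ with $\nu^{-,\e_k}\to\nu^-$ in the sense of Young measures. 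It suffices that this one limit point works, because the identity we must prove has a left-hand side that does not depend on the subsequence.

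Now fix $(\eta,q)$ and a compactly supported Lipschitz $\varphi$. Let $\theta_\e:\R\to[0,1]$ be Lipschitz, equal to $1$ on $(-\infty,-\e]$, to $0$ on $[0,+\infty)$, and affine on $[-\e,0]$ (so $\theta_\e'=-\tfrac1\e$ there); then $\Phi(t,x):=\varphi(t,x)\,\theta_\e(x-\gamma(t))$ is Lipschitz and compactly supported (as $\varphi$ is), hence admissible in $(\star)$. Writing $S_\e(t):=(\gamma(t)-\e,\gamma(t))$, one has $\partial_x[\theta_\e(x-\gamma(t))]=-\tfrac1\e\ind_{S_\e(t)}$ and $\partial_t[\theta_\e(x-\gamma(t))]=\tfrac{\dot\gamma(t)}\e\ind_{S_\e(t)}$, and collecting in $(\star)$ the terms in which a derivative falls on $\theta_\e$ yields
\begin{equation*}
\frac1\e\int_0^{+\infty}\!\!\int_{\gamma(t)-\e}^{\gamma(t)}\varphi(t,x)\big(-\dot\gamma(t)\langle\nu_{t,x},\eta\rangle+\langle\nu_{t,x},q\rangle\big)\,dx\,dt=\int\varphi\,\theta_\e\,d\mu+\int\theta_\e\big(\varphi_t\langle\nu,\eta\rangle+\varphi_x\langle\nu,q\rangle\big)\,dx\,dt.
\end{equation*}
As $\e\to0$ we have $\theta_\e(x-\gamma(t))\to\ind_{\Omega^-}(t,x)$ boundedly and pointwise, so dominated convergence sends the right-hand side to $\int_{\Omega^-}\varphi\,d\mu+\int_{\Omega^-}\big(\varphi_t\langle\nu,\eta\rangle+\varphi_x\langle\nu,q\rangle\big)\,dx\,dt$, which is the left-hand side of the asserted formula; and on the left-hand side of the last display, since $|x-\gamma(t)|\le\e$ on $S_\e(t)$ and $\varphi$ is Lipschitz, replacing $\varphi(t,x)$ by $\varphi(t,\gamma(t))$ changes it by $O(\e)$, so it has the same limit as $\int_0^{+\infty}\varphi(t,\gamma(t))\,\langle\nu^{-,\e}_t,-\dot\gamma(t)\eta+q\rangle\,dt$.

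Finally, let $\e=\e_k\to0$ in $\int_0^{+\infty}\varphi(t,\gamma(t))\,\langle\nu^{-,\e_k}_t,-\dot\gamma(t)\eta+q\rangle\,dt$. Splitting the integrand as $-\big(\dot\gamma(t)\varphi(t,\gamma(t))\big)\langle\nu^{-,\e_k}_t,\eta\rangle+\varphi(t,\gamma(t))\langle\nu^{-,\e_k}_t,q\rangle$ and noting that $t\mapsto\dot\gamma(t)\varphi(t,\gamma(t))$ and $t\mapsto\varphi(t,\gamma(t))$ both lie in $L^1(\R^+)$ ($\gamma$ Lipschitz, $\varphi$ compactly supported), the definition of Young-measure convergence (used with $g\in C_c(\R)$ agreeing with $\eta$, resp. $q$, on $[-M,M]$) gives the limit $\int_0^{+\infty}\big(-\dot\gamma(t)\langle\nu^-_t,\eta\rangle+\langle\nu^-_t,q\rangle\big)\varphi(t,\gamma(t))\,dt$. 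Equating the two evaluations of the limit proves the identity on $\Omega^-$; since $\nu^-$ was chosen first it holds for every $\varphi$ and every $(\eta,q)$. The case $\Omega^+$ is the mirror image --- take $\theta_\e\equiv0$ on $(-\infty,0]$, $\equiv1$ on $[\e,+\infty)$, and average $\nu_{t,x}$ over $(\gamma(t),\gamma(t)+\e)$; reversing the orientation of the strip yields the change of sign on the right. The steps needing care are: (i) extracting $\nu^-$ as a Young-measure limit of the horizontal averages \emph{before} fixing $(\eta,q)$ --- which is what makes one trace object simultaneously good for all entropy pairs, and is legitimate only because the left-hand side of the target formula is subsequence-free; and (ii) the choice of the sliding cutoff $\theta_\e(x-\gamma(t))$, together with the two elementary passages $\varphi(t,x)\rightsquigarrow\varphi(t,\gamma(t))$ and $\theta_\e\rightsquigarrow\ind_{\Omega^-}$. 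All remaining points are routine Fubini and dominated-convergence bookkeeping.
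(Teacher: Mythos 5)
Your proof is correct, and it reaches the same identity by a noticeably lighter route than the paper. The paper tests the entropy inequality with $\phi(t)\psi(\gamma(t)-x)$, deduces that the shifted-flux functions $A_\phi(\e)=\int\langle\nu_{t,\gamma(t)-\e},-\dot\gamma\eta+q\rangle\phi\,dt$ have bounded variation in $\e$, passes to a right-continuous representative, uses a countable dense family of time test functions with a common negligible set of $\e$'s, extracts a Young-measure limit of the one-dimensional sections $\nu_{t,\gamma(t)-\e_k}$, and only then proves the integration-by-parts formula with the linear cutoff $\psi_\e$. You instead take Young-measure limits of the slab averages $\nu^{-,\e}_t=\frac1\e\int_{\gamma(t)-\e}^{\gamma(t)}\nu_{t,x}\,dx$ (fixed before choosing $(\eta,q)$, which is the point that makes one trace serve all entropy pairs), and you get both the existence of the $\e\to0$ limit of the averaged boundary flux and the trace formula in a single pass from the sliding-cutoff identity, the limit on the bulk side being free by dominated convergence; the subsequential Young-measure limit then identifies the (already existing) full limit. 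This dispenses with the BV-in-$\e$ estimate, the dense countable family, and the right-continuous representative; the small price is that your $\nu^-$ is a limit of averaged sections rather than of the sections themselves, which is immaterial for the statement and for its later uses (the subsequent remark on uniqueness up to $\{f'=\dot\gamma\}$ applies verbatim to any Young measure satisfying the identity). Two minor points you handle correctly but should keep explicit: Theorem \ref{T_Young} is stated for Young measures on $\R^+\times\R$ and you invoke its (identical) one-dimensional version on $\R^+$; and the truncation of $\eta,q$ to $C_c$ functions agreeing with them on $[-M,M]$ is needed because Young-measure convergence is only tested against $C_c(\R)$.
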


\begin{proof}
Given $\phi \in C^\infty_c(\R^+)$ consider the function
\begin{equation*}
A_\phi(\e):=\int_{\R^+}\langle \nu_{t,\gamma(t)-\e},-\dot\gamma(t)\eta+q\rangle\phi(t)dt.
\end{equation*}
It is defined $\mathcal L^1$-a.e. by Fubini theorem.
We show that it has bounded variation: in particular it coincides $\mathcal L^1$-a.e. with his right-continuous representative $\tilde A_\phi$. 
Test the equation \eqref{E_mv_sol} with 
\begin{equation*}
\varphi(t,x)=\phi(t)\psi(\gamma(t)-x)
\end{equation*}
for some $\phi,\psi\in C^\infty_c(\R^+)$.
\begin{equation*}
\begin{split}
\int_{\Omega^-}\varphi d\mu =&~  - \int_{\Omega^-}\Big[\langle \nu_{t,x},\eta\rangle \big(\phi'(t)\psi(\gamma(t)-x) + \dot\gamma(t)\phi(t)\psi'(\gamma(t)-x)\big)-\langle \nu_{t,x},q\rangle \phi(t)\psi'(\gamma(t)-x)\Big]dx dt \\
= &~ -\int_{(\R^+)^2}\langle \nu_{t,\gamma(t)-\e}, -\dot\gamma(t)\eta + q\rangle\phi(t)\psi'(\e)dt d\e + 
\int_{(\R^+)^2}\langle \nu_{t,\gamma(t)-\e}, \eta\rangle \phi'(t)\psi(\e) dt d\e \\
= &~ -\int_{\R^+} A_\phi(\e)\psi'(\e)d\e + \int_{\R^+}\left(\int_{\R^+}\langle \nu_{t,\gamma(t)-\e},\eta\rangle \phi'(t)dt\right)\psi(\e)d\e.
\end{split}
\end{equation*}
In particular this implies that $A_\phi$ has bounded variation and 
\begin{equation*}
|DA_\phi|\leq C \TV (\phi) \mathcal L^1 + \|\phi\|_\infty p_\sharp |\mu|\llcorner \supp \varphi,
\end{equation*}
where $p:(t,x)\mapsto \gamma(t)-x$.

Let $D\subset C^1_c(\R^+)$ be a countable set dense in $C^0_c(\R^+)$.
Repeating the argument in $D$ we obtain that there exists a negligible set  $E\subset \R^+$ such that for every $\phi \in D$ and for every $\e\in \R^+\setminus E$,
\begin{equation*}
\int_{\R^+}\langle \nu_{t,\gamma(t)-\e},-\dot\gamma(t)\eta+q\rangle\phi(t)dt = \tilde A_\phi(\e).
\end{equation*}

Consider a sequence $(\e_k)_{k\in \N}\subset \R^+\setminus E$. By Theorem \ref{T_Young} there exist a subsequence $\e_{k_l}$ and a Young measure $\nu^-:\R^+\rightarrow \mathcal P(\R)$ such that for every entropy-entropy flux pair $(\eta,q)$
\begin{equation*}
\langle  \nu_{t,\gamma(t)-\e_{k_l}},-\dot\gamma(t)\eta+q\rangle \rightharpoonup \langle\nu^-_t,-\dot\gamma(t)\eta+q\rangle \quad w^*-L^\infty.
\end{equation*}
In particular for every $\phi\in D$, and using the boundedness of $\nu$ by density for every $\phi \in C^0_c(\R^+)$,
\begin{equation*}
\lim_{\e\rightarrow 0}\tilde A_\phi(\e)= \int_{\R^+}\langle \nu^-_t,-\dot\gamma(t)\eta+q\rangle\phi(t)dt.
\end{equation*}

To prove the integration by parts formula consider a test function of this form:
\begin{equation*}
(t,x)\mapsto \varphi(t,x)\psi_\e(\gamma(t)-x),
\end{equation*}
where  $\varphi$ is Lipschitz with compact support in $\R^+\times \R$ and
\begin{equation*}
\psi_\e (s)=
\begin{cases}
\frac{s}{\e} & \mbox{if }s\in (0,\e), \\
1 & \mbox{if }s\geq \e.
\end{cases}
\end{equation*}
Letting $\e\rightarrow 0$ in the divergence formula in the weak form and using $\varphi(t,\gamma(t)-\e)\rightarrow \varphi(t,\gamma(t))$, we get the claim.
\end{proof}

\begin{remark}
The fact that $ \langle\nu^-_t,-\dot\gamma(t)\eta+q\rangle$ is uniquely determined $\mathcal L^1$-a.e. for all entropy-entropy flux pairs $(\eta,q)$ implies that $\nu^-$ is uniquely determined up to regions where $f'=\dot\gamma$: more precisely let 
\begin{equation*}
O=\{u: f'(u)\neq\dot\gamma(t)\}
\end{equation*}
and $\nu^1, \nu^2$ two measures such that for all entropy-entropy flux pairs $(\eta,q)$
\begin{equation*}
\langle\nu^1,-\dot\gamma(t)\eta+q\rangle=\langle\nu^2,-\dot\gamma(t)\eta+q\rangle.
\end{equation*}
Then $\nu^1=\nu^2$ on the $\sigma$-algebra generated by $\{(u,+\infty): u\in O\}$.

It suffices to prove that $\nu^1(u^1,u^2)=\nu^2(u^1,u^2)$ for $u^1,u^2\in O$. To see this we test with an entropy $\eta_n$ such that 
\begin{equation*}
\eta_n'(u)=\frac{n}{f'(u)-\dot\gamma(t)}\left(\chi_{(u^1,u^1+\frac{1}{n})}(u)-\chi_{(u^2,u^2+\frac{1}{n})}(u)\right)
\end{equation*}
For $n$ sufficiently large this defines an entropy and we can choose the entropy flux $q_n$ such that letting $n\rightarrow +\infty$,
\begin{equation*}
\begin{split}
\eta_n\rightarrow &\frac{1}{f'(w_1)-\dot\gamma(t)}\chi_{(w_1,+\infty)}-\frac{1}{f'(w_2)-\dot\gamma(t)}\chi_{(w_2,+\infty)}, \\
q_n\rightarrow &\frac{f'(w_1)}{f'(w_1)-\dot\gamma(t)}\chi_{(w_1,+\infty)}-\frac{f'(w_2)}{f'(w_2)-\dot\gamma(t)}\chi_{(w_2,+\infty)},
\end{split}
\end{equation*}
therefore
\begin{equation*}
-\dot\gamma(t)\eta_n+q_n\rightarrow \chi_{(w^1,w^2)}.
\end{equation*}
\end{remark}

\begin{remark}
The same argument works for space-like curves. In particular a mv entropy solution $\nu$ has a representative such that for every $\bar t>0$ 
both the following limits exist in the sense of Young measures:
\begin{equation*}
 \nu_{t,x}^- = \lim_{t\rightarrow \bar t ^-}\nu_{t,x}, \qquad \nu_{t,x}^+ = \lim_{t\rightarrow \bar t ^+}\nu_{t,x} 
\end{equation*}
and they are equal for all $t$ except at most countably many.  We will denote in particular by $\nu^+_{0,x} = \lim_{t\rightarrow 0^+}\nu_{t,x}$ the trace at $t=0$.
\end{remark}
The notion of trace allows us to define in which sense a boundary condition is satisfied: see \cite{Bardos_boundary}. 

\begin{definition}\label{D_adm_bound}
A couple $(\gamma,w)$ with $\gamma:[0,+\infty)\rightarrow \R$ Lipschitz and $w\in \R$ is said to be an \emph{admissible right boundary} if for $\mathcal L^1$ a.e. t
\begin{equation}\label{E_bd_adm_cond}
\begin{split}
-\dot\gamma \langle \eta_k^+,\nu^-\rangle + \langle q_k^+,\nu^-\rangle \geq 0 & \quad \forall k\geq w, \\
-\dot\gamma \langle \eta_k^-,\nu^-\rangle + \langle q_k^-,\nu^-\rangle \geq 0 & \quad \forall k\leq w,
\end{split}
\end{equation}
where $\nu^-$ is a left trace of $\nu$ on $\gamma$. Similarly we say that  $(\gamma,w)$ is a \emph{admissible left boundary} if for $\mathcal L^1$ a.e. t
\begin{equation}\label{E_bd_adm_cond+}
\begin{split}
-\dot\gamma \langle \eta_k^+,\nu^+\rangle + \langle q_k^+,\nu^+\rangle \leq 0 & \quad \forall k\geq w, \\
-\dot\gamma \langle \eta_k^-,\nu^+\rangle + \langle q_k^-,\nu^+\rangle \leq 0 & \quad \forall k\leq w.
\end{split}
\end{equation}
We simply say that $(\gamma,w)$ is an \emph{admissible boundary} if it is admissible left and right boundary.
\end{definition}

\begin{proposition}[Stability]\label{P_stab}
Let $\nu^n$ be mv entropy solutions of \eqref{E_cl} with flux $f^n$ and $(\gamma^n,w^n)$ admissible boundaries for $\nu^n$.
Suppose that 
\begin{itemize}
\item $f^n$ are uniformly Lipschitz and $f^n\rightarrow f$ uniformly;
\item $\nu^n \rightarrow \nu$ in the sense of Young measures;
\item $w^n\rightarrow w$;
\item $\gamma^n \rightarrow \gamma$ uniformly.
\end{itemize}
Then $(\gamma,w)$ is an admissible boundary for $\nu$.
\end{proposition}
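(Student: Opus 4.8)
The goal is to verify that $(\gamma,w)$ satisfies the one‑sided inequalities \eqref{E_bd_adm_cond} and \eqref{E_bd_adm_cond+} for the limit solution $\nu$ (which is uniformly bounded, say with $\supp\nu_{t,x}\subset[-M,M]$, and Lipschitz limit curve $\gamma$). By Proposition \ref{P_traces} these are conditions on the one‑sided traces $\nu^{\pm}$ of $\nu$ along $\gamma$, and the obstruction is that traces are \emph{not} continuous under Young‑measure convergence: there is no reason for the traces of $\nu^n$ along $\gamma^n$ to converge to the trace of $\nu$ along $\gamma$. The plan is therefore to replace Definition \ref{D_adm_bound} by an equivalent formulation tested against two‑dimensional test functions — with no explicit trace, hence stable under all four convergences — and to translate back only at the end.

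\textbf{Step 1: a trace‑free reformulation.} The claim is that $(\gamma,w)$ is an admissible right boundary for a mv entropy solution $\nu$ if and only if for every $k\ge w$ and every non‑negative Lipschitz $\varphi$ with compact support in $\R^+\times\R$ one has
\begin{equation*}
\int_{\Omega^-}\big(\langle\eta_k^+,\nu\rangle\,\varphi_t+\langle q_k^+,\nu\rangle\,\varphi_x\big)\,dx\,dt\ \ge\ 0,
\end{equation*}
and the same with $(\eta_k^-,q_k^-)$ for every $k\le w$; symmetrically $(\gamma,w)$ is an admissible left boundary iff the corresponding integrals over $\Omega^+$ are $\le 0$. (In the general boundary formulation of \cite{MR996910} there is also a term $\int(-\dot\gamma\,\eta(w)+q(w))\varphi(t,\gamma(t))\,dt$, but for the pairs of Definition \ref{D_adm_bound} one checks directly $\eta_k^\pm(w)=q_k^\pm(w)=0$, so it drops out.) The implication ``$\Rightarrow$'' is immediate from Proposition \ref{P_traces}: its identity rewrites the left‑hand side as $\int_0^{\infty}(-\dot\gamma\langle\eta_k^+,\nu^-\rangle+\langle q_k^+,\nu^-\rangle)\varphi(t,\gamma(t))\,dt-\int_{\Omega^-}\varphi\,d\mu$, which is $\ge0$ because the first term is $\ge0$ by \eqref{E_bd_adm_cond} and $\varphi\ge0$, and the second because $\mu\le0$, $\varphi\ge0$. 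For ``$\Leftarrow$'' one tests with $\varphi(t,x)=\phi(t)\,\theta(x)\,\big(1-\psi_\e(\gamma(t)-x)\big)$, where $\phi\ge0$ is in $C^\infty_c(\R^+)$, $\theta\in C^\infty_c(\R)$ equals $1$ on a neighbourhood of $\gamma(\supp\phi)$, and $\psi_\e$ is the ramp ($0$ for $s\le0$, $s/\e$ for $0\le s\le\e$, $1$ for $s\ge\e$) of the proof of Proposition \ref{P_traces}; letting $\e\to0$ the bulk integral converges, exactly by the averaged‑trace computation of that proof, to $\int_0^{\infty}\phi(t)\big(-\dot\gamma(t)\langle\eta_k^+,\nu^-_t\rangle+\langle q_k^+,\nu^-_t\rangle\big)\,dt$, so the inequality for all such $\phi$ returns the first line of \eqref{E_bd_adm_cond}, and likewise for the remaining pairs and for $\Omega^+$. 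This is the classical characterisation of admissible boundaries, cf. \cite{MR996910,Bardos_boundary}.

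\textbf{Step 2: passage to the limit.} First, $\nu$ is itself a mv entropy solution: $\langle\nu^n,\eta\rangle\rightharpoonup\langle\nu,\eta\rangle$ weakly‑$*$ in $L^\infty$ by hypothesis, and $\langle\nu^n,q^n\rangle\rightharpoonup\langle\nu,q\rangle$ since $q^n\to q$ uniformly on $[-M,M]$ (a consequence of $f^n\to f$ uniformly), so $\partial_t\langle\nu,\eta\rangle+\partial_x\langle\nu,q\rangle\le0$ is inherited. Now fix $k>w$ and a non‑negative Lipschitz $\varphi$ with compact support; for $n$ large $k\ge w^n$, so Step 1 applied to $(\nu^n,\gamma^n,w^n,f^n)$ gives
\begin{equation*}
\int_{\Omega^{-,n}}\big(\langle\eta_k^+,\nu^n\rangle\,\varphi_t+\langle q^{+,n}_k,\nu^n\rangle\,\varphi_x\big)\,dx\,dt\ \ge\ 0,
\end{equation*}
where $\Omega^{-,n}=\{x<\gamma^n(t)\}$ and $q^{+,n}_k(u)=\chi_{[k,+\infty)}(u)(f^n(u)-f^n(k))$. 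Since $\gamma^n\to\gamma$ uniformly, $\chi_{\Omega^{-,n}}\varphi_t\to\chi_{\Omega^-}\varphi_t$ in $L^1$ (on $\supp\varphi$ the symmetric difference of the two subgraphs has vanishing measure), while $\langle\eta_k^+,\nu^n\rangle\rightharpoonup\langle\eta_k^+,\nu\rangle$ weakly‑$*$ in $L^\infty$; the product of an $L^1$‑strongly convergent and a bounded weakly‑$*$ convergent sequence converges, so the first summand tends to $\int_{\Omega^-}\langle\eta_k^+,\nu\rangle\varphi_t$. For the second summand one additionally uses $q^{+,n}_k\to q^{+}_k$ uniformly on $[-M,M]$ to replace $q^{+,n}_k$ by $q^{+}_k$ with vanishing error and concludes likewise. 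Hence $\int_{\Omega^-}\big(\langle\eta_k^+,\nu\rangle\varphi_t+\langle q_k^+,\nu\rangle\varphi_x\big)\ge0$ for all $k>w$, and by $k\downarrow w$ (with $\eta_k^+\to\eta_w^+$, $q_k^+\to q_w^+$ uniformly) also for $k=w$; the same gives the $(\eta_k^-,q_k^-)$ inequalities for $k\le w$ and the symmetric ones on $\Omega^+$. By Step 1, $(\gamma,w)$ is an admissible boundary for $\nu$.

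The heart of the matter is Step 1, i.e.\ decoupling the boundary condition from the (non‑stable) trace; within it only the ``$\Leftarrow$'' direction is non‑routine, and it is precisely there that the existence and averaged description of traces from Proposition \ref{P_traces} are used. Once Step 1 is in place, Step 2 is the standard combination of weak‑$*$ convergence of $\langle\nu^n,\cdot\rangle$ with the strong $L^1$ convergence of $\chi_{\Omega^{-,n}}$ and of the fluxes $q^n$; note that no control on $\dot\gamma^n$ beyond uniform convergence of $\gamma^n$ is needed, since $\dot\gamma^n$ never enters the bulk inequality.
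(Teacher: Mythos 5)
Your proof is correct, and its skeleton is the same as the paper's: use Proposition \ref{P_traces} to trade the trace conditions \eqref{E_bd_adm_cond}--\eqref{E_bd_adm_cond+} for an inequality tested against two-dimensional test functions, pass that inequality to the limit using Young-measure convergence, strong $L^1$ convergence of $\chi_{(\Omega^n)^-}$, and uniform convergence of the boundary fluxes $q^{\pm,n}_k$, and then translate back. The one genuine difference lies in how the dissipation is handled. The paper keeps the term $\int_{(\Omega^n)^-}\varphi\,d\mu^n$ inside the inequality \eqref{E_boundary}; this forces the extra upper-semicontinuity step \eqref{E_stab2} ($\limsup_n\int_{(\Omega^n)^-}\varphi\,d\mu^n\le\int_{\Omega^-}\varphi\,d\mu$, via the interior cutoffs $\psi_\e$), but then the limit inequality is literally the identity of Proposition \ref{P_traces} and the boundary condition is read off immediately. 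You instead discard the dissipation term before the limit, exploiting its sign, so your limit passage needs nothing beyond weak$*$-against-strong-$L^1$ convergence and avoids any discussion of the measures $\mu^n$; the price is paid in your Step 1 ``$\Leftarrow$'', where the weaker bulk inequality only yields the trace inequality after concentrating test functions $\phi(t)\theta(x)(1-\psi_\e(\gamma(t)-x))$ on the graph of $\gamma$, using that the interior contribution $\int_{\Omega^-}\varphi^\e\,d\mu$ vanishes as $\e\to0$ while the trace term is unchanged. Both routes are sound; yours trades the semicontinuity-of-measures argument for a boundary-concentration argument, and has the minor additional merits of checking explicitly that the limit $\nu$ is a mv entropy solution (needed to invoke Proposition \ref{P_traces} for $\nu$, and implicit in the paper) and of covering the endpoint $k=w$ by letting $k\downarrow w$.
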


\begin{proof}
We show that the property of being an admissible right boundary is stable. Let $k<w$, $\eta=\eta_k^-$, $q^n=q^{-,n}_k$ the relative flux and
\begin{equation*}
\mu^n=\langle\nu^n,\eta\rangle_t+\langle \nu^n,q^n\rangle_x
\end{equation*}
 the dissipation. For $n$ sufficiently large we have $w^n>k$ therefore, by hypothesis, for every nonnegative test function 
$\varphi\in C^\infty_c(\R^+\times\R)$
\begin{equation}\label{E_boundary}
\int_{(\Omega^n)^-}\varphi d\mu^n+\int_{(\Omega^n)^-}\left(\varphi_t \langle \nu_{t,x}^n,\eta\rangle + \varphi_x \langle\nu_{t,x}^n,q^n\rangle \right) dxdt \geq 0.
\end{equation}
We want to pass to the limit the inequality above: 
since $\nu^n\rightarrow \nu$ in the sense of Young measures and $q^n\rightarrow q$ uniformly, by Young theorem
\begin{equation*}
\langle \nu^n,\eta \rangle \rightharpoonup \langle \nu,\eta\rangle \quad w^*-L^\infty \qquad \mbox{and} \qquad \langle \nu^n,q^n\rangle \rightharpoonup \langle \nu,q \rangle \quad w^*-L^\infty.
\end{equation*}
Moreover $\chi_{(\Omega^n)^-}\rightarrow \chi_{\Omega^-}$ strongly in $L^1$, therefore
\begin{equation}\label{E_stab1}
\int_{(\Omega^n)^-}\left(\varphi_t \langle \nu^n_{t,x},\eta\rangle + \varphi_x\langle \nu^n_{t,x,}q^n\rangle\right) dxdt \rightarrow \int_{\Omega^-}\varphi_t\langle \nu_{t,x},\eta\rangle + \varphi_x\langle \nu_{t,x},q\rangle dxdt.
\end{equation}
Let $\psi_\e\in C^\infty_c(\Omega^-)$ taking values in $[0,1]$ such that $\psi_\e(t,x) = 1$ for every $(t,x)$ such that $\dist((t,x), (\Omega^-)^c)\geq \e.$
Then, since $\mu^n$ are nonpositive
\begin{equation*}
\limsup_{n\rightarrow +\infty}\int_{(\Omega^n)^-}\varphi d \mu^n \leq \lim_{n\rightarrow +\infty}\int_{\Omega^-}\varphi \psi_\e d \mu^n = \int_{\Omega^-}\varphi \psi_\e d \mu.
\end{equation*}
Letting $\e\rightarrow 0$ we get
\begin{equation}\label{E_stab2}
\limsup_{n\rightarrow +\infty} \int_{(\Omega^n)^-}\varphi d\mu^n \leq \int_{\Omega^-}\varphi d\mu.
\end{equation}
By \eqref{E_stab1} and \eqref{E_stab2} we get that \eqref{E_boundary} holds in the limit and this is equivalent to \eqref{E_bd_adm_cond} by Proposition \ref{P_traces}.

For $\eta_k^+$ the analysis is completely analogue.
\end{proof}

DiPerna \cite{MR775191} showed that the doubling variable technique by Kruzkov \cite{Kruzhkov} applies also in the context of mv solutions: given $\nu_1,\nu_2$ mv entropy solutions of \eqref{E_cl} in an open set, it holds
\begin{equation*}
\partial_t\langle \nu_1\times\nu_2,|w - w'|\rangle + \partial_x\langle \nu_1\times\nu_2, \sign(w-w')(f(w)-f(w'))\rangle \leq 0
\end{equation*}
in the sense of distributions.
This in particular implies the uniqueness of entropy solutions in the class of mv entropy solutions for the initial value problem with $u_0\in L^\infty$.

Szepessy \cite{MR996910} extended the result to bounded domains.
\begin{proposition}\label{P_szep}
Let $T>0$ and consider a domain 
\begin{equation}\label{E_def_Omega}
\Omega = \Big\{(t,x)\in (0,T)\times \R: \gamma_1(t)<x<\gamma_2(t)\Big\}
\end{equation}
where $\gamma_1,\gamma_2:[0,T]\rightarrow \R$ are Lipschitz and $\gamma_1\leq\gamma_2$.
Let $\nu$ and $\nu'$ be two solutions of \eqref{E_cl} which satisfy the same boundary conditions $w_1$ on $\gamma_1$ and $w_2$ on $\gamma_2$. Then
\begin{equation*}
F(t)=\int_{\gamma_1(t)}^{\gamma_2(t)} \langle \nu_{t,x} \times \nu'_{t,x},|u-v| \rangle dx
\end{equation*}
has non positive derivative in the sense of distributions in $(0,T)$.
\end{proposition}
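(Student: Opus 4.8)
The plan is to obtain the statement from a Kruzkov-type doubling of the space--time variables, adapted to the moving domain $\Omega$: this is DiPerna's argument \cite{MR775191} in the measure valued setting, pushed up to the lateral boundary as Szepessy does \cite{MR996910}. Since $\nu,\nu'$ admit representatives which are left/right continuous in $t$ in the sense of Young measures (see the remark after Proposition \ref{P_traces}), it suffices to show that $F(t_2)\le F(t_1)$ for a.e.\ $0<t_1<t_2<T$. I would start from the interior Kato inequality of DiPerna: the doubling of variables of Kruzkov \cite{Kruzhkov}, applied to the mv entropy solutions $\nu,\nu'$ of \eqref{E_cl}, gives, in $\mathcal{D}'(\Omega)$,
\[
\partial_t\big\langle \nu\times\nu',|u-v|\big\rangle + \partial_x\big\langle \nu\times\nu',\sign(u-v)(f(u)-f(v))\big\rangle\ \le\ 0,
\]
so that $(\mathcal E,\mathcal F):=\big(\langle \nu\times\nu',|u-v|\rangle,\ \langle\nu\times\nu',\sign(u-v)(f(u)-f(v))\rangle\big)\in L^\infty(\Omega)$ is a divergence--measure field on $\Omega$ with nonpositive divergence.

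The second step is to run this doubling argument up to $\Graph(\gamma_1)$ and $\Graph(\gamma_2)$. When Kruzkov's localizing test function reaches $\gamma_i$, one invokes the boundary entropy inequalities \eqref{E_bd_adm_cond+} on $\gamma_1$ (admissible left boundary, value $w_1$) and \eqref{E_bd_adm_cond} on $\gamma_2$ (admissible right boundary, value $w_2$), which hold exactly for the Kruzkov-type pairs $(\eta^\pm_k,q^\pm_k)$ of \eqref{E_flux} in the relevant ranges of $k$; after the mollification and the collapse of the doubled variables, the frozen level $k$ becomes the one-sided trace of the \emph{other} solution on $\gamma_i$, so the lateral contributions appear with the two one-sided traces coupled \emph{independently}. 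Using Proposition \ref{P_traces} for the traces $\nu^\pm,(\nu')^\pm$ of the single solutions and the integration by parts, and integrating over $\Omega\cap\{t_1<t<t_2\}$ so that the horizontal slices produce $F(t_1)$ and $F(t_2)$, one obtains, with $g_i(s):=f(s)-\dot\gamma_i(t)\,s$,
\begin{multline*}
F(t_2)-F(t_1)\ \le\ -\int_{t_1}^{t_2}\big\langle \nu^-_{t,\gamma_2(t)}\times(\nu')^-_{t,\gamma_2(t)},\ \sign(u-v)\big(g_2(u)-g_2(v)\big)\big\rangle\,dt\\
+\ \int_{t_1}^{t_2}\big\langle \nu^+_{t,\gamma_1(t)}\times(\nu')^+_{t,\gamma_1(t)},\ \sign(u-v)\big(g_1(u)-g_1(v)\big)\big\rangle\,dt,
\end{multline*}
where $\nu^\mp,(\nu')^\mp$ are the one-sided traces of $\nu,\nu'$ on $\gamma_i$ given by Proposition \ref{P_traces} and $\nu^\mp\times(\nu')^\mp\in\mathcal P(\R^2)$ is their tensor product.

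The heart of the matter is then a boundary comparison lemma: if $(\gamma,w)$ is an admissible right boundary for both $\nu$ and $\nu'$, with left traces $\nu^-,(\nu')^-$, then for a.e.\ $t$, writing $g(s):=f(s)-\dot\gamma(t)\,s$,
\[
\big\langle \nu^-_t\times(\nu')^-_t,\ \sign(u-v)\big(g(u)-g(v)\big)\big\rangle\ \ge\ 0,
\]
while the corresponding quantity at an admissible \emph{left} boundary (for the right traces, via \eqref{E_bd_adm_cond+}) is $\le 0$; these are precisely the signs that make both integrands in the previous display nonpositive, whence $F(t_2)-F(t_1)\le0$ for a.e.\ $t_1<t_2$, i.e.\ $F$ has nonpositive distributional derivative on $(0,T)$. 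This lemma is the measure valued form of the Bardos--LeRoux--N\'ed\'elec boundary comparison. For Dirac traces $\nu^-=\delta_a$, $(\nu')^-=\delta_b$ with $a\le b$, condition \eqref{E_bd_adm_cond} for the common value $w$ reads $\sign(a-w)\big(g(a)-g(k)\big)\ge0$ for all $k$ between $a$ and $w$ (and likewise with $b$), from which $g(a)\le g(b)$ follows by discussing the position of $w$ (for $w\le a$ take $k=a$ in the condition for $\nu'$; for $b\le w$ take $k=b$ in the condition for $\nu$; for $a\le w\le b$ take $k=w$ in both and chain $g(a)\le g(w)\le g(b)$). For general traces one writes $|u-v|=\eta^+_v(u)+\eta^-_v(u)$ and $\sign(u-v)(g(u)-g(v))=\big(q^+_v(u)-\dot\gamma\,\eta^+_v(u)\big)+\big(q^-_v(u)-\dot\gamma\,\eta^-_v(u)\big)$ with $\eta^\pm_k,q^\pm_k$ as in \eqref{E_flux}, and splits the integration according to the position of $u$ and $v$ relative to $w$: on the pieces where the level lands on the correct side of $w$ one of the four inequalities in \eqref{E_bd_adm_cond} for $\nu^-$ or $(\nu')^-$ applies directly, and on the residual pieces where $w$ lies strictly between $u$ and $v$ one combines the inequalities at level $k=w$ of both solutions together with the elementary bounds $\int_{u>w}g\,d\nu^-\ge g(w)\,\nu^-(\{u>w\})$ and $\int_{v<w}g\,d(\nu')^-\le g(w)\,(\nu')^-(\{v<w\})$; all contributions are nonnegative (the atoms on $\{u=w\}\cup\{v=w\}$ being dealt with separately), which gives the lemma.

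The main obstacle, and the point where Szepessy's technical work really lies, is the rigorous implementation of the doubling \emph{at the Lipschitz lateral boundaries}: one must carry the mollification and the collapse of the doubled variables so that \eqref{E_bd_adm_cond}, \eqref{E_bd_adm_cond+} are used at a level which in the limit equals the one-sided trace of the \emph{other} solution, and so that in the resulting boundary terms the two traces remain coupled independently. This independence is essential: it would be tempting to quote only the interior inequality above and take the normal trace of the product field $(\mathcal E,\mathcal F)$ on $\gamma_i$, but that trace is merely \emph{some} probability on $\R^2$ with the correct marginals $\nu^\mp,(\nu')^\mp$ and need not be their tensor product, and for a generic coupling the comparison lemma can fail; so the doubling genuinely has to be performed inside $\Omega$. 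The second, more elementary, difficulty is the configuration in the comparison lemma in which the common boundary value $w$ lies strictly between the two traces, which neither solution's boundary condition alone controls, and which is the reason one has to use the boundary inequalities of both solutions at the level $k=w$.
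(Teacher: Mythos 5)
The paper does not actually prove this proposition: it cites \cite{MR2006606} for the case of constant boundary curves and simply states that ``the procedure is the same'' for moving Lipschitz boundaries. Your outline is precisely that procedure — DiPerna's doubling of variables for mv solutions in the interior, carried up to the lateral boundaries, combined with a Bardos--LeRoux--N\'ed\'elec-type comparison of the one-sided traces — so in substance you are reconstructing the argument the paper delegates to the literature rather than diverging from it. The parts you make explicit are correct: the boundary comparison lemma does hold in the measure valued form you state (splitting $\sign(u-v)(g(u)-g(v))$ into the two semi-Kruzkov contributions $q^\pm_v-\dot\gamma\,\eta^\pm_v$, using \eqref{E_bd_adm_cond} of one solution at levels $k=u$ or $k=v$ on the ``good'' regions, and the inequalities of \emph{both} solutions at the common level $k=w$ on the region where $w$ lies between $u$ and $v$), and you correctly identify the two genuine subtleties: that the Anzellotti normal trace of the product field $\langle\nu\times\nu',\cdot\rangle$ need not be the tensor product of the individual traces, so the doubling must be run up to $\Graph(\gamma_i)$ with the mollification parameters kept independent so that the limit boundary terms come out coupled as $\nu^\mp_t\times(\nu')^\mp_t$; and that the configuration with $w$ strictly between the traces is the one requiring both boundary inequalities. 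What remains only sketched in your write-up — the rigorous collapse of the doubled variables at the Lipschitz boundary — is exactly the technical content of the cited reference, which the paper itself also does not reproduce; so there is no gap relative to the paper, only a deferral to the same standard machinery.
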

In \cite{MR2006606} Proposition \ref{P_szep} is proven in the case that
the curves $\gamma_1$ and $\gamma_2$ are constant: being the procedure the same in the case of moving boundaries we do not provide a proof.

%
%

Now we consider the particular case where $\gamma_1,\gamma_2$ are $L$-Lipschitz, $\gamma_1(0)=\gamma_2(0)$, $\gamma_1<\gamma_2$ in $(0,T]$ and the boundary data are the 
constant $a$ on $\gamma_1$ and the constant $b$ on $\gamma_2$. We assume $a\le b$, being the opposite case analogue.
%
%
The solution can be expressed quite explicitly with a construction which generalizes the one for the classical Riemann problem.

\begin{lemma}\label{L_cov}
Let $\Omega$ be defined by \eqref{E_def_Omega}. For every $(\bar t,\bar x)\in \Omega$ consider the length minimization problem
\begin{equation}\label{E_cov}
\min_{\gamma\in \mathcal A_{\bar t,\bar x}}\int_0^{\bar t}\sqrt{1+\gamma'(t)^2}dt, \quad \mbox{where}
\quad \mathcal A_{\bar t, \bar x}=\Big\{\gamma\in \lip([0,\bar t]):\gamma_1\le \gamma\le \gamma_2, \gamma(\bar t)=\bar x\Big\}.
\end{equation}
For every $(\bar t,\bar x)\in \Omega$ the minimizing curve $\gamma^{\bar t,\bar x}$ in \eqref{E_cov} exists and is unique. 

Moreover the following properties hold:
\begin{enumerate}
\item for every $(\bar t,\bar x)\in \Omega$, the function $\gamma^{\bar t,\bar x}$ is $L$-Lipschitz;
\item for every $\bar t\in (0,T)$ and for every $t\in (0,\bar t)$, the map $\bar x\mapsto \gamma^{\bar t,\bar x}(t)$ is non decreasing;
\item  for every $(\bar t,\bar x)\in \Omega$ the function $\dot \gamma^{\bar t,\bar x}$ is constant on each connected component of 
\begin{equation*}
\big\{t\in(0,\bar t): \gamma_1(t)<\gamma^x(t)<\gamma_2(t)\big\};
\end{equation*}
\item the map $v:(\bar t,\bar x)\mapsto \dot \gamma^{\bar t,\bar x}(\bar t-)$ is locally Lipschitz and has bounded variation in $\Omega$.
Moreover, for every $\bar t\in (0,T)$, the function $\bar x\mapsto v(\bar t,\bar x)$ is strictly increasing;
\item for every $\bar t\in(0,T)$ of differentiability for $\gamma_1$ and $\gamma_2$,
\begin{equation*}
\lim_{\bar x\rightarrow \gamma_1(\bar t)^+} v(\bar t,\bar x)\le \dot\gamma_1(\bar t) \qquad \mbox{and} \qquad
\lim_{\bar x\rightarrow \gamma_2(\bar t)^-} v(\bar t,\bar x)\ge \dot\gamma_2(\bar t).
\end{equation*}
\end{enumerate}
\end{lemma}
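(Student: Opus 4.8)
The plan is to analyze the minimization problem \eqref{E_cov} as a classical problem in the calculus of variations for the length functional with two obstacles $\gamma_1 \le \gamma \le \gamma_2$ and one fixed endpoint. Existence follows from the direct method: the class $\mathcal A_{\bar t, \bar x}$ is nonempty (it contains, e.g., a curve that goes straight down from $(\bar t, \bar x)$ until it hits one of the obstacles and then follows it, using $\gamma_1(0) = \gamma_2(0)$), it is closed under uniform convergence, and a minimizing sequence is equi-$L$-Lipschitz because one can always replace a competitor by its ``truncation'' against the $L$-Lipschitz obstacles without increasing length; hence Ascoli--Arzelà applies and the length functional is lower semicontinuous. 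Uniqueness comes from strict convexity of $s \mapsto \sqrt{1+s^2}$: if two minimizers existed, their average would be an admissible competitor (the constraint set is convex) with strictly smaller length unless the two curves have the same derivative a.e., i.e. coincide. This simultaneously gives claim (1), since the truncation argument shows the minimizer is $L$-Lipschitz.

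For claim (3), the Euler--Lagrange analysis: on any open interval where $\gamma_1(t) < \gamma^{\bar t,\bar x}(t) < \gamma_2(t)$ the minimizer is unconstrained, so it is a stationary point of the length functional, i.e. an affine function — the slope is constant there. For claim (2), the monotonicity in $\bar x$, I would argue by the standard non-crossing principle for minimizers of strictly convex Lagrangians: if $\bar x_1 < \bar x_2$ but $\gamma^{\bar t, \bar x_1}(t_0) > \gamma^{\bar t, \bar x_2}(t_0)$ at some interior $t_0$, the two curves cross; replacing each by the $\min$ (resp. $\max$) of the pair on the relevant intervals produces two admissible competitors whose total length is no larger, with equality only if the curves were already ordered — contradicting uniqueness. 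One must check the swapped curves still lie between $\gamma_1$ and $\gamma_2$ and still hit the prescribed endpoints, which is immediate since both original curves do.

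Claim (4) is the substantive one, and I expect the Lipschitz/BV regularity of the terminal slope $v(\bar t,\bar x) = \dot\gamma^{\bar t,\bar x}(\bar t-)$ to be the main obstacle. The geometric picture is that $v$ is the slope of the last affine segment of the minimizer before time $\bar t$; by (3) that segment runs from $(\bar t, \bar x)$ back to the first time the minimizer last touched an obstacle (or to $t=0$). The idea is to express $v$ via a comparison/variational inequality: perturbing $\bar x$ slightly moves the foot of this last segment continuously, and the dependence is controlled because the obstacles are $L$-Lipschitz, which bounds how fast the contact point can move; this yields local Lipschitz continuity of $v$ in $\bar x$ for fixed $\bar t$, and the monotonicity (strict, because distinct terminal points with the same interior behavior would force the segments to cross, again contradicting (2) and uniqueness) then upgrades to BV in $(\bar t, \bar x)$ jointly via the standard fact that a monotone-in-$x$, Lipschitz-in-$x$ family with suitable $t$-dependence is BV. Finally, claim (5): at a point of differentiability of the obstacles, the one-sided limit of $v$ as $\bar x \to \gamma_1(\bar t)^+$ is the slope of a minimizer that hugs $\gamma_1$ near time $\bar t$; if this limiting slope exceeded $\dot\gamma_1(\bar t)$, the minimizer would cross below $\gamma_1$ just before $\bar t$, violating the constraint — so the inequality $\lim_{\bar x \to \gamma_1(\bar t)^+} v(\bar t, \bar x) \le \dot\gamma_1(\bar t)$ follows from the obstacle constraint and the affine structure from (3), and symmetrically for $\gamma_2$.
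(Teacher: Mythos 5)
Your treatment of existence, uniqueness and of points (1)--(3) is essentially the part the paper itself dismisses as standard (direct method, strict convexity of $s\mapsto\sqrt{1+s^2}$, the $\min/\max$ swap for the non-crossing principle, affine pieces where no constraint is active), and it is fine, although the ``truncation against the obstacles'' step for the $L$-Lipschitz bound is left vague. The genuine gap is in point (4), which is the substantive claim. First, your mechanism for the Lipschitz continuity of $v$ --- that the foot of the last affine segment moves continuously as $\bar x$ varies and that ``the obstacles are $L$-Lipschitz, which bounds how fast the contact point can move'' --- is not correct: the contact point can jump as $\bar x$ crosses a critical value (when the tangency of the taut string switches between two separated contact arcs of the same obstacle the foot jumps while the slope stays continuous), and nothing in the Lipschitz bound on $\gamma_1,\gamma_2$ controls the speed of the contact point. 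Second, your justification of strictness, ``distinct terminal points with the same interior behavior would force the segments to cross'', fails: two final segments with equal slope and distinct endpoints at time $\bar t$ are parallel and never cross, so no contradiction with (2) or uniqueness arises. A real argument is needed; for instance, past the last time $\tau$ at which $\gamma^{\bar t,\bar x_1}$ and $\gamma^{\bar t,\bar x_2}$ coincide, the lower minimizer can touch only $\gamma_1$ and is therefore concave on $(\tau,\bar t)$, the upper one can touch only $\gamma_2$ and is convex there, and comparing the one-sided derivatives at $\bar t$ with the chords from the common splitting point gives $v(\bar t,\bar x_1)<v(\bar t,\bar x_2)$. Third, the upgrade from ``Lipschitz in $\bar x$ for fixed $\bar t$'' to local Lipschitz continuity and BV in $\Omega$ via an unnamed ``standard fact'' with ``suitable $t$-dependence'' is not an argument, and note the lemma asserts joint local Lipschitz regularity in $(\bar t,\bar x)$, not only in $\bar x$.

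The idea you are missing, which is the one the paper uses, is the dynamic-programming consequence of uniqueness: $\gamma^{\bar t,\bar x}\llcorner_{[0,t]}=\gamma^{t,\gamma^{\bar t,\bar x}(t)}$ for every $t\in(0,\bar t)$. Hence $v$ is constant along the affine pieces of the minimizers, these pieces are mutually non-crossing, and each level set of $v$ is a union of segments with endpoints on $\partial\Omega$; through every point of a compact subset of $\Omega$ such a segment has length bounded below (its slope being bounded by $L$ thanks to (1)), and the elementary fact that slopes of non-crossing segments of length bounded from below vary in a Lipschitz way with the point through which they pass gives the local Lipschitz continuity of $v$ jointly in $(\bar t,\bar x)$ --- this is exactly the ``non-intersecting segments'' mechanism invoked in the introduction of the paper. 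The BV bound then follows from $|v|\le L$, the monotonicity of $v(\bar t,\cdot)$, and the constancy of $v$ along the segments, which converts variation in $t$ into variation in $x$. Your sketch of (5) has the right geometric idea, but as stated it also needs this structure (or a direct comparison with a competitor that follows $\gamma_1$ for a while), since one must rule out that the last affine segment degenerates too quickly as $\bar x\downarrow\gamma_1(\bar t)$ for the constraint-violation estimate to apply.
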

\begin{proof}
We just sketch the proof.

Existence, uniqueness and Lipschitz regularity are standard. 

Point (2) follows from uniqueness and Point (3) is trivial. Observe that uniqueness
implies that for every $(\bar t,\bar x)\in \Omega$ and $t\in (0,\bar t)$
\begin{equation*}
\gamma^{\bar t,\bar x}\llcorner [0,t]= \gamma^{t, \gamma^{\bar t,\bar x}(t)}.
\end{equation*}
In particular each level sets of $v$ is the union of segments with slope $v$ and endpoints in $\partial \Omega$. Therefore $v$ is locally Lipschitz.
The strict monotonicity with respect to $\bar x$ is a consequence of minimality and then it follows that $v$ has bounded variation. Point (5) is a
consequence of minimality too.
\end{proof}

\begin{figure}
\centering
\def\svgwidth{\columnwidth}
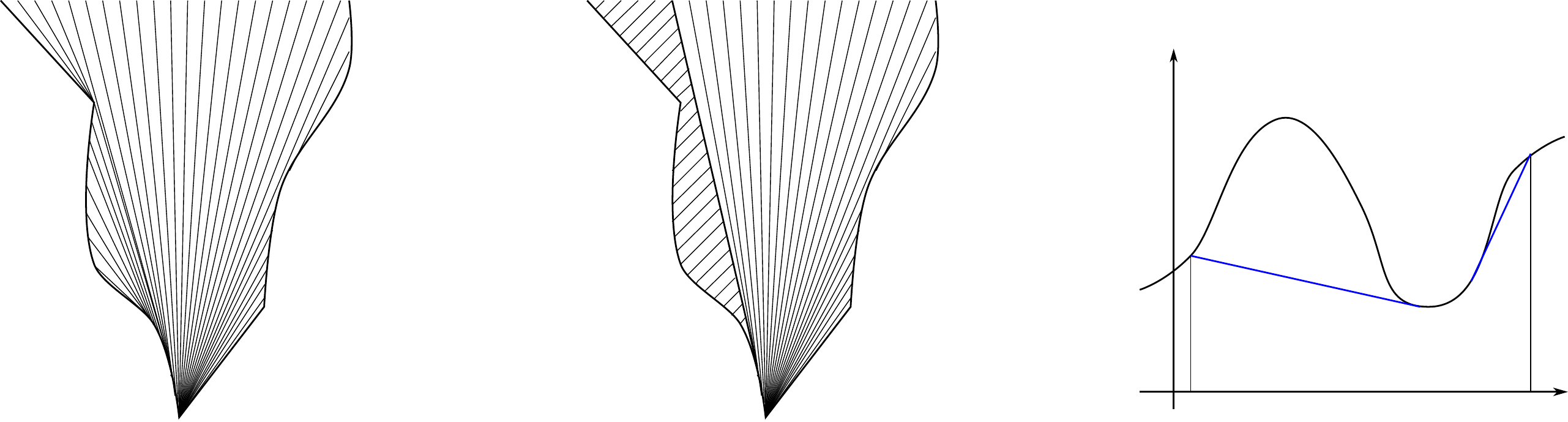
\caption{Riemann problem with boundaries: in the first figure there are the minimal curves, in the second the characteristics for the flux $f$ as in the third picture.}
\end{figure}

Denote by $\conv_{[a,b]} f:[a,b]\rightarrow \R$ the convex envelope of $f$ in $[a,b]$ and let $[\lambda^-,\lambda^+]$ the image of its derivative.
The function $(\conv_{[a,b]}f)'$ is non decreasing, we denote its pseudo-inverse by $g:[\lambda^-,\lambda^+]\rightarrow [a,b]$. 
For every $(\bar t,\bar x)\in \Omega$, define 
\begin{equation}\label{E_def_u}
u(\bar t,x)=
\begin{cases}
a &\mbox{if }v(\bar t,\bar x)\le \lambda^-, \\
g(v(\bar t,\bar x)) & \mbox{if }v(\bar t,\bar x)\in (\lambda^-,\lambda^+),\\
b & \mbox{if }v(\bar t,\bar x)\ge \lambda^+.
\end{cases}
\end{equation}
Observe that $g$ is strictly increasing (because $(\conv_{[a,b]}f)'$ is continuous) and $v(\bar t)$ is strictly increasing by the previous lemma, therefore 
$g\circ v(\bar t)$ is defined up to a countable set for $\mathcal L^1$-a.e. $\bar t$.
\begin{remark}
By the strict monotonicity of $v$ and the proof of Lemma \ref{L_cov}, each level set of $v$ is the union of at most countably many segments with 
velocity $v$ and endpoints in $\partial \Omega$. Therefore, by the strict monotonicity of $g$, the same holds for the level sets $\{u=c\}$ with $c\in (a,b)$.

\end{remark}

In the next proposition we show that $u$ is the unique solution of the boundary problem and we list some of its properties that
will be useful in the following sections.

\begin{proposition}\label{P_V_Riem}
The function $u$ defined by \eqref{E_def_u} is the unique solution of the boundary value problem in $\Omega$ in the class of mv entropy solutions.

Moreover, there exist two $L-$Lipschitz curves $\gamma^-,\gamma^+$ such that 
\begin{enumerate}[(1)]
\item for every $t\in [0,T]$, $\gamma_1(t)\le \gamma^-(t)\le \gamma^+(t) \le \gamma_2(t)$;
\item $u(t,x)=a$ for every $(t,x)$ in
\begin{equation*}
 \Omega^-:=\big\{(t,x)\in \Omega: \gamma_1(t)<x<\gamma^-(t)\big\}
\end{equation*}
and $u(t,x)=b$ for every $(t,x)$ in
\begin{equation*}
 \Omega^+:=\big\{(t,x)\in \Omega: \gamma^+(t)<x<\gamma_2(t)\big\};
\end{equation*}
\item if $\gamma_1(t)<\gamma^-(t)<\gamma_2(t)$ then $\dot\gamma^-(t)=\lambda^-$ and similarly  
	  if $\gamma_1(t)<\gamma^+(t)<\gamma_2(t)$ then $\dot\gamma^+(t)=\lambda^+$;
\item $u$ and $f'\circ u$ are strictly increasing in 
\begin{equation*}
\Omega^m:=\big\{(t,x)\in \Omega: \gamma^-(t)<x<\gamma^+(t)\big\}.
\end{equation*}
 Moreover $f'\circ u$ is locally 		Lipschitz in $\Omega^m$;
\item for $\mathcal L^1$-a.e. $t\in (0,T)$ such that $\gamma^-(t)=\gamma_1(t)$, it holds $\dot\gamma^-(t)\ge \lambda^-$;
	similarly for $\mathcal L^1$-a.e. $t\in (0,T)$ such that $\gamma^+(t)=\gamma_2(t)$, it holds $\dot\gamma^+(t)\le \lambda^+$.
\end{enumerate}
\end{proposition}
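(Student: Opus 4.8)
The plan is to reduce everything to one fact: that the explicit function $u$ of \eqref{E_def_u} is an mv entropy solution of \eqref{E_cl} in $\Omega$ with boundary value $a$ on $\gamma_1$ and $b$ on $\gamma_2$. Granting this, existence is immediate and uniqueness is free: if $\nu,\nu'$ are two such solutions, Proposition~\ref{P_szep} gives that $F(t)=\int_{\gamma_1(t)}^{\gamma_2(t)}\langle\nu_{t,x}\times\nu'_{t,x},|w-w'|\rangle\,dx$ has nonpositive distributional derivative on $(0,T)$, hence agrees a.e.\ with a nonincreasing function; since $F\ge 0$, the measures are uniformly bounded and $\gamma_1(0)=\gamma_2(0)$, one has $F(t)\le 2M\,|\gamma_2(t)-\gamma_1(t)|\to 0$ as $t\to 0^+$, so $F\equiv 0$ and $\nu=\nu'$ for $\mathcal L^2$-a.e.\ $(t,x)$. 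Thus the proposition splits into (i) verifying that $\delta_u$ solves the boundary problem, and (ii) the geometric properties (1)--(5).

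I would first fix the structure. Since $v(\bar t,\cdot)$ is strictly increasing (Lemma~\ref{L_cov}(4)), put $\gamma^-(t)=\sup\{x\in(\gamma_1(t),\gamma_2(t)):v(t,x)\le\lambda^-\}$ and $\gamma^+(t)=\inf\{x\in(\gamma_1(t),\gamma_2(t)):v(t,x)\ge\lambda^+\}$, with $\gamma^-(t)=\gamma_1(t)$ (resp.\ $\gamma^+(t)=\gamma_2(t)$) when the set is empty. The nesting identity $\gamma^{\bar t,\bar x}\llcorner [0,t]=\gamma^{t,\gamma^{\bar t,\bar x}(t)}$ from the proof of Lemma~\ref{L_cov} shows that $\{v\le\lambda^-\}$ and $\{v\ge\lambda^+\}$ are respectively downward and upward closed along the minimizing curves, so their interior boundaries $\gamma^\pm$ are themselves minimizing curves, hence $L$-Lipschitz by Lemma~\ref{L_cov}(1); this gives (1) and partitions $\Omega$ into $\Omega^-=\{u=a\}$, $\Omega^m$, $\Omega^+=\{u=b\}$, i.e.\ (2). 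Property (3) is Lemma~\ref{L_cov}(3) applied to $\gamma^\pm$ (an interior arc is a segment on which $v\equiv\lambda^\pm$, so $\dot\gamma^\pm=\lambda^\pm$), and (5) is a restatement of Lemma~\ref{L_cov}(5). For (4): in $\Omega^m$ one has $v\in(\lambda^-,\lambda^+)$ and $u=g\circ v$; since $(\conv_{[a,b]}f)'$ is continuous, $g(v)$ is a point where $\conv_{[a,b]}f$ touches $f$, so $f'\circ g=\mathrm{id}$ there, whence $f'\circ u=v$ (where unambiguously defined) is locally Lipschitz and strictly increasing in $x$ on each time slice by Lemma~\ref{L_cov}(4), and $u=g\circ v$ is strictly increasing because $g$ is.

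There remains (i). In $\Omega^m$ the minimizing segment through a point has slope $v=f'(u)$, and these lines fan out without crossing because $v$ is strictly increasing in $x$; hence the locally Lipschitz $u$ is a classical --- so entropic --- solution there, and \eqref{E_mv_sol} is trivial in $\Omega^\pm$. Across an interior arc of $\gamma^-$ the function $u$ jumps from $a$ to $\lim_{x\downarrow\gamma^-(t)}g(v(t,x))=g(\lambda^-)$ at speed $\dot\gamma^-=\lambda^-$; since $\conv_{[a,b]}f$ is affine with slope $\lambda^-$ on $[a,g(\lambda^-)]$ and meets $f$ at both endpoints, this is the Rankine--Hugoniot speed, and $\conv_{[a,b]}f\le f$ gives the Oleinik condition for the shock; $\gamma^+$ is symmetric. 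For the boundary condition on $\gamma_1$ (value $a$): if $\gamma^-(t)>\gamma_1(t)$ the right trace of $u$ equals $a$ and \eqref{E_bd_adm_cond+} holds with equality; otherwise the trace is the clamped value $\tau=g(v_1)\in[a,b]$ with $v_1=\lim_{x\downarrow\gamma_1(t)}v(t,x)\le\dot\gamma_1(t)$ by Lemma~\ref{L_cov}(5), and since $\tau\ge a$ is a touching point with $\conv_{[a,b]}f\le f$ one gets $\frac{f(\tau)-f(k)}{\tau-k}\le f'(\tau)=v_1\le\dot\gamma_1(t)$ for all $k\in[a,\tau)$, which is exactly \eqref{E_bd_adm_cond+} for the entropies $\eta_k^+$; the entropies $\eta_k^-$ with $k\le a$ are trivially satisfied since $\tau\ge a\ge k$. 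The curve $\gamma_2$ is treated symmetrically via the second inequality in Lemma~\ref{L_cov}(5), giving \eqref{E_bd_adm_cond}.

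I expect the genuine difficulty to be concentrated in this last point: determining the exact one-sided traces of $u$ along $\gamma_1,\gamma_2$ in every configuration (constant region present or absent, trace interior to $[a,b]$ or at an endpoint) and checking the Bardos--LeRoux--N\'ed\'elec inequalities \eqref{E_bd_adm_cond}--\eqref{E_bd_adm_cond+} for all $k$ simultaneously, together with the care needed to see that $\gamma^\pm$ are $L$-Lipschitz up to the times where they leave or rejoin $\gamma_1,\gamma_2$. The interior analysis is, by contrast, the classical Riemann fan rewritten along curved characteristics, and should follow routinely from Lemma~\ref{L_cov}.
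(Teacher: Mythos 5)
Your overall route is the same as the paper's: uniqueness from Proposition \ref{P_szep} (your extra remark that $F(t)\le 2M|\gamma_2(t)-\gamma_1(t)|\to 0$ just makes explicit what the paper leaves implicit), existence by checking that the explicit $u$ of \eqref{E_def_u} is an entropy solution with $(\gamma_1,a)$, $(\gamma_2,b)$ admissible boundaries via Lemma \ref{L_cov}(5) and the fact that $u$ takes values in the contact set $\{f=\conv_{[a,b]}f\}$, and properties (1)--(5) read off from the level sets of $v$ and the corresponding items of Lemma \ref{L_cov}. Your boundary-admissibility computation is the same chord inequality the paper uses, and your Lipschitz argument for $\gamma^\pm$ (minimizers via the nesting identity) is a harmless variant of the paper's observation that $\gamma^\pm$ are straight lines of bounded slope inside $\{\gamma_1<\gamma^\pm<\gamma_2\}$.

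The one step that is wrong as written is the interior verification: ``the locally Lipschitz $u$ is a classical --- so entropic --- solution'' in $\Omega^m$. In general $u=g\circ v$ is \emph{not} continuous in $\Omega^m$: whenever $(\conv_{[a,b]}f)'$ is constant equal to some $\sigma\in(\lambda^-,\lambda^+)$ on a nontrivial interval $[u_1,u_2]$ (an affine piece of the envelope with $f>\conv_{[a,b]}f$ in its interior), the pseudo-inverse $g$ jumps, and $u$ jumps from $u_1$ to $u_2$ across the segment $\{v=\sigma\}$. This is precisely why item (4) of the proposition asserts local Lipschitz regularity only for $f'\circ u$, not for $u$; note also that this conflicts with your own (correct) treatment of (4) two sentences earlier. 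The repair is contained in your argument for $\gamma^\pm$: each such interior segment is a discontinuity travelling with speed $\sigma$ equal to the slope of the affine piece of $\conv_{[a,b]}f$, which touches $f$ at $u_1,u_2$ and lies below it in between, so Rankine--Hugoniot and the Oleinik condition hold exactly as in your check across interior arcs of $\gamma^-$ and $\gamma^+$. With that substitution the interior analysis becomes the classical non-convex Riemann fan, which is how the paper itself dismisses it (``the analysis is the same as for the classical Riemann problem''), and the rest of your proof stands.
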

\begin{proof}
Uniqueness is a corollary of Proposition \ref{P_szep}, therefore
we need to verify that $u$ is an entropy solution in $\Omega$, $(\gamma_1,a)$ is an admissible left boundary for $u$ and 
$(\gamma_2,b)$ is an admissible right boundary for $u$. 
In the interior the analysis is the same as for the classical Riemann problem. 
Let us verify that $(\gamma_1,a)$ is an admissible left boundary for $u$, namely conditions \eqref{E_bd_adm_cond+}.
Observe that the second condition is trivial, being $u\in [a,b]$.
Denote by $u^+(t)\in \R$ the trace of $u$ in $(t,\gamma_1(t))$ for $t\in (0,T)$ and let $\bar t$ be a differentiability point of $\gamma_1$.  By 
Point (5) in Lemma \ref{L_cov} and the definition of $u$ it follows that one of the following holds:
\begin{equation*}
u^+(\bar t)=a \qquad \mbox{or}\qquad f'(u^+(\bar t))\le \dot\gamma_1(\bar t).
\end{equation*}
In the first case it is clear that \eqref{E_bd_adm_cond} is satisfied, otherwise observe that $u$ and in particular $u^+$ takes values only in the set 
$\{u:f(u)=\conv_{[a,b]} f(u)\}$. 
Therefore for every $k\in [a,u^+(\bar t)]$, it holds
\begin{equation*}
\dot\gamma_1(\bar t)\ge f'(u^+(\bar t)) = \big(\conv_{[a,b]} f\big)'(u^+(\bar t)) \ge \big(\conv_{[a,b]} f\big)'(k).
\end{equation*}
In particular \eqref{E_bd_adm_cond} is
satisfied.

In order to prove the second part of the statement, consider 
\begin{equation*}
\begin{split}
\gamma^-(t)&=\inf\big\{ \{x \in (\gamma_1(t),\gamma_2(t)): u(t,x)>a\}, \gamma_2(t)\big\},  \\
\gamma^+(t)&=\sup\big\{ \{x \in (\gamma_1(t),\gamma_2(t)): u(t,x)<b\}, \gamma_1(t)\big\}.
\end{split}
\end{equation*}
These curves are Lipschitz because they are straight lines in the open set $\{\gamma_1<\gamma^\pm<\gamma_2\}$ with bounded slope by \eqref{E_def_u}. \\
Requirements (1) and (2) are satisfied by definition of $\gamma^-$ and $\gamma^+$. Points (3), (4) and (5) follow from the respective points in
Lemma \ref{L_cov}. 
\end{proof}

\section{Lagrangian representation and complete family of boundaries}
The following result is taken from \cite{BiaMar1}.
\begin{proposition}
Let $u$ be the entropy solution of \eqref{E_cl} with initial datum $u_0\in\BV(\R)$. Then there exists a pair of functions $(\X,\U)$ such that:
\begin{enumerate}
\item $\X:[0,+\infty)\times \R\rightarrow \R$ is continuous, $t\mapsto\X(t,y)$ is Lipschitz for every $y$ and $y\mapsto \X(t,y)$ is non-decreasing for every $t$;
\item $\U:\R\rightarrow\R$ is Lipschitz;
\item the following representation formula holds: for every $t\geq 0$ and $\varphi\in C^1_c(\R)$
\begin{equation}\label{E_lagr_BV}
\int_\R u(t,x)\varphi'(x)dx = \int_\R \U(y)d D_y(\varphi\circ\X(t))(y);
\end{equation}
\item the characteristic equation holds: for every $y$, for almost every $t$
\begin{equation*}
D_t \X(t,y)=\lambda (t,\X(t,y)),
\end{equation*}
where
\begin{equation*}
\lambda(t,x)=
\begin{cases}
f'(u(t,x)) & \text{if }u(t) \text{ is continuous at }x, \\
\displaystyle \frac{f(u(t,x+))-f(u(t,x-))}{u(t,x+)-u(t,x-)} & \text{if }u(t)\text{ has a jump at }x.
\end{cases}
\end{equation*}
\end{enumerate}
\end{proposition}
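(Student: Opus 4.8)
The statement is quoted from \cite{BiaMar1}, so I only outline the argument I would follow.

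\emph{Step 1: construction for wavefront-tracking solutions.} The plan is to build $(\X,\U)$ first for approximations and then pass to the limit. I would fix piecewise constant data $u_{0,n}\to u_0$ in $L^1_\loc$ with $\TV(u_{0,n})\to\TV(u_0)$ and let $u_n$ be the corresponding wavefront-tracking solution: piecewise constant, finitely many polygonal fronts, finitely many interaction times. For such a $u_n$ I would define explicitly a monotone flow of polygonal characteristics: in a region where $u_n\equiv c$ a characteristic is a segment of slope $f'(c)$, and when it meets a front with states $u^{\mp}$ and Rankine--Hugoniot speed $\sigma$ it is absorbed by the front exactly when $f'(u^-)\ge\sigma\ge f'(u^+)$ and crosses it otherwise; the entropy inequalities of the Riemann solver make two such characteristics never cross, so the flow is nondecreasing in the label. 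I would parametrise the labels by $y\in\R$ via a nondecreasing surjection $\X_n(0,\cdot)\colon\R\to\R$ which is affine on the flat parts of $u_{0,n}$ and collapses each jump point $x_i$ to an interval of length $\ell_i:=|u_{0,n}(x_i+)-u_{0,n}(x_i-)|/L$, and set $\U_n$ to interpolate linearly the two one-sided limits of $u_{0,n}$ on each such interval and $\U_n=u_{0,n}\circ\X_n(0,\cdot)$ elsewhere. Then $\U_n$ is constant along labels, hence time-independent, $\|\U_n\|_{\lip}\le L$, and $\sum_i\ell_i\le\TV(u_{0,n})/L$; defining $\X_n(t,y)$ as the time-$t$ position of the characteristic with label $y$, the curve $t\mapsto\X_n(t,y)$ is $\|f'\|_\infty$-Lipschitz and $y\mapsto\X_n(t,y)$ is nondecreasing and surjective.

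\emph{Step 2: representation formula and passage to the limit.} For $u_n$ I would check that $\X_n(t,\cdot)$ is a nondecreasing continuous surjection (a jump would be a strip carrying no characteristic, excluded by the flow), that $D_y\X_n(t,\cdot)$ charges no fibre $\{\X_n(t,\cdot)=x\}$ over a front, and that $\U_n(y)=u_n(t,\X_n(t,y))$ at every continuity point of $u_n(t,\cdot)$ -- the value is transported along characteristics, immediate between interaction times. The chain rule $D_y(\varphi\circ\X_n(t))=\varphi'(\X_n(t,\cdot))\,D_y\X_n(t,\cdot)$ and the area formula $\X_n(t,\cdot)_\#\big(D_y\X_n(t,\cdot)\big)=\mathcal L^1$ then give \eqref{E_lagr_BV} for $u_n$. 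Next I would use the uniform bounds ($\|\U_n\|_{\lip}\le L$ with $\U_n$ constant off a fixed compact, $t\mapsto\X_n(t,y)$ equi-Lipschitz, $y\mapsto\X_n(t,y)$ monotone with $|\X_n(t,y)-\X_n(0,y)|\le\|f'\|_\infty t$) together with a Helly--diagonal argument to extract a subsequence with $\U_n\to\U$ locally uniformly and $\X_n\to\X$ pointwise, $\X$ nondecreasing in $y$, Lipschitz in $t$, and continuous; standard wavefront estimates give $u_n(t,\cdot)\to u(t,\cdot)$ in $L^1_\loc$ for every $t$. Writing \eqref{E_lagr_BV} as $\int u_n(t,x)\varphi'(x)\,dx=-\int\varphi(\X_n(t,y))\,dD_y\U_n(y)$ and letting $n\to\infty$ (using $D_y\U_n\rightharpoonup D_y\U$ with mass uniformly bounded on compacts and $\varphi\circ\X_n(t,\cdot)\to\varphi\circ\X(t,\cdot)$ locally uniformly) yields \eqref{E_lagr_BV}, hence properties (1)--(3).

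\emph{Step 3: the characteristic equation, and the main obstacle.} The hard part is property (4). Passing $D_t\X_n(t,y)=\lambda_n(t,\X_n(t,y))$ to the limit is not automatic: $\lambda_n=f'(u_n)$ (or the local Rankine--Hugoniot speed) converges only weakly in $L^1$, while $\X_n(t,y)$ is a moving point, so one cannot simply evaluate. My plan is to exploit that $u\in\BV$, hence has one-sided traces at every point of every Lipschitz curve, and to first upgrade the transport property to the limit: $\U(y)=u(t,\X(t,y))$ whenever $u(t,\cdot)$ is approximately continuous at $\X(t,y)$ (this survives the limit from the $L^1_\loc$-convergence of $u_n(t,\cdot)$ and the uniform convergence of $\X_n,\U_n$). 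Then I would show $t\mapsto\X(t,y)$ is a generalised characteristic in Dafermos' sense: for a.e. $t$, either $u(t,\cdot)$ is approximately continuous at $\X(t,y)$ and a blow-up plus the transport property give $D_t\X(t,y)=f'(u(t,\X(t,y)))$, or $\X(t,\cdot)$ has a nontrivial fibre over $\X(t,y)$, $\X(t,y)$ is a jump point of $u(t,\cdot)$, and integrating $u_t+f(u)_x=0$ over the region swept by the fibre (using its two traces) forces $D_t\X(t,y)$ to equal the Rankine--Hugoniot speed. This is precisely where the $\BV$ regularity of $u$ is used essentially, and it completes the proof.
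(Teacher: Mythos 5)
The paper itself does not prove this proposition: it is quoted from \cite{BiaMar1}, and the remark immediately following it describes precisely the mechanism you propose (a uniformly convergent, Lipschitz-reparametrized $\U^n$ paired with weak-* convergence of the measures $D_y(\varphi\circ\X^n(t))$), while Lemma \ref{L_wft} and Proposition \ref{P_stability} are the paper's own analogues of your Steps 1--2 in the $L^\infty$ setting. Your outline therefore follows essentially the same wavefront-tracking-plus-compactness route as the cited proof; the points you assert rather than argue (continuity in $y$ of the limit flow $\X$, and the a.e.\ dichotomy giving the characteristic equation (4)) are exactly the technical steps settled in \cite{BiaMar1}, and the treatment you sketch for them is the standard one.
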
 

We say that $(\X,\U)$ is a \emph{Lagrangian representation} for $u$. 

Some comments are in order:
\begin{enumerate}
\item The equation \eqref{E_lagr_BV} is inspired by the following observation: let $v=u_x$ and differentiate formally \eqref{E_cl} with respect to $x$. We get
\begin{equation*}
v_t+(f'(u) v)_x=0,
\end{equation*}
which is the continuity equation with vector field $f'(u)$. The solution to that equation can be written in the following way:
\begin{equation*}
v(t) = \X(t)_\sharp v_0, \qquad \mbox{i.e.} \qquad D_x u (t) = \X(t)_\sharp (D_x u_0),
\end{equation*}
where $\X$ is the flow of the vector field $f'(u)$. This is encoded in the characteristic equation.

\item Consider the equation \eqref{E_cl} in quasilinear form: 
\begin{equation*}
u_t+f'(u)u_x=0.
\end{equation*}
We expect that $u$ is constant along characteristics of $f'(u)$, so with our notation
\begin{equation*}
u(t,x)=\U(\X(t)^{-1}(x)).
\end{equation*}
Observe that, since $\X(t)$ is continuous and non decreasing, this defines $u(t)$ on the whole $\R$ except the countable set of points
\begin{equation*}
J_t=\big\{x: |\X(t)^{-1}(x)|>1\big\}.
\end{equation*}
The set $J\subset \R^+\times\R$ such that $J\cap \{t=\bar t\}=J_{\bar t}$ is the set of discontinuity points of the solution. The regularity of $\X$
implies that it is contained in the graphs of countably many Lipschitz curves parametrized by $t$. See also below in Lemma \ref{L_rect}.
\end{enumerate}

\begin{remark}
In \cite{BiaMar1} it is shown how to pass to the limit the representation formula \eqref{E_lagr_BV} in the case when the initial datum is continuous, 
or more generally has only countably many jump-type discontinuities. Roughly speaking this follows from the fact that we can choose 
$\mathtt u^n\rightarrow \mathtt u$ uniformly and  by compactness $D_y\varphi 	\circ \X^n(t) \rightharpoonup D_y\varphi \circ \X(t)$ in the sense of 
measures. 

A similar analysis seems to be definitely more difficult for general $u_0\in L^\infty$. Indeed in this setting we can expect to represent the solution with 
$\mathtt u$ merely in $L^\infty$ and at most $\mathtt u^n\rightarrow \mathtt u$ strongly in $L^1$ for some good parametrization.
However the $\mathcal L^1$-a.e. convergence of $\U^n$ to $\U$ is not sufficient to show that $\mathtt u^n D_y\X^n(t)\rightarrow \mathtt u D_y\X(t)$ in the sense of distributions.
\end{remark}

The previous remark motivates the introduction of a more robust interpretation of Lagrangian representation. In Section \ref{S_Lagr} we will see that
 it is actually possible to recover the original Lagrangian representation from the structure of the solution.

To deal with wavefront tracking approximations, in the following definition we will consider fluxes $f$ which are $C^1$ outside finitely many points and such that
the left and right limits of $f'$ exist everywhere.
\begin{definition}\label{D_bd_fam}
Let $\nu$ be a mv entropy solution of \eqref{E_cl}. A \emph{complete family of boundaries} is a couple $(\mathcal K,T)$ where
\begin{enumerate}[(a)]
\item $\mathcal K$ is a closed subset of $\lip([0,+\infty),\R)\times \R$,
\item $T:\mathcal K\rightarrow \R$ is an upper semicontinuous function,
\end{enumerate}
and the following properties hold:
\begin{enumerate}
\item {\it monotonicity}:  $\forall (\gamma_1,w_1), (\gamma_2,w_2)\in\mathcal K$, 
\begin{equation*}
\exists \bar t: \gamma_1(\bar t)<\gamma_2(\bar t) \qquad \Longrightarrow\qquad  \forall t\geq 0, \, \gamma_1(t)\leq \gamma_2(t).
\end{equation*}
In particular there exists a total order on $\mathcal K_\gamma=\{\gamma: \exists w, (\gamma,w)\in \mathcal K\}$:
\begin{equation*}
\gamma_1 \leq \gamma_2 \qquad \Longleftrightarrow \qquad  \forall t\geq 0, \, \gamma_1(t)\leq \gamma_2(t);
\end{equation*}
\item {\it entropy admissibility}: every $(\gamma,w)\in\mathcal K$ is an admissible boundary for the mv solution $\nu$ for $t\leq T(w,\gamma)$.
Set 
\begin{equation*}
K=\Big\{(t,x,w): \exists (\gamma,w)\in \K \mbox{ such that }\gamma(t)=x, T(\gamma,w)\ge t\Big\}
\end{equation*}
and the section 
\begin{equation*}
K(t,x)=\{w: (t,x,w)\in K\};
\end{equation*}
\item {\it completeness}: 
\begin{equation*}
 \supp \left(\mathcal L^2\otimes \nu_{t,x}\right) \subset K;
\end{equation*}
\item {\it connectedness}: for all $t\geq 0$ and $\gamma_1\leq \gamma_2$, the set
\begin{equation*}
\Big\{(\gamma(t),w):(\gamma,w)\in \mathcal K, \gamma_1\leq \gamma\leq \gamma_2, t\leq T(w,\gamma)\Big\}
\end{equation*}
is connected.
\item {\it consistency with the PDE}: for every $r>0$
\begin{equation}\label{E_relation}
V_{\bar t,\bar x}(r):=\big\{\dot\gamma(t):(t,\gamma(t))\in B_{\bar t, \bar x}(r)\big\}\subset \bigcup_{w\in U_{\bar t,\bar x}(r)} \left(D^+f(w)\cup D^-f(w)\right),
\end{equation}
where $D^\pm f(w)$ denotes the super/subdifferential of $f$ at $w$ and
\begin{equation*}
U_{\bar t,\bar x}(r)=\Big\{w: \exists t,\gamma\ \Big( (\gamma,w)\in \mathcal K, (t,\gamma(t))\in \bar B_{\bar t, \bar x}(r) \text{ and }t\leq T(w,\gamma)\Big)\Big\}.
\end{equation*}
\end{enumerate}
\end{definition}
For a $C^1$ flux $f$ condition (5) reduces to 
\begin{equation*}
\big\{\dot\gamma(t):(t,\gamma(t))\in B_{\bar t, \bar x}(r)\big\}\subset \big\{f'(w):w \in U_{\bar t,\bar x}(r)\big\}.
\end{equation*}
\begin{remark}
The completeness property and the fact that $K$ is closed imply that for every $(t,x)\in \R^{+} \times \R$ the section $K(t,x)\ne \emptyset$.
\end{remark}
\begin{remark}
The monotonicity and the covering properties imply that the curve in $\K_\gamma$ can be parametrized by $\R$: more precisely there exists a monotone invertible map $p:\R\rightarrow \K_\gamma$. We will denote $p(y)$ by $\gamma_y$.
\end{remark}

\begin{proposition}\label{P_stability}
Let $\nu^n$ be mv entropy solutions of \eqref{E_cl} and let $(\K^n,T^n)$ be a complete family of boundaries for $\nu^n$. Assume that 
\begin{enumerate}
\item $f^n\rightarrow f$ uniformly and $\graph(D^+f^n) \cup \graph(D^-f^n) \rightarrow \graph(f')$ in the sense of Kuratowski,
\item $\nu^n \rightarrow \nu$ in the sense of Young measures,
\item $\K^n\rightarrow \K$ in the sense of Kuratowski,
\end{enumerate}
and set
\begin{equation*}
T(\gamma,w)= \inf_{U\in \mathcal U(\gamma,w)}\limsup_{n\rightarrow +\infty} \sup_{(\gamma',w')\in U}T^n(\gamma',w') = - \Gamma \mbox{-}\liminf_{n\rightarrow +\infty} (-T^n (\gamma,w)).
\end{equation*}
Then $(\K,T)$ is a complete family of boundaries for $\nu$.
\end{proposition}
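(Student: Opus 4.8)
The plan is to verify, one at a time, the five conditions of Definition~\ref{D_bd_fam} for the pair $(\K,T)$, pushing each property from the approximations $(\K^n,T^n)$ to the limit. Two preliminaries set the stage. First, condition~(5) applied to each $(\K^n,T^n)$ forces every curve in $\K^n_\gamma$ to be $L$-Lipschitz with $L=\sup_n\|(f^n)'\|_\infty<\infty$ (the $f^n$ are uniformly Lipschitz and the $\nu^n$ uniformly bounded); hence, by compactness (Ascoli), the Kuratowski limit $\K$ is again contained in $\{L\text{-Lipschitz curves}\}\times\R$, and it is closed by the definition of Kuratowski limit. Second, since $-T=\Gamma\text{-}\liminf_n(-T^n)$ and $\Gamma$-liminf's are lower semicontinuous, $T$ is automatically upper semicontinuous; that $T$ is real valued follows from a uniform bound on the $T^n$ over the relevant compacta.

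Monotonicity~(1) passes to the limit directly: if $(\gamma_i,w_i)\in\K$, $i=1,2$, with $\gamma_1(\bar t)<\gamma_2(\bar t)$, pick $(\gamma_i^n,w_i^n)\in\K^n\to(\gamma_i,w_i)$; for large $n$ one has $\gamma_1^n(\bar t)<\gamma_2^n(\bar t)$, so the monotonicity of $\K^n$ gives $\gamma_1^n\le\gamma_2^n$ on $[0,\infty)$, and $n\to\infty$ yields $\gamma_1\le\gamma_2$. For entropy admissibility~(2), fix $(\gamma,w)\in\K$ and $t_0<T(\gamma,w)$. By the definition of the relaxed limsup, for every neighbourhood $U$ of $(\gamma,w)$ there are infinitely many $n$ carrying a point $(\gamma',w')\in U\cap\K^n$ with $T^n(\gamma',w')>t_0$; a diagonal argument over a countable neighbourhood basis yields a subsequence $n_k$ and $(\gamma_{n_k},w_{n_k})\in\K^{n_k}$ with $(\gamma_{n_k},w_{n_k})\to(\gamma,w)$ and $T^{n_k}(\gamma_{n_k},w_{n_k})>t_0$. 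Thus each $(\gamma_{n_k},w_{n_k})$ is an admissible boundary for $\nu^{n_k}$ on $[0,t_0]$, and arguing as in the proof of Proposition~\ref{P_stab} but with test functions supported in $\{t<t_0\}$ (so that only the admissibility on $[0,t_0]$ is needed), together with $f^n\to f$ uniformly and $\nu^n\to\nu$ in the sense of Young measures, shows that $(\gamma\llcorner_{[0,t_0]},w)$ is an admissible boundary for $\nu$. Since $t_0<T(\gamma,w)$ was arbitrary and admissibility is an $\mathcal L^1$-a.e.\ condition in $t$, $(\gamma,w)$ is admissible for $t\le T(\gamma,w)$.

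For completeness~(3), take $(t_0,x_0,w_0)\in\supp(\mathcal L^2\otimes\nu_{t,x})$; testing the convergence $\langle\nu^n,g\rangle\rightharpoonup\langle\nu,g\rangle$ against products of small nonnegative bumps in $(t,x)$ and in $w$ forces, for $n$ large, a point of $\supp(\mathcal L^2\otimes\nu^n_{t,x})\subset K^n$ near $(t_0,x_0,w_0)$; shrinking the bumps and diagonalising gives $(t_{n_k},x_{n_k},w_{n_k})\to(t_0,x_0,w_0)$ with $(\gamma_{n_k},w_{n_k})\in\K^{n_k}$, $\gamma_{n_k}(t_{n_k})=x_{n_k}$ and $T^{n_k}(\gamma_{n_k},w_{n_k})\ge t_{n_k}$. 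The uniform Lipschitz bound and Ascoli let us assume $\gamma_{n_k}\to\gamma$ locally uniformly, so $(\gamma,w_0)\in\limsup_n\K^n=\K$ and $\gamma(t_0)=x_0$; for any neighbourhood $U$ of $(\gamma,w_0)$ one has $\limsup_n\sup_{U\cap\K^n}T^n\ge\limsup_k T^{n_k}(\gamma_{n_k},w_{n_k})\ge t_0$, hence $T(\gamma,w_0)\ge t_0$ and $(t_0,x_0,w_0)\in K$. For connectedness~(4), fix $t\ge0$ and $\gamma_1\le\gamma_2$ in $\K_\gamma$, and consider the sets $S^n$ from condition~(4) for $(\K^n,T^n)$, with the bounding curves replaced by approximating elements of $\K^n_\gamma$ and the threshold $T^n$ relaxed to a level $t'\le t$; these are connected, live in a fixed compact subset of $\R^2$, and (along subsequences) Kuratowski-converge to connected sets, because the Kuratowski limit of connected subsets of a compact metric space is connected. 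Using the relaxed-limsup definition of $T$ to match the thresholds $\{t\le T\}$ and $\{t'\le T^n\}$ up to a harmless perturbation of $t$, one identifies the set in~(4) for $(\K,T)$ with the closure of an increasing family of such connected limit sets sharing common points, hence it is connected.

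Finally, consistency with the PDE~(5): fix $(\bar t,\bar x)$, $r>0$ and $\lambda=\dot\gamma(t^*)\in V_{\bar t,\bar x}(r)$ with $t^*$ a differentiability point of $\gamma$ and $(t^*,\gamma(t^*))\in B_{\bar t,\bar x}(r)$, and pick $(\gamma_n,w_n)\in\K^n\to(\gamma,w)$. Then $\dot\gamma_n$ is uniformly bounded and $\dot\gamma_n\rightharpoonup\dot\gamma$ weakly-$*$ in $L^\infty$, while~(5) for $\K^n$ gives $\dot\gamma_n(s)\in\bigcup_{w\in U^n_{\bar t,\bar x}(\rho)}(D^+f^n(w)\cup D^-f^n(w))$ for a.e.\ $s$ with $(s,\gamma_n(s))\in B_{\bar t,\bar x}(\rho)$, any $\rho>r$. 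Averaging the incremental ratios of $\gamma_n$ over shrinking windows about $t^*$ and letting $n\to\infty$, using $\graph(D^+f^n)\cup\graph(D^-f^n)\to\graph(f')$ in the Kuratowski sense and the induced convergence $U^n_{\bar t,\bar x}(\rho)\to U_{\bar t,\bar x}(\rho)$, places $\lambda$ in the closed convex hull of $\{f'(w):w\in U_{\bar t,\bar x}(\rho)\}$ for every $\rho>r$; letting $\rho\downarrow r$ and using that $f$ is smooth ($D^\pm f(w)=\{f'(w)\}$) recovers the inclusion~\eqref{E_relation}, provided these convex hulls collapse to $\{f'(w):w\in U_{\bar t,\bar x}(r)\}$. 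I expect this property to be the main obstacle: since the slopes $\dot\gamma_n$ converge only weakly-$*$, the limiting slope can a priori only be located in a convex hull of admissible speeds, and showing that no spurious convex combination survives --- equivalently, that the value sets $U_{\bar t,\bar x}(r)$ behave like intervals along $f'$ --- is the delicate structural point, to be handled by exploiting the monotone ordering of the $\K^n$, which prevents the approximating characteristics from oscillating across a broad band of slopes. The recurring secondary difficulty, already present in~(2),~(3) and~(4), is the bookkeeping needed to extract, from the relaxed-limsup definition of $T$, the subsequences along which Proposition~\ref{P_stab} and the support and connectedness arguments can be run.
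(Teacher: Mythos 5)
Your treatment of conditions (1)--(4) is sound and matches what the paper (very tersely) does: (2) via Proposition~\ref{P_stab} along a subsequence extracted from the relaxed definition of $T$, and (1), (3), (4) essentially from the definitions of Kuratowski and Young-measure convergence. But for condition (5) you stop exactly at the point that constitutes the whole substance of the paper's proof: you show that the limit slope lies only in the closed convex hull of $\{f'(w):w\in U_{\bar t,\bar x}(\rho)\}$ and then declare the removal of spurious convex combinations to be ``the main obstacle'', proposing to attack it via the monotone ordering of the $\K^n$. That leaves a genuine gap, and the suggested mechanism is not the one that closes it.

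The paper's resolution is different and works at the level of the approximations, before passing to the limit. Condition (5) for $(\K^n,T^n)$ gives $V^n_{\bar t,\bar x}(r)\subset \bigcup_{w\in U^n_{\bar t,\bar x}(r)}\left(D^+f^n(w)\cup D^-f^n(w)\right)$, and the connectedness property (condition (4)) of $\K^n$ makes this speed set \emph{convex} (an interval: at a kink of the piecewise regular $f^n$ the union $D^+f^n\cup D^-f^n$ fills in the gap between one-sided slopes, and $w$ ranges over a connected set of values). Hence $\conv V^n_{\bar t,\bar x}(r)$ is already contained in the admissible speed set at level $n$, so the convexification forced by the weak-$*$ convergence $\dot\gamma^n\rightharpoonup\dot\gamma$ costs nothing. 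One then concludes with two Kuratowski inclusions: $V_{\bar t,\bar x}(r)\subset K\mbox{-}\limsup_n \conv V^n_{\bar t,\bar x}(r)$, and, using $K\mbox{-}\limsup_n U^n_{\bar t,\bar x}(r)\subset U_{\bar t,\bar x}(r)$ together with assumption (1) (Kuratowski convergence of $\graph(D^+f^n)\cup\graph(D^-f^n)$ to $\graph(f')$), $K\mbox{-}\limsup_n \bigcup_{w\in U^n}\left(D^+f^n(w)\cup D^-f^n(w)\right)\subset\{f'(w):w\in U_{\bar t,\bar x}(r)\}$. Note also that your framing ``let $\rho\downarrow r$ and hope the convex hulls collapse'' points in the wrong direction: nothing collapses in the limit; rather, the hull is harmless because each approximating speed set is already convex, which is precisely why condition (4) is built into Definition~\ref{D_bd_fam} and why assumption (1) is stated in terms of sub/superdifferential graphs rather than mere uniform convergence of $f^n$.
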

\begin{proof}
We have to verify conditions (1) to (5) in Definition \ref{D_bd_fam}: condition (2) follows from Proposition \ref{P_stab} and conditions (1), (3) and (4)
follow from the very definition of Kuratowski convergence, convergence in the sense of Young measures and the definition of $T$. About condition (5),
each $\gamma\in \K_\gamma$ is the uniform limit of $\gamma^n\in \K^n_\gamma$, in particolar $\dot\gamma^n\rightarrow \dot\gamma$ weakly. 
Therefore, for every $r>0$,
\begin{equation}\label{E_K_stab_1}
V_{\bar t,\bar x}(r)\subset K\mbox{-}\limsup_{n\rightarrow \infty} \conv V^n_{\bar t,\bar x}(r).
\end{equation}
By the Kuratowski convergence of the complete families of boundaries 
\begin{equation*}
K\mbox{-}\limsup_{n\rightarrow \infty}U^n_{\bar t,\bar x}(r)\subset U_{\bar t,\bar x}(r),
\end{equation*}
hence, by assumption (1),
\begin{equation}\label{E_K_stab_2}
\bigcup_{w\in U_{\bar t,\bar x}(r)}\{f'(w)\} \supset K\mbox{-}\limsup_{n\rightarrow \infty}
\bigcup_{w\in U^n_{\bar t,\bar x}(r)} \left(D^+f^n(w)\cup D^-f^n(w)\right).
\end{equation}
Since $ \bigcup_{w\in U^n_{\bar t,\bar x}(r)} \left(D^+f^n(w)\cup D^-f^n(w)\right)$ is convex by the connectedness property, the claim follows from 
\eqref{E_K_stab_1} and \eqref{E_K_stab_2}.
\end{proof}

In the following lemma we construct a complete family of boundaries for wave-front tracking approximate solutions: it is standard to assume that only binary interactions among shocks occur.
For reference on the wavefront-tracking scheme see \cite{Dafermos}.

\begin{lemma}\label{L_wft}
Let $u:\R^+\times \R\rightarrow 2^{-k}\Z$ be a wave-front tracking solution of \eqref{E_cl}. Then there exists a complete family of boundaries for $u$ such that 
\begin{enumerate}
\item every $\gamma\in \K_\gamma$ is piecewise affine and for all except finitely many positive times it holds
\begin{equation*}
\dot\gamma(t)=\lambda(t,\gamma(t)),
\end{equation*}
where 
\begin{equation}\label{E_lambda}
\lambda(t,x)=
\begin{cases}
f'(u(t,x)) & \mbox{if $u$ is continuous at }(t,x), \\
\displaystyle \frac{f(u(t,x+))-f(u(t,x-))}{u(t,x+)-u(t,x-)} & \mbox{if $u$ has a jump at }(t,x);
\end{cases}
\end{equation}
\item for all $(t,x)\in \R^+\times \R$ except the cancellation points
\begin{equation*}
K(t,x)=\conv (u^-,u^+),
\end{equation*}
where $u^-$ and $u^+$ denote the left limit and right limit respectively.
At every cancellation point
\begin{equation*}
K(t,x)=\conv (u^-,u^+)\cup I,
\end{equation*}
where $I$ is the set of values of $u$ that is canceled in $(t,x)$.
\end{enumerate}
\end{lemma}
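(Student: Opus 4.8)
The plan is to build $\K$ by hand from the finite combinatorial structure of the wave‑front tracking solution. Fix a final time $T$: on $[0,T]$ the solution $u$ is piecewise constant, its jumps are supported on finitely many segments (fronts), there are finitely many interaction points and only binary interactions, and between interactions $u$ is constant on each region cut out by the fronts. Define the piecewise‑constant field $\lambda(t,x)$ by \eqref{E_lambda}, and declare the candidate section $\widehat K(t,x)$ to be $\conv(u^-,u^+)$ off the cancellation points and $\conv(u^-,u^+)\cup I$ at a cancellation point, $I$ being the annihilated values. The curves in $\K$ will be selected integral curves of $\lambda$, each carrying a constant value $w$, together with the time $\mathtt T(\gamma,w)$ beyond which $w$ is no longer an admissible boundary value along $\gamma$.

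A single curve with value $w$ is built forward and backward from a seed point $(t_0,x_0)$ with $w\in\widehat K(t_0,x_0)$. Forward: at a point of continuity the curve is the line of slope $f'(w)$; as soon as it meets a front from the admissible side it is absorbed and travels with the front's Rankine–Hugoniot speed; when a front reaches an interaction point the curve continues along the outgoing front provided $w$ still belongs to $\widehat K$ there, and otherwise we record that time as $\mathtt T(\gamma,w)$ and prolong $\gamma$ beyond it as a straight line with the (now unambiguous) characteristic slope, merely so that $\gamma$ is defined on all of $[0,+\infty)$. Backward: again lines of slope $f'(w)$ in continuity regions; when the curve sits on a front and reaches an interaction point going backward, the front splits into at most two incoming fronts and we choose the incoming branch according to $w$, the rule being fixed once and for all so that larger values are routed to the branch on the side forced by the geometry of the Riemann solver for the corner‑affine flux $f$. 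Let $\K_0$ be the set of all pairs $(\gamma,w)$ so obtained, and set $\K=\overline{\K_0}$ with $\mathtt T$ extended by upper‑semicontinuous regularization; the limit curves added by the closure are still admissible boundaries by the stability Proposition \ref{P_stab}.

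It then remains to check the five properties of Definition \ref{D_bd_fam} and claims (1)--(2). \emph{Monotonicity}: two curves of $\K_0$ never cross, since in continuity regions characteristics of an entropy‑admissible (hence of a WFT) solution do not cross forward in time, a curve absorbed into a front stays on it, and the backward branching rule was designed precisely to respect the left/right order; closure preserves non‑crossing. \emph{Entropy admissibility}: on a continuity segment both one‑sided traces are $\delta_w$ and the defining inequalities collapse to $0\ge 0$ (resp. $0\le 0$); on a front segment they reduce to the Oleinik chord conditions between $w$ and the trace values, which hold because the fronts of a WFT solution are entropy admissible for $f$, and this is valid exactly up to $\mathtt T(\gamma,w)$ by construction. \emph{Completeness} and claim (2): through a point $(t,x)$ off the fronts the line of slope $f'(u(t,x))$ carries value $u(t,x)$, so $u(t,x)\in K(t,x)$; along a front the absorbed curves realize every value of $\conv(u^-,u^+)$, and the curves killed exactly at a cancellation point account for the extra set $I$; since $K$ is closed this gives $\supp(\mathcal L^2\otimes\nu)\subset K$ and the formula for $K(t,x)$. \emph{Connectedness}: at a fixed $t$, as $\gamma$ runs through the total order the pair $(\gamma(t),w)$ traces horizontal segments (in a constant region $\gamma(t)$ increases, $w$ fixed) joined to vertical segments (on a front $\gamma(t)$ is frozen while $w$ sweeps $\conv(u^-,u^+)$), so the relevant set is connected. \emph{Consistency with the PDE} \eqref{E_relation} is immediate, since on each curve $\dot\gamma(t)$ is either $f'(w)$ with $w\in K(t,\gamma(t))$ or a one‑sided derivative $D^\pm f$ of the corner‑affine $f$ at a trace value. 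Finally claim (1) holds for each individual curve, which changes slope only at the finitely many interaction times and the finitely many times it is absorbed into a front.

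The hard part will be the \emph{backward selection at branch points}: one must distribute, consistently across all interactions, each value $w$ carried by an outgoing front among the incoming fronts so as to be simultaneously monotone (curves never cross), complete (every front point $(t,x)$ sees the whole interval $\conv(u^-,u^+)$, and a cancellation point additionally sees $I$), and compatible with taking the closure and with an upper‑semicontinuous $\mathtt T$. The cancellation interactions — where a piece of the value interval of a shock is annihilated against a rarefaction front — are the delicate case: they are exactly what forces the enlargement of $K(t,x)$ by $I$ and demand the most bookkeeping.
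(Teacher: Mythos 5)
Your construction is in the same spirit as the paper's (value-carrying curves along fronts plus characteristic segments in the continuity regions, closure, then verification of Definition \ref{D_bd_fam}), but it has a genuine gap exactly where you flag it: the backward selection at branch points is never actually defined, only described as ``the rule being fixed once and for all so that larger values are routed to the branch forced by the geometry of the Riemann solver,'' and you yourself defer the consistency of this rule (simultaneous monotonicity, completeness at cancellation points, compatibility with the closure and with an u.s.c.\ $\mathtt T$) as ``the hard part.'' That is precisely the content of the lemma, not bookkeeping that can be postponed. The paper removes the need for any routing rule by tracking only values $w\in\conv(u^-,u^+)\setminus 2^{-k}\Z$: since the intermediate state at a binary interaction is a grid value in $2^{-k}\Z$, a non-grid value $w$ belongs to exactly one incoming (and one outgoing) front interval, so the curve $\gamma_w$ characterized by $w\in\conv(u(t,\gamma_w(t)-),u(t,\gamma_w(t)+))$ for all $t\in I_w$ is \emph{unique}, with $I_w=[0,\bar t]$ if $w$ is cancelled at time $\bar t$; monotonicity then follows by direct inspection of binary interactions, and curves through continuity points are glued backward onto these shock curves (choosing one with $|w-u(\bar t,\bar x)|<2^{-k}$ following the correct boundary of the continuity region). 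Without an analogous uniqueness mechanism, your closure step is also not justified: you cannot invoke Proposition \ref{P_stab} to get monotonicity or connectedness of the limit family, only admissibility of individual limit curves.

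A second concrete problem is your prolongation of a curve past $\mathtt T(\gamma,w)$ ``as a straight line with the characteristic slope, merely so that $\gamma$ is defined on all of $[0,+\infty)$.'' The monotonicity requirement in Definition \ref{D_bd_fam} is a total ordering of the curves for \emph{all} $t\geq 0$, not only up to $\mathtt T$; a straight line of slope $f'(w)$ emitted from a cancellation point will in general cut across fronts and across other members of the family, destroying the order. This is why the paper extends beyond $\widetilde T$ by the supremum
\begin{equation*}
\gamma(t)=\sup\Big\{\gamma'(t):\ (\gamma',w')\in\tilde\K,\ \widetilde T(\gamma',w')\ge t,\ \exists\, t'<T(\gamma,w)\ \gamma'(t')<\gamma(t')\Big\},
\end{equation*}
which is designed precisely so that the extension stays compatible with the ordering. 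You would need to replace your straight-line prolongation by an extension of this type (or prove that your choice never crosses the family, which is false in general).
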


\begin{proof}
We just prove the existence because the properties (1) and (2) follow easily from the construction.
{\it Step 1}.
We first construct the candidate admissible boundaries on the set $J$ of discontinuity points of $u$. 
Consider a shock starting at $t=0$ from $\bar x$ with left and right limits $u^-$ and $u^+$ respectively.
For every $w\in \conv(u^-,u^+)\setminus 2^{-k}\Z$ consider the unique Lipschitz continuous curve $\gamma_w:I_w\rightarrow \R$ such that
for all $t\in I_w$, $w\in \conv(u(t,\gamma_w(t)-),u(t,\gamma_w(t)+))$, where $I_w=[0,\bar t]$ if the value $w$ is canceled at time $\bar t$ and
$I_w=[0,+\infty)$ if the value $w$ is not canceled.
Denote the set of pairs $(\gamma_w,w)$ by $\tilde \K^1$ and set $\widetilde{T}(\gamma_w,w)=\sup I_w$. 
The following monotonicity property holds: let $w_1<w_2$ and $t \in I_{w_1}\cap I_{w_2}$ such that $\gamma_{w_1}(t)=\gamma_{w_2}(t)=x$. 
Then $u(t,x-)<u(t,x+)$ implies $\gamma_{w_1} \le \gamma_{w_2}$ in $I_{w_1}\cap I_{w_2}$ and similarly 
$u(t,x-)>u(t,x+)$ implies $\gamma_{w_1} \ge \gamma_{w_2}$ in $I_{w_1}\cap I_{w_2}$.
The proof is by direct inspection of binary interactions of shocks.

{\it Step 2}.
Next we construct segments in $\R^+\times \R\setminus J$. For every $(\bar t,\bar x)\in \R^+\times \R\setminus J$ consider the straight line 
$\gamma^{\bar t,\bar x}:\R^+\rightarrow \R$ where $\gamma^{\bar t,\bar x}(\bar t)=\bar x$, ${\gamma^{\bar t,\bar x}}'(t)= f'(u(\bar t,\bar x))$.
In order to have monotonicity we consider $\gamma^{\bar t,\bar x}$ restricted to the connected component $(t_1(\bar t,\bar x),t_2(\bar t, \bar x))$ 
of $\{t\in \R^+:(t,\gamma^{\bar t,\bar x}(t)\in \R^+\times \R\setminus J\}$ which contains $\bar t$.
Denote the set of pairs $(\gamma^{\bar t,\bar x},u(\bar t,\bar x))$ by $\tilde \K^2$ and set 
$\widetilde{T}(\gamma^{\bar t,\bar x},u(\bar t,\bar x))=t_2(\bar t, \bar x)$.

In order to construct a complete family of boundaries we begin to extend the curves in $\tilde\K^1$ and $\tilde \K^2$ to the whole $\R^+$.

{\it Step 3}.
First, for every $(\bar t, \bar x)\in \R^+\times \R\setminus J$ we prolong $\gamma^{\bar t,\bar x}$ to $[0,t_2(\bar t,\bar x))$.
Denote by $\gamma^-$ and $\gamma^+$ the left and the right boundary of the connected component of $\R^+\times \R\setminus J$
which contains $(\bar t,\bar x)$.
If $t_1(\bar t,\bar x)>0$ at least one of the following holds:
\begin{equation}\label{E_dicot}
\gamma^-(t_1(\bar t,\bar x))=\gamma^{\bar t,\bar x}(t_1(\bar t,\bar x)+) \qquad \mbox{or}\qquad 
\gamma^+(t_1(\bar t,\bar x))=\gamma^{\bar t,\bar x}(t_1(\bar t,\bar x)+).
\end{equation}
If the first condition holds we set $\gamma^{\bar t,\bar x}=\bar \gamma$ in $[0,t_1(\bar t,\bar x)]$ where $\bar\gamma:[0,t_1(\bar t,\bar x)]\rightarrow \R$
is the unique curve such that there exists $(\gamma_w,w)\in\tilde K^1$ for which
\begin{equation*}
\gamma_w=\bar \gamma \mbox{ in } [0,t_1(\bar t,\bar x)], \quad 
|w-u(\bar t,\bar x)|< 2^{-k}, \quad 
\widetilde{T}(\gamma_w,w)>t_1(\bar t,\bar x), \quad
\gamma_w=\gamma^- \mbox{ in }(t_1(\bar t,\bar x),t_1(\bar t,\bar x)+\e)
\end{equation*}
for some $\e>0$. If the first condition in \eqref{E_dicot} does not hold the analogue extension can be done for $\bar \gamma=\gamma^+$ in a 
right neighborhood of $t_1(\bar t,\bar x)$.
This extension maintains the monotonicity. Denote by $\tilde \K^3$ this extension of $\tilde K^2$.

For the extension in the future the only constraint is the monotonicity: 
denote by $\tilde \K=\tilde\K^1 \cup\tilde \K^3$ and let $(\gamma,w)\in \tilde \K$ and $t>\widetilde{T}(\gamma,w)$. Then we consider the following extension:
\begin{equation*}
\gamma(t)=\sup_{(\gamma',w') \in \tilde \K} \Big\{\gamma' (t): \widetilde{T}(\gamma',w')\ge t \mbox{ and }\exists t'<T(\gamma,w) \mbox{ such that }\gamma'(t')<\gamma(t')\Big\}.
\end{equation*}
It is fairly easy to check that with this extension the monotonicity is preserved.

Finally let $\K$ be the closure of the family constructed above with respect the product of local uniform convergence topology and 
the standard topology on $\R$ and let $T$ the minimal upper semicontinuous extension of $\widetilde{T}$.

To conclude, we have to verify the properties in Definition \ref{D_bd_fam}. Only the entropy admissibility is not straightforward but it is a consequence of the fact that for every
$(\gamma,w)\in \K$ and $t< T(\gamma,w)$ it holds $w\in \conv (u(t,\gamma(t)-),u(t,\gamma(t)+))$.
\end{proof}

The construction of a complete family of boundaries for approximations by wave-front tracking and the stability proven in Proposition \ref{P_stability} imply the following result.
\begin{theorem}
For every entropy solution of \eqref{E_cl} with initial data $u_0\in L^\infty$ there exists a complete family of boundaries.
\end{theorem}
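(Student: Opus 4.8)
The plan is to obtain the general result as a limiting case of the wave-front tracking construction of Lemma \ref{L_wft}, using the stability of Proposition \ref{P_stability}. First I would approximate the given data: let $u$ be the entropy solution with initial datum $u_0 \in L^\infty(\R)$, normalized so that $u \in [-M,M]$. I would choose a sequence of fluxes $f^n$ which are piecewise affine on the grid $2^{-n}\Z$ (interpolating $f$ at the grid nodes) so that $f^n \to f$ uniformly on $[-M,M]$ and $\graph(D^+f^n)\cup\graph(D^-f^n) \to \graph(f')$ in the sense of Kuratowski (this is the standard wave-front tracking setup, and the Kuratowski convergence of the (sub/super)differential graphs is elementary for piecewise-affine interpolants of a $C^1$ function). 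Simultaneously I would choose piecewise-constant initial data $u_0^n : \R \to 2^{-n}\Z$ with $u_0^n \to u_0$ in $L^1_{\loc}$ and $\|u_0^n\|_\infty \le M$.

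Next, for each $n$ let $u^n$ be the corresponding wave-front tracking solution of $u_t + f^n(u)_x = 0$ with initial datum $u_0^n$; by the standard convergence theory for wave-front tracking (e.g.\ \cite{Dafermos}), after passing to a subsequence $u^n \to u$ in $L^1_{\loc}(\R^+\times\R)$, hence $\nu^n := \delta_{u^n} \to \nu := \delta_u$ in the sense of Young measures. By Lemma \ref{L_wft}, each $u^n$ carries a complete family of boundaries $(\K^n, T^n)$ for the flux $f^n$. The families $\K^n \subset \lip([0,+\infty),\R)\times\R$ are, restricted to the relevant compact range of values $[-M,M]$ and to curves with slopes bounded by $\lip(f^n)$ (uniformly bounded), uniformly Lipschitz; together with the separability of the space of locally uniformly convergent curves this puts us in the setting of the Zarankiewicz compactness theorem, so after a further subsequence $\K^n \to \K$ in the sense of Kuratowski for some closed set $\K$. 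Define $T$ from $(T^n)$ by the $\Gamma$-liminf formula in Proposition \ref{P_stability}.

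Now all three hypotheses of Proposition \ref{P_stability} are in place: $f^n \to f$ with Kuratowski convergence of the differential graphs, $\nu^n \to \nu$ as Young measures, and $\K^n \to \K$ in Kuratowski sense. The proposition then yields directly that $(\K,T)$ is a complete family of boundaries for $\nu = \delta_u$, which is exactly the assertion. The only point requiring a little care — and the main obstacle — is guaranteeing the uniform Lipschitz and slope bounds that make the Kuratowski compactness of $\{\K^n\}$ non-degenerate (recall the remark after Zarankiewicz's theorem that the limit may be empty): one must check both that $\K$ is nonempty and, more importantly, that it is "large enough" — but completeness of the limit, $\supp(\mathcal L^2 \otimes \nu_{t,x}) \subset K$, is itself part of the conclusion of Proposition \ref{P_stability}, so no separate argument is needed once the hypotheses are verified. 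I would also remark that the upper semicontinuity of $T$ is automatic from the $\Gamma$-liminf definition, and that the whole construction is independent of the approximating sequences, so the statement holds for every entropy solution with $L^\infty$ data.
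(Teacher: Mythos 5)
Your proposal is correct and takes essentially the same route as the paper, which obtains the theorem precisely by combining the wave-front tracking construction of Lemma \ref{L_wft} with the stability result of Proposition \ref{P_stability} after extracting a Kuratowski limit of the approximate families. The details you supply (piecewise-affine flux and piecewise-constant data approximations, Zarankiewicz compactness for $\K^n$, the $\Gamma$-liminf definition of $T$, and the observation that completeness of the limit family is part of the conclusion of Proposition \ref{P_stability}) are exactly the ones the paper leaves implicit.
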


Since the linearly degenerate components of the flux play a significant role in what follows we introduce the following notation.
\begin{definition}\label{D_Lf}
We denote by $\mathcal L_f$ the set of maximal closed intervals on which $f'$ is constant. If there are no intervals where $f'$ is constant we say that $f$ 
is weakly genuine nonlinear. For every $w\in \R$ we denote by $I_w$ the unique element of 
$\mathcal L_f$ which contains $w$. Moreover if $I\in \mathcal L_f$, we write $f'(I)$ to indicate $f'(w)$ for some $w\in I$. Finally, when $\mathcal L_f$ is
considered as a topological space it is endowed with the quotient topology obtained from the euclidean topology on $\R$ by the relation that identifies
elements belonging to the same $I\in \mathcal L_f$.
\end{definition}

By the completeness property of the complete family of boundaries we have that $ \supp \left(\mathcal L^2\otimes \nu_{t,x}\right) \subset K$; the next lemma
is a first result about the opposite inclusion. We will see that it holds up to linearly degenerate components of the flux. 
 
\begin{lemma}\label{L_bd_const}
Let $\nu$ be a mv entropy solution on $\R^2$ such that there exists $I=[a,b]\in \mathcal L_f$ for which $\mathcal L^2$-a.e. $(t,x)\in \R^2$, $\supp\, \nu_{t,x}\subset I$ and let $(\gamma,w)$ an admissible boundary for $\nu$. Then $w\in I$.
\end{lemma}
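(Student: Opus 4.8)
The plan is to argue by contradiction: I would assume $w\notin I=[a,b]$ and, using the admissibility of $(\gamma,w)$ on \emph{both} sides of $\gamma$, show that $f'$ is constant on an interval strictly larger than $I$, which contradicts the maximality of $I\in\mathcal L_f$. By the symmetry between the boundary entropies $\eta_k^+$ and $\eta_k^-$ it is enough to treat the case $w>b$; the case $w<a$ is handled in exactly the same way with the pairs $(\eta_k^+,q_k^+)$ in place of $(\eta_k^-,q_k^-)$.

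Write $c:=f'(I)$, so that $f(u)=f(b)+c(u-b)$ for every $u\in I$. First I would note that the traces $\nu^\pm$ of $\nu$ along $\gamma$ produced in Proposition \ref{P_traces} can be taken with $\supp\nu^\pm_t\subset I$ for $\mathcal L^1$-a.e. $t$: they are weak$^*$ limits of the slices $\nu_{t,\gamma(t)\mp\e}$, and since $\supp\nu_{t,x}\subset I$ for $\mathcal L^2$-a.e. $(t,x)$, the measure preserving change of variables $(t,x)\mapsto(t,\gamma(t)-x)$ shows that for $\mathcal L^1$-a.e. $\e$ these slices are supported in the closed set $I$ for $\mathcal L^1$-a.e. $t$. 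The choice of trace representative is anyway immaterial, since \eqref{E_bd_adm_cond} and \eqref{E_bd_adm_cond+} only involve the combinations $\langle\nu^\pm,-\dot\gamma\,\eta_k^\pm+q_k^\pm\rangle$, which are uniquely determined (Remark after Proposition \ref{P_traces}).

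Then I would fix a time $t$ in the full-measure set on which $\dot\gamma(t)=:\delta$ exists, $\supp\nu^\pm_t\subset I$, and \eqref{E_bd_adm_cond}, \eqref{E_bd_adm_cond+} hold, and set $\bar u^\pm:=\langle\mathrm{id},\nu^\pm_t\rangle\in[a,b]$. The computational core consists of two elementary identities, valid because $\supp\nu^\pm_t\subset I$ and $f$ is affine on $I$: for $k\in(b,w]$ one has $u<k$ on $I$, whence
\begin{equation*}
\big\langle -\delta\,\eta_k^-+q_k^-,\ \nu^\pm_t\big\rangle=\big(f(k)-f(b)\big)-\delta(k-b)+(\delta-c)(\bar u^\pm-b),
\end{equation*}
while for $k\in(a,b)$ one has $-\delta\,\eta_k^-+q_k^-=(c-\delta)(k-\cdot)^+$ on $I$, so $\langle -\delta\,\eta_k^-+q_k^-,\nu^\pm_t\rangle=(c-\delta)\int(k-u)^+\,d\nu^\pm_t$. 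Inserting the first identity into the right-boundary inequality \eqref{E_bd_adm_cond} (which is $\ge0$, with $\nu^-$) and letting $k\to b^+$, using continuity of $f'$ with $f'(b)=c$, gives $\delta\le c$ when $\nu^-_t=\delta_b$; if $\nu^-_t\ne\delta_b$ the same inequality follows from the second identity by choosing $k\in(a,b)$ slightly above $\min\supp\nu^-_t$, where $\int(k-u)^+d\nu^-_t>0$. Symmetrically, \eqref{E_bd_adm_cond+} (which is $\le0$, with $\nu^+$) gives $\delta\ge c$. Hence $\delta=c$, the barycenter terms in the first identity drop out, and \eqref{E_bd_adm_cond}, \eqref{E_bd_adm_cond+} with $k\in(b,w]$ collapse to $f(k)-f(b)-c(k-b)\ge0$ and $\le0$ respectively; therefore $f(k)=f(b)+c(k-b)$ on $[b,w]$, i.e. $f'\equiv c$ on $[b,w]$. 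Since $w>b$, the interval $[a,w]$ strictly contains $I$ and $f'$ is constant on it, contradicting $I\in\mathcal L_f$. Thus $w\le b$, and the symmetric argument gives $w\ge a$, so $w\in I$.

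The only genuinely delicate point I anticipate is the extraction of $\delta=c$: when a trace reduces to the Dirac mass at the endpoint $b$ several of the boundary inequalities become vacuous, and one has to pass to the limit $k\to b^+$ and invoke continuity of $f'$ at $b$ (together with the complementary use of entropies with $k$ slightly below $b$ in the non-degenerate case). Everything else is routine bookkeeping with the explicit boundary entropy--flux pairs and the affine form of $f$ on $I$.
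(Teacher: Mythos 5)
Your proof is correct and takes essentially the same route as the paper: assume $w\notin I$, use the two-sided admissibility with boundary entropy--flux pairs for $k$ between $I$ and $w$ (and the fact that the traces are supported in $I$, where $f$ is affine) to force $\dot\gamma=f'(I)$, and then let the two inequalities collapse to $f$ being affine on the interval joining $I$ to $w$, contradicting maximality of $I\in\mathcal L_f$. The only difference is bookkeeping: the paper avoids your case split on $\nu^\pm_t=\delta_b$ by noting that on its side the barycenter term satisfies $\langle\nu^-,\lambda\rangle-k\ge a-k>0$, so the Taylor remainder $f(a)-f(k)-\sigma(a-k)=o(|a-k|)$ is dominated as $k\to a$; your divided-difference argument plus the entropies with $k$ inside $(a,b)$ achieve the same conclusion and are equally sound.
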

\begin{proof}
Assume by contradiction that there exists an admissible boundary $(\gamma, w)$ with $w\notin I$ and let $\sigma= f'(I)$. First we prove that 
$\dot\gamma = \sigma$.
Without loss of generality take $w<a$. By the admissibility condition \eqref{E_bd_adm_cond} for every $w\le k\le a $
\begin{equation*}
\begin{split}
0\leq &~ \langle \nu^-, f(\lambda)-f(k) - \dot\gamma(\lambda - k)\rangle \\
= &~ \langle \nu^-, f(\lambda)-f(k)-\sigma (\lambda - k)+ (\sigma- \dot\gamma)(\lambda -k)\rangle \\
= &~ f(a)-f(k)-\sigma (a-k) + (\sigma-\dot\gamma)(\langle \nu^-,\lambda\rangle -k)
\end{split}
\end{equation*}
because $f(w)-\sigma w$ is constant on $I$.
Since $ f(a)-f(k)-f'(a)(a-k)=o(|a-k|)$ as $k\rightarrow a$, $\sigma = f'(a)$ and $\langle \nu^-,\lambda\rangle\geq a$ we get $\sigma \geq \dot\gamma$. 
The same argument on the admissibility condition from the right of $\gamma$ implies that $\sigma \leq \dot\gamma$ therefore $\sigma=\dot\gamma$.
In particular the condition above reduces to $0\leq f(a)-f(k)-\sigma (a-k)$ for all $w\le k\le a $ and the one on the right to $0\geq f(a)-f(k)-\sigma (a-k)$ for all 
$w\le k\le a $. This means that $w$ and $a$ belongs to the same linearly degenerate component of the flux and that is a contradiction by maximality of $[a,b]$.
\end{proof}

In the next lemma we state some additional properties of the complete family of boundaries when the solution has bounded total variation.
These results are based on a blow-up argument: since it will be useful later we introduce the notion in the setting of mv entropy solutions. 
\begin{definition}\label{D_blow}
Let $\nu$ be a mv entropy solution on $\R^+\times \R$. Given $(\bar t,\bar x)\in [0,+\infty)\times \R$ and $\e>0$ consider 
\begin{equation*}
\nu_{t,x}^\e=\nu_{\bar t + \e t, \bar x + \e x}
\end{equation*}
defined for $\bar t+\e t\ge 0$.
For all entropies $\eta$ the dissipation $\mu^\e$ of $\nu^\e$ is given by 
\begin{equation*}
\mu^\e(B)= \frac{1}{\e}\mu ((\bar t,\bar x) + \e B),
\end{equation*}
for a Borel set $B\subset \R^2$ such that $(\bar t,\bar x) + \e B \subset \R^+\times \R$, where $\mu$ denotes the dissipation of $\nu$.
Every limit in the sense of Young measures of $\nu^\e$ as $\e\rightarrow 0$ is called \emph{blow-up} of $\nu$ at $(\bar t,\bar x)$.
\end{definition}
It is standard to check that every blow-up is a mv entropy solution.

\begin{lemma}\label{L_BV_case}
Let $\nu$ be a mv solution of \eqref{E_cl} with a complete family of boundaries $(\K,T)$ and let $\Omega\subset \R^+\times\R$ be such that for every
$(t,x)\in \Omega$, 
\begin{equation*}
\nu_{t,x}=\delta_{u(t,x)},
\end{equation*}
where $u\in \BV(\Omega)$. Then for every $\gamma\in \K_\gamma$ and for $\mathcal L^1$-a.e. $t>0$ such that $(t,\gamma(t))\in \Omega$, it holds
\begin{equation}\label{E_speed_BV}
\dot\gamma(t)=\lambda(t,\gamma(t))
\end{equation}
where $\lambda$ is defined in \eqref{E_lambda}.
Moreover for $\mathcal H^1$-a.e. $(t,x)\in \Omega$,
\begin{equation}\label{E_K_BV}
\conv(u^-,u^+)\subset K(t,x)\subset \conv \left(I_{u^-},I_{u^+}\right),
\end{equation}
where $u^-$ and $u^+$ denote the left and right limits if $(t,x)$ is a jump point of $u$ and $u^-=u^+=a$ if $u$ has a Lebesgue point with value $a$ in $(t,x)$.
\end{lemma}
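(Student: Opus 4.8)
The plan is to read off both statements from the Federer--Vol'pert structure of $u\in\BV(\Omega)$, combined with a blow-up and the stability of admissible boundaries. Recall that for $\mathcal H^1$-a.e.\ point of $\Omega$ the solution $u$ has either an approximate limit $a$ or an approximate jump $(u^-,u^+)$, and that the exceptional set, being $\mathcal H^1$-negligible in $\R^2$, meets $\Graph(\gamma)$ in a set of times of zero $\mathcal L^1$-measure, since $\mathcal L^1\{t:(t,\gamma(t))\in N\}\le\int_{\{(t,\gamma(t))\in N\}}\sqrt{1+\dot\gamma(t)^2}\,dt=\mathcal H^1(\Graph(\gamma)\cap N)$. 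Hence it suffices to argue at approximate limit points and at jump points.

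For the speed identity \eqref{E_speed_BV}, fix $\gamma\in\mathcal K_\gamma$ and $w$ with $(\gamma,w)\in\mathcal K$ (for $t>T(\gamma,w)$ there is nothing to prove, $\gamma$ being then a pure monotone extension). On the set of times where $(t,\gamma(t))$ is a jump point I use that $J_u$ is $\mathcal H^1$-rectifiable and that, testing $u_t+f(u)_x=0$ against the jump part of $Du$ (Vol'pert's chain rule for $f(u)$), one gets at $\mathcal H^1$-a.e.\ jump point $(u^+-u^-)n^t+(f(u^+)-f(u^-))n^x=0$ for the normal $n=(n^t,n^x)$ to the approximate tangent, so that this tangent is the line of slope $\lambda(t,x)$; since a Lipschitz graph and a rectifiable set share the same tangent at $\mathcal L^1$-a.e.\ time of their common points, $\dot\gamma(t)=\lambda(t,\gamma(t))$ there. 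On the set of times where $(t,\gamma(t))$ is an approximate continuity point with value $a$ I blow up at $(t,\gamma(t))$: $\nu^\varepsilon\to\delta_a$ in the sense of Young measures, and along a subsequence the rescaled curves $s\mapsto(\gamma(t+\varepsilon s)-\gamma(t))/\varepsilon$ converge locally uniformly to a Lipschitz curve $\ell$ through the origin, which by Proposition \ref{P_stab} is an admissible boundary for the constant solution $\delta_a$; Lemma \ref{L_bd_const} then forces $w\in I_a$, and testing \eqref{E_bd_adm_cond}--\eqref{E_bd_adm_cond+} with $\eta^{\pm}_w$ gives $\dot\ell\equiv f'(a)$. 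For $\mathcal L^1$-a.e.\ such $t$ the curve $\gamma$ is differentiable at $t$, so $\ell$ is the line of slope $\dot\gamma(t)$ and $\dot\gamma(t)=f'(a)=\lambda(t,\gamma(t))$.

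For the inclusion \eqref{E_K_BV}, the left-hand one is soft: by completeness $\supp(\mathcal L^2\otimes\nu_{t,x})\subset K$ and $K$ is closed, so at an approximate limit/jump point the values $u^-,u^+$, being attained with positive density in every neighbourhood of $(t,x)$, belong to $K(t,x)$; picking $\gamma^\mp\in\mathcal K_\gamma$ through $(t,x)$ carrying $u^\mp$ with $T\ge t$ — comparable by the total order, say $\gamma^-\le\gamma^+$ — the connectedness property applied with $\gamma_1=\gamma^-$, $\gamma_2=\gamma^+$ at this $t$ shows that $\{w:\exists\sigma\ (\gamma^-\le\sigma\le\gamma^+,\ (\sigma,w)\in\mathcal K,\ t\le T(\sigma,w))\}$ is a connected subset of $\R$, hence an interval containing $u^-,u^+$ and contained in $K(t,x)$, so $\conv(u^-,u^+)\subset K(t,x)$. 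For the right-hand inclusion fix $w\in K(t,x)$ with witness $(\gamma,w)$, $\gamma(t)=x$, $T(\gamma,w)\ge t$, and blow up at $(t,x)$ along a subsequence making $\nu^{\varepsilon_k}$ converge to the blow-up solution — the constant $\delta_a$ at an approximate limit point, the two-state function equal to $u^-$ for $y<\lambda s$ and $u^+$ for $y>\lambda s$ at a jump point — and the rescaled curves converge to a Lipschitz $\ell$ through the origin; by Proposition \ref{P_stab}, $(\ell,w)$ is an admissible boundary for the limit. At an approximate limit point Lemma \ref{L_bd_const} gives $w\in I_a=\conv(I_{u^-},I_{u^+})$. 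At a jump point, say $u^-<u^+$ (the reverse order being symmetric), suppose $w<\inf I_{u^-}\le u^-$: on whichever of the open regions $\{\ell(s)<\lambda s\}$, $\{\ell(s)>\lambda s\}$, or the coincidence set $\{\ell(s)=\lambda s\}$ carries positive measure, the left- and right-admissibility conditions for $\eta^+_k$ with $k\in[w,u^-]$ collapse — after using the Rankine--Hugoniot identity $f(u^-)-\lambda u^-=f(u^+)-\lambda u^+$ — to the statement that $k\mapsto f(k)-\dot\ell(s)k$ is constant on an interval properly containing $[w,u^-]$ to the left of $\inf I_{u^-}$, contradicting the maximality of the linearly degenerate component $I_{u^-}$. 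Symmetrically $w\le\sup I_{u^+}$, and $w\in\conv(I_{u^-},I_{u^+})$.

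The main difficulty is this last step: the blown-up boundary curve $\ell$ need not be affine and may oscillate across the shock line $y=\lambda s$, so one has to localize the entropy inequalities \eqref{E_bd_adm_cond}--\eqref{E_bd_adm_cond+} to the open sets where $\ell$ lies strictly on one side, and treat the coincidence set separately, before concluding. Everything else reduces to the standard $\BV$-structure theory together with the completeness, monotonicity, connectedness and stability of complete families of boundaries already established above.
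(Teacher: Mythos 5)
There is a genuine gap in your argument for \eqref{E_speed_BV} at approximate continuity points. After blowing up to the constant state $\delta_a$ you claim that ``testing \eqref{E_bd_adm_cond}--\eqref{E_bd_adm_cond+} with $\eta^{\pm}_w$ gives $\dot\ell\equiv f'(a)$''. This is false in the generic case $w=a$ (which is forced, e.g., whenever $I_a$ is a singleton, in particular for every weakly genuinely nonlinear flux): for the trace $\delta_a$ one has $\eta_k^+(a)=q_k^+(a)=0$ for all $k\ge a$ and $\eta_k^-(a)=q_k^-(a)=0$ for all $k\le a$, so every boundary entropy inequality reduces to $0\ge 0$, and \emph{any} Lipschitz curve is an admissible two-sided boundary with value $a$ for a constant solution. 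Admissibility alone therefore cannot pin down the speed at continuity points; Lemma \ref{L_bd_const} only yields $w\in I_a$ (its proof extracts a speed only under the contradiction hypothesis $w\notin I$). The paper closes exactly this hole by invoking the ``consistency with the PDE'' requirement \eqref{E_relation} of Definition \ref{D_bd_fam}(5), which constrains the speeds of all nearby curves of $\K_\gamma$ by the nearby admissible values; your proof never uses property (5), so the speed identity at continuity points is not established. Relatedly, your dismissal ``for $t>T(\gamma,w)$ there is nothing to prove'' is not justified: the lemma asserts \eqref{E_speed_BV} for every $\gamma\in\K_\gamma$ with no restriction $t\le T(\gamma,w)$, and the paper's tools for this (condition (5) and monotonicity) apply to such times, whereas your blow-up-plus-admissibility argument does not.

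The remaining parts are essentially sound and partly take a different route than the paper. At jump times you replace the paper's monotonicity argument (neighbouring admissible boundaries in the blown-up half-planes move with speeds $f'(u^\mp)$, and $f'(u^-)\ge\lambda\ge f'(u^+)$ forces the straight blow-up of $\gamma$ to have the Rankine--Hugoniot speed) by the locality of approximate tangents of rectifiable sets, identifying $\dot\gamma$ with the slope of the tangent to $J_u$; this is a legitimate alternative and, as a bonus, covers curves beyond their time $T$. Your treatment of \eqref{E_K_BV} (completeness and closedness of $K$ plus connectedness for the first inclusion; blow-up, stability, Lemma \ref{L_bd_const} and the $\eta_k^\pm$ computation with Rankine--Hugoniot for the second, with the extra localization when $\ell$ oscillates about the shock line) follows the paper's scheme and is acceptable modulo the details you yourself flag. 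But as written, the proof of \eqref{E_speed_BV} away from the jump set fails without \eqref{E_relation}.
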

The intervals $I_{u^-}$ and $I_{u^+}$ are defined in Definition \ref{D_Lf}.
\begin{proof}
For $\mathcal H^1$-a.e. $(\bar t,\bar x)\in \Omega$ there are two possibilities for the $L^1$-blow-up of $u$ in $(\bar t,\bar x)$:
\begin{enumerate}
\item the limit is contained in $I$ for some $I\in \mathcal L_f$;
\item the limit is a jump with $u^-$ and $u^+$ which do not belong to the same $I\in \mathcal L_f$.
\end{enumerate}
In the first case \eqref{E_speed_BV} follows from Lemma \ref{L_bd_const} and \eqref{E_relation}. Moreover the first inclusion in \eqref{E_K_BV}
follows from the connectedness property in Definition \ref{D_bd_fam} and the second inclusion follows from Lemma \ref{L_bd_const}, being the blow-up a mv entropy solution.

In the second case let 
\begin{equation*}
\bar u(t,x) =
\begin{cases}
u^- & x<\lambda t, \\
u^+ & x>\lambda t,
\end{cases}
\end{equation*}
be the $L^1$-blow-up. By Lemma \ref{L_bd_const}, the speed of admissible boundaries in $\{(t,x):x<\lambda t\}$ is $f'(u^-)$ and similarly the speed of 
admissible boundaries in $\{(t,x):x>\lambda t\}$ is $f'(u^+)$. Moreover, since $\nu$ is a mv entropy solution, then
\begin{equation*}
f'(u^-)\ge\lambda\ge f'(u^+)
\end{equation*}
 and if $\gamma$ is differentiable at $\bar t$, its blow-up is a straight line. 
 So the unique velocity that $\gamma$ can have without violating 
 the monotonicity is 
$\lambda(\bar t,\gamma(\bar t))$.

As in the previous case, the first inclusion in \eqref{E_K_BV} follows from connectedness.

\noindent About the second inclusion consider an admissible
boundary $(\gamma, w)$ for $\bar u$. If $\gamma(t)\ne \lambda t$ the result follows from Lemma \ref{L_bd_const}. Finally consider the case 
$\gamma(t)=\lambda t$ and suppose without loss of generality that $w<u^-< u^+$ and let $k\in (w,u^-)$. By admissibility conditions \eqref{E_bd_adm_cond} and \eqref{E_bd_adm_cond+},
\begin{equation*}
f(u^+)-\lambda(u^+-k)\leq f(k)\leq f(u^-)-\lambda(u^--k)=f(u^+)-\lambda(u^+-k),
\end{equation*}
therefore $w\in I_{u^-}$ and $\lambda= f'(u^-)$.
\end{proof}

Now we consider the particular case of the Riemann problem with two boundaries. With the same notation as in Proposition \ref{P_V_Riem}, the previous result
implies that the complete family of boundaries is uniquely determined in $\Omega^m$. 
\begin{corollary}\label{C_uniq_K}
Let $\nu$ be a mv entropy solution of \eqref{E_cl} with a complete family of boundaries $(\K,T)$ and let $(\gamma_1,a),(\gamma_2,b)\in \K$ such that
for some $0\le t_1<t_2$:
\begin{enumerate}
\item $\gamma_1(t_1)=\gamma_2(t_1)$;
\item $\gamma_1(t)<\gamma_2(t)$ for every $t\in (t_1,t_2)$;
\item $T(\gamma_1,a)>t_2,  T(\gamma_2,b)>t_2$.
\end{enumerate}
Let $(\gamma,w)\in \K$ be such that there exists $\bar t>0$: $(\bar t,\gamma(\bar t))\in \Omega^m$ and $T(\gamma,w)\ge\bar t$. 

Then $w\in (a,b)$ and $\gamma$ coincides in $[t_1,\bar t]$ with the unique Lipschitz curve $\tilde \gamma:[t_1,\bar t]\rightarrow \R$ such that
\begin{enumerate}[(a)]
\item for all $t\in [t_1,\bar t]$, $\gamma_1(t)\le \tilde\gamma(t)\le \gamma_2(t)$;
\item it holds 
\begin{equation*}
\graph(\tilde\gamma) \cap \Omega^m = \Big\{(t,x)\in \Omega: v(t,x)=\big(\conv_{[a,b]}f\big)'(w)\Big\},
\end{equation*}
where $v$ is defined in Lemma \ref{L_cov}.
\end{enumerate}
Moreover for every $(t,x)\in \Omega^m$
\begin{equation*}
K(t,x)=\conv (u^-,u^+),
\end{equation*}
where $u^-,u^+$ denote the left and right limits of $u$ at time $t$ in $x$.
\end{corollary}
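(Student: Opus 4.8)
The plan is to combine the total order on $\mathcal K_\gamma$ given by the monotonicity property with the explicit description of the two-boundary Riemann solver (Proposition \ref{P_V_Riem}, Lemma \ref{L_cov}) and the regularity statement of Lemma \ref{L_BV_case}. First I would use monotonicity: since $(\bar t,\gamma(\bar t))\in\Omega^m$ we have $\gamma_1(\bar t)<\gamma(\bar t)<\gamma_2(\bar t)$, so the monotonicity property in Definition \ref{D_bd_fam} forces $\gamma_1\le\gamma\le\gamma_2$ on $\R^+$; hence $\gamma|_{[t_1,\bar t]}$ satisfies (a) and $\gamma(t_1)=\gamma_1(t_1)=\gamma_2(t_1)$, and by continuity the set $\mathcal T:=\{t\in(t_1,\bar t]:(t,\gamma(t))\in\Omega^m\}$ is relatively open and contains $\bar t$.

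Then I would show that $\gamma$ coincides on $\mathcal T$ with a level segment of $v$ carrying a constant value $w\in(a,b)$. Inside $\Omega$ one has $\nu_{t,x}=\delta_{u(t,x)}$ with $u\in\BV$, so Lemma \ref{L_BV_case} gives $\dot\gamma(t)=\lambda(t,\gamma(t))$ for $\mathcal L^1$-a.e.\ $t$ with $(t,\gamma(t))\in\Omega$. In $\Omega^m$ the functions $u$ and $f'\circ u$ are strictly increasing and $f'\circ u$ is locally Lipschitz (Proposition \ref{P_V_Riem}(4)); since $u$ takes values in $\{f=\conv_{[a,b]}f\}$ and $g$ is the pseudo-inverse of $(\conv_{[a,b]}f)'$, one has $v=(\conv_{[a,b]}f)'\circ u=f'\circ u$ there, and a direct inspection shows that at a jump point of $u$ in $\Omega^m$ the jump is across a full linearly degenerate component $I$, where again $\lambda=f'(I)=v$. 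Hence $\dot\gamma(t)=v(t,\gamma(t))$ for a.e.\ $t\in\mathcal T$; as $v$ is locally Lipschitz on $\Omega^m$, $\gamma|_{\mathcal T}$ is an integral curve of the continuous vector field $v$, and by Lemma \ref{L_cov}(3) together with the semigroup identity $\gamma^{\bar t,\bar x}|_{[0,t]}=\gamma^{t,\gamma^{\bar t,\bar x}(t)}$ the integral curves of $v$ in $\Omega^m$ are its level segments, which do not cross since $v(t,\cdot)$ is strictly increasing. Thus $\gamma$ agrees on $\mathcal T$ with $S:=\{v=v(\bar t,\gamma(\bar t))\}\cap\Omega^m$. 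To pin down the value, choose $t'\in\mathcal T$ near $\bar t$ where the conclusion of Lemma \ref{L_BV_case} holds: then $w\in K(t',\gamma(t'))=\conv(u^-,u^+)$ forces $w$ into the linearly degenerate component determined by $v(t',\gamma(t'))$, so $(\conv_{[a,b]}f)'(w)=v(t',\gamma(t'))$, and by continuity $(\conv_{[a,b]}f)'(w)=v(\bar t,\gamma(\bar t))$; since $v\in(\lambda^-,\lambda^+)$ on $\Omega^m$ this also gives $w\in(a,b)$ and identifies $S$ with $\{v=(\conv_{[a,b]}f)'(w)\}\cap\Omega^m$.

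For uniqueness of $\tilde\gamma$ and the claim $\gamma|_{[t_1,\bar t]}=\tilde\gamma$ I would argue that $S$ is a single segment of slope $(\conv_{[a,b]}f)'(w)\in(\lambda^-,\lambda^+)$ whose lower endpoint is the common vertex $(t_1,\gamma_1(t_1))$: it cannot terminate on $\graph(\gamma_1)$ or $\graph(\gamma_2)$ because along $S$ the value is $w\in(a,b)$ while the boundary values are $a,b$, and the monotone strips $\Omega^\pm$ separate $S$ from $\gamma_1,\gamma_2$ (here Points (3) and (5) of Lemma \ref{L_cov} enter); and going upward $S$ stays in $\Omega^m$ at least up to $t_2>\bar t$. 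Therefore any Lipschitz curve satisfying (a) and (b) must coincide with $S$ on $(t_1,\bar t]$ and continuously equal $\gamma_1(t_1)$ at $t_1$, so $\tilde\gamma$ is unique; and the same facts show $\mathcal T=(t_1,\bar t]$, whence $\gamma|_{[t_1,\bar t]}=\tilde\gamma$. Finally, for $K(t,x)=\conv(u^-,u^+)$ on $\Omega^m$: the inclusion $\conv(u^-,u^+)\subset K(t,x)$ follows from completeness and connectedness as in Lemma \ref{L_BV_case}, and conversely, given $w'\in K(t,x)$ one applies the part of the corollary just proved to any $(\gamma',w')\in\mathcal K$ through $(t,x)$ with $T(\gamma',w')\ge t$, obtaining $v(t,x)=(\conv_{[a,b]}f)'(w')$ and hence, since in $\Omega^m$ a value lies in the interior of a linearly degenerate component only along a jump segment of $u$, $w'\in\conv(u^-,u^+)$.

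The hard part will be the middle step: showing that $\gamma$ carries a single value, equivalently that $v(\bar t,\gamma(\bar t))=(\conv_{[a,b]}f)'(w)$. This requires upgrading the $\mathcal L^1$- and $\mathcal H^1$-a.e.\ identities of Lemma \ref{L_BV_case} to a pointwise statement along $\gamma$, using the continuity of $v$ on $\Omega^m$, and treating with care the jump segments of $u$ associated with the linearly degenerate components of $f$, where $u(t,\gamma(t))$ is ambiguous but $v(t,\gamma(t))$ is not. The second, more geometric, difficulty — that the level segment $S$ reaches the initial vertex — reduces to the minimality properties in Lemma \ref{L_cov}.
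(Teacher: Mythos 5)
There is a genuine gap in the geometric step of your argument. You claim that the level set $S=\{v=(\conv_{[a,b]}f)'(w)\}\cap\Omega^m$ is a single segment whose lower endpoint is the common vertex $(t_1,\gamma_1(t_1))$, and you justify this by saying that $S$ carries the value $w\in(a,b)$ while ``the boundary values are $a,b$''. This is not a valid obstruction: in the boundary Riemann problem the data $a,b$ are imposed only in the entropy--admissible sense and need not be attained as traces on $\gamma_1,\gamma_2$ (see the proof of Proposition \ref{P_V_Riem}, where the trace $u^+$ on $\gamma_1$ is only required to satisfy $u^+=a$ \emph{or} $f'(u^+)\le\dot\gamma_1$). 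Concretely, if $\gamma_1$ moves for an initial time interval with speed strictly larger than the fan speeds (e.g. $\dot\gamma_1=c>\lambda^-$ on $[t_1,s]$ and then turns left), the minimizers of Lemma \ref{L_cov} hug $\gamma_1$ before leaving it along a straight segment, and the level segments $\{v=c'\}$ with $c'\in(\lambda^-,c)$ have their lower endpoints on $\graph(\gamma_1)$ at times strictly larger than $t_1$, while still carrying values $g(c')\in(a,b)$ and lying in $\Omega^m$. This is exactly why statement (b) is formulated as an equality of $\graph(\tilde\gamma)\cap\Omega^m$ with the level set: $\tilde\gamma$ may have to ride along $\gamma_1$ (or $\gamma_2$) before joining the segment. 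In such a configuration your deductions that $\mathcal T=(t_1,\bar t]$, that any curve satisfying (a)--(b) coincides with $S$ on all of $(t_1,\bar t]$, and your uniqueness argument for $\tilde\gamma$ all break down, and the case in which $\gamma$ coincides with a lateral boundary on an initial interval is simply not treated.

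The paper closes this case by a different, shorter mechanism which you could adopt as a repair: uniqueness of $\tilde\gamma$ follows just from the strict monotonicity of $v(t,\cdot)$ in $\Omega^m$ together with (a), and the identity $\gamma=\tilde\gamma$ on $[t_1,\bar t]$ is proved by contradiction using uniqueness for the two--boundary problem (Proposition \ref{P_szep}): if $\gamma(\tilde t)\ne\tilde\gamma(\tilde t)$ for some $\tilde t$, then $u$ restricted to the region between $\gamma$ and $\tilde\gamma$ solves a boundary problem with the same constant datum $w$ on both curves, hence $u\equiv w$ on an open set meeting $\Omega^m$, contradicting the strict monotonicity of $u$ there; no analysis of where the level segment terminates is needed. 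Your earlier steps are essentially sound but heavier than necessary: the ``moreover'' part and $w\in(a,b)$ are obtained in the paper directly from \eqref{E_K_BV} once one notes that strict monotonicity of $f'\circ u$ in $\Omega^m$ forces $\conv(u^-,u^+)=\conv(I_{u^-},I_{u^+})$ (a point you use implicitly and should state, since it is what lets you pass from $w\in K(t',\gamma(t'))$ to $(\conv_{[a,b]}f)'(w)=v(t',\gamma(t'))$), so your final paragraph need not reapply the corollary to curves through $(t,x)$.
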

\begin{proof}
Let $(\gamma,w)\in \K$ be as in the statement. 
By Proposition \ref{P_V_Riem} point (4), it follows that for every $(t,x)\in \Omega^m$ it holds 
\begin{equation*}
\conv(u^-,u^+)=\conv(I_{u^-}, I_{u^+})
\end{equation*}
otherwise $f'\circ u$ would not be strictly increasing, therefore the last part of the statement is a consequence of \eqref{E_K_BV} and it implies that $w\in (a,b)$.

Since $v$ is strictly increasing with respect to $x$ in $\Omega^m$, the curve $\tilde\gamma$ that satisfies $(a)$ and $(b)$ in the
statement is unique. Indeed suppose by contradiction that $\gamma (\tilde t) \ne\tilde\gamma (\tilde t)$ for some $\tilde t\in  (t_1,\bar t)$, then $u$ solves the
boundary Riemann problem with boundary data equal to $w$ on $\gamma$ and $\tilde \gamma$. In particular $u(t,x)\equiv w$ for every $(t,x)$ in the open region
delimited by $\gamma$ and $\tilde \gamma$ and this contradicts the strict monotonity of $u$ in $\Omega^m$.
\end{proof}

\begin{remark}
A complete family of boundaries for a Riemann problem with two boundaries is not uniquely determined in $\Omega^-$ and $\Omega^+$ if $I_a$ and $I_b$ are
non trivial. However we will see that $\Omega^-=\Omega^+=\emptyset$ in the setting of the corollary above.
\end{remark}

\section{Structure of $\K$}
\label{S_struct}
In this section we see that a complete family of boundaries for a mv entropy solution enjoys additional properties than the ones
required in the definition. More precisely we prove that $\R^+\times \R$ is covered by characteristics which are straight lines outside a 1-rectifiable 
set of jumps, similarly to the case of solutions with bounded variations.

First we introduce some notation:
denote by $B_{t,x}(r)$ the ball in $\R^+\times \R$ of centre $(t,x)$ and radius $r$. Given $\gamma\in\mathcal K_\gamma$, a differentiability point $\bar t$ of $\gamma$ and $r,\delta>0$, let
\begin{equation*}
B^{\delta +}_{\bar t,\gamma}(r) := \Big\{(t,x)\in B_{\bar t,\gamma(\bar t)}(r) : x> \gamma (\bar t)+ \dot\gamma(\bar t)(t-\bar t) + \delta|t-\bar t|\Big\},
\end{equation*}
\begin{equation*}
B^{\delta -}_{\bar t,\gamma}(r) := \Big\{(t,x)\in B_{\bar t,\gamma(\bar t)}(r) : x< \gamma (\bar t)+ \dot\gamma(\bar t)(t-\bar t) - \delta|t-\bar t|\Big\}.
\end{equation*}
Accordingly we define 
\begin{equation*}
\begin{split}
U_{\bar t,\bar x}(r) & := \Big\{ w \in \R : \exists t\in \R^+, (\gamma,w)\in \mathcal K \mbox{ such that } T(\gamma, w )> t, (t,\gamma(t)) \in B_{\bar t,\bar x}(r)\Big\}, \\
U^{\delta \pm}_{\bar t, \bar \gamma}(r) & := \Big\{ w \in \R : \exists t\in \R^+, (\gamma,w)\in \mathcal K \mbox{ such that } T(\gamma, w )> t, (t,\gamma(t)) \in B^{\delta \pm}_{\bar t,\bar \gamma}(r)\Big\}.
\end{split}
\end{equation*}
For every $(t,x)\in \R^+\times \R$, introduce the maximal characteristic $\gamma_{t,x}^+$ as the maximum in $\K_\gamma$ of the
curves $\gamma$ such that $\gamma(t)=x$. The maximum exists being $\K$ closed. Similarly let $\gamma_{t,x}^-$ be the minimal characteristic, 
and denote by
\begin{equation*}
\begin{split}
U^+_{t,x}(r)& :=\Big\{ w\in \R : \exists t\in \R^+, (\gamma,w)\in \mathcal K \mbox{ such that } T(\gamma,w)>t,(t,\gamma(t))\in B_{t,x}(r) \mbox{ and } \gamma>\gamma^+_{t,x},  \Big\}, \\
U^-_{t,x}(r)& :=\Big\{ w\in \R : \exists t\in \R^+, (\gamma,w)\in \mathcal K \mbox{ such that } T(\gamma,w)>t,(t,\gamma(t))\in B_{t,x}(r) \mbox{ and } \gamma<\gamma^-_{t,x},  \Big\}.
\end{split}
\end{equation*}
See Figure \ref{F_cono}.

\begin{figure}
\centering
\def\svgwidth{0.5\columnwidth}
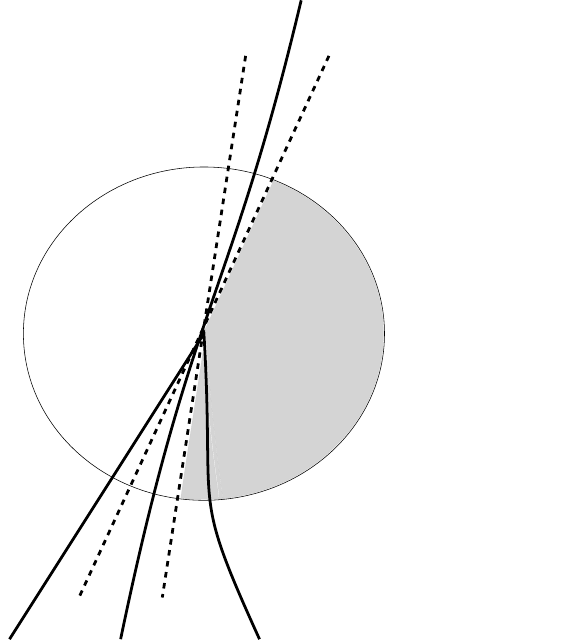\label{F_cono}
\caption{Maximal and minimal curves and cone.}
\end{figure}

\begin{lemma}\label{L_segments}
Consider $\bar \gamma \in \K_\gamma$ and $[t_1,t_2]\subset \R^+$. Suppose that $\forall \gamma \in \K_\gamma$,
\begin{equation*}
\exists\, \bar t \in [t_1,t_2] : \gamma (\bar t)>\bar \gamma(\bar t) \quad \Longrightarrow \quad \forall  t \in [t_1,t_2] : \gamma ( t)>\bar \gamma( t).
\end{equation*}
Then there exists an interval $I\in \mathscr L_f$ such that for every sequence $(\gamma^n,u^n)\in \K$ satisfying
\begin{enumerate}
\item $\gamma^n\llcorner_{[t_1,t_2]}>\bar \gamma\llcorner_{[t_1,t_2]}$,
\item $\gamma^n \rightarrow \bar\gamma$ uniformly in $[t_1,t_2]$,
\item $\displaystyle{\liminf_{n\rightarrow +\infty} T(\gamma^n,w^n) = \tilde t> t_1}$,
\end{enumerate}
it holds
\begin{equation}\label{E_l_segm}
\lim_{n\rightarrow +\infty}\dist(w^n, I)= 0.
\end{equation}
In particular $\bar\gamma$ is a segment in $[t_1,t_2]$ with velocity $f'(I)$.
\end{lemma}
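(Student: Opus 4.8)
The plan is to identify the value carried by $\bar\gamma$ in the limit, then to blow $\nu$ up at a generic point of $\bar\gamma$ so as to reduce to a straight line — where the one-sided hypothesis becomes ``no characteristic crosses the line, and admissible boundaries approach it from one side only'' — and finally to close two one-sided trace inequalities into an equality forcing the relevant trace to be concentrated on a linearly degenerate component $I$ and $\bar\gamma$ to have slope $f'(I)$. First I would use boundedness of $\nu$ to pass to a subsequence with $w^{n_k}\to w$ and $T(\gamma^{n_k},w^{n_k})\to\tilde t$; since $\mathcal K$ is closed and $T$ upper semicontinuous, $(\bar\gamma,w)\in\mathcal K$ with $T(\bar\gamma,w)\ge\tilde t>t_1$, so $(\bar\gamma,w)$ is an admissible boundary for $\nu$ on $[0,\tilde t]$. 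I would also record the elementary consequence of the hypothesis together with the total order on $\K_\gamma$ that for every $(\bar t,\bar x)$ with $\bar t\in[t_1,t_2]$ and $\bar x=\bar\gamma(\bar t)$ the maximal characteristic $\gamma^+_{\bar t,\bar x}$ coincides with $\bar\gamma$ on $[t_1,t_2]$: indeed $\gamma^+_{\bar t,\bar x}\ge\bar\gamma$ with equality at $\bar t$, and a strict inequality at any point of $[t_1,t_2]$ would, by the hypothesis, force a strict inequality at $\bar t$.

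Next, fix a Lebesgue point $\bar t\in(t_1,\tilde t)$ of $\dot{\bar\gamma}$, put $\bar x=\bar\gamma(\bar t)$, $\sigma=\dot{\bar\gamma}(\bar t)$, and let $\bar\nu$ be a Young-measure blow-up of $\nu$ at $(\bar t,\bar x)$; it is a mv entropy solution on $\R^2$, $\bar\gamma$ rescales to $\ell=\{x=\sigma t\}$, and (the admissibility time $t\le T(\bar\gamma,w)$ rescaling to $t\le(T(\bar\gamma,w)-\bar t)/\e\to+\infty$) Proposition \ref{P_stab} gives that $(\ell,w)$ is an admissible boundary for $\bar\nu$ on all of $\R$. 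Diagonalising in $\e\to0$ and $k\to\infty$, the rescaled curves $\gamma^{n_k}$ yield admissible boundaries for $\bar\nu$ lying on the side $\{x>\sigma t\}$, converging uniformly to $\ell$, with values $\to w$ and admissibility times $\to+\infty$. Since they approach $\ell$ strictly from the side $\{x>\sigma t\}$, the traces of $\bar\nu$ along them converge to the right trace $\mu:=\bar\nu^+$ of $\bar\nu$ along $\ell$, and — the limiting slope being the constant $\sigma$, so no ``weak times weak'' issue arises — their admissibility-from-the-right inequalities pass to the limit: with $g(v):=f(v)-\sigma v$, for $\mathcal L^1$-a.e.\ $t$,
\begin{equation*}
\langle \chi_{\{v\ge k\}}(g(v)-g(k)),\mu_t\rangle\ge0\ \ (k\ge w),\qquad
\langle \chi_{\{v\le k\}}(g(k)-g(v)),\mu_t\rangle\ge0\ \ (k\le w).
\end{equation*}
The admissibility-from-the-left conditions of $(\ell,w)$, which bear on the same $\mu$, give the reverse inequalities, so both expressions vanish identically in $k$; differentiating in $k$ gives $g'(k)\mu_t([k,+\infty))=0$ for $k\ge w$ and $g'(k)\mu_t((-\infty,k])=0$ for $k\le w$. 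The two relations at $k=w$ together force $g'(w)=0$ (as $\mu_t$ is a probability), i.e.\ $\sigma=f'(w)$, after which $\mu_t$ must be concentrated on the maximal interval around $w$ on which $f'\equiv\sigma$, that is on $I_w\in\mathcal L_f$, with $f'(I_w)=\sigma=\dot{\bar\gamma}(\bar t)$.

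Since $\dot{\bar\gamma}(\bar t)=f'(w)$ is independent of $\bar t$, this shows that for any subinterval $[s_1,s_2]\subseteq[t_1,t_2]$ and any barrier sequence obeying (1)--(3) relative to $[s_1,s_2]$, $\bar\gamma$ is affine of slope $f'(I_w)$ on $[s_1,\min(s_2,\tilde t)]$. A continuation argument — if $\tau^*:=\sup\{\tau:\bar\gamma$ affine of slope $f'(I_w)$ on $[t_1,\tau]\}$ were $<t_2$, then for small $\delta_0>0$ and $x\downarrow\bar\gamma(\tau^*+\delta_0)$ the curves $(\gamma'_x,w'_x)\in\mathcal K$ provided by completeness, which converge to $\bar\gamma$ on $[t_1,t_2]$ by the previous sentence (via $\bar\gamma=\gamma^+_{\cdot,\cdot}$) and have $T\ge\tau^*+\delta_0$, form a barrier sequence forcing $\bar\gamma$ affine beyond $\tau^*$ — then upgrades this to: $\bar\gamma$ is the segment of slope $f'(I_w)$ on all of $[t_1,t_2]$. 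Setting $I:=I_w$: for any admissible sequence as in the statement, any subsequential limit $\tilde w$ of its values gives, by the blow-up step at the same $\bar t$, that $\mu$ is concentrated both on $I_w$ and on $I_{\tilde w}$, hence $I_{\tilde w}=I_w=I$; as this holds for every subsequential limit, $\dist(w^n,I)\to0$, which is \eqref{E_l_segm}.

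The hard part will be the blow-up step: $\gamma^n\to\bar\gamma$ uniformly but $\dot\gamma^n\to\dot{\bar\gamma}$ only weakly, so one cannot directly pass ``$\dot\gamma^n\cdot(\text{trace along }\gamma^n)$'' to the limit along $\bar\gamma$, and rescaling to the fixed line $\ell$ — where Propositions \ref{P_stab} and \ref{P_traces} apply cleanly, in particular the continuity of the trace of $\bar\nu$ along the rescaled barriers towards $\mu$ — is precisely what repairs this. The one-sidedness of the hypothesis is used twice: to obtain $\bar\gamma=\gamma^+_{\cdot,\cdot}$ in the first paragraph, and to guarantee that the barriers meet $\ell$ only from the side $\{x>\sigma t\}$, so that the inequality they contribute lands on the right trace $\mu$ and complements the one coming from $(\ell,w)$ being an admissible left boundary, which is what turns the two inequalities into an equality.
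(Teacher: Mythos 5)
Your route is genuinely different from the paper's (the paper never blows up; it works directly with $\bar\gamma$ and the approximating curves), but it breaks down at exactly the step you flag as ``the hard part'', and the proposed repair does not work. The inequality you need is a sign condition on the \emph{right} trace $\mu^+$ of $\bar\nu$ along $\ell$, extracted from boundaries that approach $\ell$ strictly from the right. Neither Proposition \ref{P_stab} nor Proposition \ref{P_traces} gives this. The proof of Proposition \ref{P_stab} passes to the limit the weak formulation of right-admissibility posed on $\{x<\gamma^n(t)\}$; since $\chi_{\{x<\gamma^n(t)\}}\rightarrow\chi_{\{x<\ell(t)\}}$ in $L^1$ even when the $\gamma^n$ lie strictly to the right of $\ell$, the limiting inequality constrains the \emph{left} trace of $\bar\nu$ on $\ell$. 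In other words, stability only returns the statement that $(\ell,w)$ is an admissible boundary, which you already had, and the two conditions in Definition \ref{D_adm_bound} bear on \emph{different} traces (\eqref{E_bd_adm_cond} on $\nu^-$, \eqref{E_bd_adm_cond+} on $\nu^+$), so no equality on a single trace follows from pairing them. Proposition \ref{P_traces} likewise only yields convergence of the fluxes $-\sigma\eta+q$ along the translates $\gamma(\cdot)-\e$ for a.e.\ $\e$; it says nothing about traces along an arbitrary one-sided family of Lipschitz curves converging to $\ell$, and such traces need not converge to the trace on $\ell$. (There is also the secondary wrinkle that for fixed $n$ the rescaled barrier escapes to infinity, so a diagonal choice $n(\e)$ is needed; but even granting a diagonal limit equal to $\ell$, the limit inequality lands on the wrong side.) Hence the claimed convergence ``traces of $\bar\nu$ along the rescaled barriers $\rightarrow\mu^+$'' is unproven, and with it the whole vanishing-flux computation, the identity $\sigma=f'(w)$, and the concentration of $\mu_t$ on $I_w$.

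What supplies the missing one-sided inequality is the paper's Claim 1: an entropy balance on the thin region between $\bar\gamma$ and $\gamma^n$ over $[t_1,\tilde t]$, for boundary entropies $\eta^+_k$, $k>\bar w$, and $\eta^-_k$, $k<\bar w$, where $\bar w$ is a cluster point of $w^n$. The entropy stored in the sliver at times $t_1$ and $\tilde t$ vanishes because $\gamma^n\rightarrow\bar\gamma$ uniformly; the flux across $\gamma^n$ into the sliver is nonnegative because $w^n\rightarrow\bar w$ makes these boundary entropies admissible for $(\gamma^n,w^n)$ for large $n$; the interior dissipation is nonpositive. This forces the flux of those boundary entropies across $\bar\gamma$ \emph{from the right} to be $\ge 0$, which combined with the $\le 0$ coming from $(\bar\gamma,\bar w)\in\K$ (closedness of $\K$ and upper semicontinuity of $T$, as you correctly noted) gives the vanishing you were after, with no blow-up at all. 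The paper then extends the vanishing to all entropy fluxes normalized at $\bar w$ by density of finite signed sums of boundary entropies (Claim 2), concludes that two distinct cluster points $a\neq b$ would make both mirrored piecewise-constant functions $u^1,u^2$ entropy solutions, forcing $a,b$ into the same $I\in\mathcal L_f$ and $\dot{\bar\gamma}=f'(I)$, and obtains the segment on all of $[t_1,t_2]$ from completeness (a sequence with $\liminf_n T(\gamma^n,w^n)\ge t_2$), which replaces your continuation argument. Your differentiation-in-$k$ endgame would be acceptable once the vanishing flux on the right trace is in hand, but as written that key identity is exactly what is missing.
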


\begin{proof}
\textbf{Claim 1}. Let $(\gamma^n,w^n)$ be a sequence as in the statement and $\bar w$ be a cluster point of the sequence $w^n$. Let $\eta$ be an entropy
of this type: 
\begin{equation*}
\eta_k^+(u)=(u-k)^+ \mbox{ with }k>\bar w \quad \mbox{or}\quad \eta_k^-(u)=(u-k)^- \mbox{ with }k<\bar w
\end{equation*}
and denote the relative flux by $q$. 

Then the flux from the right side 
of $\eta$ across $\bar\gamma$ in $[t_1,\tilde t]$ is zero: i.e. for almost every $t\in [t_1,\tilde t]$,
\begin{equation*}
-\dot{\bar\gamma}\langle \nu^+_t,\eta\rangle + \langle \nu^+_t,q\rangle = 0.
\end{equation*}

{\it Proof of Claim 1}. Roughly speaking the proof is the following: consider the amount of entropy between $\bar \gamma$ and $\gamma^n$ at time $t_1$.
The flux across both boundaries is non negative, in particular the flux across $\bar \gamma$ from the right is less than the total amount of entropy
at time $t_1$ between $\bar \gamma$ and $\gamma^n$. Since $\gamma^n$ is arbitrarily close to $\bar\gamma$ the flux must be 0.

Consider $\eta$ as in the statement of the claim and compute the balance in the region delimited by $\bar\gamma$ and 
$\gamma^n$ for $t\in [t_1,\tilde t]$: using that $\eta_t+q_x \le 0$,
\begin{equation}\label{E_bal_segm}
\int_{\gamma(\tilde t)}^{\gamma^n(\tilde t)}\langle \nu_{\tilde t, x}, \eta\rangle dx - 
\int_{\gamma(t_1)}^{\gamma^n(t_1)}\langle \nu_{t_1,x}, \eta\rangle dx + 
\int_{t_1}^{\tilde t}\langle \nu^-_{n,t}, -\dot\gamma^n(t)\eta + q\rangle dt - \int_{t_1}^{\tilde t}\langle \nu^+_t, -\dot{\bar\gamma}(t)\eta + q\rangle dt \leq 0,
\end{equation}
where $\nu^+$ denotes the right trace of $\nu$ on $\bar\gamma$ and $\nu^-_n$ denotes the left trace on $\gamma^n$.
Since $w^n\rightarrow \bar w$, $\eta$ is an admissible boundary entropy also for $(\gamma^n,w^n)$ for $n$ sufficiently large, so that the flux across $\gamma^n$ is non-negative: for $\mathcal L^1$-a.e. $t\in (t_1,\tilde t)$,
\begin{equation*}
\langle \nu^-_{n,t}, -\dot\gamma^n(t)\eta + q\rangle \geq 0.
\end{equation*}
Moreover for $\mathcal L^1$-a.e. $t\in (t_1,\tilde t)$
\begin{equation*}
\langle \nu^+_t, -\dot{\bar\gamma}(t)\eta + q\rangle \leq 0
\end{equation*}
because $\bar w$ is an admissible boundary on $\bar\gamma$. To prove the other inequality take the limit as $n\rightarrow \infty$ in \eqref{E_bal_segm}: since 
\begin{equation*}
\lim_{n\rightarrow \infty}\int_{\bar\gamma(\tilde t)}^{\gamma^n(\tilde t)}\langle \nu_{\tilde t, x}, \eta\rangle dx = 
\lim_{n\rightarrow \infty}\int_{\bar\gamma(t_1)}^{\gamma^n(t_1)}\langle \nu_{t_1,x}, \eta\rangle dx =0 
\end{equation*}
it holds 
\begin{equation*}
0 \le \lim_{n\rightarrow \infty}  \int_{t_1}^{\tilde t}\langle \nu^-_{n,t}, -\dot\gamma^n(t)\eta + q\rangle dt  \le \int_{t_1}^{\tilde t}\langle \nu^+_t, -\dot{\bar\gamma}(t)\eta + q\rangle dt
\end{equation*}
and this concludes the proof of Claim 1.

\textbf{Claim 2}.
Let $\bar w$ be a cluster point of the sequence $w^n$. Then the Young measures 
\begin{equation*}
\nu^1_{t,x}= \begin{cases}
\nu_{t,x} & \text{if }x<\bar\gamma(t), \\
\delta_{\bar u} & \text{if }x>\bar\gamma(t),
\end{cases}
\qquad
\text{and}
\qquad 
\nu^2_{t,x}= \begin{cases}
\delta_{\bar u} & \text{if }x<\bar\gamma(t), \\
\nu_{t,x} & \text{if }x>\bar\gamma(t),
\end{cases}
\end{equation*}
are mv entropy solutions of \eqref{E_cl} in $(t_1,\tilde t)\times \R$.

{\it Proof of Claim 2}. We need to verify that $\nu^1$ and $\nu^2$ are mv entropy solutions on $\bar\gamma$. More precisely we have to verify that for all convex entropy-entropy flux $(\eta,q)$, for $\mathcal L^1$-a.e. $t\in (t_1,\tilde t)$
\begin{equation}\label{E_nu1_sol}
Q^{1-}_{\eta,q}(t):=\langle \nu^{1-}_t, -\dot{\bar\gamma}(t)\eta+q\rangle \geq \langle \nu^{1+}_t, -\dot{\bar\gamma}(t)\eta+q\rangle=: Q^{1+}_{\eta, q}(t)
\end{equation}
and similarly for $\nu^2$.

By the previous step we know that for every boundary entropy-entropy flux $(\eta,q)$ with value $\bar w$, $Q^{1+}_{\eta,q}=Q^+_{\eta,q}=0$, where 
$Q^+_{\eta,q}(t)=\langle \nu^+_t,-\dot{\bar\gamma}(t)\eta +q\rangle$ is the flux for the real solution. We claim that this implies that $Q^+_{\eta,q}=Q^{1+}_{\eta,q}$ for every entropy-entropy
flux pair $(\eta,q)$. This is sufficient to conclude since \eqref{E_nu1_sol} holds for $\nu$.

Observe that 
\begin{equation*}
Q^+_{\eta_1\pm\eta_2,q_1\pm q_2}=Q^+_{\eta_1,q_1}\pm Q^+_{\eta_2,q_2}
\end{equation*} and that the family of finite sums with sign of boundary 
entropies is dense in the family of Lipschitz entropies with $\eta(\bar u)=0$.
Using the fact that if $\eta^n$ and $q^n$ converges uniformly to $\eta$ and $q$ respectively then $Q^+_{\eta^n,q^n}\rightarrow Q^+_{\eta,q}$ almost everywhere, 
by density $Q^+_{\eta,q}=Q^{1+}_{\eta,q}=0$ for every entropy-entropy flux pair with $\eta(\bar w)=q(\bar w)=0$ and the claim for $\nu^1$ easily follows.

Consider the entropy $\I - \bar w$ and the flux $f-f(\bar w)$. By the previous step it follows that $Q^+=0$ therefore $Q^{2+}=0$ by conservation. By definition $Q^{2-}=0$ therefore $\nu^2$ is a distributional solution.
Moreover it does not dissipate any of the boundary entropies on $\gamma$ and it is fairly easy to prove that a solution that does not dissipate any boundary
entropy does not dissipate any entropy. In particular $\nu^2$ is a mv entropy solution.

{\it Proof of Lemma \ref{L_segments}}.
In order to prove \eqref{E_l_segm} suppose there exists a sequence as in the statement such that $w^n$ has two cluster points $a\neq b$ and $\displaystyle{\liminf_{n\rightarrow +\infty} T(\gamma^n,w^n) =\tilde t>  t_1}$. We need to prove that $a$ and $b$ belong to the same linearly degenerate component of the flux. Let
\begin{equation*}
\bar t= \min\big\{\liminf_{n\rightarrow +\infty} T(\gamma^n,w^n), t_2\big\}.
\end{equation*}
Applying Claim 2 twice we get that
\begin{equation*}
u^1(t,x)= \begin{cases}
a & \text{if }x<\bar\gamma(t), \\
b & \text{if }x>\bar\gamma(t),
\end{cases}
\qquad
\text{and}
\qquad 
u^2(t,x)= \begin{cases}
b & \text{if }x<\bar\gamma(t), \\
a & \text{if }x>\bar\gamma(t),
\end{cases}
\end{equation*}
are both entropy solutions of \eqref{E_cl} in $[t_1,\bar t]\times \R$. This implies that $a$ and $b$ belong to the same linearly degenerate component of $f$ 
and that $\bar\gamma$ is a segment with velocity $f'(a)$.

By the completeness property there exists a sequence as in the statement with the additional assumption that 
\begin{equation*}
\liminf_{n\rightarrow +\infty} T(\gamma^n,w^n)\geq t_2.
\end{equation*}
Then repeating 
the argument above in $[t_1,\tilde t]$ we get that every limit of $u^n$ belongs to $I_a$, in particular $\bar\gamma$ is a segment with constant velocity $f'(a)$ for 
$t\in [t_1,t_2]$.
\end{proof}

We introduce the following partition of the half-plane. \label{P_page_decomp}
\begin{enumerate}
\item The set $A_1$ is given by points belonging to at least two curves in $\mathcal K_\gamma$:
\begin{equation*}
A_1=\Big\{( t, x) \in \R^+\times \R: \ \exists \gamma\neq\gamma' \in\mathcal K_\gamma \big(\gamma(t)=\gamma'(t)= x \big)\Big\}.
\end{equation*}
For every $(\bar t,\bar x)\in \R^+\times \R\setminus A_1$ let $\bar \gamma= \gamma_{\bar y}$ be the unique curve in $\mathcal K_\gamma$ such that $\bar\gamma(\bar t)=\bar x$.
\item The open set $B$ is given by
\begin{equation*}
B=\Big\{(\bar t,\bar x): \exists \tilde t<\bar t, \ y_1<\bar y< y_2 \, \big(\gamma_{y_1}(\tilde t)=\gamma_{y_2}(\tilde t) \ \text{and}\ \gamma_{y_1}(\bar t) < \bar x <\gamma_{y_2}(\bar t) \big)\Big\}.
\end{equation*}
\item The set $C$ is given by the points $(\bar t,\bar x) \in\R^+\times \R$ such that
\begin{equation}\label{E_c}
\forall t<\bar t, \ \forall y>\bar y \ \left(\gamma_y(t)>\gamma_{\bar y}(t)\right) \quad \mbox{and} \quad
\forall t<\bar t, \ \forall y<\bar y \ \left(\gamma_y(t)<\gamma_{\bar y}(t)\right).
\end{equation}
By Lemma \ref{L_segments} the set $C$ is obtained as the union of segments starting from 0.
\item Let $A_2$ be the complement: $A_2=\R^+\times \R\setminus (A_1\cup B \cup C)$. 
\end{enumerate}
Setting $A=A_1\cup A_2$ we have $\R^+\times\R=A\cup B\cup C$.

We will need a further distinction: 
let $A_2=A_2'\cup A_2''$, where $A_2'$ is the set of points $(t,x)$ for which only one condition in \eqref{E_c} holds and $A_2''$ is the set of points $(t,x)$ such that 
\begin{enumerate}
\item there exists a unique curve $\gamma\in \K_\gamma$ such that $\gamma(t)=x$;
\item there exists $t^-<t$ and $\gamma^-$ such that $\gamma^-(t^-)=\gamma(t^-)$ and $\gamma^-(t)<\gamma(t)$;
\item there exists $t^+<t$ and $\gamma^+$ such that $\gamma^+(t^+)=\gamma(t^+)$ and $\gamma^+(t)>\gamma(t)$;
\item there are no $\tilde \gamma^-,\tilde \gamma^+\in \K_\gamma$ and $\tilde t<t$ such that $\tilde\gamma^-(\tilde t)=\tilde\gamma^+(\tilde t)$ and $\tilde\gamma^-( t)<\gamma(t)<\tilde\gamma^+(t)$.
\end{enumerate}
See Figure \ref{F_partition}.

\begin{figure}
\centering
\def\svgwidth{\columnwidth}
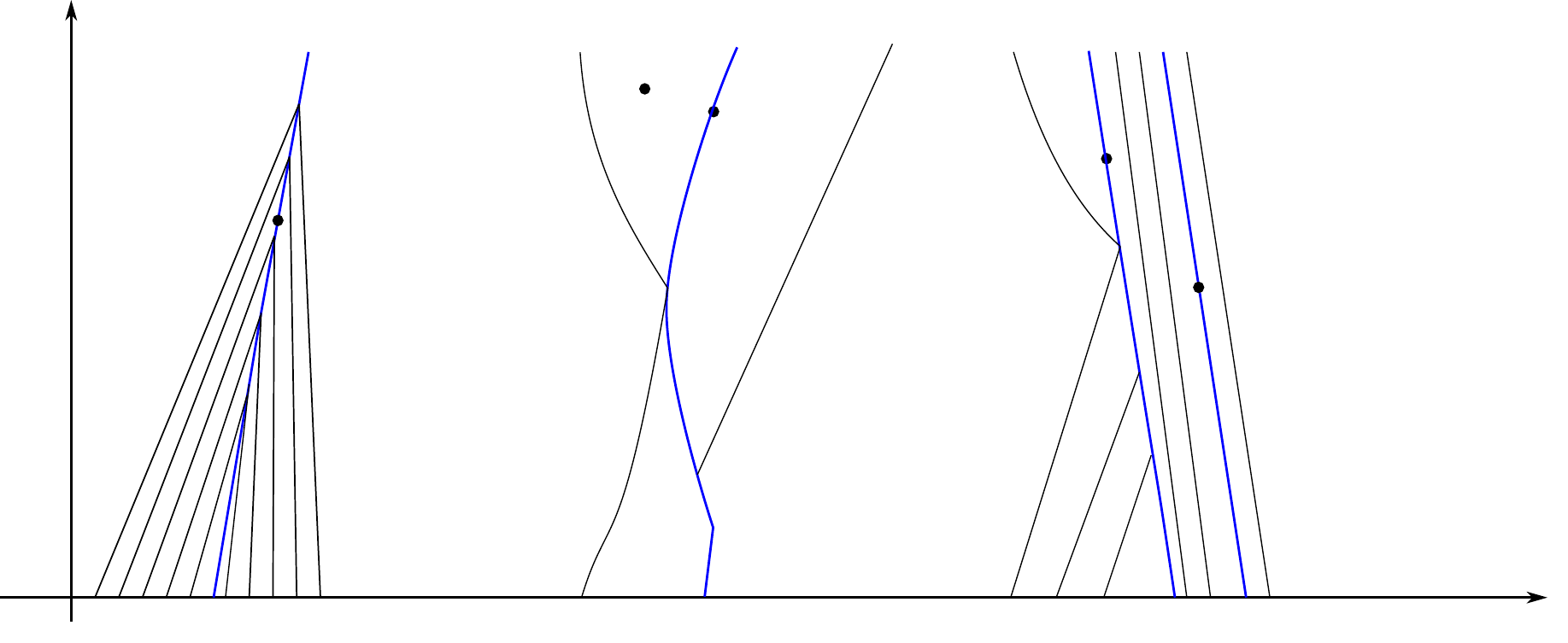
\caption{Example of points of the partition: $(t_1,x_1)\in A_1$, $(t_2,x_2)\in B$, $(t_3,x_3)\in A_2''$, $(t_4,x_4)\in A_2'$ and $(t_5,x_5)\in C$.}\label{F_partition}
\end{figure}

The candidate jump set $J$ is the set of points $(t,x)\in \R^+\times \R$ such that one of the following possibilities happens:
\begin{enumerate}
\item $(t,x)\in A$;
\item $(t,x)\in B$ and the solution is not continuous in $(t,x)$.
\item $(t,x)\in C$ such that the left and right limits obtained by Lemma \ref{L_segments} belong to different linearly degenerate components 
$I^-,I^+\in \mathcal L_f$ of the flux $f$.
\end{enumerate}

\begin{lemma}\label{L_rect}
There exists a countable subset $N\subset \mathcal K_\gamma$ such that
\begin{equation*}
J\subset \bigcup_{\gamma\in N} \graph(\gamma).
\end{equation*}
\end{lemma}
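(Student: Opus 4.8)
The plan is to split $J$ along the partition $\R^+\times\R=A_1\cup A_2\cup B\cup C$ and to exhibit, for each piece, an explicit countable subfamily of $\mathcal K_\gamma$ covering it. By the remark following Definition \ref{D_bd_fam} fix a monotone parametrization $y\mapsto\gamma_y$ of $\mathcal K_\gamma$ and a countable order-dense set $Q\subset\R$. For $J\cap A_1=A_1$: if $(\bar t,\bar x)\in A_1$ pick $y_1<y_2$ with $\gamma_{y_1}(\bar t)=\gamma_{y_2}(\bar t)=\bar x$ and $q\in Q\cap(y_1,y_2)$; the total order of $\mathcal K_\gamma$ forces $\gamma_{y_1}(\bar t)\le\gamma_q(\bar t)\le\gamma_{y_2}(\bar t)$, hence $\gamma_q(\bar t)=\bar x$, so $A_1\subset\bigcup_{q\in Q}\Graph(\gamma_q)$.

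For $J\cap B$: each connected component $\Omega$ of the open set $B$ is, by Corollary \ref{C_uniq_K} and the remark after it (so that $\Omega^-=\Omega^+=\emptyset$), a region where $u$ equals the explicit solution of Proposition \ref{P_V_Riem}; on each time slice $u$ is monotone in $x$ and jumps exactly along the level sets $\{v=c\}$ with $c$ a jump point of the monotone pseudo-inverse $g$. There are countably many such $c$, each level set is a countable union of segments with endpoints on $\partial\Omega$, and by completeness together with the uniqueness in Corollary \ref{C_uniq_K} each such segment is the restriction to $\Omega$ of a curve of $\mathcal K_\gamma$. Since $B$ has countably many components, $J\cap B$ lies in countably many graphs.

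For $J\cap C$: by Lemma \ref{L_segments} every point of $C$ lies on a unique curve $\gamma\in\mathcal K_\gamma$, which is a segment issuing from $t=0$, and distinct points of $C$ lie on disjoint segments. If $(t,x)\in J\cap C$ then the values $I^-,I^+\in\mathcal L_f$ furnished by Lemma \ref{L_segments} are distinct, so $\dist(I^-,I^+)>0$ and $u$ jumps genuinely across $\gamma$: testing a uniformly convex entropy $\eta$, the Rankine--Hugoniot balance of $(\eta,q)$ across $\gamma$ is strictly negative (the chord of $f$ between the two components cannot coincide with $f$, otherwise the two components would be one), hence $\mu\llcorner\Graph(\gamma)$ has a strictly negative $\mathcal H^1$-density on a sub-arc. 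Since these sub-arcs are pairwise disjoint and $\mu$ is locally finite, only countably many such $\gamma$ can occur, so $J\cap C$ is covered by countably many graphs.

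The main obstacle is the residual set $A_2=A_2'\cup A_2''$. Each of its points lies on a unique curve $\gamma\in\mathcal K_\gamma$ which, on a time interval $[\tau(\gamma),t]$ ending at that point, is isolated from at least one side — for $A_2'$ this is the surviving condition in \eqref{E_c}, for $A_2''$ it follows from condition (4) together with monotonicity — hence, by Lemma \ref{L_segments} (or its symmetric counterpart), is a segment there, whose lower endpoint $(\tau(\gamma),\gamma(\tau(\gamma)))$ lies in $A_1$ or on $\partial B$. One then needs a one-sided slope bound for the last-merging time, $\tau(\gamma_y)<\tau(\gamma_{\bar y})+C|y-\bar y|$ for $y$ on the relevant side of $\bar y$ — the standard rectifiability criterion already used in Section \ref{S_conc} — which confines the relevant parameters to countably many ``Lipschitz arcs'' in $y$; on each such arc one selects, at the rational parameter values, countably many curves of $\mathcal K_\gamma$ whose (segment) graphs cover the corresponding part of $A_2$. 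The delicate points are to prove this one-sided estimate for $\tau$ by combining monotonicity of $\mathcal K_\gamma$, closedness of $\mathcal K$ and Lemma \ref{L_segments}, and to exclude that a single $A_1$ point generates uncountably many such segments — the rarefaction-fan case, which must be shown to land in $B$ rather than in $A_2$; the non-dissipative points of $A_2$ are precisely the ones the dissipation argument used for $C$ cannot reach, so this step cannot be bypassed.
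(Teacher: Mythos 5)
The decisive gap is the set $A_2$: you explicitly leave its covering as a programme contingent on an unproven one-sided Lipschitz bound for the last merging time $\tau$ and on ruling out that a single $A_1$ point emits uncountably many such segments, and you acknowledge these points cannot be bypassed. That is precisely where a proof is required, and the route you sketch is both stronger than needed and not clearly sufficient (choosing curves of $\K_\gamma$ at rational parameters along a ``Lipschitz arc'' in $y$ does not by itself cover the intermediate points of $A_2$; quantitative estimates of this type belong to the endpoint analysis of Section \ref{S_conc}, not to this lemma). The paper disposes of $A_2$ by a soft separability argument: if $(\bar t,\bar x)\in A_2'$ and, say, only the first condition in \eqref{E_c} holds, then there is a one-sided product neighbourhood $U_{\bar t}\times U_{\bar y}$ (left in the parameter $y$) on which that condition fails identically, and the open sets so associated to distinct curves are pairwise disjoint, so separability of $\R^+\times\R$ allows only countably many curves carrying $A_2'$ points; for $(\bar t,\bar x)\in A_2''$ there is a neighbourhood in which the only points outside $B$ lie on the unique curve through $(\bar t,\bar x)$, and again separability yields countably many such curves. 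No estimate on merging times is needed anywhere.

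A secondary issue concerns $J\cap C$. Your idea — a segment whose one-sided limits lie in distinct components $I^-\ne I^+$ must dissipate a strictly positive amount, and pairwise disjoint segments with positive $|\mu|$-mass in a compact set are countably many — is a genuinely different and essentially viable argument, but as written it invokes the Rankine--Hugoniot/trace description along the segment (the content of Remark \ref{R_traces}, which rests on Proposition \ref{P_struct_J}), and in the paper those results come after, and partly rely on, the present lemma; to avoid circularity you would need to extract the one-sided flux identities directly from Claims 1--2 in the proof of Lemma \ref{L_segments} and check that the traces of $\nu$ on both sides of the segment are supported in the fixed components $I^\pm$ for a.e.\ time. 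The paper instead argues combinatorially: writing $C=\bigcup_n C_n$ with $\bar t\ge 2^{-n}$, for every $m$ the segments of $\overline{C}_n$ with $\dist(I^-,I^+)>1/m$ form a discrete family, hence are at most countable. Your treatment of $A_1$ coincides with the paper's, and your argument for $J\cap B$ (via Proposition \ref{P_V_Riem} and Corollary \ref{C_uniq_K}) agrees in substance with the paper's appeal to Lemma \ref{L_BV_case}; but without a complete argument for $A_2$ the proof of the lemma is not established.
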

\begin{proof}
The set $A_1$ is covered by countably many curves thanks to monotonicity:
\begin{equation*}
A_1\subset \bigcup_{y\in \Q}\graph(\gamma_y).
\end{equation*}

Let $(\bar t,\bar x)\in A_2'$ such that only the first condition in \eqref{E_c} holds:  then there exists a left neighborhood $U_{\bar y}$ 
of $\bar y$ in $\R$ and a neighborhood $U_{\bar t}$ of $\bar t$ such that for every 
$(t,y)\in U_{\bar t}\times U_{\bar y}$, the first condition in \eqref{E_c} is not satisfied. Since $\R^+\times \R$ is separable it has at most countably many
disjoint open subsets and this proves the claim for $A_2'$. \\
For every $(\bar t,\bar x)\in A_2''$ there exists a neighborhood in $\R^+\times \R$ such that the points on $\gamma$ are the only points not belonging to $B$ and this concludes the proof of the statement for $A_2$, again by separability. 

The result for $J\cap B$ follows from Lemma \ref{L_BV_case}.

Write 
\begin{equation*}
C=\bigcup_{n}C_n,
\end{equation*}
where $C_n$ is the subset of $C$ such that \eqref{E_c} holds with $\bar t \ge 2^{-n}$:
clearly
\begin{equation*}
\overline{C}_n\subset C_n \cup A_2' \cup A_1.
\end{equation*}
Since every point in $\overline{C}_n$ has both limits left and right $I^-$ and $I^+$ respectively from Lemma \ref{L_segments}, they can be different at most on countably many segments of $\overline{C}_n$  because for every $m\in \N$ the points such that $\dist (I^-,I^+)>1/ m$ is discrete.
\end{proof}

The following proposition is a sort of continuity outside $J$.

\begin{proposition}\label{P_struct_Jc}
For every $(t,x)\in \R^+\times \R \setminus J$ there exists $I\in \mathscr L_f$ such that 
\begin{equation*}
\forall \e>0 \,\, \exists r >0  \,\big(U_{t,x}(r)\subset I + (-\e,\e)\big).
\end{equation*}
\end{proposition}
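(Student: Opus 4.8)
The proof splits along the partition $\R^+\times\R=A\cup B\cup C$ introduced before Lemma~\ref{L_rect}. Since $A\subset J$, any $(t,x)\notin J$ lies either in $B$ with $u$ continuous at $(t,x)$, or in $C$ with the two one–sided limits produced by Lemma~\ref{L_segments} belonging to the same $I\in\mathscr L_f$ (this equality is precisely the hypothesis $(t,x)\notin J$). In both cases $(t,x)$ lies on a unique curve $\bar\gamma=\gamma_{\bar y}\in\mathcal K_\gamma$. The elementary observation is that every $w\in U_{t,x}(r)$ belongs to $K(t',x')$ for some $(t',x')\in B_{t,x}(r)$ (directly from the definitions of $U_{t,x}(r)$ and of $K$), so it suffices to show that $\bigcup_{(t',x')\in B_{t,x}(r)}K(t',x')$ shrinks to a small neighbourhood of a suitable $I$ as $r\to0$.

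\emph{Case $(t,x)\in B$.} By definition of $B$, $(t,x)$ lies in the inner region $\Omega'$ of a Riemann problem between two crossing curves of $\mathcal K$, so Corollary~\ref{C_uniq_K} (and the remark that $\Omega^-=\Omega^+=\emptyset$) gives $K(t',x')=\conv(u^-,u^+)$ for all $(t',x')\in\Omega'$, with $u$ monotone in $x$ on $\Omega'$. Since $u$ is continuous at $(t,x)$, monotonicity forces $u^-(t',x'),u^+(t',x')\to u(t,x)$ as $(t',x')\to(t,x)$, hence $K(t',x')\to\{u(t,x)\}$. Taking $r$ so small that $B_{t,x}(r)\subset\Omega'$ we obtain $U_{t,x}(r)\subset\bigcup_{(t',x')\in B_{t,x}(r)}K(t',x')\subset I+(-\e,\e)$, with $I:=I_{u(t,x)}\in\mathscr L_f$.

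\emph{Case $(t,x)\in C$.} Here $\bar\gamma=\gamma_{\bar y}$ is a segment of slope $f'(I)$, with $I=I^+=I^-$ the common value from Lemma~\ref{L_segments} applied on an interval $[t_1,t]$ from the right ($\gamma^n>\bar\gamma$) and from the left. I use three facts. \emph{(i)} $y\mapsto\gamma_y$ is continuous: if $\lim_{y\to\bar y^+}\gamma_y>\bar\gamma$, the open region between would contain a point lying on no curve of $\mathcal K_\gamma$, contradicting completeness and the total order; by Dini, $\gamma_y\to\bar\gamma$ uniformly on $[0,t]$ as $y\to\bar y$. \emph{(ii)} Since $(t,x)\notin A_1$ we have $\gamma_y(t)\neq x$ for $y\neq\bar y$, and $y\mapsto\gamma_y(t)$ is monotone; with the Lipschitz bound this forces every curve meeting $B_{t,x}(r)$ to be some $\gamma_y$ with $|\gamma_y(t)-x|\le(L+1)r$, hence with $y$ in a neighbourhood of $\bar y$ shrinking to $\{\bar y\}$ as $r\to0$. \emph{(iii)} The sequential statement of Lemma~\ref{L_segments} upgrades to a uniform one: for every $\e>0$ and $\tilde t\in(t_1,t)$ there is $\delta'>0$ so that $(\gamma,w)\in\mathcal K$ with $\gamma\gtrless\bar\gamma$ on $[t_1,t]$, $\|\gamma-\bar\gamma\|_{L^\infty[t_1,t]}<\delta'$ and $T(\gamma,w)>\tilde t$ implies $\dist(w,I)<\e$ (negating this produces a sequence satisfying (1)--(3) of Lemma~\ref{L_segments} but violating its conclusion, using that the no–crossing hypothesis on $[t_1,t]$ follows from \eqref{E_c}). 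Now fix $\e>0$, obtain $\delta'$ from (iii) with $\tilde t=3t/4$, then $\delta_1>0$ from (i) so that $|y-\bar y|<\delta_1\Rightarrow\|\gamma_y-\bar\gamma\|_{L^\infty[t_1,t]}<\delta'$, then $r<t/4$ small enough (using (ii)) that every curve meeting $B_{t,x}(r)$ has index $y$ with $|y-\bar y|<\delta_1$. If $w\in U_{t,x}(r)$ is realized by $(\gamma_y,w)$ with $T(\gamma_y,w)>t'$, $(t',\gamma_y(t'))\in B_{t,x}(r)$ and $y\neq\bar y$, then $T(\gamma_y,w)>t'>t-r>\tilde t$, $\gamma_y\gtrless\bar\gamma$ on $[t_1,t]$ with the sign dictated by \eqref{E_c}, and $\|\gamma_y-\bar\gamma\|_{L^\infty[t_1,t]}<\delta'$, so $\dist(w,I)<\e$ by (iii). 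The value(s) carried by $\bar\gamma$ itself with $T(\bar\gamma,w)\ge t$ — equivalently $K(t'',\gamma_{\bar y}(t''))$ for $(t'',\gamma_{\bar y}(t''))\in C\setminus J$ — are also contained in $I$; this follows from the closedness of $\mathcal K$ and the connectedness property of Definition~\ref{D_bd_fam}, combined with Claims~1--2 in the proof of Lemma~\ref{L_segments} (whose auxiliary mv solutions $\nu^1,\nu^2$ have constant sides taking values in $I^\pm=I$) and Lemma~\ref{L_bd_const}. Hence $U_{t,x}(r)\subset I+(-\e,\e)$.

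I expect the main obstacle to be exactly this last point of Case~C: the values attached to the segment $\bar\gamma$ itself cannot be reached by approximating inside $U_{t,x}(r)$, so one must exploit the closedness/connectedness of $\mathcal K$ and the entropy–balance Claims of Lemma~\ref{L_segments} to show $K\subset I$ along the segment; the other technical ingredients — the continuity of the flow $y\mapsto\gamma_y$, the localization (ii), and the uniform upgrade of Lemma~\ref{L_segments} — are comparatively routine. (The case $t=0$ is immediate, or reduces to the analysis of the initial data.)
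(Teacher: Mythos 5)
Your argument is correct and is essentially the paper's own (two-line) proof: the case $(t,x)\in B\setminus J$ is handled through the BV/Riemann-problem structure (the paper cites Lemma \ref{L_BV_case}; you use Corollary \ref{C_uniq_K} and Remark \ref{R_V_Riem}, which belong to the same circle of results), and the case $(t,x)\in C\setminus J$ through Lemma \ref{L_segments} combined with the definition of $J$, i.e.\ $I^-=I^+$. The additional details you supply --- continuity of the flow $y\mapsto\gamma_y$, the compactness upgrade of Lemma \ref{L_segments}, and the values carried by the segment itself (where Lemma \ref{L_bd_const} applied to a blow-up, rather than the connectedness property, is the operative tool) --- are points the paper leaves implicit.
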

\begin{proof}
If $(t,x)\in B\setminus J$ the claim follows from Lemma \ref{L_BV_case}. If $(t,x)\in C\setminus J$ the claim 
follows from the definition of $J$.
\end{proof}

Similarly, the next lemma corresponds to an extension of left/right continuity at a fixed time $\bar t$.
\begin{lemma}\label{L_trace_max}
For every $(\bar t,\bar x)\in \R^+\times\R$ there exist $I^+,I^-\in \mathscr L_f$ such that 
\begin{equation*}
\forall \e>0 \,\exists r >0  \,\big(U^\pm_{\bar t,\bar x}(r)\subset I^\pm + (-\e,\e)\big).
\end{equation*}
\end{lemma}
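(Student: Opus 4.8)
The plan is to prove the statement for $U^+_{\bar t,\bar x}$, the case of $U^-_{\bar t,\bar x}$ being symmetric (exchange the two sides of the maximal characteristic). Write $\bar\gamma:=\gamma^+_{\bar t,\bar x}$ and, for concreteness, assume $\bar t>0$ (the case $\bar t=0$ is analogous, working on time intervals of the form $[0,\cdot]$). The values $w$ occurring in $\mathcal K$ form a bounded set, so the sets $\overline{U^+_{\bar t,\bar x}(r)}$ are compact and nested in $r$, and $V:=\bigcap_{r>0}\overline{U^+_{\bar t,\bar x}(r)}$ is exactly the set of cluster points of sequences $w^n\in U^+_{\bar t,\bar x}(r_n)$ with $r_n\downarrow0$. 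If $V=\emptyset$ then $U^+_{\bar t,\bar x}(r)=\emptyset$ for $r$ small and the claim is trivial; otherwise, by compactness, the claim is equivalent to the assertion that $V$ is contained in a single $I^+\in\mathscr L_f$, which I will prove by contradiction.

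Assume then that $a,b\in V$ with $I_a\neq I_b$, and pick $(\gamma^n,w^n),(\delta^n,v^n)\in\mathcal K$ with $\gamma^n>\bar\gamma$, $\delta^n>\bar\gamma$ in the total order of $\mathcal K_\gamma$, with $w^n\to a$, $v^n\to b$, and with times such that $T(\gamma^n,w^n)>t_n$, $T(\delta^n,v^n)>s_n$, $(t_n,\gamma^n(t_n))\to(\bar t,\bar x)$ and $(s_n,\delta^n(s_n))\to(\bar t,\bar x)$. The first step is to show that $\gamma^n\to\bar\gamma$ and $\delta^n\to\bar\gamma$ locally uniformly. The curves in $\mathcal K_\gamma$ are uniformly Lipschitz (the Lipschitz constant being controlled by $f'$ on the range of the solution, via the consistency property in Definition \ref{D_bd_fam}), so by Ascoli--Arzelà and the closedness of $\mathcal K$ any subsequence of $(\gamma^n)$ has a further subsequence converging locally uniformly to some $\gamma^\infty\in\mathcal K_\gamma$; the convergences $\gamma^n(t_n)\to\bar x$, $t_n\to\bar t$ together with the Lipschitz bound give $\gamma^\infty(\bar t)=\bar x$, while $\gamma^\infty\ge\bar\gamma$ (pointwise limit of curves $\ge\bar\gamma$) and $\gamma^\infty\le\bar\gamma$ by maximality of $\bar\gamma$ among curves through $(\bar t,\bar x)$; hence $\gamma^\infty=\bar\gamma$, and likewise for $(\delta^n)$.

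After passing to subsequences we may assume $T(\gamma^n,w^n)\to\tau_a$ and $T(\delta^n,v^n)\to\tau_b$, with $\tau_a,\tau_b\ge\bar t$ since $t_n,s_n\to\bar t$; fix $t_1\in(0,\bar t)$. By closedness of $\mathcal K$ and upper semicontinuity of $T$ we get $(\bar\gamma,a),(\bar\gamma,b)\in\mathcal K$ with $T(\bar\gamma,a)\ge\tau_a>t_1$ and $T(\bar\gamma,b)\ge\tau_b>t_1$, so $a$ and $b$ are admissible boundary values along $\bar\gamma$ up to those times. Now I rerun the argument of the first Claim in the proof of Lemma \ref{L_segments} with $\bar\gamma$ and $(\gamma^n,w^n)$: that argument uses only $\gamma^n\ge\bar\gamma$ (a pinched region between $\bar\gamma$ and $\gamma^n$ is harmless), $\gamma^n\to\bar\gamma$, $w^n\to a$, and the admissibility of $(\bar\gamma,a)$, and it yields that the right trace $\nu^+$ of $\nu$ on $\bar\gamma$ does not dissipate any boundary entropy--entropy flux pair of value $a$, i.e. $-\dot{\bar\gamma}(t)\langle\nu^+_t,\eta\rangle+\langle\nu^+_t,q\rangle=0$ for a.e. $t\in[t_1,\tau_a]$, and similarly for value $b$ on $[t_1,\tau_b]$. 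Exactly as in the second Claim of that proof (density of finite signed combinations of boundary entropies among the Lipschitz entropies vanishing at a given value, together with stability of the flux under uniform convergence) this upgrades, for a.e. $t\in[t_1,\min(\tau_a,\tau_b)]$ and every entropy--entropy flux pair, to
\[ -\dot{\bar\gamma}(t)\langle\nu^+_t,\eta\rangle+\langle\nu^+_t,q\rangle = -\dot{\bar\gamma}(t)\eta(a)+q(a) = -\dot{\bar\gamma}(t)\eta(b)+q(b). \]
In particular $-\dot{\bar\gamma}(t)(\eta(a)-\eta(b))+(q(a)-q(b))=0$ for every convex $\eta$; taking both signs, the Young measures $\delta_a\ind_{\{x<\bar\gamma(t)\}}+\delta_b\ind_{\{x>\bar\gamma(t)\}}$ and $\delta_b\ind_{\{x<\bar\gamma(t)\}}+\delta_a\ind_{\{x>\bar\gamma(t)\}}$ are both mv entropy solutions of \eqref{E_cl} in $(t_1,\min(\tau_a,\tau_b))\times\R$. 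A single jump connecting $a$ and $b$ which is entropy admissible in both orientations forces $f$ to coincide with its chord on the interval with endpoints $a$ and $b$, hence $[a,b]\subset I$ for some $I\in\mathscr L_f$ and $I_a=I_b$ — a contradiction. Therefore $V$ lies in a single $I^+\in\mathscr L_f$, which proves the lemma. I expect the main obstacle to be precisely this step of invoking the two Claims from the proof of Lemma \ref{L_segments} for the maximal characteristic $\bar\gamma=\gamma^+_{\bar t,\bar x}$, which does not a priori satisfy the separation hypothesis assumed there: one must check that only $\gamma^n\ge\bar\gamma$ (not the strict inequality) and $\gamma^n\to\bar\gamma$ are used, and that the limiting admissibility of $(\bar\gamma,a)$ and $(\bar\gamma,b)$ on the relevant time interval is indeed provided by closedness of $\mathcal K$ and upper semicontinuity of $T$.
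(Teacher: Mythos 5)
The weak point you flag at the end is a genuine gap, and it is not just a technical check. The proof of Claim 1 in Lemma \ref{L_segments} is a balance on the region $\{(t,x):\ \bar\gamma(t)<x<\gamma^n(t)\}$, and the inequality \eqref{E_bal_segm} controls the flux of the right trace $\nu^+$ of $\nu$ on $\bar\gamma$ only over the set of times where this region has a nonempty slice, i.e.\ where $\bar\gamma(t)<\gamma^n(t)$ strictly; where the two curves coincide the slice is empty and no information on $\nu^+$ is produced. In Lemma \ref{L_segments} this is harmless because hypothesis (1) there is the \emph{pointwise} strict inequality $\gamma^n\llcorner_{[t_1,t_2]}>\bar\gamma\llcorner_{[t_1,t_2]}$, guaranteed by the separation assumption on $\bar\gamma$. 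In your setting $\gamma^n>\gamma^+_{\bar t,\bar x}$ only in the total order, and precisely in the configuration where some curve above $\gamma^+_{\bar t,\bar x}$ touches it at a time $t^*<\bar t$ (rarefaction-type regions of $B$ emanating from points of $\bar\gamma$, which for non-convex $f$ can occur at positive times, e.g.\ at shock interactions), the curves carrying the admissible values near $(\bar t,\bar x)$ may coincide with $\bar\gamma$ on all of $[t_1,s_n]$ and detach only at times $s_n$ which are not a priori bounded away from $\bar t$. Then the set $\{t:\bar\gamma(t)<\gamma^n(t)\}$ can have measure tending to $0$, the limit of \eqref{E_bal_segm} gives no vanishing of the boundary-entropy flux of $\nu^+$ on any fixed interval, and the whole contradiction machinery (Claim 2, the two-sided jump between $a$ and $b$) cannot be started. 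So the assertion that ``a pinched region is harmless'' is exactly what is missing, and it is false as stated; at best one gets flux information on the varying sets $\{\bar\gamma<\gamma^n\}$, which is not enough.

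The paper avoids this by splitting into the two cases according to whether curves above $\gamma^+_{\bar t,\bar x}$ stay strictly above it for $t<\bar t$ or touch it. In the first case Lemma \ref{L_segments} applies verbatim and gives the claim. In the second case no entropy balance is used at all: $\gamma^+_{\bar t,\bar x}$ is the left boundary of a Riemann problem with two boundaries, and Corollary \ref{C_uniq_K} (based on the strict monotonicity of the speed $v$ in $\Omega^m$ from Proposition \ref{P_V_Riem}, together with Remark \ref{R_V_Riem}) forces the values $w_n$ carried by a monotone decreasing sequence of curves with $\liminf T(\gamma_n,w_n)\ge\bar t$ to be monotone, hence to converge to a single value, which yields $I^+$ directly. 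If you want to keep your contradiction scheme, you must supply an argument of this second type for the touching case (or prove that the detachment times stay bounded away from $\bar t$ and rerun the balance with the flux integrals restricted to $\{\bar\gamma<\gamma^n\}$); the rest of your proposal (uniform Lipschitz bound and convergence $\gamma^n\to\bar\gamma$ by maximality, admissibility of $(\bar\gamma,a)$, $(\bar\gamma,b)$ via closedness of $\K$ and upper semicontinuity of $T$, and the density argument upgrading zero boundary-entropy flux to all entropy pairs) is sound.
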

\begin{proof}
One of the following cases occurs:
\begin{enumerate}
\item for all $\gamma>\gamma^+_{\bar t,\bar x}$ and $t< \bar t$, it holds $\gamma(t)>\gamma^+_{\bar t,\bar x}(t)$;
\item there exists $\gamma>\gamma^+_{\bar t,\bar x}$ and $t< \bar t$ such that 
$\gamma(t)=\gamma^+_{\bar t,\bar x}(t)$.
\end{enumerate}
Case (1): the claim immediately follows from Lemma \ref{L_segments}.
\newline
Case (2): in this case $\gamma^+_{\bar t.\bar x}$ is the left boundary of a Riemann problem with two boundaries. 
Consider a monotone decreasing sequence $\gamma_n \rightarrow \gamma^+_{\bar t,\bar x}$ and values $w_n$ such that
\begin{equation*}
\liminf_{n\rightarrow \infty}T(\gamma_n, w_n) \ge \bar t.
\end{equation*}
By Corollary \ref{C_uniq_K} the sequence $w_n$ is monotone and this implies the claim.
%
%
\end{proof}

Finally a result similar to the $L^1$ blow-up of $\BV$ functions.

\begin{proposition}\label{P_struct_J}
For every $\gamma \in \K_\gamma$, for $\mathcal L^1$-a.e. $t>0$, there exist $I^+,I^-\in \mathscr L_f$ such that 
\begin{equation*}
\forall \delta>0 \, \, \forall \e>0 \, \exists r>0 \, \big( U^{\delta \pm}_{t,\gamma}(r)\subset I^\pm+(-\e,\e)\big).
\end{equation*}
\end{proposition}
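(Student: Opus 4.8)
The plan is to reduce the statement to the fixed–time result Proposition \ref{P_struct_Jc} (and, where that does not suffice, to Lemma \ref{L_segments} and Corollary \ref{C_uniq_K}) by a decomposition of the time axis. First, fix $\gamma=\gamma_{\bar y}\in\K_\gamma$; by Rademacher's theorem the set $G$ of $\bar t>0$ at which $\gamma$ is differentiable has full measure, and I only consider $\bar t\in G$. Since $B^{\delta\pm}_{\bar t,\gamma}(r)\subset B_{\bar t,\gamma(\bar t)}(r)$ one always has $U^{\delta\pm}_{\bar t,\gamma}(r)\subset U_{\bar t,\gamma(\bar t)}(r)$, so for every $\bar t\in G$ with $(\bar t,\gamma(\bar t))\notin J$ the claim, with $I^+=I^-$, follows at once from Proposition \ref{P_struct_Jc}. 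It remains to treat the times $\bar t\in G$ with $(\bar t,\gamma(\bar t))\in J$.

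By Lemma \ref{L_rect}, $J\subset\bigcup_{\gamma_i\in N}\graph(\gamma_i)$ with $N$ countable, so this set of times is contained in $\bigcup_i E_i$, where $E_i:=\{\bar t:\gamma(\bar t)=\gamma_i(\bar t)\}$. Discarding the indices $i$ with $\mathcal L^1(E_i)=0$ --- a countable union of null sets --- I am left with $\bar t$ which is a density point of some $E_i$ of positive measure; then $\dot\gamma(\bar t)=\dot\gamma_i(\bar t)$, and by the monotonicity of $\K_\gamma$ the whole bundle of characteristics between $\gamma$ and $\gamma_i$ at $(\bar t,\gamma(\bar t))$ coincides with $\gamma$ on a set of times having $\bar t$ as a density point. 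Along $\gamma_i$ the trichotomy of page \pageref{P_page_decomp} now applies: up to a further $\mathcal L^1$-null set of $\bar t$, the point $(\bar t,\gamma(\bar t))$ is either a $C$-point or lies on the boundary of a $B$-region.

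In the $C$-case the non–crossing hypothesis of Lemma \ref{L_segments} holds on a one–sided neighbourhood of $\bar t$, which yields $I^+\in\mathcal L_f$ (and symmetrically $I^-$) such that every sequence of characteristics collapsing onto $\gamma$ from the right has boundary values tending to $I^+$; since, for $r$ small, every characteristic meeting $B^{\delta+}_{\bar t,\gamma}(r)$ is --- by the differentiability of $\gamma$ at $\bar t$ together with the uniform Lipschitz bound on characteristics --- such a curve, this gives $U^{\delta+}_{\bar t,\gamma}(r)\subset I^++(-\e,\e)$ for $r$ small. In the $B$-case Corollary \ref{C_uniq_K} and Proposition \ref{P_V_Riem} describe $\nu$ to the right of $\gamma$ explicitly; for $\mathcal L^1$-a.e.\ $\bar t$ the one–sided trace $u^+$ of $\nu$ on $\gamma$ exists, the sections $K(t,x)$ at nearby points equal $\conv(u^-,u^+)$, and the $\BV$ fine structure of this trace forces $U^{\delta+}_{\bar t,\gamma}(r)$ to shrink onto $I^+:=I_{u^+(\bar t)}$ as $r\to0$; symmetrically for $I^-$. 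In every case $I^\pm$ depends only on $\bar t$, so the quantifier order $\forall\delta\,\forall\e\,\exists r$ is respected.

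The hard part will be the measure–theoretic step of the second paragraph: proving that, along the fixed curve $\gamma$, the characteristics glued to $\gamma$ detach into the open cones $B^{\delta\pm}_{\bar t,\gamma}(r)$ only for $\bar t$ in an $\mathcal L^1$-null set. This is precisely where the shock structure intervenes --- on a shock curve the glued bundle is a nondegenerate interval of characteristics over a positive–measure set of times --- and I would handle it by a disintegration/covering argument over the monotone family $\K_\gamma$, using the characteristic equation \eqref{E_speed_BV} and the two–boundary Riemann description of Proposition \ref{P_V_Riem} to confine the detachment times to the (null) set of endpoints of the coincidence intervals.
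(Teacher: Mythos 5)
Your first reduction (off $J$, use Proposition \ref{P_struct_Jc}) is fine, but the core of the statement is precisely at the times when $(\bar t,\gamma(\bar t))\in J$ --- e.g.\ when $\gamma$ is a shock --- and there your argument has a genuine gap. The claim in your second paragraph, that up to an $\mathcal L^1$-null set of $\bar t$ such a point ``is either a $C$-point or lies on the boundary of a $B$-region'', is unjustified and in general false: at a shock point several curves of $\K_\gamma$ pass through $(\bar t,\gamma(\bar t))$ (the incoming characteristics), so the point lies in $A_1$ for a whole interval of times; it cannot be a $C$-point (condition \eqref{E_c} presupposes a unique curve through the point), and the characteristics impinging on the shock may be segments issuing from $t=0$, with no two-boundary Riemann structure ($B$-region) on either side. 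The relevant dichotomy is not about the point itself but about the maximal (resp.\ minimal) characteristic $\gamma^{\pm}_{\bar t,\gamma(\bar t)}$: either it is touched from above (resp.\ below) at some earlier time, or it stays strictly separated and hence is a segment by Lemma \ref{L_segments}. Moreover, even granting a one-sided limit along $\gamma^{+}_{\bar t,\gamma(\bar t)}$ (Lemma \ref{L_trace_max} only controls values carried by curves lying \emph{above} the maximal characteristic), you still must control the whole cone $B^{\delta+}_{\bar t,\gamma}(r)$, which also contains points with $t>\bar t$ and curves that need not stay above $\gamma^{+}_{\bar t,\gamma(\bar t)}$; your proposal never addresses this forward-in-time part. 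You yourself flag the missing measure-theoretic step as ``the hard part'' to be handled by an unspecified covering/disintegration argument --- but that step \emph{is} the proof.

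For comparison, the paper splits the times along $\bar\gamma$ into $T_m$ (the maximal characteristic is hit from above before $\bar t$) and $T_s$ (it is not). On $T_m$ the right limit of $u$ at fixed time exists by the two-boundary Riemann description; an Egorov argument upgrades this to a strong trace for a.e.\ $\bar t\in T_m$, and then a blow-up combined with the stability of admissible boundaries and Lemma \ref{L_bd_const} excludes boundary values in the cone away from $I_{u^+(\bar t)}$ --- your appeal to the ``BV fine structure of the trace'' skips exactly this blow-up step. On $T_s$ the maximal characteristic is a segment (Lemma \ref{L_segments}); the paper parametrizes these segments by $y(\bar t)=\gamma^+_{\bar t,\bar\gamma(\bar t)}(\e)$, uses monotonicity of $y$ and the countable exceptional set from Lemma \ref{L_rect} to decompose $T_s(\e)$ into a null set, countably many pieces handled by Lemma \ref{L_trace_max} (tangency to a fixed $\gamma_{y^n}$), and a main piece $T^0_s(\e)$ where, at density-one points and by finite speed of propagation, the full cone $B^{\delta+}_{\bar t,\bar\gamma}(r)$ is eventually contained in the region $R$ swept by the segments, so that limits of admissible boundaries give the desired $I^+$. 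None of this machinery is present, even in outline, in your proposal, so the argument as written does not establish the proposition.
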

Recall that $U^{\delta \pm}_{t,\gamma}(r)$ is defined if $\dot\gamma(t)$ exists.
\begin{proof}
Fixed $\bar \gamma \in \K_\gamma$, we distinguish two cases as in the proof of the previous lemma: let $\R^+= T_m\cup T_s$
where 
\begin{enumerate}[(a)]
\item $\bar t \in T_m$ if there exists 
$\gamma>\gamma^+_{\bar t,\bar\gamma(\bar t)}$ and $t< \bar t$ such that 
$\gamma(t)=\gamma^+_{\bar t,\bar \gamma(\bar t)}(t)$;
\item $\bar t\in T_s$ if for all $\gamma>\gamma^+_{\bar t,\bar \gamma(\bar t)}$ and $t< \bar t$, it holds 
$\gamma(t)>\gamma^+_{\bar t,\bar\gamma(\bar t)}(t)$.
\end{enumerate}

{\it Case (a)}.
Since for every $\bar t\in T_m$ there exists $\displaystyle{\lim_{x\rightarrow \bar\gamma(\bar t)^+} u(\bar t,x)}$ as in case (2) above, by a standard application of 
Egorov theorem for $\mathcal L^1$-a.e. 
$\bar t\in T_m$ there exists the trace $u^+$ in the following sense: 
\begin{equation*}
\lim_{r\rightarrow 0}\frac{1}{r^2}\int_{B^+_{(\bar t,\bar\gamma(\bar t))}(r)}\int_\R|w-u^+(\bar t)|d\nu_{t,x}(w) dxdt = 0,
\end{equation*}
where $ B^+_{(\bar t,\bar \gamma(\bar t))}(r)= B_{(\bar t,\bar \gamma(\bar t))}(r)\cap \{(t,x):x>\bar \gamma(t)\}$. 
In particular the blow-up at $(\bar t,\bar\gamma(\bar t))$ is constant $u^+(\bar t)$ on the right side of the straight line $\{x=\dot{\bar\gamma}(\bar t)t\}$. \\
Assume that by contradiction the statement of the proposition is false in $\bar t\in T_m$ as above.
Then there exist $\delta,\e>0$ and a subsequence of rescaled solutions with an admissible boundary in 
\begin{equation*}
\bigg\{(t,x): |t|\le \frac{1}{\delta}, x=\dot{\bar\gamma}(\bar t)t+1\bigg\}
\end{equation*}
with value in $\R\setminus ( I_{u^+(\bar t)} + (-\e,\e))$.
Therefore the blow-up has an admissible boundary in $\{x>\dot{\bar\gamma}(\bar t)t\}$ with value not belonging to the linearly degenerate component $I_{u^+(\bar t)}\in \mathcal L_f$.
This contradicts Lemma \ref{L_bd_const}.

{\it Case (b)}.
By Lemma \ref{L_segments} for every $\bar t\in T_s$ the maximal characteristic $\gamma_{\bar t,\bar \gamma(\bar t)}^+$ is a
segment in $[0,\bar t]$ belonging to $C\cup A_2'$ and there exists $I^+(\bar t)\in \mathcal L_f$ such that the admissible boundary values from the right of $\gamma_{\bar t,\bar \gamma(\bar t)}^+$ converge to $I^+(\bar t)$. 
We fix an arbitrary $\e>0$ and we prove the statement for $\bar t\in T_s(\e):= T_s\cap(2\e,+\infty)$. \\
For every $\bar t\in T_s(\e)$, denote by $y(\bar t):=\gamma_{\bar t,\bar \gamma(\bar t)}^+(\e)$ and let $\gamma_{y(\bar t)}=\gamma_{\bar t,\bar \gamma(\bar t)}^+$.
In the proof of Lemma \ref{L_rect} we observed that there exists an at most countable set $N=\{y^n\}_{n\in \N}\subset y(T_s(\e))$  such that for every 
$y\in y(T_s(\e))\setminus N$, $\gamma_y\in C$ and $I^-=I^+$.
It is easy to prove that the function $y(t)$ is monotone, in particular it is continuous except an at most a countable subset of $T_s(\e)$.
Therefore we can write
\begin{equation*}
T_s(\e)= E\cup\bigcup_{n=0}^{+\infty} T_s^n(\e),
\end{equation*}
where 
\begin{enumerate}
\item $\mathcal L^1(E)=0$;
\item $t\in  T_s^0(\e)$ if and only if $t\in T_s(\e)$, $t$ is a differentiability point of $\gamma$,  $t$ is a continuity point of $y$ and $y(t)\notin N$;
\item for every $n>0$, $t\in  T_s^n(\e)$ if and only if $t\in T_s(\e)$, $t$ is a differentiability point of $\gamma$,  $t$ is a continuity point of $y$ and $y(t)=y^n$.
\end{enumerate}

We prove the statement for points of Lebesgue density one of $T_s^n$ for every $n\ge 0$. If $n>0$ it immediately follows from Lemma \ref{L_trace_max}, being the Lebesgue points of $T^n_s$ times where $\bar \gamma$ is tangent to $\gamma_{y^n}$. \\
It remains to consider the case $n=0$.
Let $R$ be the region 
\begin{equation*}
R=\bigcup_{\bar t\in T_s(\e)}\Big\{ (t,x)\in [0,\bar t]\times \R \, : x>\gamma^+_{\bar t,\bar \gamma(\bar t)}(t)\Big\}.
\end{equation*}
By definition of $T_s^0(\e)$ it follows that for every sequence $R\ni (t_n,x_n)\rightarrow (\bar t,\bar \gamma(\bar t))$ with $\bar t\in T_s^0(\e)$ and every $\gamma_n\in \K_\gamma$ such that $\gamma_n(t_n)=x_n$ it holds $\gamma_n\rightarrow \gamma^+_{\bar t,\bar\gamma(\bar t)}$. In particular, since the limits of admissible boundaries are admissible boundaries it suffices to verify that for every $\bar t\in T_s^0(\e)$ of density one
\begin{equation*}
\forall \delta>0\, \forall \e>0 \, \exists r>0 \, \big( B^{\delta +}_{\bar t, \bar \gamma}(r) \subset R\big).
\end{equation*}
By finite speed of propagation it follows from the fact that $\bar t$ has density one in $T_s^0(\e)$.
\end{proof}

The next result in this section describes the structure of the solution $\nu$ that follows from the corresponding structure of the complete family of boundaries.

\begin{corollary}\label{C_struct_Jc}
Let $\nu$ be a mv entropy solution of \eqref{E_cl} with a complete family of boundaries. Then there exists a representative of $\nu$ such that
\begin{enumerate}
\item $\langle \nu, f' \rangle$ is continuous in $\R^+\times \R \setminus J$;
\item for $\mathcal H^1$-a.e. $(t,x)\in J$, there exists $\lambda^-,\lambda^+\in \R$ and $\gamma\in \K_\gamma$ such that $\gamma(t)=x$ and for every $\delta >0$
\begin{equation*}
\lim_{r\rightarrow 0}\|\langle \nu, f' \rangle-\lambda^-\|_{L^\infty(B^{\delta-}_{t,\gamma})}=0,\qquad
\lim_{r\rightarrow 0}\|\langle \nu, f' \rangle-\lambda^+\|_{L^\infty(B^{\delta+}_{t,\gamma})}=0;
\end{equation*}
\item for every $(\bar t,\bar x)\in \R^+\times\R$ there exist left and right limits
\begin{equation*}
\lambda^-=\lim_{x\rightarrow\bar x^-}\langle \nu_{\bar t, x},f'\rangle, \qquad
\lambda^+=\lim_{x\rightarrow\bar x^+}\langle \nu_{\bar t, x},f'\rangle.
\end{equation*}
\end{enumerate}
If $f$ is weakly genuine nonlinear then $\nu=\delta_u$ is a Dirac solution and the same regularity can be deduced for $u$.
\end{corollary}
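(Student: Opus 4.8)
The plan is to deduce all three assertions, together with the weakly-genuinely-nonlinear addendum, from the three structure results just proved --- Proposition \ref{P_struct_Jc}, Lemma \ref{L_trace_max} and Proposition \ref{P_struct_J} --- combined with the completeness property. Write $\lambda(t,x):=\langle\nu_{t,x},f'\rangle$; since $\nu$ is bounded, all $\nu_{t,x}$ live on a fixed compact interval on which $f'$ has a modulus of continuity $\omega$, and $f'$ equals the constant $f'(I)$ on each $I\in\mathscr L_f$. I will use repeatedly that, by completeness and the closedness of $K$, for $\mathcal L^2$-a.e.\ $(t,x)$ one has $\emptyset\ne\supp\nu_{t,x}\subset K(t,x)$, that $K(t,x)\ne\emptyset$ for every $(t,x)$ (remark after Definition \ref{D_bd_fam}), that $K(t',x')$ is contained in $U_{t',x'}(r)$ and --- whenever $(t',x')$ lies in the relevant cone --- in $U^{\delta\pm}_{t,\gamma}(r)$ and in $U^{\pm}_{\bar t,\bar x}(r)$ (because the boundary curve through $(t',x')$ enters the cone at times slightly below $t'$), and that $r\mapsto U_{t,x}(r)$ is monotone with $U_{t_1,x_1}(\rho)\subset U_{t_0,x_0}(r)$ when $B_{t_1,x_1}(\rho)\subset B_{t_0,x_0}(r)$.

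For assertion (1): given $(t,x)\notin J$, take $I(t,x)\in\mathscr L_f$ from Proposition \ref{P_struct_Jc}; passing to closures and intersecting over $r>0$ in the inclusion $U_{t,x}(r)\subset I(t,x)+(-\e,\e)$ gives $K(t,x)\subset\bigcap_{r>0}\overline{U_{t,x}(r)}\subset I(t,x)$, so for $\mathcal L^2$-a.e.\ $(t,x)$ (the set $J$ being $\mathcal L^2$-negligible by Lemma \ref{L_rect}) $\supp\nu_{t,x}\subset I(t,x)$ and hence $\lambda(t,x)=f'(I(t,x))$. I then show that $g(t,x):=f'(I(t,x))$, pointwise defined on $\R^+\times\R\setminus J$, is continuous there: fixing $(t_0,x_0)\notin J$ and $\e>0$, pick $r$ with $U_{t_0,x_0}(r)\subset I(t_0,x_0)+(-\e,\e)$; for $(t_1,x_1)\notin J$ with $B_{t_1,x_1}(r/2)\subset B_{t_0,x_0}(r)$, choose $w_1\in K(t_1,x_1)$, so that $w_1\in U_{t_1,x_1}(r/2)\subset I(t_0,x_0)+(-\e,\e)$ and $w_1\in I(t_1,x_1)$, whence $|g(t_1,x_1)-g(t_0,x_0)|=|f'(w_1)-f'(I(t_0,x_0))|\le\omega(\e)$. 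Redefining $\nu_{t,x}$ to be $\delta_w$ for some $w\in K(t,x)$ on the $\mathcal L^2$-null set where $\supp\nu_{t,x}\not\subset I(t,x)$ and on $J$ then gives the representative with $\langle\nu,f'\rangle=g$ off $J$, which is (1).

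For (2) and (3) I run the same argument with a different cone. For (2): by Lemma \ref{L_rect}, $J\subset\bigcup_{\gamma\in N}\Graph(\gamma)$ with $N$ countable; applying Proposition \ref{P_struct_J} to each $\gamma\in N$ and discarding an $\mathcal L^1$-null set of times for each (hence an $\mathcal H^1$-null subset of $J$), for $\mathcal H^1$-a.e.\ $(t,x)\in J$ there is $\gamma\in N$ with $\gamma(t)=x$, $\dot\gamma(t)$ existing, and $I^\pm\in\mathscr L_f$ with $U^{\delta\pm}_{t,\gamma}(r)\subset I^\pm+(-\e,\e)$ for $r$ small; since $K(t',x')\subset U^{\delta+}_{t,\gamma}(r)$ for $(t',x')\in B^{\delta+}_{t,\gamma}(r)$, the representative $g$ satisfies $\|\langle\nu,f'\rangle-f'(I^+)\|_{L^\infty(B^{\delta+}_{t,\gamma}(r))}\le\omega(\e)\to0$ as $r\to0$, and symmetrically for $I^-$; put $\lambda^\pm:=f'(I^\pm)$. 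For (3): fix $(\bar t,\bar x)$ and take $I^\pm\in\mathscr L_f$ from Lemma \ref{L_trace_max}; for $x\in(\bar x-r,\bar x)$ any $(\gamma,w)\in\K$ with $\gamma(\bar t)=x$ and $T(\gamma,w)\ge\bar t$ has $\gamma<\gamma^-_{\bar t,\bar x}$ by monotonicity and $(\bar t,\gamma(\bar t))\in B_{\bar t,\bar x}(r)$, so $w\in U^-_{\bar t,\bar x}(r)$, whence $K(\bar t,x)\subset U^-_{\bar t,\bar x}(r)\subset I^-+(-\e,\e)$; taking $w\in K(\bar t,x)\subset I(\bar t,x)$ for $(\bar t,x)\notin J$ yields $|g(\bar t,x)-f'(I^-)|\le\omega(\e)$. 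Since $J\cap\{t=\bar t\}$ is at most countable this gives $\lim_{x\to\bar x^-}\langle\nu_{\bar t,x},f'\rangle=f'(I^-)=:\lambda^-$, and symmetrically $\lambda^+:=f'(I^+)$ with $U^+_{\bar t,\bar x}$ and $x>\bar x$.

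If $f$ is weakly genuinely nonlinear, every $I\in\mathscr L_f$ is a single point, so the inclusion $\supp\nu_{t,x}\subset I(t,x)$ forces $\nu_{t,x}=\delta_{u(t,x)}$ for $\mathcal L^2$-a.e.\ $(t,x)$; thus $\nu=\delta_u$ and (1)--(3) are precisely the stated regularity of $\lambda=f'\circ u$. The main subtlety I anticipate is reconciling the $\mathcal L^2$-a.e.\ completeness inclusion with the fixed time $\bar t$ appearing in (3); my way around it is to phrase the one-sided limits of (3) through the pointwise, continuous-off-$J$ representative $g$ produced in (1) and to exploit that $J$ meets each horizontal line in a countable set, so that no fixed-time version of completeness is required. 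A minor point is the mismatch between the strict inequality in the definition of the cones $U$ and the non-strict one in the definition of $K$, which is absorbed by evaluating the realizing curves at times slightly below the relevant instant.
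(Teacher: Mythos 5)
Your proof is correct and follows essentially the paper's own route: the paper settles this corollary with the single observation that $f'$ is constant on each $I\in\mathcal L_f$ (and that weak genuine nonlinearity makes each $I$ a singleton), the substance being exactly your combination of Proposition \ref{P_struct_Jc}, Lemma \ref{L_trace_max} and Proposition \ref{P_struct_J} with the completeness property. The only cosmetic point is that in (3) the appeal to countability of $J\cap\{t=\bar t\}$ is unnecessary, since your representative at points of $J$ also takes values in $K(\bar t,x)\subset I^-+(-\varepsilon,\varepsilon)$, so the one-sided limits hold along all $x$.
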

The proof is just the observation that $f'$ is constant on $I\in \mathcal L_f$ plus the fact that weak genuine nonlinearity implies that each $I\in \mathcal L_f$
is a singleton.

\begin{remark}\label{R_compactness}
Let $\nu$ be a mv solution for which there exists a complete family of boundaries. Then for almost every $(t,x)\in \R^+\times \R$, 
\begin{equation*}
\supp \,\nu_{t,x}\subset I
\end{equation*}
for some $I\in \mathscr L_f$.
\end{remark}
Suppose additionally that $f$ is weakly genuine nonlinear and $u^n\rightarrow \nu$ as Young measures where $u^n$ are entropy solutions of
\eqref{E_cl}. Then Remark \ref{R_compactness} implies that $\nu_{t,x}= \delta_{u(t,x)}$ for an $L^\infty$ entropy solution $u$ of \eqref{E_cl} and 
$u^n\rightarrow u$ strongly in $L^1(\R^+\times \R)$.

\begin{remark}\label{R_traces}
Consider a curve $\gamma \in \K_\gamma$. In Section \ref{S_prel} a notion of left and right trace has been defined for $\nu$ on $\gamma$ . 
The results in this section allow to compute the speed of $\gamma$ and the dissipation $\mu$ on $\gamma$ for every entropy $\eta$.

By Proposition \ref{P_struct_J} it follows that for $\mathcal L^1$-almost every $t>0$ there exist $I^+(t)$ and $I^-(t)$ in $\mathcal L_f$ such that $\supp \nu^\pm_t\subset I^\pm(t)$. 
If $I^+(t)\ne I^-(t)$ then the Rankine-Hugoniot condition implies that 
\begin{equation*}
\dot\gamma(t)=\frac{\langle \nu^+_t,f\rangle - \langle \nu^-_t,f\rangle}{\langle \nu^+_t,\Id\rangle - \langle \nu^-_t,\Id\rangle}.
\end{equation*}
Observe that the denominator is non zero since $I^-(t)$ and $I^+(t)$ are disjoint. Moreover for every entropy-entropy flux pair $(\eta,q)$ the dissipation along 
$\gamma$ is given by
\begin{equation}\label{E_diss_gamma}
\mu\llcorner \Graph(\gamma)= \Big(\langle \nu^+,q\rangle - \langle \nu^-,q\rangle -\dot\gamma(t)\big( \langle \nu^+,\eta\rangle - \langle \nu^-,\eta\rangle\big)\Big)
\frac{1}{\sqrt{1+\dot\gamma(t)^2}}\mathcal H^1\llcorner \Graph(\gamma).
\end{equation}

If $I^+(t)=I^-(t)$ then by Point (5) in Definition \ref{D_bd_fam} it holds $\dot\gamma(t)=f'(I^+(t))$ moreover since $q-\dot\gamma \eta$ is constant on $I^+(t)$ by \eqref{E_diss_gamma} it follows $\mu(\Graph(\gamma))=0$.
\end{remark}

\begin{remark}\label{R_V_Riem}
If the boundaries of the Riemann problem with two boundaries belong to a complete family of boundaries, we can refine Proposition \ref{P_V_Riem}: in particular using the same notation
we can prove that $\Omega^m=\Omega$. Roughly speaking this means that no constant region can appear.

By properties (3) and (5) in Proposition \ref{P_V_Riem} it suffices to prove that $\mathcal L^1$-a.e. in $(0,T)$, it holds 
$\dot\gamma_1\ge \lambda^-$ and $\dot\gamma_2 \le \lambda^+$. \\
Denote by $T^-\subset (0,T)$ the set of points where $\dot\gamma_1(t)< \lambda^-$. In particular for every $t\in T^-$, $\dot\gamma_1(t)<f'(a)$.
By Points (2) and (5), for $\mathcal L^1-$a.e. $t\in T^-$, the right trace $u^+(t)=a$.
Therefore for every entropy-entropy flux pair $(\eta,q)$ the dissipation on $\gamma_1$ for $t\in T^-$ is
\begin{equation*}
\left(-\dot\gamma_1(t)(\eta(a)-\langle \nu^-_t,\eta\rangle)+q(a)-\langle \nu^-_t,q\rangle\right)\frac{1}{\sqrt{1+\dot\gamma_1(t)^2}}
\mathcal H^1\llcorner \graph(\gamma_1\llcorner T^-).
\end{equation*}
We will obtain that $\mathcal L^1(T^-)=0$ by checking that $-\dot\gamma_1(t)(\eta(a)-\langle \nu^-_t,\eta\rangle)+q(a)-\langle \nu^-_t,q\rangle \le 0$ for every 
entropy-entropy flux pair $(\eta,q)$. \\
Indeed we already know by Corollary \ref{C_struct_Jc} that for $\mathcal L^1$-a.e. $t\in T^-$ there exists $I_t\in \mathcal L_f$ such that $\supp\nu^-_t \subset I_t$.
First we observe that $a\in I_t$ for $\mathcal L^1$-a.e. $t\in T^-$. Indeed if $a<w$ for every $w\in I_t$, check the entropy inequality for $\eta_k^-$ with 
$k\in (a,\inf I_t)$:
\begin{equation*}
\begin{split}
0 \ge &~ -\dot\gamma_1(t)(\eta^-_k(a)-\langle \nu^-_t,\eta_k^-\rangle)+q_k^-(a)-\langle \nu^-_t,q^-_k\rangle \\
= &~  -\dot\gamma_1(t)(k-a)+f(k)-f(a) \\
= &~ (f'(a)-\dot\gamma_1(t))(k-a) + o(|k-a|).
\end{split}
\end{equation*}
Since $\dot\gamma_1(t)<f'(a)$, the inequality above cannot be satisfied for $k$ in a right neighborhood of $a$.
Similarly the case $a>w$ for $w\in I_t$ is excluded.
Then the conclusion easily follows: by property (5) in Definition \ref{D_bd_fam}, we have that $\dot\gamma^1(t)=f'(a)$ for $\mathcal L^1$-a.e. $t\in T^-$, 
therefore $\mathcal L^1(T^-)=0$.
 \end{remark}

\section{Concentration}\label{S_conc}
In this section we study the structure of the dissipation measure $\mu=\langle \nu,\eta\rangle_t+\langle \nu, q\rangle_x$, where $(\eta,q)$ is an 
entropy-entropy flux pair and $\nu$ is a mv entropy solution with a complete family of boundaries.

Consider the decomposition of $\R^+\times \R$ introduced in Section \ref{S_struct}: the dissipation on $J$ can be computed by means of the traces
given in Proposition \ref{P_traces}: see Remark \ref{R_traces}. Moreover $\mu(B\setminus J)=0$ by Volpert chain rule for functions of bounded variation.
Here we analyze $\mu\llcorner (C\setminus J)$.

Let $\e>0$, $T>2\e$ and consider the set $S_T=\{x\in \R: (T,x)\in C\setminus J\}$. 
By Lemma \ref{L_segments}, for all $x\in S_T$ the unique curve $\gamma\in \K_\gamma$ such that $\gamma(T)=x$ has constant velocity $f'(I)$
in $[0,T]$ where $I$ is the unique element of $\mathcal L_f$ such that $K(T,x)\subset I$. 
Denote this set of curves by $\K_\gamma(T)$ and parametrize it by the position $y=\gamma_y(\e)$ of the curves at time $\e$.
Moreover set 
\begin{equation*}
Y_T:=\Big\{y\in \R: \gamma_y\in \K_\gamma(T)\Big\}
\end{equation*} 
and let $I(y)$ be the corresponding element of $\mathcal L_f$.

\begin{lemma}\label{L_Flip}
There exists $U\in L^\infty (\R)$ such that for every $y\in Y_T$, $U(y)\in K(\e,y)$ and
\begin{enumerate}
\item for every entropy-entropy flux pair $(\eta,q)$ the function
\begin{equation*}
Q(y)=q(U(y))-f'(U(y))\eta(U(y))
\end{equation*}
has locally bounded variation;
\item in the particular case with $(\eta,q)=(\I,f)$ the function 
\begin{equation*}
F(y)=f(U(y))-f'(U(y))U(y)
\end{equation*}
has no Cantor part.
\end{enumerate}
\end{lemma}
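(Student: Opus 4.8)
The plan is to read off the regularity of $F$ and $Q$ from the \emph{exact} balance of $u_t+f(u)_x=0$ (respectively the entropy balance $\langle\nu,\eta\rangle_t+\langle\nu,q\rangle_x=\mu$) on the regions bounded by two segments of $\K_\gamma(T)$, exploiting that these segments are almost parallel. First I would fix the selection: for $y\in Y_T$ the point $(\e,\gamma_y(\e))=(\e,y)$ lies in $C\setminus J$, so by Lemma \ref{L_segments} and Proposition \ref{P_struct_J} one has $K(\e,y)\subset I(y)$ and $\dot\gamma_y\equiv\lambda(y):=f'(I(y))$ on $[0,T]$. I pick a measurable selection $y\mapsto U(y)\in K(\e,y)$; since $|U(y)|\le M$ it extends to an element of $L^\infty(\R)$. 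Because $f'\equiv\lambda(y)$ on $I(y)$, the maps $w\mapsto f(w)-\lambda(y)w$ and $w\mapsto q(w)-\lambda(y)\eta(w)$ are constant on $I(y)$; hence $F(y)$ and $Q(y)$ do not depend on the particular choice of $U(y)$ within $I(y)$, and they may be evaluated on any value of $I(y)$.

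The geometric key is that the slope is Lipschitz. If $y_1<y_2$ in $Y_T$, the segments $\gamma_{y_1},\gamma_{y_2}$ are totally ordered, affine on $[0,T]\supset[0,2\e]$, with $\gamma_{y_i}(\e)=y_i$; imposing $\gamma_{y_1}(t)\le\gamma_{y_2}(t)$ at $t=0$ and at $t=2\e$ gives $|\lambda(y_1)-\lambda(y_2)|\le|y_1-y_2|/\e$, so $\lambda$ is $1/\e$-Lipschitz on $Y_T$. Consequently, for each fixed $t\in[\e,T]$ the map $\phi_t\colon y\mapsto\gamma_y(t)=y+(t-\e)\lambda(y)$ is Lipschitz and, by total ordering, non-decreasing. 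Next I would set up the balances. Fix $y_1<y_2$ in $Y_T$, $\e\le t_1\le t_2\le T$, and let $R=R_{y_1,y_2}(t_1,t_2):=\{(t,x):t_1\le t\le t_2,\ \gamma_{y_1}(t)<x<\gamma_{y_2}(t)\}$. By Proposition \ref{P_traces} the one-sided traces $\nu^\pm$ of $\nu$ on $\gamma_{y_i}$ exist, and since $\gamma_{y_i}\subset C\setminus J$ they are supported in $I(y_i)$ for a.e.\ $t$ (Proposition \ref{P_struct_J} and Remark \ref{R_traces}); using $\dot\gamma_{y_i}=\lambda(y_i)$, the flux of $(\langle\nu,\I\rangle,\langle\nu,f\rangle)$ across $\gamma_{y_i}$ equals $\langle\nu^\pm,\lambda(y_i)\I-f\rangle=-F(y_i)$ (up to the orientation of the normal), and likewise the flux of $(\langle\nu,\eta\rangle,\langle\nu,q\rangle)$ equals $-Q(y_i)$. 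Integrating $\I_t+f_x=0$ over $R$ therefore yields
\begin{equation*}
\int_{\gamma_{y_1}(t_2)}^{\gamma_{y_2}(t_2)}\langle\nu_{t_2,x},\I\rangle\,dx-\int_{\gamma_{y_1}(t_1)}^{\gamma_{y_2}(t_1)}\langle\nu_{t_1,x},\I\rangle\,dx=(t_2-t_1)\big(F(y_1)-F(y_2)\big),
\end{equation*}
and integrating $\langle\nu,\eta\rangle_t+\langle\nu,q\rangle_x=\mu$ over $R$ yields the same identity with $(\I,f,F)$ replaced by $(\eta,q,Q)$ and an extra additive term $\mu(R)\le0$.

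To conclude, fix a base point $y_0\in Y_T$ and take $t_1=\e$, $t_2=T$ in the first identity; since $\gamma_y(\e)=y$,
\begin{equation*}
F(y_0)-F(y)=\frac{1}{T-\e}\left[\int_{\phi_T(y_0)}^{\phi_T(y)}\langle\nu_{T,x},\I\rangle\,dx-\int_{y_0}^{y}\langle\nu_{\e,x},\I\rangle\,dx\right].
\end{equation*}
The second term is $M$-Lipschitz in $y$; the first is the composition of the $M$-Lipschitz map $z\mapsto\int_{\phi_T(y_0)}^{z}\langle\nu_{T,x},\I\rangle\,dx$ with the Lipschitz map $\phi_T$, hence Lipschitz. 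So $F$ is locally Lipschitz on $Y_T$, in particular locally BV and with no Cantor part, which proves (2). For $Q$ the analogous formula carries the extra term $-\tfrac{1}{T-\e}\mu\big(R_{y_0,y}(\e,T)\big)$; the two integral terms are Lipschitz exactly as above, while $y\mapsto\mu\big(R_{y_0,y}(\e,T)\big)$ is monotone non-increasing (the regions are nested in $y$ and $\mu\le0$), hence locally BV. Therefore $Q$ is locally BV, proving (1). To avoid assuming $Y_T$ has a least element one works with the increments $F(y_2)-F(y_1)$, $Q(y_2)-Q(y_1)$ directly, bounding the variation on each bounded interval, and extends $U,F,Q$ by constants on $\R\setminus Y_T$.

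The main obstacle is the identification of the one-sided normal traces of $\nu$ on each segment $\gamma_{y_i}$ as probability measures concentrated on the \emph{single} linearly degenerate component $I(y_i)$, so that the boundary fluxes are \emph{exactly} $-F(y_i)$ and $-Q(y_i)$ with nothing left over; this rests on the structure theory of Section \ref{S_struct} — precisely the fact that $\gamma_{y_i}\notin J$ forces the one-sided components to coincide (with common value $I(y_i)$) for a.e.\ $t$. The remaining ingredients, namely the balance via Proposition \ref{P_traces} and the elementary Lipschitz/monotone composition estimates, are routine.
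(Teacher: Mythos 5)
Your proof is correct and follows essentially the same route as the paper: the Lipschitz bound on the slopes from the non-crossing of the segments, the balance of the conservation law and of the entropy inequality on the regions between two ordered segments with the boundary fluxes identified with $F(y_i)$, $Q(y_i)$ via the traces being supported in $I(y_i)$, yielding $F$ Lipschitz on $Y_T$ and $Q$ of bounded variation through the dissipation term. The only (harmless) variations are that you conclude BV of $Q$ from the monotonicity of $y\mapsto\mu(R_{y_0,y})$ instead of summing $|\mu|$ over disjoint cylinders along a partition, and you work on $[\e,T]$ rather than $[0,T]$; the extension of $U$ off $Y_T$ is stated more tersely than the paper's one-sided limit construction, but both treat it as routine.
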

\begin{proof}
Since the segments do not cross in $(0,T)$, by monotonicity for every $y_1,y_2\in Y_T$
\begin{equation*}
\big|f'(I(y_2))-f'(I(y_1))\big|\le \frac{1}{\e}|y_2-y_1|.
\end{equation*}
Then consider the domain 
\begin{equation*}
\mathcal{C}_{y_1,y_2}(T) = \Big\{ (t,x): t\in (0,T), \gamma_{y_1}(t)<x<\gamma_{y_2}(t)\Big\}.
\end{equation*}
Proposition \ref{P_traces} allows the application of the divergence theorem on $\mathcal C_{y_1,y_2}(T)$ so we get
\begin{equation}\label{E_bal_cyl}
\int_{\gamma_{y_1}(T)}^{\gamma_{y_2}(T)}\langle \nu^-_{T,x}, \eta \rangle dx - \int_{\gamma_{y_1}(0)}^{\gamma_{y_2}(0)}\langle \nu^+_{0,x}, \eta \rangle dx + T(Q(y_2)-Q(y_1)) = \mu (\mathcal C_{y_1,y_2}(T)),
\end{equation}
where $Q(y)=q(I(y))-f'(I(y))\eta(I(y))$ is well-defined being constant on each $I\in \mathcal L_f$.

Since $\{\gamma_y\}_{y\in Y_T}$ are segments in $[0,T]$ which do not cross in $(0,T)$ thanks to the monotonicity property, 
for every $y_1<y_2$ in $Y_T$
\begin{equation}\label{E_no_cross}
0\le \gamma_{y_2}(T)-\gamma_{y_1}(T) < \frac{T}{\e} \qquad \mbox{and}\qquad \gamma_{y_2}(0)-\gamma_{y_1}(0)<\frac{T}{T-\e}<\frac{T}{\e}.
\end{equation}
Therefore from \eqref{E_bal_cyl} it follows that there exists a constant $C$ depending on $f, \eta, \|\nu\|_{\infty}$ such that 
\begin{equation*}
\big|Q(y_2)-Q(y_1)\big| \le \frac{C}{\e}(y_2-y_1) + \frac{1}{T}|\mu| (\mathcal C_{y_1,y_2}(T)).
\end{equation*}
It follows that for every $L>0$
\begin{equation*}
\sup_{y_1< \ldots <y_n \in  Y_T\cap [-L,L]}\sum_{i=1}^{n-1}|Q(y_{i+1})-Q(y_i)| \le \frac{2CL}{\e}+ \frac{1}{T}|\mu|\Big((0,T)\times (-L-CT,L+CT)\Big) <+\infty
\end{equation*}
and that $F\llcorner Y_T$ is Lipschitz for every section $U$ of $K(\e,\cdot)$. Then it is easy to show that $U$ can be extended maintaining the required
properties e.g. taking $U(y)\in I(y)$ where
\begin{equation*}
I(y)=\lim_{y'\searrow \inf\{Y_T\cap [y,+\infty)\}}I(y').
\end{equation*}
The limit exists by Lemma \ref{L_trace_max}.
\end{proof}

The following geometric lemma has a quite standard proof. We give it for completeness.
\begin{lemma}\label{L_geom}
Let $\alpha:[-M,M]\rightarrow \R^2$ be a smooth curve and assume that there exists a constant $C>0$ such that
\begin{equation}\label{E_Lip}
|\dot\alpha^2|\leq C |\dot\alpha^1|.
\end{equation}
Let $I\subset \R$ be an interval and $\gamma=(\gamma^1,\gamma^2)=\alpha\circ \varphi$ for some Borel $\varphi: I\rightarrow [-M,M]$; suppose that 
$\gamma$ has bounded variation and $\gamma^1\in \SBV(I)$. Then $\gamma^2 \in \SBV(I)$ and there exists $c:I\rightarrow \R$ 
such that for $\mathcal L^1$-a.e. $y\in I$ 
\begin{equation*}
D_y\gamma(y)= c(y)D_w\alpha(\varphi(y)).
\end{equation*}
\end{lemma}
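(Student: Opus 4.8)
The plan is as follows. That $\gamma^2\in\BV(I)$ is immediate from $\gamma\in\BV(I;\R^2)$, so the real content is that $\gamma^2$ has no Cantor part, together with the existence of the scalar $c$. I would work with the polar decomposition $D\gamma=\theta\,|D\gamma|$ of the $\R^2$-valued measure $D\gamma$, with $|\theta(y)|=1$ for $|D\gamma|$-a.e.\ $y$; writing $|D\gamma|=|D\gamma|^{\mathrm a}+|D\gamma|^{\mathrm j}+|D\gamma|^{c}$ for the absolutely continuous, atomic and Cantor parts of this scalar measure, uniqueness of the Lebesgue decomposition gives $D^{\mathrm a}\gamma^i=\theta^i|D\gamma|^{\mathrm a}$, $D^{\mathrm j}\gamma^i=\theta^i|D\gamma|^{\mathrm j}$ and $D^{c}\gamma^i=\theta^i|D\gamma|^{c}$. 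The whole statement then reduces to the pointwise estimate
\begin{equation*}
|\theta^2(y)|\le C\,|\theta^1(y)|\qquad\text{for }|D\gamma|^{c}\text{-a.e. and for }\mathcal L^1\text{-a.e. }y.
\end{equation*}
Granting it: since $\gamma^1\in\SBV$ we have $\theta^1|D\gamma|^{c}=D^{c}\gamma^1=0$, hence $\theta^1=0$ and then $\theta^2=0$ $|D\gamma|^{c}$-a.e.; as $|\theta|=1$ $|D\gamma|$-a.e., this forces $|D\gamma|^{c}=0$, so $D^{c}\gamma^2=0$ and $\gamma^2\in\SBV$. On the absolutely continuous part the same estimate reads $|\nabla\gamma^2|\le C|\nabla\gamma^1|$ a.e.

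To prove the estimate I would fix $y_0$ in the set of full $|D\gamma|$-measure of points that are not jump points of $\gamma$, are $|D\gamma|$-Lebesgue points of $\theta$, and at which the Besicovitch ratio converges. Then $\theta(y_0)=\lim_{r\to0}\frac{\gamma(y_0+r)-\gamma(y_0-r)}{|D\gamma|((y_0-r,y_0+r))}$ along radii $r$ with $y_0\pm r$ continuity points of $\gamma$; since $|\theta(y_0)|=1$, the total variation of $\gamma$ on $(y_0-r,y_0+r)$ is asymptotic to $|\gamma(y_0+r)-\gamma(y_0-r)|$, i.e.\ $\gamma$ does not micro-oscillate near $y_0$. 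Because $\gamma$ takes values in the compact smooth arc $\alpha([-M,M])$, a path contained in this arc whose length is asymptotic to its chord cannot traverse a self-crossing, a cusp, or a local extremum of $\alpha^1$ — each of these costs a definite extra amount of length — and, crucially, \eqref{E_Lip} forces $\alpha$ to be fully critical ($\dot\alpha^1=\dot\alpha^2=0$) wherever $\dot\alpha^1$ vanishes, so a local extremum of $\alpha^1$ always produces such a feature. Hence the portion of the arc traced by $\gamma$ on $(y_0-r,y_0+r)$ lies, for $r$ small, on a single branch where $\alpha^1$ is monotone; there $\mathrm{Var}(\alpha^1;\cdot)=|\Delta\alpha^1|$ and, integrating \eqref{E_Lip}, $\mathrm{Var}(\alpha^2;\cdot)\le C\,\mathrm{Var}(\alpha^1;\cdot)$, and the absence of back-tracking turns these into $|\gamma^2(y_0+r)-\gamma^2(y_0-r)|\le C\,|\gamma^1(y_0+r)-\gamma^1(y_0-r)|\,(1+o(1))$; dividing by $|D\gamma|((y_0-r,y_0+r))$ and letting $r\to0$ gives $|\theta^2(y_0)|\le C|\theta^1(y_0)|$.

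For the identity $D_y\gamma(y)=c(y)D_w\alpha(\varphi(y))$ I would repeat this at $\mathcal L^1$-a.e.\ $y_0$, now also a point where $\varphi$ is approximately continuous and $\gamma$ is approximately differentiable: for most nearby $y$ the value $\varphi(y)$ is close to $\varphi(y_0)$ and $\gamma(y)=\alpha(\varphi(y))$, so near $y_0$ one has $\gamma=\alpha\circ\varphi$ with $\varphi$ moving monotonically along the branch through $\varphi(y_0)$; blowing up then yields $\nabla\gamma(y_0)=c(y_0)\,\dot\alpha(\varphi(y_0))$, where $c(y_0)$ is the proportionality factor (and $c(y_0):=0$ if $\nabla\gamma(y_0)=0$, which includes the case $\dot\alpha(\varphi(y_0))=0$).

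The main obstacle is precisely the geometric step in the second paragraph: converting the measure-theoretic statement ``$\gamma$ does not micro-oscillate at $|D\gamma|$-a.e.\ point'' into ``$\gamma$ traces a monotone branch of $\alpha$''. Self-intersections of a general smooth curve may be abundant, and the critical set $Z=\{\dot\alpha^1=0\}$ (which by \eqref{E_Lip} equals $\{\dot\alpha=0\}$) may be, e.g., a Cantor set; one handles $Z$ by Sard's theorem, which makes $\alpha(Z)$ $\mathcal L^1$-null, so that $\gamma$ restricted to $\varphi^{-1}(Z)$ has $\mathcal L^1$-null image and hence contributes nothing to $D^{\mathrm a}\gamma$ (coarea), and — in combination with the extra-length comparison, which excludes $Z$ from the support of $|D\gamma|^{c}$ — nothing to $D^{c}\gamma$; the remaining bookkeeping (self-crossings, cusps, $\alpha^1$-extrema) is exactly the elementary ``definite extra length'' estimate that makes the argument go through.
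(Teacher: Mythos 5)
Your reduction is sound: by the polar decomposition $D\gamma=\theta\,|D\gamma|$ it is indeed enough to prove the cone estimate $|\theta^2|\le C|\theta^1|$ on the diffuse part, and your third paragraph (approximate continuity of $\varphi$, locality of the embedded arc where $\dot\alpha\neq 0$, and $\nabla\gamma=0$ a.e.\ on $\varphi^{-1}(\{\dot\alpha=0\})$ via Sard/coarea) essentially handles the absolutely continuous part and the identity $D_y\gamma=c\,D_w\alpha(\varphi)$; the invocation of ``$\varphi$ moving monotonically'' there is neither justified nor needed. The genuine gap is exactly the step you yourself call the main obstacle, and it is not closed: the passage from ``chord asymptotic to length at $y_0$'' to ``the portion traced on $(y_0-r,y_0+r)$ lies on a single branch where $\alpha^1$ is monotone.'' First, the ``definite extra length'' of traversing an $\alpha^1$-extremum is proportional (with a constant depending on $C$) to the size of the backtrack, not to the variation over the window: a reversal of $\gamma^1$ of size $\delta$ inside the cone costs extra length only of order $\delta/C$. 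Hence asymptotic chord--length equality only tells you that the total backtracking of $\gamma^1$ is $o\big(|D\gamma|(B_r)\big)$; it does not exclude turnarounds at every small scale, and to pass from ``small backtracking'' to ``the chord direction lies in the cone'' you still need something like $\mathrm{Var}(\gamma^2;(y_0-r,y_0+r))\le C\,\mathrm{Var}(\gamma^1;(y_0-r,y_0+r))+o\big(|D\gamma|(B_r)\big)$, which is not a consequence of \eqref{E_Lip} alone (chords of $\alpha$ over non-monotone stretches can be vertical) — it is essentially the localized content of the lemma itself. Second, ``a single monotone branch'' need not exist: the critical set $Z=\{\dot\alpha=0\}$ can be a Cantor set, so the traced parameters may meet infinitely many monotonicity intervals with no single branch containing them; moreover $\gamma$ may have small jumps inside the window, so the traced set is not a sub-arc of $\alpha$ at all. (Jump and a.c.\ mass are negligible at $|D\gamma|^{c}$-generic points by mutual singularity and Besicovitch differentiation — a point you would also need to state — but the continuous Cantor-type motion can still visit infinitely many branches.)

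What you actually need is a locality-of-tangents statement: at $|D\gamma|^{c}$-a.e.\ point the direction $\theta$ is parallel to $D_w\alpha$ at the corresponding parameter, hence lies in the cone. The paper obtains this cheaply by reparametrizing by arc length: write $\gamma=\tilde\gamma\circ\psi$ with $\tilde\gamma$ $1$-Lipschitz and $\psi$ monotone; since $\tilde\gamma(\psi(I))\subset\Graph\,\alpha$ and $\mathcal H^1\big(\alpha(\{D_w\alpha=0\})\big)=0$, for $\mathcal L^1$-a.e.\ $s\in\psi(I)$ the tangent $D_s\tilde\gamma(s)$ is parallel to $D_w\alpha(\varphi(y))$ with $\psi(y)=s$, so $|D_s\tilde\gamma^2|\le C|D_s\tilde\gamma^1|$ on $\psi(I)$, and the chain rule for the diffuse part of $D(\tilde\gamma\circ\psi)$ transfers this to $|D^c\gamma^2|\le C|D^c\gamma^1|=0$. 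Either import such a tangent-locality lemma for rectifiable sets or follow this reparametrization route; as written, your blow-up argument does not prove the key estimate.
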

\begin{proof}
Let $\tilde\gamma=(\tilde \gamma^1,\tilde\gamma^2):[0,\TV(\gamma)]\rightarrow \R^2$ be the unique curve 1-Lipschitz curve such that there exists a monotone increasing function
$\psi:I\rightarrow [0,\TV(\gamma)]$ satisfying $\tilde\gamma \circ \psi=\gamma$.
For $\mathcal L^1$-a.e. $s\in \psi(I)$ there exists a unique $y \in I$ such that 
\begin{equation}\label{E_tang}
\psi(y)=s,\quad |D_w\alpha(\varphi(y))|\ne 0\quad \mbox{and} \quad D_s\tilde \gamma(s)=\frac{D_w\alpha(\varphi(y))}{|D_w\alpha(\varphi(y))|},
\end{equation}
because $\psi$ is monotone, $\tilde\gamma(\psi(I))\subset \Graph\, \alpha$ and $\mathcal H^1(\alpha(\{D_w\alpha=0\}))=0$.
Since $|\dot\alpha^2|\leq C |\dot\alpha^1|$, this implies that 
\begin{equation*}
|D_s\tilde\gamma^2|\llcorner \psi(I) \le C |D_s\tilde \gamma^1|\llcorner \psi(I).
\end{equation*}
Therefore $|D^c_y\gamma^2|\le C|D_y\gamma^1|$, in particular $\gamma^2\in \SBV(I)$. 
Moreover it follows from \eqref{E_tang} that there exists $c:I\rightarrow \R$ such that
 for $\mathcal L^1$-a.e. $y\in I$ 
\begin{equation*}
D_y\gamma(y)= c(y)D_w\alpha(\varphi(y)). \qedhere
\end{equation*}
\end{proof}
In our context let $(\eta,q)$ be an entropy-entropy flux pair, $\alpha:[-M,M]\rightarrow \R^2$ defined by
\begin{equation*}
\alpha(w) = \left(
\begin{array}{c}
f(w)-f'(w)w  \\
q(w)-f'(w)\eta(w)
\end{array}
\right)
\end{equation*}
and $\varphi=U$ introduced in Lemma \ref{L_Flip}. The hypothesis on $\gamma^1$ are satisfied by Lemma \ref{L_Flip}, moreover
\begin{equation*}
\dot\alpha(w)=  \left(
\begin{array}{c}
-f''(w)w  \\
-f''(w)\eta(w)
\end{array}
\right)
\end{equation*}
therefore \eqref{E_Lip} is satisfied for every $(\eta,q)$ with $\eta(0)=0$. This is not a restrictive condition since it is sufficient to subtract a constant.

Denote by 
\begin{equation*}
\widetilde{C}(T)=\bigcup_{y \in Y_T}\Graph(\gamma_y\llcorner (0,T))
\end{equation*} 
and let $P:\widetilde{C}(T)\rightarrow \R$ be the map which assigns to each $(t,x)\in \widetilde{C}(T)$ the parameter $y\in Y_T$ such that $\gamma_y(t)=x$.

The following corollary is a first result toward the concentration of entropy dissipation for mv entropy solutions with a complete family of boundaries.
The analysis of the endpoints of segments in $\widetilde{C}(T)$ will be done in Lemma \ref{L_endpoints}. 
\begin{corollary}\label{C_Cantor_part}
For every entropy-entropy flux pair $(\eta,q)$ with dissipation measure $\mu$ the Cantor part of $P_\sharp (\mu\llcorner_{\widetilde{C}(T)})$  vanishes.
\end{corollary}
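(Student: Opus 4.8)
The plan is to combine Lemma \ref{L_Flip}, Lemma \ref{L_geom}, and the chain-rule structure along the segments of $C$. First I would record the two balances on the cylinders $\mathcal C_{y_1,y_2}(T)$: from \eqref{E_bal_cyl} applied with $(\eta,q)=(\I,f)$ (for which $\mu\equiv 0$), one gets that the mass flowing through the bottom and top bases differs only by $T(F(y_2)-F(y_1))$, and letting the cylinders shrink one obtains that $U(y)$ is $\mathcal L^1$-a.e. constant along $\gamma_y$, together with the chain-rule identity $\mathtt d_y F(y) = -\mathtt d_y \lambda(y) U(y)$ (this is Lemma \ref{L_chain_rule}, available to us). In particular $F$ is Lipschitz by Lemma \ref{L_Flip}(2), and $\lambda(y)=f'(I(y))$ is Lipschitz by the no-crossing estimate at the start of the proof of Lemma \ref{L_Flip}.

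Next I would apply the geometric Lemma \ref{L_geom} with $\alpha(w)=(f(w)-f'(w)w,\ q(w)-f'(w)\eta(w))$ and $\varphi=U$: since $\dot\alpha(w)=(-f''(w)w,-f''(w)\eta(w))$, hypothesis \eqref{E_Lip} holds for every entropy with $\eta(0)=0$ (which we may assume, subtracting a constant), and $\gamma^1=F$ is even Lipschitz, hence trivially in $\SBV$. Lemma \ref{L_geom} then yields $Q=\gamma^2\in\SBV(Y_T)$ together with $D_y(F,Q)(y) = c(y)\,D_w\alpha(U(y))$ for $\mathcal L^1$-a.e. $y$, i.e. in particular $D^c_yQ=0$: the function $Q$ has no Cantor part, confirming — and slightly strengthening — Lemma \ref{L_Flip}(1). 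Equating the second component of the chain rule gives $\mathtt d_y Q(y) = -\mathtt d_y\lambda(y)\,\eta(U(y))$.

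Finally I would translate this into a statement about $\mu$. Rearranging \eqref{E_bal_cyl} and letting $|y_1-y_2|\to 0$, the cylinder balance says precisely that $P_\sharp(\mu\llcorner_{\widetilde C(T)})$ equals $T\,D_yQ$ minus the (absolutely continuous, because $U$ is $\mathcal L^1$-a.e. constant along segments and $\lambda$ is Lipschitz) contribution of the difference of the base integrals; more carefully, differentiating the identity \eqref{E_bal_cyl} in $y$ shows that $P_\sharp(\mu\llcorner_{\widetilde C(T)})$ is, up to an absolutely continuous term, the distributional derivative of the BV function $y\mapsto T Q(y)$. Since $Q\in\SBV$ has no Cantor part and the remaining term is absolutely continuous, the Cantor part of $P_\sharp(\mu\llcorner_{\widetilde C(T)})$ vanishes.

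The main obstacle is the second step: making rigorous that $U(y)$ is $\mathcal L^1$-a.e. constant along each $\gamma_y$ and that consequently the base integrals in \eqref{E_bal_cyl} depend on $y$ in an absolutely continuous way, so that when we differentiate \eqref{E_bal_cyl} in $y$ only the BV function $Q$ carries a possible Cantor part. This requires the chain-rule Lemma \ref{L_chain_rule} and a careful disintegration of $\mu\llcorner_{\widetilde C(T)}$ through $P$, using that the slopes $\lambda(y)$ are Lipschitz so the top and bottom bases of the cylinders are bi-Lipschitz reparametrizations of $Y_T$; the geometric Lemma \ref{L_geom} then does the rest essentially for free.
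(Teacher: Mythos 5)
Your proposal is correct and follows essentially the same route as the paper: the cylinder balance \eqref{E_bal_cyl} together with the no-crossing estimate \eqref{E_no_cross}, Lemma \ref{L_Flip} and the geometric Lemma \ref{L_geom} (with $\gamma^1=F$, $\gamma^2=Q$, $\varphi=U$, after normalizing $\eta(0)=0$) give $Q\in \SBV_\loc$, and then $P_\sharp(\mu\llcorner_{\widetilde{C}(T)})$ is dominated by $T|D_yQ|$ plus an absolutely continuous term, so its Cantor part vanishes. The only superfluous step is the detour through Lemma \ref{L_aver_const}/Lemma \ref{L_chain_rule} and the constancy of $U$ along segments, which the corollary does not need: the base integrals in \eqref{E_bal_cyl} are Lipschitz in $y$ simply because the integrand is bounded and \eqref{E_no_cross} controls the base lengths, which is exactly what the paper uses.
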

\begin{proof}
By \eqref{E_bal_cyl} and \eqref{E_no_cross} it follows that there exists a constant $C$ such that 
\begin{equation*}
|P_\sharp (\mu\llcorner_{\widetilde{C}(T)})|\leq T|D_yQ| + C\frac{T}{\e}\mathcal L^1
\end{equation*}
and Lemma \ref{L_geom} implies that $Q$ belongs to $SBV_\loc$, therefore $P_\sharp (\mu\llcorner_{\widetilde{C}(T)})$ has no Cantor part.
\end{proof}

Denote by
\begin{equation*}
L_T=\Big\{y\in Y_T : \exists w \in I \mbox{ for some nontrivial }I\in \mathcal L_f \mbox{ such that } (\gamma_y,w)\in \K\Big\}.
\end{equation*}
Observe that, being the isolated points of a subset of $\R$ at most countably many, we can find a set $\tilde L_T$ such that 
$L_T\setminus \tilde L_T$ is at most countable and for every $y\in\tilde L_T$ there exist a sequence $y_n\rightarrow y$, 
an interval $I\in \mathcal L_f$ and $u_n\in I$ such that $(\gamma_{y_n},u_n)\in \K$. \\
In particular for $\mathcal L^1$-a.e. $y\in L_T$
\begin{equation}\label{E_const_vel}
\mathtt d_y \dot\gamma_y=0,
\end{equation}
where $\mathtt d_y$ denotes the limit of incremental ratios with values in $Y_T$.

In the following lemma we prove that the average $\langle \nu,\I\rangle$ is a constant $\bar u(y)$ on $\gamma_y$ in $(0,T)$ for every $y\in Y_T$.
\begin{lemma}\label{L_aver_const}
For $\mathcal L^1$-a.e. $y\in Y_T$ there exists $\bar u(y)\in K(\e,y)$ such that for $\mathcal L^1$-a.e. $t\in (0,T)$
\begin{equation*}
\langle\nu_{t,\gamma_y(t)},\I\rangle = \bar u(y).
\end{equation*}
\end{lemma}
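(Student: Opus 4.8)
The plan is to run the elementary conservation balance for the pair $(\I,f)$ on the cylinders $\mathcal C_{y_1,y_2}$, cut at two intermediate times, and to feed the resulting identity into the geometric Lemma \ref{L_geom} applied to the curve $w\mapsto\bigl(f'(w),f(w)-f'(w)w\bigr)$.

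First I would fix $y_1<y_2$ in $Y_T$ and $0<t_1<t_2<T$ and apply the divergence theorem of Proposition \ref{P_traces} to the truncated cylinder $\{(t,x):t_1<t<t_2,\ \gamma_{y_1}(t)<x<\gamma_{y_2}(t)\}$ for the (zero–dissipation) pair $(\I,f)$. Since $\gamma_y$ lies in $C\setminus J$, Lemma \ref{L_segments} gives $\dot\gamma_y\equiv\lambda(y):=f'(I(y))$, and Remark \ref{R_traces} together with Proposition \ref{P_struct_J} gives $\supp\nu^\pm_{t,\gamma_y(t)}\subset I(y)$ for a.e.\ $t$; as $w\mapsto f(w)-\lambda(y)w$ is constant and equal to $F(y)=f(U(y))-f'(U(y))U(y)$ on $I(y)$, the flux of $(\I,f)$ across $\gamma_y$ equals the constant $F(y)$. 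Hence the balance reads
\begin{equation*}
\int_{\gamma_{y_1}(t_2)}^{\gamma_{y_2}(t_2)}\langle\nu_{t_2,x},\I\rangle\,dx-\int_{\gamma_{y_1}(t_1)}^{\gamma_{y_2}(t_1)}\langle\nu_{t_1,x},\I\rangle\,dx=-(t_2-t_1)\bigl(F(y_2)-F(y_1)\bigr).
\end{equation*}
Writing $m(t,y)=\int_{x_0}^{\gamma_y(t)}\langle\nu_{t,x},\I\rangle\,dx$ for a fixed base point $x_0$, this says that $m(t,y)+tF(y)$ has a $t$–independent increment in $y$ along $Y_T$, so $m(t,y)=A(t)+G(y)-tF(y)$ for $y\in Y_T$. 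Evaluating at $t=\e$, where $\gamma_y(\e)=y$, shows that $m(\e,\cdot)$ is a Lipschitz primitive of $\langle\nu_{\e,\cdot},\I\rangle$, whence $G'=\langle\nu_{\e,\cdot},\I\rangle+\e F'$ $\mathcal L^1$–a.e.\ on $Y_T$ (recall $F\llcorner Y_T$ is Lipschitz by Lemma \ref{L_Flip}). Since on $Y_T$ one has $\gamma_y(t)=y+\lambda(y)(t-\e)$ with $\lambda$ Lipschitz (again by the no–crossing estimate in the proof of Lemma \ref{L_Flip}), differentiating $m(t,\cdot)$ in $y$ — a change of variables, valid at Lebesgue density points of $Y_T$, the identification of $D_y m(t,\cdot)$ with $\langle\nu_{t,\gamma_y(t)},\I\rangle\,D_y\gamma_\cdot(t)$ being justified because $\langle\nu_{t,\cdot},\I\rangle\in L^\infty$ and, by Fubini, $\gamma_y(t)$ is a Lebesgue point of $\langle\nu_{t,\cdot},\I\rangle$ for a.e.\ $(t,y)$ — gives, for a.e.\ $y\in Y_T$ and a.e.\ $t\in(0,T)$,
\begin{equation*}
\langle\nu_{t,\gamma_y(t)},\I\rangle\,\bigl(1+\lambda'(y)(t-\e)\bigr)=\langle\nu_{\e,y},\I\rangle-(t-\e)\,F'(y).
\end{equation*}
Note that $1+\lambda'(y)(t-\e)\ge 0$ for all $t\in(0,T)$ (the segments do not cross in $(0,T)$, so $y\mapsto\gamma_y(t)$ is non-decreasing), hence $>0$ for a.e.\ $t$.

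Next I would invoke Lemma \ref{L_geom} with $\alpha=\beta$, $\beta(w)=\bigl(f'(w),f(w)-f'(w)w\bigr)$: since $\dot\beta(w)=f''(w)(1,-w)$ and $|w|\le M$, \eqref{E_Lip} holds with $C=M$; with the Borel selection $\varphi=U$ from Lemma \ref{L_Flip} one has $\beta\circ U=(\lambda,F)$ on $Y_T$, which is Lipschitz, so $BV$ with $\lambda\in\SBV$. After a Lipschitz extension to an interval, Lemma \ref{L_geom} produces $c(\cdot)$ with $(\lambda'(y),F'(y))=c(y)\,\dot\beta(U(y))=c(y)f''(U(y))(1,-U(y))$ a.e.; in particular $F'(y)=-U(y)\lambda'(y)$. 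Where $\lambda'(y)\ne 0$ we then have $f''(U(y))\ne 0$, so $U(y)$ lies in no flat interval of $f'$, hence $I(y)=\{U(y)\}$ and $\supp\nu_{\e,y}\subset K(\e,y)\subset I(y)=\{U(y)\}$, giving $\langle\nu_{\e,y},\I\rangle=U(y)$; substituting $F'(y)=-U(y)\lambda'(y)$ into the transport identity, its right-hand side becomes $U(y)\bigl(1+\lambda'(y)(t-\e)\bigr)$, so $\langle\nu_{t,\gamma_y(t)},\I\rangle=U(y)=\langle\nu_{\e,y},\I\rangle$. Where $\lambda'(y)=0$ we get $F'(y)=0$ and the transport identity reads directly $\langle\nu_{t,\gamma_y(t)},\I\rangle=\langle\nu_{\e,y},\I\rangle$. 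In both cases $\langle\nu_{t,\gamma_y(t)},\I\rangle$ is $\mathcal L^1$–a.e.\ equal to $\bar u(y):=\langle\nu_{\e,y},\I\rangle$; finally, since $\bar u(y)\in K(t,\gamma_y(t))$ for a.e.\ $t$ (the barycenter of $\nu_{t,x}$ lies in the closed interval $K(t,x)$ by the completeness property, and $\supp\nu_{t,\gamma_y(t)}\subset I(y)$ on $C\setminus J$), choosing a sequence $t_n\to\e$ of such times and using that $K$ is closed gives $\bar u(y)\in K(\e,y)$.

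The substantive point, and the one I expect to cost the most care, is the coupling in the last step of the transport identity with Lemma \ref{L_geom}: it is exactly the geometric bound $|\dot\beta^2|\le M|\dot\beta^1|$ that forces $F'=-U\lambda'$ and thereby cancels the potentially non-constant affine profile $t\mapsto\langle\nu_{\e,y},\I\rangle-(t-\e)F'(y)$ against the Jacobian $1+\lambda'(y)(t-\e)$. The remainder is measure-theoretic bookkeeping that nonetheless must be carried out carefully: the change of variables $x\leftrightarrow y$ in the integrals (the ``gaps'' $\conv(Y_T)\setminus\overline{Y_T}$ lie in the already-understood region $B$, so they contribute controlled terms), the chain rule $D_y m(t,\cdot)=\langle\nu_{t,\gamma_\cdot(t)},\I\rangle\,D_y\gamma_\cdot(t)$ for a Lipschitz$\,\circ\,$monotone composition, and the Fubini argument guaranteeing that $(t,\gamma_y(t))$ is a Lebesgue point of $\langle\nu_{t,\cdot},\I\rangle$ for a.e.\ $(t,y)$.
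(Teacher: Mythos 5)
Your argument rests on the same basic mechanism as the paper's proof --- the conservation balance for the pair $(\I,f)$ on the cylinders of non-crossing segments --- but it is organized quite differently. The paper splits into two cases: for $y\in Y_T\setminus L_T$ the component $I(y)$ is a singleton and the claim is immediate from $\langle\nu_{t,\gamma_y(t)},\I\rangle\in I(y)$; for $\mathcal L^1$-a.e.\ $y\in L_T$ it uses \eqref{E_const_vel} ($\mathtt d_y\dot\gamma_y=0$) and an approximating sequence $y_n\to y$ whose boundary values lie in the same nontrivial $I$, so that $F(y_n)=F(y)$ and the Jacobian tends to $1$; equality of the barycenters at two Lebesgue times then drops out of \eqref{E_cons} with no chain rule needed, and the chain rule (Lemma \ref{L_chain_rule}) is deduced \emph{afterwards} from Lemma \ref{L_aver_const}. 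You reverse this order: you integrate the balance into the identity $m(t,y)=A(t)+G(y)-tF(y)$, differentiate along $Y_T$, and obtain the chain rule $\mathtt d_yF=-U\,\mathtt d_y\dot\gamma_y$ \emph{first}, from Lemma \ref{L_geom} applied to the curve $w\mapsto(f'(w),f(w)-f'(w)w)$, feeding it back into your transport identity. This is not circular (Lemma \ref{L_geom} is independent of Lemma \ref{L_aver_const}) and it produces the chain rule as a byproduct, at the price of the extra verifications discussed below.

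Two steps need repair, though both are fixable. First, Lemma \ref{L_geom} cannot be applied to ``a Lipschitz extension'' of $(\lambda,F)$: a generic Lipschitz extension off $Y_T$ is no longer of the form $\alpha\circ\varphi$, and the proof of the lemma uses this structure through the inclusion $\tilde\gamma(\psi(I))\subset\Graph\,\alpha$. You should instead use the extension of $U$ constructed at the end of the proof of Lemma \ref{L_Flip}, so that $(\lambda,F)=\alpha\circ U$ on all of $\R$, and check that $\gamma^1=\lambda=f'\circ U\in\SBV_\loc$ (true: $\lambda\llcorner Y_T$ is Lipschitz and that extension is locally constant on the complementary intervals, so only countably many jumps are created), or else rerun the proof of Lemma \ref{L_geom} directly on $Y_T$; only then do you get $(\mathtt d_y\lambda,\mathtt d_yF)$ parallel to $(1,-U(y))$ at a.e.\ density point of $Y_T$. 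Second, in the case $\mathtt d_y\dot\gamma_y\neq 0$ you identify $\langle\nu_{\e,y},\I\rangle$ with $U(y)$ via ``$\supp\nu_{\e,y}\subset K(\e,y)$'', which does not follow from the completeness property (an $\mathcal L^2$-a.e.\ statement) on the null slice $\{t=\e\}$. But notice that this case is precisely the paper's trivial case: $\mathtt d_y\dot\gamma_y\neq0$ forces $f''(U(y))\neq0$, hence $I(y)=\{U(y)\}$, and since the barycenter lies in $I(y)$ for a.e.\ $t$ it is automatically constant there, so the detour through the transport identity (and through the fixed slice $t=\e$) can simply be dropped in that case. With these two adjustments your proof goes through.
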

\begin{proof}
By the previous analysis we already know that for $\mathcal L^1$-a.e. $y\in Y_T$ there exists $I(y)$ such that for $\mathcal L^1$-a.e. $t\in (0,T)$
\begin{equation*}
\langle\nu_{t,\gamma_y(t)},\I\rangle \in I(y).
\end{equation*}
In particular the claim is trivial if $y\in Y_T\setminus L_T$, where $I(y)$ is a singleton.
Now consider $y\in \tilde L_T$ such that $\partial_y\dot\gamma_y=0$ and $0<t_1<t_2<T$ such that 
$\nu_1:=\nu_{t_1,\gamma_y(t_1)}$ and $\nu_2:=\nu_{t_2,\gamma_y(t_2)}$ 
are Lebesgue points of $\nu_{t_1}$ and $\nu_{t_2}$ respectively. Consider a sequence $y_n\rightarrow y$ as above: the conservation in $\mathcal C_{y_n,y}(t_1,t_2)$ gives
\begin{equation}\label{E_cons}
\begin{split}
0=&~\frac{1}{y-y_n}\left(\int_{\gamma_{y_n}(t_2)}^{\gamma_{y}(t_2)} \langle \nu_{t_2,x},\I\rangle dx -
\int_{\gamma_{y_n}(t_1)}^{\gamma_{y}(t_1)} \langle \nu_{t_1,x},\I\rangle dx + (t_2-t_1)(F(y)-F(y_n))\right) \\
= &~ \frac{1}{y-y_n}\left(\int_{\gamma_{y_n}(t_2)}^{\gamma_{y}(t_2)} \langle \nu_{t_2,x},\I\rangle dx -
\int_{\gamma_{y_n}(t_1)}^{\gamma_{y}(t_1)} \langle \nu_{t_1,x},\I\rangle dx \right)
\end{split}
\end{equation}
because $F$ is constant on $I(y)$.
Since $\gamma_y(t)=y+(t-\e)\dot\gamma_y$ and $\partial_y \dot\gamma_y=0$ by \eqref{E_const_vel},
\begin{equation*}
\lim_{n\rightarrow \infty}\frac{\gamma_y(t_2)-\gamma_{y_n}(t_2)}{y-y_n}=\lim_{n\rightarrow \infty}\frac{\gamma_y(t_1)-\gamma_{y_n}(t_1)}{y-y_n}=1,
\end{equation*}
therefore taking the limit as $n\rightarrow\infty$ in \eqref{E_cons} we get $\langle\nu_1,\I\rangle=\langle\nu_2,\I\rangle$.
\end{proof}

At this point we can obtain the chain rule corresponding to $(f(u)-f'(u)u)_y=-(f'(u))_yu$.

\begin{lemma}\label{L_chain_rule}
For $\mathcal L^1$-a.e. $y\in Y_T$ it holds
\begin{equation*}
\mathtt d_y F(y)=-\bar u (y) \mathtt d_y \dot \gamma_y.
\end{equation*}
\end{lemma}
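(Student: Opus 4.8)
The plan is to differentiate in the parameter $y$ the conservation balance for the pair $(\I,f)$ on a thin cylinder of segments, using Lemma~\ref{L_aver_const} to identify the traces along those segments. First I would fix, via Fubini applied to Lemma~\ref{L_aver_const}, two times $0<t_1<t_2<T$ such that each $\nu_{t_i}$ coincides with its one‑sided traces (true for all but countably many $t_i$) and such that $\langle\nu_{t_i,\gamma_y(t_i)},\I\rangle=\bar u(y)$ for $i=1,2$ and $\mathcal L^1$‑a.e. $y\in Y_T$. Next I would observe that for any $t_i\in(0,T)$ the map $y\mapsto\gamma_y(t_i)=y+(t_i-\e)\dot\gamma_y$ is bi‑Lipschitz on $Y_T$: the estimate $|\dot\gamma_{y'}-\dot\gamma_y|\le\tfrac1\e|y'-y|$ is the no‑crossing inequality used in Lemma~\ref{L_Flip} (recall $\dot\gamma_y=f'(I(y))$ for $\gamma_y\in\K_\gamma(T)$), while the fact that the segments in $\widetilde{C}(T)$ do not cross in $(0,T)$ yields $\gamma_{y'}(t_i)-\gamma_y(t_i)\ge c_{t_i}(y'-y)$ for $y'>y$ with $c_{t_i}>0$. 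Consequently the $y$‑preimage of the $\mathcal L^1$‑null set of non–Lebesgue points of $x\mapsto\langle\nu_{t_i,x},\I\rangle$ is again $\mathcal L^1$‑null, so after discarding a further null set of $y$ one may assume that $\gamma_y(t_i)$ is a Lebesgue point of $\langle\nu_{t_i,\cdot},\I\rangle$ for $i=1,2$.

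Then I would fix $y\in Y_T$ satisfying the above and, in addition, a point at which the incremental ratios (restricted to $Y_T$) of $F$ — Lipschitz by Lemma~\ref{L_Flip} — and of $y\mapsto\dot\gamma_y$ — Lipschitz with constant $1/\e$ — both converge, i.e. $\mathtt d_y F(y)$ and $\mathtt d_y\dot\gamma_y$ exist; $\mathcal L^1$‑a.e. $y\in Y_T$ qualifies. For $y'\in Y_T$ with $y<y'$ close to $y$, Proposition~\ref{P_traces} permits the divergence theorem for $(\I,f)$ (whose dissipation vanishes) on $\mathcal C_{y,y'}(t_1,t_2)$, giving, exactly as \eqref{E_bal_cyl} restricted to the sub‑cylinder,
\begin{equation*}
\int_{\gamma_y(t_2)}^{\gamma_{y'}(t_2)}\langle\nu_{t_2,x},\I\rangle\,dx-\int_{\gamma_y(t_1)}^{\gamma_{y'}(t_1)}\langle\nu_{t_1,x},\I\rangle\,dx+(t_2-t_1)\big(F(y')-F(y)\big)=0.
\end{equation*}
Dividing by $y'-y$ and letting $y'\searrow y$ in $Y_T$, I would write each integral term as
\begin{equation*}
\frac{\gamma_{y'}(t_i)-\gamma_y(t_i)}{y'-y}\cdot\frac{1}{\gamma_{y'}(t_i)-\gamma_y(t_i)}\int_{\gamma_y(t_i)}^{\gamma_{y'}(t_i)}\langle\nu_{t_i,x},\I\rangle\,dx,
\end{equation*}
where the first factor tends to $1+(t_i-\e)\mathtt d_y\dot\gamma_y$ (since $\gamma_{y'}(t_i)=y'+(t_i-\e)\dot\gamma_{y'}$) and the second tends to $\langle\nu_{t_i,\gamma_y(t_i)},\I\rangle=\bar u(y)$, because $[\gamma_y(t_i),\gamma_{y'}(t_i)]$ collapses onto the Lebesgue point $\gamma_y(t_i)$.

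Passing to the limit I would then obtain
\begin{equation*}
\big(1+(t_2-\e)\mathtt d_y\dot\gamma_y\big)\bar u(y)-\big(1+(t_1-\e)\mathtt d_y\dot\gamma_y\big)\bar u(y)+(t_2-t_1)\,\mathtt d_y F(y)=0,
\end{equation*}
that is $(t_2-t_1)\big(\bar u(y)\,\mathtt d_y\dot\gamma_y+\mathtt d_y F(y)\big)=0$, and dividing by $t_2-t_1>0$ gives $\mathtt d_y F(y)=-\bar u(y)\,\mathtt d_y\dot\gamma_y$, which is the assertion. The step requiring the most care — and the one I would write out in full — is the selection of Lebesgue points: ensuring that for $\mathcal L^1$‑a.e. $y$ the reference points $\gamma_y(t_i)$ are Lebesgue points of the time‑slices $\langle\nu_{t_i,\cdot},\I\rangle$. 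This hinges on the bi‑Lipschitz character of $y\mapsto\gamma_y(t_i)$, available precisely because the segments do not cross inside $(0,T)$, together with the Fubini argument that chooses $t_1,t_2$ in accordance with Lemma~\ref{L_aver_const}; the remaining steps are a routine differentiation of a one‑parameter family of cylinder balances.
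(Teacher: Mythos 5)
Your proposal is correct and follows essentially the same route as the paper: differentiating the $(\I,f)$ balance \eqref{E_cons} on thin cylinders of segments, using Lemma \ref{L_aver_const} to identify the boundary averages with $\bar u(y)$ and the linear parametrization $\gamma_y(t)=y+(t-\e)\dot\gamma_y$ to produce the factor $1+(t_i-\e)\mathtt d_y\dot\gamma_y$. The only (harmless) differences are that you fix $t_1,t_2$ first by Fubini and carry out the Lebesgue-point selection via the bi-Lipschitz map $y\mapsto\gamma_y(t_i)$, treating all of $Y_T$ uniformly, whereas the paper fixes $y$ and chooses the times afterwards and disposes of $y\in L_T$ separately through \eqref{E_const_vel}.
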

\begin{proof}
If $y\in L_T$ the claim follows from \eqref{E_const_vel}. If $y\in Y_T\setminus L_T$ consider again the conservation \eqref{E_cons}. In this case only
the first equality holds but by Lemma \ref{L_aver_const} we can compute 
\begin{equation*}
\lim_{n\rightarrow \infty}\frac{1}{y-y_n}\left(\int_{\gamma_{y_n}(t_2)}^{\gamma_{y}(t_2)} \langle \nu_{t_2,x},\I\rangle dx -
\int_{\gamma_{y_n}(t_1)}^{\gamma_{y}(t_1)} \langle \nu_{t_1,x},\I\rangle dx\right) = \bar u(y)(t_2-t_1)\mathtt d_y\dot\gamma_y
\end{equation*}
and this completes the proof.
\end{proof}

We introduce the set $D_T$ of points $y\in Y_T$ for which $\nu_{t,\gamma_y(t)}=\delta_{\bar u(y)}$ for $\mathcal L^1$-a.e. $t\in (0,T)$. 
In particular we have that $Y_T\setminus L_T\subset D_T$.

\begin{lemma}\label{L_ac_part}
For every $\varphi\in C_c(\R^+\times\R)$
\begin{equation}\label{E_diss_form}
\begin{split}
\int_{\widetilde{C}(T)}\varphi d \mu &=~ \int_{Y_T}\int_0^T \varphi(t,\gamma_y(t))\,d\big(\partial_t\langle \nu_{t,\gamma_y(t)},\eta\rangle\big) dy \\
& =~  \int_{Y_T\setminus D_T}\int_0^T \varphi(t,\gamma_y(t))\,d\big(\partial_t\langle \nu_{t,\gamma_y(t)},\eta\rangle\big) dy. 
\end{split}
\end{equation}
In particular if $\nu$ is a Dirac entropy solution then $\mu\llcorner \widetilde{C}(T)=0$.
\end{lemma}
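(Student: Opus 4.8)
The plan is to disintegrate $\mu\llcorner\widetilde C(T)$ along the map $P:\widetilde C(T)\to\R$, $(t,x)\mapsto y$ (i.e.\ along the foliation of $\widetilde C(T)$ by the segments $\gamma_y$, $y\in Y_T$), and to identify the resulting fibre measures with $\partial_t\langle\nu_{t,\gamma_y(t)},\eta\rangle$. I first observe that the lateral flux across each $\gamma_y$ is constant: for $y\in Y_T$ the segment $\gamma_y\llcorner(0,T)$ lies in $C\setminus J$, so by Corollary \ref{C_struct_Jc} and Proposition \ref{P_struct_J} the traces $\nu^\pm_t$ of $\nu$ on $\gamma_y$ satisfy $\supp\nu^\pm_t\subset I(y)$ for $\mathcal L^1$-a.e.\ $t\in(0,T)$; since $\dot\gamma_y=f'(I(y))$ and $q-f'(I(y))\eta$ is constant on $I(y)\in\mathcal L_f$, we get $\langle\nu^\pm_t,-\dot\gamma_y\eta+q\rangle=Q(y)$ for a.e.\ $t$, and hence, by the Rankine--Hugoniot expression \eqref{E_diss_gamma}, $\mu\big(\Graph(\gamma_y\llcorner(0,T))\big)=0$ for every $y\in Y_T$.

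Therefore $P_\sharp(\mu\llcorner\widetilde C(T))$ has no atoms (an atom at $y_0$ would force $\mu(\Graph(\gamma_{y_0}\llcorner(0,T)))\neq0$), and by Corollary \ref{C_Cantor_part} it has no Cantor part either, so it equals $g\,\mathcal L^1\llcorner Y_T$ for some $g\in L^1_\loc(\R)$. By the disintegration theorem applied to $P$, $\mu\llcorner\widetilde C(T)=\int_{Y_T}g(y)\,\tilde\mu_y\,dy$ with $\tilde\mu_y$ a probability measure on $\Graph(\gamma_y\llcorner(0,T))$, which I read as a probability on $(0,T)$ via $t\mapsto(t,\gamma_y(t))$. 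To identify $g(y)\tilde\mu_y$, I test \eqref{E_mv_sol} on the cylinders $\mathcal C_{y_1,y_2}(T)$ of Lemma \ref{L_Flip} with $\psi=\psi(t)\in C^\infty_c((0,T))$ (constant in $x$): using the first observation to collapse the lateral terms to $(Q(y_2)-Q(y_1))\int_0^T\psi$,
\begin{equation*}
\int_{\mathcal C_{y_1,y_2}(T)}\psi\,d\mu+\int_0^T\psi'(t)\Big(\int_{\gamma_{y_1}(t)}^{\gamma_{y_2}(t)}\langle\nu_{t,x},\eta\rangle\,dx\Big)dt=(Q(y_2)-Q(y_1))\int_0^T\psi(t)\,dt.
\end{equation*}
Dividing by $y_2-y_1$ and letting $y_1\uparrow y$, $y_2\downarrow y$ at a Lebesgue density point $y$ of $Y_T$: the change of variables $x=\gamma_{y'}(t)$ (with $y'\mapsto\gamma_{y'}(t)$ continuous nondecreasing and $1/\e$-Lipschitz on $Y_T$ by \eqref{E_no_cross}) gives $(y_2-y_1)^{-1}\int_{\gamma_{y_1}(t)}^{\gamma_{y_2}(t)}\langle\nu_{t,x},\eta\rangle\,dx\to h_y(t)\,\partial_y\gamma_y(t)$ for a.e.\ $t$, where $h_y(t):=\langle\nu_{t,\gamma_y(t)},\eta\rangle$ and $\partial_y\gamma_y(t)=1+(t-\e)\,\mathtt d_y\dot\gamma_y$; moreover $\tfrac{Q(y_2)-Q(y_1)}{y_2-y_1}\to\mathtt d_y Q(y)$ since $Q\in\SBV_\loc$ by Lemmas \ref{L_Flip} and \ref{L_geom}; and the $\mu$-contribution from $\mathcal C_{y_1,y_2}(T)\setminus\widetilde C(T)$ vanishes in the limit (the delicate point, see below). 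This yields, for a.e.\ $y\in Y_T$,
\begin{equation*}
\int_0^T\psi\,d\big(g(y)\tilde\mu_y\big)=\mathtt d_y Q(y)\int_0^T\psi(t)\,dt-\int_0^T\psi'(t)\,h_y(t)\,\partial_y\gamma_y(t)\,dt .
\end{equation*}

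Now I distinguish two cases. If $y\in Y_T\setminus L_T$ then $I(y)$ is a point, so $\nu_{t,\gamma_y(t)}=\delta_{U(y)}$, $h_y\equiv\eta(U(y))$ is constant, and $\mathtt d_y Q(y)=-\mathtt d_y\dot\gamma_y\,\eta(U(y))$ (differentiating $Q(y)=q(U(y))-f'(U(y))\eta(U(y))$ and using $q'=f'\eta'$, $\dot\gamma_y=f'(U(y))$); inserting these and using $\int_0^T\psi'(t)\,dt=0$, $\int_0^T\psi'(t)(t-\e)\,dt=-\int_0^T\psi(t)\,dt$ makes the right-hand side vanish, in agreement with $\partial_t h_y=0$. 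If $y\in L_T$ then $\mathtt d_y\dot\gamma_y=0$ for a.e.\ such $y$ by \eqref{E_const_vel}, whence $\partial_y\gamma_y\equiv1$ and $\mathtt d_y Q(y)=0$, and the identity reads $\int_0^T\psi\,d(g(y)\tilde\mu_y)=-\int_0^T\psi'(t)h_y(t)\,dt=\int_0^T\psi(t)\,d(\partial_t h_y)(t)$. In both cases $g(y)\tilde\mu_y=\partial_t h_y$ as (finite) measures on $(0,T)$ for a.e.\ $y\in Y_T$; in particular $h_y\in\BV_\loc(0,T)$. Since a general $\varphi\in C_c(\R^+\times\R)$ restricts on the fibre $\Graph(\gamma_y)$ to $t\mapsto\varphi(t,\gamma_y(t))$, we obtain
\begin{equation*}
\int_{\widetilde C(T)}\varphi\,d\mu=\int_{Y_T}\Big(\int_0^T\varphi(t,\gamma_y(t))\,d(\partial_t h_y)(t)\Big)dy,
\end{equation*}
which is the first identity; for $y\in D_T$ one has $h_y\equiv\eta(\bar u(y))$ constant, so $\partial_t h_y=0$ and the integral over $D_T$ drops out, giving the second identity. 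Finally, if $\nu=\delta_u$ is a Dirac solution, Lemma \ref{L_aver_const} gives $u(t,\gamma_y(t))=\bar u(y)$ for a.e.\ $t$, hence $Y_T=D_T$ and $\mu\llcorner\widetilde C(T)=0$.

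The main obstacle is the vanishing, for a.e.\ $y\in Y_T$, of $(y_2-y_1)^{-1}|\mu|\big(\mathcal C_{y_1,y_2}(T)\setminus\widetilde C(T)\big)$ as $y_1\uparrow y$, $y_2\downarrow y$. Here $\mathcal C_{y_1,y_2}(T)\setminus\widetilde C(T)$ is the union of the graphs of the $\gamma_{y'}$ with $y'\in(y_1,y_2)\setminus Y_T$, a parameter set of $\mathcal L^1$-measure $o(y_2-y_1)$ at a density point; the cylinder balance \eqref{E_bal_cyl}--\eqref{E_no_cross} together with $\mu\le0$ gives $|\mu|(\mathcal C_{y',y''}(T))\le T\,|D_y Q|((y',y''))+C|y''-y'|$ for $y',y''\in Y_T$, and summing this over a cover of $(y_1,y_2)\setminus Y_T$ by such cylinders, together with the fact that the (Cantor-part-free) measure $|D_y Q|$ assigns $o(\delta)$ to sets of $\mathcal L^1$-measure $o(\delta)$ around $\mathcal L^1$-a.e.\ point, yields the claim.
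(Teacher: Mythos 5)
Your proof is correct and follows essentially the same route as the paper's: atoms of $P_\sharp(\mu\llcorner\widetilde{C}(T))$ are excluded because $\mu$ vanishes on each graph $\Graph(\gamma_y\llcorner(0,T))$, the Cantor part is excluded by Corollary \ref{C_Cantor_part}, and the density is identified through the cylinder balance divided by $y_2-y_1$, split into the cases $y\in L_T$ (where \eqref{E_const_vel} kills the $Q$-increment and the stretching factor) and $y\in Y_T\setminus L_T$ (where the chain rule \eqref{E_chain_rule} makes the contribution vanish). The differences are presentational — you phrase the identification via a disintegration and time-localized test functions $\psi(t)$ instead of the paper's direct computation of the Radon--Nikodym derivative for a.e.\ fixed $t_1<t_2$ — and you are in fact somewhat more explicit than the paper about the negligibility of $\mu$ on $\mathcal C_{y_1,y_2}(T)\setminus\widetilde{C}(T)$.
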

\begin{proof}
It suffices to prove that for $\mathcal L^2$-a.e. $0<t_1<t_2<T$,
\begin{equation*}
P_\sharp \mu\llcorner \big(\widetilde{C}(T)\cap ((t_1,t_2]\times \R)\big)= \left\langle\nu_{t_2,\gamma_y(t_2)}-\nu_{t_1,\gamma_y(t_1)},\eta\right\rangle \mathcal L^1(dy).
\end{equation*}
By Corollary \ref{C_Cantor_part} and the definition of $\widetilde{C}(T)$ we have that 
\begin{equation*}
P_\sharp \mu\llcorner  \big(\widetilde{C}(T)\cap ((t_1,t_2)\times \R)\big)\ll \mathcal L^1
\end{equation*}
so we have to check that for $\mathcal L^1$-a.e. $y\in Y_T$ the Radon-Nykodim derivative is 
$\left\langle\nu_{t_2,\gamma_y(t_2)}-\nu_{t_1,\gamma_y(t_1)},\eta\right\rangle$.

As before we distinguish the cases $y\in L_T$ and $y\in Y_T\setminus L_T$. 
Consider $y\in \tilde L_T$ which is a Lebesgue point for $\nu_{t_1,\gamma_y(t_1)}$ and $\nu_{t_2,\gamma_y(t_2)}$. 
Then, similarly to Lemma \ref{L_aver_const} for every entropy-entropy flux pair $(\eta,q)$
\begin{equation*}
\begin{split}
\frac{1}{y-y_n}\mu\big(&\mathcal C_{y_1,y_2}(T)\cap ((t_1,t_2)\times \R)\big)  = \\
= &~ \frac{1}{y-y_n}\left(\int_{\gamma_{y_n}(t_2)}^{\gamma_{y}(t_2)} \langle \nu_{t_2,x},\eta\rangle dx -
\int_{\gamma_{y_n}(t_1)}^{\gamma_{y}(t_1)} \langle \nu_{t_1,x},\eta\rangle dx + (t_2-t_1)(Q(y)-Q(y_n))\right) \\
= &~ \frac{1}{y-y_n}\left(\int_{\gamma_{y_n}(t_2)}^{\gamma_{y}(t_2)} \langle \nu_{t_2,x},\eta\rangle dx -
\int_{\gamma_{y_n}(t_1)}^{\gamma_{y}(t_1)} \langle \nu_{t_1,x},\eta\rangle dx \right),
\end{split}
\end{equation*}
because $Q$ is constant on $I(y)$
and taking the limit as $n\rightarrow \infty$
\begin{equation*}
\lim_{n\rightarrow \infty}\frac{1}{y-y_n}\mu\big(\mathcal C_{y_1,y_2}(T)\cap ((t_1,t_2)\times \R)\big)  = \left\langle\nu_{t_2,\gamma_y(t_2)}-\nu_{t_1,\gamma_y(t_1)},\eta\right\rangle.
\end{equation*}

Now we consider the case $y\in Y_T\setminus L_T$:
by Lemma \ref{L_geom} and \ref{L_chain_rule} for $\mathcal L^1$-a.e. $y\in Y_T$,
\begin{equation}\label{E_chain_rule}
\mathtt d_yQ(y)=-\eta(\bar u(y))\mathtt d_y\dot\gamma_y .
\end{equation}
Since $L_T\setminus \tilde L_T$ is at most countable it is sufficient to consider $y\in Y_T\setminus L_T$ of $\mathcal L^1$ density one so that 
$\nu_{t_1,\gamma_y(t_1)}=\delta_{u(y)}=\nu_{t_2,\gamma_y(t_2)}$ are Lebesgue points of $\nu$ and assume that \eqref{E_chain_rule} holds. \\
For every sequence $y_n\rightarrow y$ in $Y_T$ and for every $t\in (0,T)$ it holds 
\begin{equation*} 
\lim_{n\rightarrow \infty}\frac{\gamma_y(t)-\gamma_{y_n}(t)}{y-y_n}=1+ (t-\e)\mathtt d_y\dot\gamma_y
\end{equation*}
therefore the balance in $\mathcal C_{y_1,y_2}(T)\cap ((t_1,t_2)\times \R)$ gives
\begin{equation*}
\lim_{n\rightarrow\infty}\frac{1}{y-y_n}\mu\big(\mathcal C_{y_1,y_2}(T)\cap ((t_1,t_2)\times \R)\big)  = (t_2-t_1)\eta(\bar u(y))\mathtt d_y\dot\gamma_y + (t_2-t_1)\mathtt d_yQ(y) =0. \qedhere
\end{equation*}
\end{proof}

\begin{remark}\label{R_gammax}
Consider the function $P_0(y)=\gamma_y(0)$ defined on $Y_T$. Observe that $P_0$ is monotone and for $\mathcal L^1$-a.e. $y\in L_T$, it holds
$P_0'(y)=1$. In particular 
\begin{equation*}
{P_0}_\sharp \mathcal L^1 \llcorner L_T = \mathcal L^1\llcorner P_0(L_T).
\end{equation*}
Therefore we can write the formula \eqref{E_diss_form} in the following form:
\begin{equation*}
\int_{\widetilde{C}(T)}\varphi d \mu =  \int_{P_0(L_T)}\int_{(0,T)} \varphi(t,\gamma^x(t))d\left(\partial_t\langle \nu_{t,\gamma^x(t)},\eta\rangle\right) dx,
\end{equation*}
where $\gamma^x$ denote the curve in $\K$ for which $\gamma^x(0)=x$: we already observed that it is well-defined on a set whose complement 
is at most countable.
\end{remark}

In the last part of this section we study the endpoints of the segments in $C$. \\
For every $\gamma\in \K_\gamma$ let 
\begin{equation*}
T_1(\gamma)=\inf \Big\{t: \exists \gamma'\in \K_\gamma \mbox{ such that } \gamma'\ne \gamma, \gamma'(t)=\gamma(t)\Big\}.
\end{equation*}
Therefore, denoting by $\gamma^x\in \K_\gamma$ the curve starting from $x$ as in Remark \ref{R_gammax}, write
\begin{equation*}
S=C\setminus J= \bigcup_{x\in X_1}\big\{(t,\gamma^x(t)): t\in (0,T_1(\gamma^x)]\big\} \cup \bigcup_{x\in X_2}\big\{(t,\gamma^x(t)): t \in (0, T_1(\gamma^x))\big\},
\end{equation*}
where $x\in X_2$ if the infimum in the definition of $T_1(\gamma^x)$ is an actual minimum, and $x\in X_1$ when it is not a minimum. 
Let $X=X_1 \cup X_2$ and denote by 
\begin{equation*}
E:=\big\{(T_1(\gamma^x),\gamma^x(T_1(\gamma^x))): x \in X_1\big\}
\end{equation*}
the set of endpoints of segments in 
$C\setminus J$ and let $\tilde S=S\setminus E$.

Iterating the argument above on a countable dense set of $t$ in $\R^+$ we obtain that
\begin{equation*}
\int_{\tilde S}\varphi d \mu =  \int_{X}\int_{(0,T_1(\gamma^x))} \varphi(t,\gamma^x(t))\,d(\partial_t\langle \nu_{t,\gamma^x(t)},\eta\rangle) dx.
\end{equation*}

It remains to analyze the dissipation on $E$. 
In \cite{AdL_note} it is provided an example for which $\mathcal L^1(X_1)>0$.  

As in the previous argument, fix $\e>0$ and consider $E_\e=\{(t,x)\in E : t\ge 2\e\}$. Denote by 
$P: E_\e\rightarrow \R$ the map that at each $(t,x)\in E_\e$ assigns the unique $y=\gamma_y(\e)\in \R$ such that $\gamma_y(t)=x$ 
and by $Y_\e$ the image $P(E_\e)$.
Moreover we denote by $D_\e\subset Y_\e$ the set of points $y$ of density 1 for $\mathcal L^1\llcorner Y_\e$ such that $\nu_{\e,y}=\delta_{\bar u(y)}$ is a Lebesgue point
of $\nu_\e$.
We also introduce the function 
\begin{equation*}
\Phi:E_\e \rightarrow \Graph(T_1\llcorner X_1)=: G, \qquad \Phi (t,x)=(t,P(t,x)).
\end{equation*}
We observe that $\Phi$ is invertible and, since the segments $\gamma_y$ for $y\in Y_\e$ do not cross, it is fairly easy to check that $\Phi^{-1}$ is L-Lipschitz.

We say that $(t,y)\in G$ can be approximated from the right if there exists a sequence $(t_n,y_n)\in G$ converging to $(t,y)$
such that $y_n>y$ and $t_n>t+y_n-y$. Similarly we say that $(t,y)$ can be approximated from the left if  there exists a sequence 
$(t_n,y_n)\in G$ converging to $(t,x)$ such that $y_n<y$ and $t_n>t+y-y_n$.
We denote by $F$ the set of points of $G$ which can be approximated from both sides. A standard argument proves that 
$R:=G\setminus F$  is countably 1-rectifiable and being $\Phi^{-1}$ Lipschitz, $\Phi^{-1}(R)$ is also countably 1-rectifiable. See for example
\cite[Chapter~2]{AFP}.

\begin{figure}
\centering
\def\svgwidth{\columnwidth}
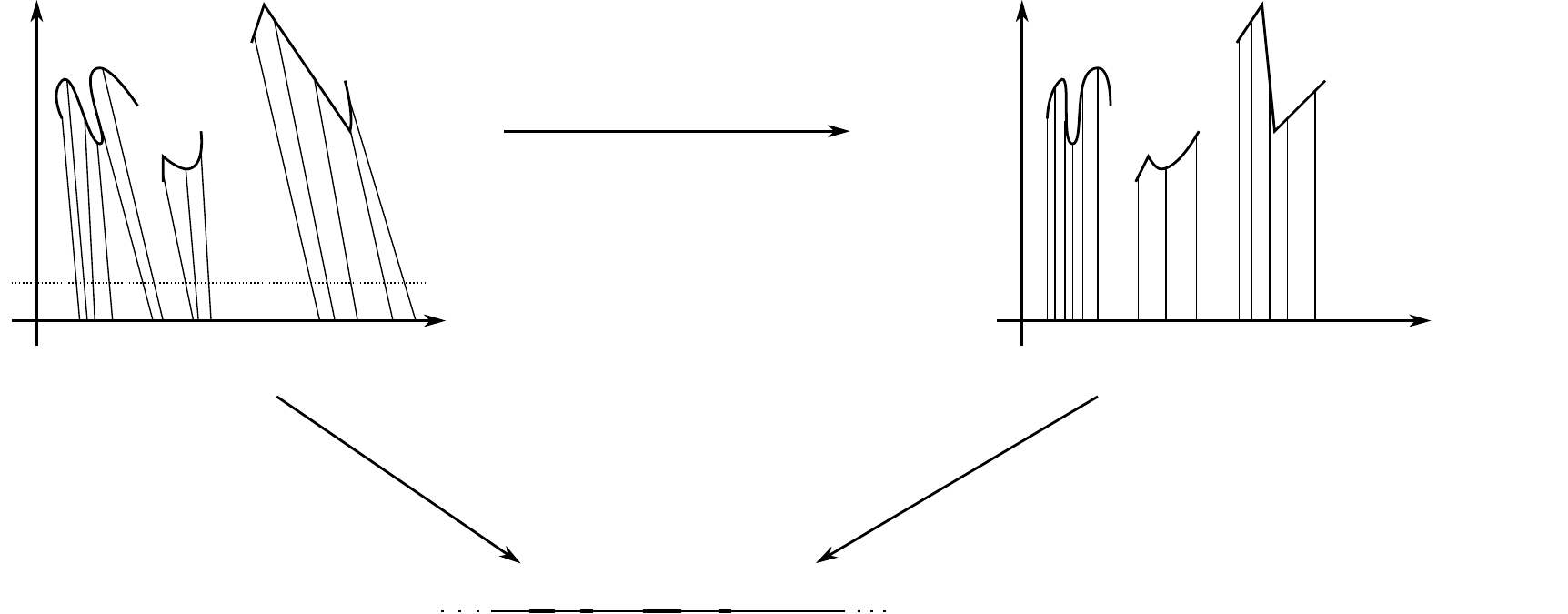
\caption{The set of endpoints in the two coordinate systems.}
\end{figure}

\begin{lemma}\label{L_endpoints}
The image measure $m:=P_\sharp (\mu \llcorner E_\e)$ is absolutely continuous with respect to $\mathcal L^1$. Moreover
\begin{equation*}
m\llcorner D_\e = 0.
\end{equation*}
\end{lemma}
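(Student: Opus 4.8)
The plan is to split $E_\e=\Phi^{-1}(F)\cup\Phi^{-1}(R)$ according to $G=F\cup R$ and treat the two pieces separately: a rectifiability argument on $\Phi^{-1}(R)$, the cylinder balance \eqref{E_bal_cyl} on $\Phi^{-1}(F)$, and then the chain rule of Lemma~\ref{L_chain_rule} to identify the Radon--Nikodym density on $D_\e$, exactly as Lemma~\ref{L_ac_part} does on $\widetilde{C}(T)$. I will use throughout that $\mu\le0$, so $|\mu|(\Omega)=|\mu(\Omega)|$, and that $\mu$ vanishes on $\mathcal H^1$-negligible sets; since a point is $\mathcal H^1$-null, $\mu$ is non-atomic, and since $P$ is injective on $E_\e$ this already forces $m$ to have no atoms. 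So for absolute continuity it only remains to rule out a singular continuous part, and on $D_\e$ to kill the absolutely continuous part too.

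\emph{The rectifiable piece.} By the criterion behind its rectifiability (which rests on the one-sided bound $T_1(\gamma_y)<T_1(\gamma_{\bar y})+|y-\bar y|$, valid on one side of each $\bar y$ by monotonicity), $R\subset G$ is contained, up to an $\mathcal H^1$-null set, in countably many Lipschitz graphs over the label axis. Hence for every $\mathcal L^1$-negligible $N\subset\R$ the set $R\cap(\R\times N)$ is $\mathcal H^1$-negligible, and since $\Phi^{-1}$ is $L$-Lipschitz the set $\Phi^{-1}(R)\cap P^{-1}(N)=\Phi^{-1}\big(R\cap(\R\times N)\big)$ is $\mathcal H^1$-negligible, hence $\mu$-negligible. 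Thus $P_\sharp(\mu\llcorner\Phi^{-1}(R))\ll\mathcal L^1$.

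\emph{The two-sided piece.} Fix $T>2\e$ and restrict to labels $y$ with $T_1(\gamma_y)\le T$ (letting $T\to\infty$ at the end). For $y_1<y_2$ such labels the curves $\gamma_{y_1},\gamma_{y_2}$ are segments (Lemma~\ref{L_segments}), their traces on the cylinder sides are supported in the relevant element of $\mathcal L_f$, and the divergence theorem on $\mathcal C_{y_1,y_2}(t)$ for $t\le T$, as in \eqref{E_bal_cyl}, together with \eqref{E_no_cross}, gives
\begin{equation*}
|\mu|\big(\mathcal C_{y_1,y_2}(t)\big)\le C\,(y_2-y_1)+T\,|D_yQ|\big([y_1,y_2]\big),
\end{equation*}
with $C$ depending only on $\e,T,f,\eta,\|\nu\|_\infty$ and $Q$ extended to a $\BV_{\loc}$ function on $\R$ as in Lemma~\ref{L_Flip}. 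The key point is to enclose the endpoints with labels in a small interval around a two-sided-approximable label $y$ inside such a cylinder: if $(T_1(\gamma_y),y)\in F$, two-sided approximability provides, for every $\delta>0$, labels $y_1\in(y-\delta,y)$ and $y_2\in(y,y+\delta)$ in $Y_\e$ with $T_1(\gamma_{y_i})>T_1(\gamma_y)$, so $\gamma_{y_1},\gamma_{y_2}$ are segments on $[0,T_1(\gamma_y)]$ and the endpoint of $\gamma_y$ lies on the top lid of $\mathcal C_{y_1,y_2}(T_1(\gamma_y))$. Combining this with a localization in time of $\Phi^{-1}(F)$ (so the ``higher'' endpoints fall into the rectifiable piece already handled) and the resulting covering yields $|P_\sharp(\mu\llcorner\Phi^{-1}(F))|\le C'\,\mathcal L^1+T\,|D_yQ|$ on the labels with $T_1\le T$. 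Now $Q\in SBV_{\loc}$: by Lemma~\ref{L_geom} with $\alpha(w)=\big(f(w)-f'(w)w,\ q(w)-f'(w)\eta(w)\big)$ and $\varphi=U$ — the hypotheses being supplied by Lemma~\ref{L_Flip}, after subtracting a constant from $\eta$ so that $\eta(0)=0$ — the Cantor part of $D_yQ$ is absent and its jump part is carried by the at most countable jump set of $Q$; together with the non-atomicity of $\mu$ this gives $P_\sharp(\mu\llcorner\Phi^{-1}(F))\ll\mathcal L^1$, hence $m\ll\mathcal L^1$.

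\emph{Vanishing on $D_\e$ and the main obstacle.} Write $m=\rho\,\mathcal L^1$. For $\mathcal L^1$-a.e.\ $y\in D_\e$ one has $\nu_{t,\gamma_y(t)}=\delta_{\bar u(y)}$ for a.e.\ $t\in(0,T_1(\gamma_y))$ — this follows from $\nu_{\e,y}=\delta_{\bar u(y)}$ and the entropy inequality, as in Lemma~\ref{L_aver_const} — and the chain rule \eqref{E_chain_rule}, $\mathtt d_yQ(y)=-\eta(\bar u(y))\,\mathtt d_y\dot\gamma_y$, holds. Computing $\rho(y)$ as the limit of $(y_2-y_1)^{-1}\mu\big(\mathcal C_{y_1,y_2}(t)\big)$ with $t$ slightly above $T_1(\gamma_y)$ (so the endpoint of $\gamma_y$ is captured, the extra sliver above it being $\mu$-negligible in the limit $y_1\nearrow y$, $y_2\searrow y$ by non-atomicity), and evaluating the base and top terms via the Dirac structure exactly as in the Dirac case of Lemma~\ref{L_ac_part}, yields $\rho(y)=\eta(\bar u(y))\,\mathtt d_y\dot\gamma_y+\mathtt d_yQ(y)=0$; labels in the rectifiable piece are handled as in the first step. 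The main obstacle is precisely this two-sided piece: turning the local cylinder estimate into a genuine bound on the image measure, i.e.\ organizing the time-localization and covering so that only the absolutely continuous part of $D_yQ$ survives (the jump part being absorbed by the fact that $\mu$ charges no point), and, in the $D_\e$ step, controlling uniformly the portion of the cylinder lying above the endpoint while passing to the incremental limit.
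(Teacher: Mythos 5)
Your handling of the two-sided piece $\Phi^{-1}(F)$ and of the density on $D_\e$ there is essentially the paper's argument (cylinder balance \eqref{E_bal_cyl}, $Q\in SBV_{\loc}$ via Lemmas \ref{L_Flip} and \ref{L_geom}, then the computation of Lemma \ref{L_ac_part} with the chain rule \eqref{E_chain_rule}), and, modulo the covering details you leave implicit, it is in order. The genuine gap is in the rectifiable piece. The one-sided cone condition defining $R=G\setminus F$ does \emph{not} put $R$, up to $\mathcal H^1$-null sets, inside countably many Lipschitz graphs over the label axis: it only gives that, on each piece $R_n$ and locally, $y\mapsto T_1(y)\mp y$ is monotone along the set, so $R$ is covered by completed graphs of monotone functions, i.e. by $1$-Lipschitz graphs over rotated axes. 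A $T_1$ with a Cantor-type (devil's staircase) singular part is perfectly compatible with the definition of $R$: the portion of $G$ lying over the $\mathcal L^1$-null set of labels carrying that singular part has positive $\mathcal H^1$-measure, belongs to $R$, and projects under $P$ onto a null set. Hence your step ``$\Phi^{-1}(R)\cap P^{-1}(N)$ is $\mathcal H^1$-negligible for every $\mathcal L^1$-null $N$'' fails, and with it the conclusion $P_\sharp(\mu\llcorner\Phi^{-1}(R))\ll\mathcal L^1$. This degenerate situation — in the $(t,x)$-plane, approximate tangent to $\Phi^{-1}(R)$ parallel to the segment direction $(1,f'(I(z)))$, exactly where $P$ collapses $\mathcal H^1$ onto a possibly singular image — is precisely what the paper must treat by a blow-up: writing $\mu\llcorner\Phi^{-1}(R)=g\,\mathcal H^1\llcorner\Phi^{-1}(R)$ and showing $g=0$ at $\mathcal H^1$-a.e. point with parallel tangent (Remark \ref{R_traces}: no dissipation across a characteristic-direction line whose two traces lie in the same linearly degenerate component, since $z\notin J$), while at points with transversal tangent the projection is nondegenerate and the image is explicitly absolutely continuous.

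Relatedly, your remark that on the rectifiable piece ``labels are handled as in the first step'' does not give $m\llcorner D_\e=0$ there: even repaired, that step only yields absolute continuity. The paper needs a second blow-up at points $z\in\Phi^{-1}(R\setminus R_\parallel)$ with $P(z)\in D_\e$: there the blow-up equals $\delta_{\bar u}$ on one half-plane, and imposing nonpositivity of the dissipation \eqref{E_diss_inf} for all entropy pairs together with the sign condition \eqref{E_sign} forces the trace from the other side to be $\delta_{\bar u}$ as well, whence $g(z)=0$. Without this (or an equivalent argument) the second assertion of the lemma is not established on the rectifiable part.
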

\begin{proof}
We consider separately $\mu\llcorner \Phi^{-1}(F)$ and $\mu\llcorner \Phi^{-1}(R)$: in the first case we take advantage of the fact that these points can be approximated from both sides to
repeat the argument of the previous section including end-points, in the second case, being $ \Phi^{-1}(R)$ rectifiable, we can use a 
blow-up technique.

\textbf{Non-rectifiable part $F$}. 
Assume by contradiction that there exists $A\subset \pi_y(F)$ such that $m(A)>0$ and $\mathcal L^1 (A)=0$. Without loss of generality we can take $A$ compact and $T_1\llcorner A$ continuous.

We first prove that 
\begin{equation}\label{E_cover}
\forall \bar y\in A\, \forall \e>0 \, \exists \,y^-<\bar y < y^+ : |y^+-y^-|< 2\e \mbox{ and }\mathcal C_{y^-,y^+}(T_1(\bar y)+\e) \supset \Graph(T_1\llcorner (A\cap (y^-,y^+)).
\end{equation}
Define the function
\begin{equation*}
\varpi (l) = 
\begin{cases}
\displaystyle{\max \big\{T_1(y): y \in [\bar y + l , \bar y] \cap A\big\}} & l<0, \\
\displaystyle{\max \big\{T_1(y): y \in [\bar y , \bar y+l] \cap A\big\}} & l\ge 0.
\end{cases}
\end{equation*}
The function $\varpi$ is upper semicontinuous, so that  
\begin{equation*}
\bar y\in (\tilde y^-,\tilde y^+)= \varpi^{-1}([0, T_1(\bar y)+\e)).
\end{equation*}
Define 
\begin{equation*}
y^+
\begin{cases}
= \tilde y^+ & \tilde y^+ \le \e, \\
\in (\bar y,\bar y +\e] \cap T_1^{-1}([T_1(\bar y)+\e, +\infty)) & \mbox{otherwise}.
\end{cases}
\end{equation*}
The last set is nonempty by the assumption on $F$.
For $y^-$ the definition is analogue and this gives \eqref{E_cover}.

Being a fine cover, 
 for every $\delta>0$ there exists $y_i^-,y_i^+$ for $i=1,\ldots, n$ such that 
\begin{enumerate}
\item $\displaystyle{\sum_{i=1}^n y_i^+-y_i^- < \delta}$,
\item $\displaystyle{\sum_{i=1}^n |D_yQ|(y_i^-,y_i^+) < \delta}$ by Corollary \ref{C_Cantor_part},
\item $\displaystyle{\bigcup_{i=1}^n \mathcal C_{y_i^-,y_i^+}(T^i)} \supset \Graph(T_1\llcorner A)$, where $T^i=\min (T_1(\gamma_{y_i^-}), T_1(\gamma_{y_i^+}))$. 
\end{enumerate}

Computing the balance in each cylinder we get 
\begin{equation*}
\begin{split}
|\mu|(\mathcal C_{y_i^-,y_i^+}(T^i))\le &~ \int_{\gamma_{y_i^-}(0)}^{\gamma_{y_i^+}(0)} \langle \nu_{0,x}^+,\eta\rangle dx-
\int_{\gamma_{y_i^-}(T^i)}^{\gamma_{y_i^+}(T^i)}\langle \nu^-_{T^i,x},\eta\rangle dx + T^i|Q(y_i^+)-Q(y_i^-)| \\
\le &~ C(y_i^+-y_i^-)+ |D_y Q|(y_i^+-y_i^-).
\end{split}
\end{equation*}
Summing in $i$ we get $m(A)< (C+1) \delta$ and by arbitrariness of $\delta>0$ this proves that $m\llcorner \pi_y(F) \ll \mathcal L^1$.

Moreover the same covering argument allows to repeat computations in Lemma \ref{L_ac_part} and this yields that the Radon-Nikodym derivative 
of $m$ with respect to $\mathcal L^1$ vanishes in $D_\e$.

\textbf{Rectifiable part $R$}. The dissipation measure on $\Phi^{-1}(R)$ has the form $\mu\llcorner \Phi^{-1}(R)= g\mathcal H^1\llcorner \Phi^{-1}(R)$ for some 
$g\in L^\infty(\Phi^{-1}(R),\mathcal H^1)$, being the divergence of an $L^\infty$ vector field. 
We consider  a blow-up $\nu^{\infty}$ of $\nu$ at the points $z\in \Phi^{-1}(R)$ such that 
$z$ is a Lebesgue point of $g$ and the blow-up of $\Phi^{-1}(R)$ at $z$ is a straight line $R^\infty$.  
Since $z\notin J$ there exists $I(z)\in \mathcal L_f$ such that $\supp\nu^{\infty}\subset I(z)$. 

We consider two cases:
\begin{enumerate}
\item the tangent to $\Phi^{-1}(R)$ at $z$ has the same direction of $(1,f'(I(z)))$;
\item the tangent to $\Phi^{-1}(R)$ at $z$ is $(\alpha^1,\alpha^2)$ not parallel to $(1,f'(I(z)))$.
\end{enumerate}
By Remark \ref{R_traces} it follows immediately that in the first case the dissipation $\mu^\infty$ of $\nu^\infty$ on $\R^\infty$ is zero. In particular $g(z)=0$ for $\mathcal H^1$-a.e. point in $\Phi^{-1}(R)$ such that the tangent has direction $(1,f'(I(z)))$. Denote the image of this set through $\Phi$ by $R_\parallel$. 
Then it follows that
\begin{equation*}
m\llcorner \pi_y(R_\parallel)=0.
\end{equation*}
In the second case an easy computation shows that 
\begin{equation*}
m=m\llcorner \pi_y(R\setminus R_\parallel)= \frac{g(P^{-1}(y))}{\left| \alpha^2(P^{-1}(y))-\alpha^1(P^{-1}(y))f'(I_{P^{-1}(y)})\right|}\mathcal L^1(dy),
\end{equation*}
in particular it is absolutely continuous.
 
To prove that $m\llcorner D_\e=0$ we show that $g( P^{-1}(y))=0$ for $\mathcal L^1$-almost every $y\in D_\e$. 
Consider a blow-up $\nu^\infty$ of $\nu$ at a point $z\in \Phi^{-1}(R\setminus R_\parallel)$ as above with the additional requirement that $P(z)\in D_\e$.  
By the dissipation formula \ref{E_diss_form} and the definition of $D_\e$ it follows that $\nu^\infty$ is a mv entropy solution on the plane with 
$\nu^\infty=\delta_{\bar u}$ for some constant $\bar u$ on the half-plane $\alpha^2(z)t-\alpha^1(z)x<0$ where the sign of $\alpha$ has been chosen so that 
\begin{equation}\label{E_sign}
\alpha^2(z)>f'(I(z))\alpha^1(z).
\end{equation}
The dissipation on $R^\infty=\{t,f'(I(z))t\}_{t\in \R}$ is given by
\begin{equation}\label{E_diss_inf}
 \alpha^2(z) (\langle\nu^+,\eta\rangle-\eta(\bar u)) - \alpha^1(z)(\langle\nu^+,q\rangle- q(\bar u)) ,
\end{equation}
where $\nu^+$ is the trace on $\R^\infty$ of $\nu^\infty$ from the half-plane $\alpha^1(z)t+\alpha^2(z)x>0$.
Since $\supp \nu^+ \subset I(z)$, imposing that the dissipation \eqref{E_diss_inf} is nonpositive for every entropy-entropy flux pair $(\eta,q)$, by the condition 
\eqref{E_sign} it follows that $\nu^+=\delta_{\bar u}$. In particular the dissipation on $R^\infty$ is 0 and this concludes the proof.
\end{proof}

For every nontrivial $I\in \mathcal L_f$ and $t>0$ let $L(t,I)$ be the set of points $x\in \R$ for which $(t,x)$ is a Lebesgue point for $\nu$,
$\supp \nu_{t,x}\subset I$ and $\nu_{t,x}$ is not a Dirac delta. 
By the previous analysis it follows that for every nontrivial $I\in \mathcal L_f$ there exists  

\begin{equation*}
L(0,I):=\lim_{t\rightarrow 0}L(t,I) \quad \mbox{in }L^1.
\end{equation*}
Denote by $L(0)$ the union of $L(0,I)$ for $I\in\mathcal L_f$ nontrivial and let $D(0)=\R\setminus L(0)$.

In the following statement we summarize the results on concentration of entropy dissipation obtained in this section.
\begin{theorem}\label{P_conc}
Let $\nu$ be an entropy mv solution with a complete family of boundaries. Then for every entropy-entropy flux pair $(\eta,q)$ the dissipation
measure $\mu=\langle \nu,\eta\rangle_t+\langle \nu,q\rangle_x$ can be decomposed as $\mu=\mu_\diff + \mu_\jump$ where
\begin{enumerate}
\item $\mu_\jump$ is concentrated on $J$,
\item the image ${P_0}_\sharp \mu_\diff\ll \mathcal L^1$ and ${P_0}_\sharp \mu_\diff (D(0))=0$.
\end{enumerate}
\end{theorem}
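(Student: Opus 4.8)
The plan is to set $\mu_\jump := \mu\llcorner J$ and $\mu_\diff := \mu\llcorner(\R^+\times\R\setminus J)$. Since $\mu$ gives no mass to $\mathcal H^1$-negligible sets and, by Lemma \ref{L_rect}, $J$ lies inside a countable union of Lipschitz graphs, the restriction is meaningful, $\mu = \mu_\jump + \mu_\diff$, and statement (1) holds by construction. The whole content is thus statement (2). First I would localise $\mu_\diff$: by the partition $\R^+\times\R = A\cup B\cup C$ with $A\subset J$, and since on the open set $B$ one has $\nu = \delta_u$ with $u$ locally $\BV$ (Theorem \ref{T_decomposition}), Volpert's chain rule forces $\mu\llcorner B$ to be carried by the jump set of $u\llcorner_B$, which by the very definition of $J$ equals $B\cap J$; hence $\mu(B\setminus J) = 0$ and $\mu_\diff = \mu\llcorner(C\setminus J)$. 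I then split $C\setminus J = \tilde S\cup E$ exactly as in Section \ref{S_conc} ($E$ the set of endpoints of the segments, $\tilde S$ their relative interiors) and estimate ${P_0}_\sharp(\mu\llcorner\tilde S)$ and ${P_0}_\sharp(\mu\llcorner E)$ separately.

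For the interior part I would just use the identity $\int_{\tilde S}\varphi\,d\mu = \int_X\int_{(0,T_1(\gamma^x))}\varphi(t,\gamma^x(t))\,d(\partial_t\langle\nu_{t,\gamma^x(t)},\eta\rangle)\,dx$ established before Lemma \ref{L_endpoints} by iterating Lemma \ref{L_ac_part}: choosing $\varphi = \psi\circ P_0$, which is constant along each segment, it gives ${P_0}_\sharp(\mu\llcorner\tilde S) = \big(\langle\nu^-,\eta\rangle-\langle\nu^+,\eta\rangle\big)\,\mathcal L^1\llcorner X$, the traces being those of $\langle\nu,\eta\rangle$ at the two ends of $\gamma^x$; in particular it is absolutely continuous. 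Moreover the inner measure $\partial_t\langle\nu_{t,\gamma^x(t)},\eta\rangle$ vanishes whenever $\gamma^x$ is $\mathcal L^1$-a.e. a Dirac delta, since then $\langle\nu_{t,\gamma^x(t)},\eta\rangle\equiv\eta(\bar u(x))$ by Lemma \ref{L_aver_const}; so ${P_0}_\sharp(\mu\llcorner\tilde S)$ is concentrated on the $x$ whose segment is non-Dirac, which by Remark \ref{R_compactness} forces $\supp\nu_{t,\gamma^x(t)}\subset I(x)$ for a fixed \emph{nontrivial} $I(x)\in\mathcal L_f$; transporting this information down to $t=0$ along $\gamma^x$ (through the trace $\nu^+_{0,\cdot}$) places $\mathcal L^1$-a.e. such $x$ in $L(0,I(x))\subset L(0)$. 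Hence ${P_0}_\sharp(\mu\llcorner\tilde S)\ll\mathcal L^1$ and vanishes on $D(0)$.

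For the endpoints I would exhaust $E = \bigcup_{n\ge 1}E_{1/n}$ with $E_\e = \{(t,x)\in E: t\ge 2\e\}$ and invoke Lemma \ref{L_endpoints}: $P_\sharp(\mu\llcorner E_\e)\ll\mathcal L^1$ and vanishes on $D_\e$, where $P$ records the position at time $\e$. Since the relabelling $y\mapsto\gamma_y(0)$ is monotone and Lipschitz (Remark \ref{R_gammax}) it sends absolutely continuous measures to absolutely continuous measures, and $P_0 = (y\mapsto\gamma_y(0))\circ P$ on $E_\e$, so ${P_0}_\sharp(\mu\llcorner E_\e)\ll\mathcal L^1$; moreover $Y_\e\setminus D_\e$ parametrizes, up to $\mathcal L^1$-null sets, segments along which $\nu$ is non-Dirac in a nontrivial component, whose starting abscissae lie in $L(0)$ by the same transport argument, so ${P_0}_\sharp(\mu\llcorner E_\e)$ vanishes on $D(0)$. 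Finally, $\mu$ being a negative measure, $\mu\llcorner E = \lim_n\mu\llcorner E_{1/n}$ monotonically, hence ${P_0}_\sharp(\mu\llcorner E)$ is a monotone limit of locally finite $\mathcal L^1$-absolutely continuous measures vanishing on $D(0)$, and is therefore itself $\mathcal L^1$-absolutely continuous and vanishing on $D(0)$. Adding the two pieces gives ${P_0}_\sharp\mu_\diff\ll\mathcal L^1$ and ${P_0}_\sharp\mu_\diff(D(0)) = 0$, which is (2).

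I expect the delicate point to be the step used twice above: that the starting abscissae of the segments along which $\nu$ is non-Dirac coincide, up to an $\mathcal L^1$-null set, with $L(0) = \bigcup_{I}L(0,I)$. This requires making precise that the $L^1$-limit $L(0,I) = \lim_{t\to 0}L(t,I)$ genuinely records this information, i.e. that "being non-Dirac with support in the nontrivial component $I$" is carried continuously along the affine characteristics down to $t=0$, using the existence of the trace $\nu^+_{0,\cdot}$ and the fact that $x\mapsto\gamma^x(t)$ is a monotone, near-identity map as $t\searrow 0$. The remaining work is bookkeeping: keeping track of which endpoints land in $J$ rather than in $C\setminus J$, and checking that the countable exceptional sets appearing in Lemmas \ref{L_ac_part} and \ref{L_endpoints} ($L_T\setminus\tilde L_T$, isolated points, and the like) are irrelevant to $\mathcal L^1$-a.e. statements.
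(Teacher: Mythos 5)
Your proposal is correct and follows essentially the same route as the paper: Theorem \ref{P_conc} is stated there as a summary of Section \ref{S_conc}, and your assembly ($\mu_\jump=\mu\llcorner J$, Volpert's chain rule on $B$, the dissipation formula from Lemma \ref{L_ac_part} on $\tilde S$, Lemma \ref{L_endpoints} on the endpoints $E$, reparametrization to $t=0$ via Remark \ref{R_gammax}, and the identification of the feet of the non-Dirac segments with $L(0)$, which the paper also leaves at the level of ``by the previous analysis'') is precisely the intended argument. The one imprecision is the claim that a monotone Lipschitz relabelling pushes absolutely continuous measures to absolutely continuous ones (false in general, since the derivative may vanish on a set of positive measure); what actually closes this step — and what Remark \ref{R_gammax} provides — is that the measures in question are concentrated, up to $\mathcal L^1$-null sets, on parameters lying in the linearly degenerate part where $\mathtt d_y\dot\gamma_y=0$, so that $P_0$ has derivative $1$ there and ${P_0}_\sharp\mathcal L^1\llcorner L_T=\mathcal L^1\llcorner P_0(L_T)$.
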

\begin{remark}
If $\nu$ is a Dirac entropy solution then $D(0)=\R$ therefore Theorem \ref{T_main_theorem_conc} immediately follows from this result .
\end{remark}

\section{Initial data}\label{S_init}
We show that a mv entropy solution endowed with a compete family of boundaries assumes the initial datum in a strong sense.
The fact that the solution has a complete family of boundaries is used in the following lemma.

\begin{lemma}\label{L_mv_const}
Let $\bar \nu$ be a constant Young measure on $\R$ and let $\nu$ be a mv entropy solution with a complete family of boundaries on $\R^+\times \R$
such that for all entropy-entropy flux pairs
$(\eta,q)$ and $\varphi\in C^\infty_c([0,+\infty)\times \R)$
\begin{equation*}
\int_{\R^+\times \R}\left( \langle \nu,\eta\rangle \varphi_t+\langle \nu,q\rangle\varphi_x \right)dxdt + \int_\R\langle \bar\nu, \eta\rangle\varphi(0,x) dx = 0.
\end{equation*}
Then $\supp \bar \nu \subset I$ for some $I\in \mathcal L_f$ and $\nu_{t,x}=\bar \nu$ for $\mathcal L^2$-a.e. $(t,x)\in \R^+\times\R$.
\end{lemma}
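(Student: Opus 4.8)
The plan is to use the complete family of boundaries to confine first $\bar\nu$ and then $\nu$ to a single linearly degenerate component $I$ of $f$; on $I$ the flux is affine, so the equation reduces to a linear transport equation, and combined with the hypothesis that \emph{every} entropy is exactly conserved (not merely dissipated) this will force $\nu\equiv\bar\nu$.

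\emph{First I would prove $\supp\bar\nu\subset I$ for some $I\in\mathcal L_f$}, which is the only step that genuinely uses the existence of a complete family of boundaries. By Remark~\ref{R_compactness}, for $\mathcal L^2$-a.e.\ $(t,x)$ there is $I(t,x)\in\mathcal L_f$ with $\supp\nu_{t,x}\subset I(t,x)$. The stated identity is exactly the assertion that $\bar\nu$ is the initial trace, so $\nu^+_{0,x}=\bar\nu$ for $\mathcal L^1$-a.e.\ $x$; fixing such an $x$ at which, by Fubini, the support condition of Remark~\ref{R_compactness} also holds for $\mathcal L^1$-a.e.\ $t>0$, I would select $t_n\downarrow0$ with $\nu_{t_n,x}\rightharpoonup\bar\nu$ and $\supp\nu_{t_n,x}\subset[a_n,b_n]\in\mathcal L_f$, pass to a subsequence so that $[a_n,b_n]\to[a,b]$, and conclude: $\supp\bar\nu\subset[a,b]$ by upper semicontinuity of supports under weak convergence, while $f'$ is constant on each $[a_n,b_n]$ and hence—by continuity of $f'$—constant on $[a,b]$, so $[a,b]$ lies in a maximal interval $I=[a_I,b_I]\in\mathcal L_f$. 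Set $\sigma=f'(I)$, so that $f(u)=\sigma u+c$ on $I$.

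\emph{Next I would confine $\nu$ to $I$ and then identify it.} Applying the hypothesis with the boundary pair $(\eta^+_{b_I},q^+_{b_I})$ and using $\langle\bar\nu,\eta^+_{b_I}\rangle=0$ (all the mass of $\bar\nu$ is $\le b_I$), the nonnegative function $\phi:=\langle\nu,\eta^+_{b_I}\rangle$ and $G:=\langle\nu,q^+_{b_I}\rangle$ solve $\partial_t\phi+\partial_xG=0$ on $\R^+\times\R$ with zero initial trace, together with the pointwise bound $|G|\le C\phi$ ($C$ depending only on $f$ and $M$, since $|q^+_{b_I}(u)|\le C\,(u-b_I)^+$); a standard finite‑speed/domain‑of‑dependence estimate then forces $\phi\equiv0$, i.e.\ $\nu_{t,x}\big((b_I,+\infty)\big)=0$ a.e., and symmetrically, using $(\eta^-_{a_I},q^-_{a_I})$, $\nu_{t,x}\big((-\infty,a_I)\big)=0$ a.e., so $\supp\nu_{t,x}\subset I$ for a.e.\ $(t,x)$. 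On $I$ the flux is affine, hence $q^+_k=\sigma\eta^+_k+\beta_k$ on $I$ for suitable constants $\beta_k$; inserting $\langle\nu,q^+_k\rangle=\sigma\langle\nu,\eta^+_k\rangle+\beta_k$ into the hypothesis and using $\langle\nu_{t,x},1\rangle\equiv1$ and $\int_\R\varphi_x\,dx=0$, the function $h_k:=\langle\nu,\eta^+_k\rangle$ solves the transport equation $\partial_t h_k+\sigma\partial_x h_k=0$ with constant initial datum $\langle\bar\nu,\eta^+_k\rangle$, whence $h_k\equiv\langle\bar\nu,\eta^+_k\rangle$ by uniqueness for transport with $L^\infty$ data. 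Letting $k$ run over a countable dense set and using that $k\mapsto\langle\mu,(u-k)^+\rangle$ is continuous and determines any probability measure supported in $[-M,M]$, I conclude $\nu_{t,x}=\bar\nu$ for $\mathcal L^2$-a.e.\ $(t,x)$.

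The main obstacle is the first step. Without a complete family of boundaries the statement is false—for instance, if $f$ is weakly genuinely nonlinear and $\bar\nu$ is not a Dirac mass, the constant Young measure $\nu\equiv\bar\nu$ is a mv entropy solution with no dissipation and constant initial datum, yet $\supp\bar\nu$ is contained in no element of $\mathcal L_f$—so a structural input is unavoidable, and the delicate point is precisely to transfer the constraint ``$\supp\nu_{t,x}$ lies in one flat piece of $f'$'' down to $t=0$ when the initial trace is only a weak limit. Steps two and three are comparatively soft, being a finite‑speed argument and the linear transport equation, respectively.
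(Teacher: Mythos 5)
There is a genuine gap, and it sits exactly where you put the weight of the argument: the confinement $\supp\bar\nu\subset I$. Your Step 1 needs, at $\mathcal L^1$-a.e.\ \emph{fixed} $x$, a sequence $t_n\downarrow 0$ with $\nu_{t_n,x}\rightharpoonup\bar\nu$ weakly as probability measures. But the trace $\nu^+_{0,x}$ furnished by Proposition \ref{P_traces} (and its space-like variant) is only a limit in the sense of Young measures, i.e.\ $\langle\nu_{t},g\rangle\rightharpoonup\langle\bar\nu,g\rangle$ weak-* in $L^\infty_x$; this does not give pointwise (in $x$) weak convergence along any sequence of times, and nothing in the hypotheses rules out oscillation in $x$ as $t\to 0$: a priori $\nu_{t,x}$ could be concentrated, for different $x$, in \emph{different} linearly degenerate components whose mixture converges weakly to a $\bar\nu$ charging several components, which is compatible with Remark \ref{R_compactness} but contradicts your conclusion. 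Excluding such oscillating configurations is precisely the rigidity that the zero-dissipation hypothesis must provide (a non-dissipative jump between values in different components is impossible, but seeing this requires trace/structure arguments, not the weak trace alone). Note also the circularity risk: pointwise/strong attainment of the datum at $t=0$ is what Section \ref{S_init} is building towards, and this very lemma is an ingredient of Lemma \ref{L_const_blow} and Proposition \ref{P_init}, so it cannot be assumed here. Since your Steps 2 and 3 (the finite-speed argument giving $\supp\nu_{t,x}\subset I$, and the reduction to the constant-coefficient transport equation on the flat piece) both hinge on Step 1, the proposal as written does not close.

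For comparison, the paper argues in the opposite order and never needs the support of $\bar\nu$ up front: the hypothesis forces the interior dissipation to vanish, so by Proposition \ref{P_struct_J} every admissible boundary $(\gamma,w)\in\K$ has $I^-=I^+$ along it, hence constant speed, and by the consistency property \eqref{E_relation} these straight characteristics cover all of $\R^+\times\R$; the dissipation formula \eqref{E_diss_form} then shows $\nu$ is constant along each line, and the (weak) initial trace $\bar\nu$ pins the constant, giving $\nu_{t,x}=\bar\nu$ a.e.; the inclusion $\supp\bar\nu\subset I$ then follows a posteriori from Remark \ref{R_compactness}. Your Steps 2--3 would be a clean, more elementary finish \emph{after} that structural input, but the complete family of boundaries has to enter through something like Proposition \ref{P_struct_J} and \eqref{E_diss_form}, not merely through Remark \ref{R_compactness} plus the weak trace.
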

\begin{proof}
Since there is no dissipation, for every $(\gamma,w)\in \K$, for every $t\in (0,T(\gamma,w))$ we have $u\in I^-=I^+$, where
$I^\pm$ are given by Proposition \ref{P_struct_J}. In particular $\gamma$ has constant speed $f'(u)$ in $(0,T(\gamma,w))$ and 
$\langle \nu, f'\rangle$ is continuous in $\R^+\times \R$.

We claim that every $\gamma\in \K_\gamma$ has constant speed in $(0,+\infty)$. Fix a positive time $T$ and for each $x\in \R$ let 
$(\gamma_x,w_x)\in \K$ be such that $\gamma_x(T)=x$ and $T(\gamma_x,w_x)\ge T$. The velocity $\dot\gamma_x$ is continuous in $x$ thanks to \eqref{E_relation}, therefore 
for every $t\in (0,T)$ we have 
\begin{equation*}
\bigcup_{x\in \R}\gamma_x(t)=\R. 
\end{equation*}
By arbitrariness of $T$ we have the claim.

By the dissipation formula \eqref{E_diss_form} we know that on each straight line $\nu$ is constant and therefore the initial condition implies that $\nu_{t,x}=\bar \nu$ 
for $\mathcal L^2$-a.e. $(t,x)\in \R^+\times \R$. In particular the curves $\gamma \in K_\gamma$ are parallel.
\end{proof}

\begin{lemma}\label{L_const_blow}
Let $\nu$ be a mv entropy solution with a complete family of boundaries.
For $\mathcal L^1$-a.e. $x\in \R$ the blow-up of $\nu$ about $(0,x)$ is the constant Young measure $\bar \nu$, where $\nu_{0,x}^+=\bar \nu$ is the
Lebesgue value of the trace $\nu_0^+$.
\end{lemma}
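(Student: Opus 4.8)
The plan is to deduce this from Lemma \ref{L_mv_const} together with the standard fact that for $\mathcal L^1$-a.e.\ $x \in \R$, the point $(0,x)$ is a Lebesgue point of the trace $\nu_0^+$ (in the Young-measure sense), i.e.\ $\bar\nu := \nu_{0,x}^+$ is well-defined and the averages of $\langle \nu_{0,\cdot}^+, g\rangle$ near $x$ converge to $\langle \bar\nu, g\rangle$ for all $g$ in a countable dense family of continuous functions. First I would fix such a point $x$ and rescale: set $\nu^\e_{t,y} = \nu_{\e t,\, x + \e y}$ as in Definition \ref{D_blow}, and also rescale the complete family of boundaries $\K$ by the same affine map, together with the flux $f$ (which is unchanged under this rescaling since $f$ is fixed). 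By Theorem \ref{T_Young} a subsequence $\nu^{\e_k}$ converges in the sense of Young measures to some $\nu^\infty$, and by the compactness of the rescaled families of boundaries (Kuratowski compactness, Theorem of Zarankiewicz) we may assume $\K^{\e_k} \to \K^\infty$. By Proposition \ref{P_stability} the limit $\nu^\infty$ is a mv entropy solution admitting the complete family of boundaries $(\K^\infty, T^\infty)$.

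The next step is to identify the initial datum of $\nu^\infty$. Because $(0,x)$ is a Lebesgue point of $\nu_0^+$, for every test function $\varphi \in C^\infty_c([0,+\infty)\times\R)$ and every entropy--entropy flux pair $(\eta,q)$ one has, after the change of variables, that $\int \langle \nu^\infty,\eta\rangle \varphi_t + \langle \nu^\infty,q\rangle \varphi_x \, dy\,dt + \int \langle\bar\nu,\eta\rangle \varphi(0,y)\, dy = 0$: the interior terms pass to the limit by weak-$*$ convergence of $\langle\nu^{\e_k},\eta\rangle$ and $\langle\nu^{\e_k},q\rangle$, and the boundary term converges precisely by the Lebesgue-point property of the trace $\nu_0^+$ at $x$ (the trace of $\nu^{\e_k}$ at $t=0$ is $y \mapsto \nu^+_{0,\,x+\e_k y}$, whose local averages converge to $\bar\nu$). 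This is exactly the hypothesis of Lemma \ref{L_mv_const}, so that lemma applies to $\nu^\infty$ and gives $\supp\bar\nu \subset I$ for some $I \in \mathcal L_f$ and $\nu^\infty_{t,y} = \bar\nu$ for $\mathcal L^2$-a.e.\ $(t,y)$. Since this holds along every convergent subsequence and the limit is always the same constant Young measure $\bar\nu$, the full blow-up limit exists and equals $\bar\nu$.

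The main obstacle I expect is the justification that $\nu^\infty$ genuinely carries a complete family of boundaries — that is, checking that the rescaled families $\K^{\e_k}$ do not degenerate and that Proposition \ref{P_stability} applies. One needs: (i) a uniform Lipschitz bound on the curves in $\K^{\e_k}_\gamma$, which follows from the fact that their slopes lie in $\{f'(w) : w \in [-M,M]\}$, a fixed compact set, and rescaling preserves slopes; (ii) that the rescaled flux is literally $f$ (so the Kuratowski convergence of the graphs of $f'$ in hypothesis (1) of Proposition \ref{P_stability} is trivial); and (iii) that the limiting family is nonempty and covers $\R^+ \times \R$ — this uses that for each $(\bar t, \bar x)$ the section $K^{\e_k}(\bar t,\bar x) \neq \emptyset$ with values in the fixed compact $[-M,M]$, so a diagonal argument produces curves through any given point in the limit. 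A secondary technical point is the genuine Young-measure Lebesgue-point statement for the trace $\nu_0^+$; this is classical (the map $x \mapsto \nu^+_{0,x}$ is an $L^\infty$-valued, hence strongly measurable up to choosing a countable dense set of test functions, map into probability measures), and I would invoke it rather than prove it. Once these are in place the argument is essentially a soft compactness-plus-rigidity statement, with all the hard analytic content already packaged in Lemmas \ref{L_mv_const}, \ref{L_bd_const} and Propositions \ref{P_struct_J}, \ref{P_stability}.
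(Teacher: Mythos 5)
Your overall route is the same as the paper's: blow up at a Lebesgue point of the trace $\nu_0^+$, pass the weak formulation to the limit, and invoke Lemma \ref{L_mv_const}. The extra care you take about the complete family of boundaries of the blow-up (uniform Lipschitz bounds, Kuratowski compactness, Proposition \ref{P_stability}) is fine and fills in something the paper only asserts.

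There is, however, a genuine gap in the step where you derive the limit identity with equality. For the rescaled measure $\nu^{\e}$ the correct balance is
\begin{equation*}
\int_{\R^+\times \R}\big( \langle \nu^{\e},\eta\rangle \varphi_t+\langle \nu^{\e},q\rangle\varphi_x \big)\,dx\,dt
+ \int_\R\langle \nu^{\e+}_{0},\eta\rangle\,\varphi(0,x)\,dx
= \int_{\R^+\times\R}\varphi \,d\mu^{\e},
\end{equation*}
with $\mu^{\e}(B)=\e^{-1}\mu\big((0,x)+\e B\big)$ the rescaled dissipation, and your argument silently drops the right-hand side. Being a Lebesgue point of the trace does not make this term vanish: at a point from which a dissipative shock emanates at $t=0$ one has $\mu\big(B_{0,x}(\e)\cap\{t>0\}\big)\sim c\,\e$, the rescaled dissipation does not go to zero, the blow-up is a nonconstant Riemann-type solution, and the hypothesis of Lemma \ref{L_mv_const} (which is an equality, i.e.\ zero dissipation) fails. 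So you must additionally restrict to the $\mathcal L^1$-a.e.\ $x$ for which
\begin{equation*}
\lim_{\e\rightarrow 0}\frac{\mu\big(B_{0,x}(\e)\cap\{t>0\}\big)}{\e}=0,
\end{equation*}
a fact obtained by a standard Vitali (or Besicovitch) covering argument using that $\mu$ is a locally finite measure and $|\mu|\big(\{0<t<r\}\big)\rightarrow 0$ locally as $r\rightarrow 0$; this is exactly the point the paper makes before applying Lemma \ref{L_mv_const}. With this extra selection of $x$ the term $\int\varphi\,d\mu^{\e_k}$ tends to $0$ and the rest of your argument goes through; without it the claimed equality, and hence the application of Lemma \ref{L_mv_const}, is unjustified.
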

\begin{proof}
We first observe that for $\mathcal L^1$-a.e. $x\in \R$ the blow-up at $(0,x)$ is a mv entropy solution with a complete family of boundaries which does
not dissipate any entropy: in fact with a standard application of Vitali covering theorem it is possible to prove that for $\mathcal L^1$-a.e. $x\in \R$
\begin{equation}\label{E_no_diss}
\lim_{\e\rightarrow 0} \frac{\mu(B_{0,x}(\e)\cap \{t>0\})}{\e}=0
\end{equation}
and this implies the claim.

Consider a point $x\in \R$ such that \eqref{E_no_diss} holds and $x$ is a Lebesgue point of the trace $\nu^+_0$ with value $\bar \nu$.
With the notation introduced in Definition \ref{D_blow}, for all entropy-entropy flux pair $(\eta,q)$ and every test function $\varphi \in C_c^\infty([0,+\infty)\times \R)$
\begin{equation*}
\int_{\R^+\times \R}\left( \langle \nu^\e,\eta\rangle \varphi_t+\langle \nu^\e,q\rangle\varphi_x \right)dxdt + 
\int_\R\langle \nu^{\e+}_0, \eta\rangle\varphi(0,x) dx =
\int_{\R^+\times\R}\varphi d \mu^\e. 
\end{equation*}
Passing to the limit as $\e\rightarrow 0$ we get
\begin{equation*}
\int_{\R^+\times \R}\left( \langle \nu^0,\eta\rangle \varphi_t+\langle \nu^0,q\rangle\varphi_x \right)dxdt + \int_\R\langle \bar\nu, \eta\rangle\varphi(0,x) dx = 0.
\end{equation*}
By Lemma \ref{L_mv_const} this concludes the proof.
\end{proof}

Let $\mathfrak d$ be a bounded distance on $\mathcal P([-M,M])$ which induces the weak topology, for example the Wasserstein distance $W_2$.

\begin{lemma}\label{L_init_average}
The initial datum is assumed in the following sense: for every $L>0$
\begin{equation*}
\lim_{T\rightarrow 0}\frac{1}{T}\int_0^T\int_{-L}^L \mathfrak d(\nu_{t,x},\nu_{0,x}^+)dxdt =0,
\end{equation*}
where $\nu^+_{0,x}$ is the trace at $t=0$ of $\nu$.
\end{lemma}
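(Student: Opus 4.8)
The plan is to reduce the assertion, via the structure of the metric $\mathfrak d$ and a standard covering argument, to a \emph{strong} (not merely weak, i.e. Young‑measure) convergence of the blow‑ups of $\nu$ at the points of $\{t=0\}$, and then to extract that strong convergence from Lemma \ref{L_const_blow}, Lemma \ref{L_mv_const} and the convexity of the Kruzhkov entropies. For the reduction: since $\mathfrak d$ is bounded and $x\mapsto\nu^+_{0,x}$ is $\mathcal L^1$‑measurable into the separable metric space $(\mathcal P([-M,M]),\mathfrak d)$, translations are continuous in $L^1_\loc$; using this together with the triangle inequality it is enough to prove, for $\mathcal L^1$‑a.e. $x_0$,
\[
\lim_{r\to 0}\frac{1}{r^{2}}\int_{0}^{r}\int_{x_0-r}^{x_0+r}\mathfrak d\big(\nu_{t,x},\nu^+_{0,x_0}\big)\,dx\,dt=0,
\]
after which a Vitali‑type covering of the strip $(0,T)\times(-L,L)$ by such small squares sitting on $\{t=0\}$, together with Egorov's theorem and dominated convergence (to absorb the exceptional set of $x_0$ and the boundary layers of width $O(T)$), yields the claim; this last step I would carry out carefully but it is routine.

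For the displayed limit, fix $x_0$ which is a Lebesgue point of $x\mapsto\nu^+_{0,x}$, at which \eqref{E_no_diss} holds for all entropies, and at which Lemma \ref{L_const_blow} applies. Rescale $\nu^\e_{s,\xi}:=\nu_{\e s,\,x_0+\e\xi}$: these are mv entropy solutions whose dissipations $\mu^\e$ tend to $0$ in total variation on compacta, whose initial traces $\nu^{\e,+}_{0,\cdot}=\nu^+_{0,\,x_0+\e\cdot}$ converge to $\bar\nu:=\nu^+_{0,x_0}$ strongly in $L^1_\loc(\mathfrak d)$ (Lebesgue point), and which converge to $\bar\nu$ in the sense of Young measures by Lemma \ref{L_const_blow}; the displayed identity is exactly $\int_{(0,1)\times(-1,1)}\mathfrak d(\nu^\e_{s,\xi},\bar\nu)\,d\xi\,ds\to 0$, i.e. strong $L^1_\loc$ convergence of the blow‑up. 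By Lemma \ref{L_mv_const}, $\bar\nu$ is supported in a single $I=[\alpha,\beta]\in\mathcal L_f$; set $\sigma:=f'(I)$. For each $k\ge\beta$ the entropy $\eta^+_k$ is convex with $\langle\bar\nu,\eta^+_k\rangle=0$, the function $\langle\nu^\e,\eta^+_k\rangle$ is nonnegative and converges weakly‑$*$ in $L^\infty$ to $0$, hence converges to $0$ in $L^1_\loc$; symmetrically for $\eta^-_k$, $k\le\alpha$. Taking $k=\beta$ and $k=\alpha$ gives $\langle\nu^\e,\dist(\cdot,I)\rangle\to 0$ in $L^1_\loc$, so the blow‑up concentrates \emph{strongly} on $I$. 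Inside $I$ the flux is affine, so the evolution there is the linear transport $u_t+\sigma u_x=0$, which is rigid; combining this rigidity with the strong concentration on $I$, the vanishing of $\mu^\e$, and the strong convergence of the initial traces, one propagates the strong convergence of the datum along the (parallel) characteristics and concludes $\nu^\e\to\bar\nu$ strongly in $L^1_\loc(\mathfrak d)$.

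The hard part is the very last point: weak (Young‑measure) convergence of the blow‑up does not by itself rule out fine‑scale oscillations, so one genuinely needs the rigidity of linear transport on the linearly degenerate component $I$, made quantitative enough to also absorb the vanishing‑but‑nonzero mass of $\nu^\e$ outside $I$ and the vanishing‑but‑nonzero dissipation $\mu^\e$. Everything else — the metric expansion of $\mathfrak d$, the covering/Egorov reduction, and the translation‑continuity of $x\mapsto\nu^+_{0,x}$ — is standard.
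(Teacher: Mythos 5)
Your overall architecture is the same as the paper's: reduce the claim to the statement that for $\mathcal L^1$-a.e.\ $x$ the parabolic average $\frac{1}{r^{2}}\int_{0}^{r}\int_{x-r}^{x+r}\mathfrak d(\nu_{t,x'},\nu^+_{0,x})\,dx'\,dt$ tends to $0$, and then conclude by Egorov plus a covering of $[-L,L]$ by intervals with bounded overlap; this is exactly how the paper proves Lemma \ref{L_init_average}. You are also right to flag a subtlety that the paper passes over in one line: Lemma \ref{L_const_blow} literally provides only convergence of the blow-ups in the sense of Young measures, and when the Lebesgue value $\bar\nu=\nu^+_{0,x}$ is \emph{not} a Dirac mass (so supported in a nontrivial $I\in\mathcal L_f$, by Lemma \ref{L_mv_const}) this does not by itself give $\int\mathfrak d(\nu^\e,\bar\nu)\to 0$, since $\mathfrak d(\cdot,\bar\nu)$ is a nonlinear functional and fine-scale oscillation of $\nu^\e$ inside $I$ is a priori compatible with weak convergence to the constant $\bar\nu$. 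Your preliminary steps are correct: the vanishing of the dissipation of the blow-ups via \eqref{E_no_diss}, the strong convergence of the rescaled initial traces at a $\mathfrak d$-Lebesgue point, and the $L^1_\loc$ vanishing of $\langle\nu^\e,\eta^\pm_k\rangle$ for $k$ outside $I$ (a nonnegative sequence converging weak-$*$ to zero converges in $L^1_\loc$), hence of $\langle\nu^\e,\dist(\cdot,I)\rangle$.

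The gap is that the decisive step --- upgrading Young-measure convergence to $\int\mathfrak d(\nu^\e_{s,\xi},\bar\nu)\,d\xi\,ds\to 0$ by ``rigidity of linear transport on $I$'' --- is asserted, not proven, and you yourself identify it as the hard part; but it is precisely there that the possible oscillation inside $I$ must be excluded, so as written the proof is incomplete. Two ways to close it. (a) Quantify your sketch: for every convex $\eta$ one has $q=f'(I)\eta+c_\eta$ on $I$ and $|q-f'(I)\eta-c_\eta|\le C_\eta\,\dist(\cdot,I)$ on $[-M,M]$; testing the entropy inequality of $\nu^\e$ along the lines $\xi\mapsto\xi+f'(I)s$ against $\phi(\xi)$ and integrating by parts in $x$, the quantities $\int\langle\nu^\e_{s,\xi+f'(I)s},\eta\rangle\phi(\xi)\,d\xi$ are non-increasing in $s$ up to errors controlled by $|\mu^\e|$ and by $\|\langle\nu^\e,\dist(\cdot,I)\rangle\|_{L^1}$; combining this one-sided bound with the strong convergence of the initial trace and the weak convergence from Lemma \ref{L_const_blow} forces $\langle\nu^\e,\eta\rangle\to\langle\bar\nu,\eta\rangle$ in $L^1_\loc$ for every convex $\eta$, hence for differences of convex functions, hence for $\mathfrak d$ (all bounded metrics inducing the weak topology on $\mathcal P([-M,M])$ are equivalent by compactness). (b) Closer to the paper: do not argue on the blow-up at all, but use the structure already established in Section \ref{S_conc}: at a.e.\ point of $\{t=0\}$ the characteristic is a segment of $C$ with speed $f'(I)$, and by Lemma \ref{L_aver_const} and formula \eqref{E_diss_form} the map $t\mapsto\langle\nu_{t,x+f'(I)t},\eta\rangle$ is BV in $t$ with variation controlled by the dissipation; this gives strong limits along a.e.\ characteristic as $t\to 0$ (this is exactly what Proposition \ref{P_init} invokes) and hence the pointwise blow-up statement in the strong sense needed before Egorov. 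Either supplement would make your argument complete; without one of them the central claim of your proposal remains unestablished.
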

\begin{proof}
By Lemma \ref{L_const_blow} and Egorov theorem for every $\e>0$ there exists $A_\e \subset [-L,L]$ and $\bar r>0$ such that $\mathcal L^1([-L,L]\setminus A_\e) <\e$ and for all $x\in A_\e$, $r\in (0,\bar r)$ 
\begin{equation*}
\frac{1}{2r^2}\int_0^r\int_{x-r}^{x+r} \mathfrak d(\nu_{t,x'},\nu_{0,x}^+)dx'dt < \e \qquad \mbox{and} \qquad \frac{1}{2r}\int_{x-r}^{x+r}\mathfrak d(\nu^+_{0,x'},\nu^+_{0,x})dx' <\e.
\end{equation*}
It is easy to see that for every $r\in (0,\bar r)$ it is possible to choose $x_1<\ldots<x_N$ in $A_\e$ such that every $x\in [-L,L]$ belongs to at most two of
the intervals $(x_i-r,x_i+r)$ and 
\begin{equation*}
\mathcal L^1\bigg([-L,L]\setminus \bigcup_{i=1}^{N}(x_i-r,x_i+r)\bigg)<\e.
\end{equation*}
Therefore
\begin{equation*}
\begin{split}
\frac{1}{r}\int_0^r\int_{-L}^L \mathfrak d(\nu_{t,x},\nu_{0,x}^+)dxdt =&~ 
\frac{1}{r}\int_0^r\left(\int_{A_\e} \mathfrak d(\nu_{t,x},\nu_{0,x}^+)dx + \int_{[-L,L]\setminus A_\e} \mathfrak d(\nu_{t,x},\nu_{0,x}^+)dx  \right)dt \\
\le &~\frac{1}{r}\sum_{i=1}^N\int_0^r\int_{x_i-r}^{x_i+r}\left[\mathfrak d(\nu_{t,x},\nu_{0,x_i}^+) + \mathfrak d(\nu_{0,x_i}^+,\nu_{0,x}^+)\right] dx dt \\
&~+ \bar{M}\mathcal L^1([-L,L]\setminus A_\e) \\
\le &~ 8L\e + \bar M\e,
\end{split}
\end{equation*}
where $\bar M$ is the supremum of $d$ in $\mathcal P([-M,M])^2$, and this concludes the proof.
\end{proof}

\begin{remark}\label{R_Chen}
The lemma above implies that there exists a sequence $t_n\rightarrow 0$ such that for every $L>0$
\begin{equation*}
\lim_{n\rightarrow \infty} \int_{-L}^L \mathfrak d(\nu_{t_n,x},\nu^+_{0,x})dx =0.
\end{equation*}
In particular we can deduce from Proposition \ref{P_struct_Jc} that for $\mathcal L^1$-a.e. $x\in \R$ there exists $I\in \mathcal L_f$ such that 
\begin{equation*}
\supp\, \nu^+_{0,x}\subset I
\end{equation*}
and if $\nu$ is a Dirac entropy solution then 
\begin{equation*}
\nu^+_{0,x} = \delta_{u_0(x)}
\end{equation*}
for some $u_0\in L^\infty$ and $\nu$ represents the unique entropy solution of \eqref{E_cl} with initial datum $u_0$.
\end{remark}

\begin{proposition}\label{P_init}
Let $\nu$ a mv solution with a complete family of boundaries. Then for every $L>0$
\begin{equation*}
\lim_{t\rightarrow 0} \int_{-L}^L \mathfrak d(\nu_{t,x},\nu^+_{0,x})\, dx =0,
\end{equation*}
 where $\nu_{t,x} = \nu^+_{t,x}$ is continuous from the right.
\end{proposition}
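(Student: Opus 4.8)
The plan is to upgrade the time‑averaged statement of Lemma~\ref{L_init_average} to the pointwise one, using finite speed of propagation. First I would reduce to one convenient bounded distance: if $\mathfrak d,\mathfrak d'$ both metrize the weak topology on $\mathcal P([-M,M])$, then $\int_{-L}^L\mathfrak d'(\nu_{t,x},\nu^+_{0,x})\,dx\to0$ forces, along subsequences, a.e.-$x$ convergence of the $\nu_{t,x}$ to $\nu^+_{0,x}$, hence $\int_{-L}^L\mathfrak d(\nu_{t,x},\nu^+_{0,x})\,dx\to0$ by dominated convergence; so I work with $\mathfrak d(\mu,\sigma)=\int_{-M}^{M}|F_\mu(k)-F_\sigma(k)|\,dk$, the $1$-Wasserstein distance ($F$ the cumulative distribution function). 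I also recall that one may take the right-continuous representative $\nu_{t,x}=\nu^+_{t,x}$, that $\langle\nu_{t,\cdot},g\rangle\rightharpoonup\langle\nu^+_{0,\cdot},g\rangle$ weakly-$*$ in $L^\infty_{\loc}$ as $t\searrow0$ for all $g\in C([-M,M])$, and that by Remark~\ref{R_Chen} $\supp\nu^+_{0,x}\subset I(x)$ for some $I(x)\in\mathcal L_f$ and $\mathcal L^1$-a.e.\ $x$.

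Next I would fix $\bar x$ in the full-measure set of points that are Lebesgue points of $x\mapsto\nu^+_{0,x}$ and at which the blow-up of $\nu$ is the constant Young measure $\bar\nu:=\nu^+_{0,\bar x}$ (Lemma~\ref{L_const_blow}). As a constant Young measure is a trivial mv entropy solution on $\R^2$, the doubling of variables inequality for mv solutions recalled before Proposition~\ref{P_szep}, applied to $\nu$ and $\bar\nu$, gives
\begin{equation*}
\partial_t\big\langle\nu_{t,x}\times\bar\nu,|w-w'|\big\rangle+\partial_x\big\langle\nu_{t,x}\times\bar\nu,\sign(w-w')(f(w)-f(w'))\big\rangle\le 0 .
\end{equation*}
Writing $R(t,x)=\langle\nu_{t,x}\times\bar\nu,|w-w'|\rangle\ge0$ and $\Lambda=\sup_{[-M,M]}|f'|$, the flux above is bounded by $\Lambda R$, so integrating over backward truncated cones — legitimate by the trace machinery, as in Lemma~\ref{L_Flip} — gives the finite speed estimate $\int_{|y-\bar x|<\rho}R(t,y)\,dy\le\int_{|y-\bar x|<\rho+\Lambda(t-s)}R(s,y)\,dy$ for $0<s<t$.

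The crucial step is to remove the time average: averaging this inequality in $s$ over $(0,\tau)$ with $\tau<t$ and bounding $R(s,y)\le\langle\nu^+_{0,y}\times\bar\nu,|w-w'|\rangle+\mathfrak d(\nu_{s,y},\nu^+_{0,y})$ (the function $w\mapsto\langle\bar\nu,|w-\cdot|\rangle$ is $1$-Lipschitz, so Kantorovich--Rubinstein applies), the term involving $\mathfrak d$ vanishes as $\tau\searrow0$ by Lemma~\ref{L_init_average}, while the left-hand side is independent of $\tau$; hence
\begin{equation*}
\int_{|y-\bar x|<\rho}\langle\nu_{t,y}\times\nu^+_{0,\bar x},|w-w'|\rangle\,dy\ \le\ \int_{|y-\bar x|<\rho+\Lambda t}\langle\nu^+_{0,y}\times\nu^+_{0,\bar x},|w-w'|\rangle\,dy
\end{equation*}
for every $t,\rho>0$. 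For a Dirac solution $\bar\nu=\delta_{u_0(\bar x)}$ and this is $\int_{|y-\bar x|<\rho}|u(t,y)-u_0(\bar x)|\,dy\le\int_{|y-\bar x|<\rho+\Lambda t}|u_0(y)-u_0(\bar x)|\,dy$; with the triangle inequality and a Besicovitch covering of $[-L,L]$ by such intervals $(x_i-\rho,x_i+\rho)$ (overlap at most two, $\sum_i\int_{|y-x_i|<\rho}|u_0(y)-u_0(x_i)|\,dy$ as small as wanted, plus a small leftover set) this gives $\limsup_{t\searrow0}\int_{-L}^L|u(t,y)-u_0(y)|\,dy$ arbitrarily small, hence $0$. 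In general I would pass from the Kruzhkov functional to a genuine distance through $\langle\mu\times\sigma,|w-w'|\rangle=\tfrac12\big(\langle\mu\times\mu,|w-w'|\rangle+\langle\sigma\times\sigma,|w-w'|\rangle+\mathcal E(\mu,\sigma)\big)$, where $\mathcal E(\mu,\sigma)\ge0$ is the one-dimensional energy distance, equal to $2\|F_\mu-F_\sigma\|_{L^2}^2$ and thus $\ge M^{-1}\mathfrak d(\mu,\sigma)^2$ on $[-M,M]$, while $\mu\mapsto\langle\mu\times\mu,|w-w'|\rangle$ is concave; combining this with the weak convergence of the traces and with $\supp\nu^+_{0,y}\subset I(\bar x)+(-\e,\e)$ near $\bar x$ (so the self-energies match in the limit), one gets $\limsup_{t\searrow0}\tfrac1{2\rho}\int_{|y-\bar x|<\rho}\mathfrak d(\nu_{t,y},\nu^+_{0,y})\,dy\to0$ as $\rho\searrow0$, and the same covering argument on $[-L,L]$ concludes.

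I expect the measure-valued part of this last step to be the main obstacle: finite speed of propagation only controls the Kruzhkov functional $\langle\nu_{t,y}\times\nu^+_{0,\bar x},|w-w'|\rangle$, which does not vanish when $\nu^+_{0,\bar x}$ is not a Dirac mass, so the ``self-energy'' contributions must be carefully removed, and it is there that the concavity of $\mu\mapsto\langle\mu\times\mu,|w-w'|\rangle$, the weak convergence of the traces and the confinement of the blow-up to a single linearly degenerate component have to be used together; the Dirac case (which already yields the companion $L^1$-continuity statement at $t=0$) is by contrast immediate from the displayed inequality above.
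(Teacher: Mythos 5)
Your reduction to $W_1$, the doubling-of-variables/finite-speed estimate against the constant Young measure $\bar\nu=\nu^+_{0,\bar x}$ furnished by Lemma \ref{L_const_blow}, and the removal of the time average via Lemma \ref{L_init_average} are sound, and they do settle the case where the trace is a Dirac mass (the set $D(0)$): there $\langle\nu_{t,y}\times\delta_{u_0(\bar x)},|w-w'|\rangle$ is exactly $W_1(\nu_{t,y},\delta_{u_0(\bar x)})$, and your covering argument closes. That part is a legitimate alternative to the paper's shorter treatment of $D(0)$, which uses only the weak convergence $\nu_{t}\to\nu^+_{0}$ from Proposition \ref{P_traces} and the expansion of $\int\langle(w-u_0(x))^2,\nu_{t,x}\rangle\,dx$.

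The gap is in the genuinely measure-valued case, i.e.\ on $L(0)$, where $\nu^+_{0,\bar x}$ is not a Dirac mass and is supported in a nontrivial $I\in\mathcal L_f$. Writing $S(\mu)=\langle\mu\times\mu,|w-w'|\rangle$, your finite-speed inequality combined with the energy-distance decomposition gives
\begin{equation*}
\int_{B_\rho}\big(S(\nu_{t,y})+\mathcal E(\nu_{t,y},\bar\nu)\big)\,dy\ \le\ \int_{B_{\rho+\Lambda t}}\big(S(\nu^+_{0,y})+\mathcal E(\nu^+_{0,y},\bar\nu)\big)\,dy+\mathcal{O}(\Lambda t),
\end{equation*}
so to extract smallness of $\int\mathcal E(\nu_{t,y},\bar\nu)\,dy$ you need $\liminf_{t\to0}\int_{B_\rho}S(\nu_{t,y})\,dy\ge\int_{B_\rho}S(\nu^+_{0,y})\,dy-o(1)$. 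This is exactly what ``the self-energies match in the limit'' asserts, and it does not follow from the weak (Young-measure) convergence of $\nu_t$ to its trace, nor from the confinement of the supports to $I+(-\e,\e)$: since $S$ is concave, a family $\nu_{t,\cdot}$ oscillating in $y$ (say between Dirac masses inside $I$) at a scale vanishing with $t$ has $\int S(\nu_{t,y})\,dy$ strictly below the value of $\int S$ at its Young limit, so concavity works against you and yields only $\limsup_{t\to 0}\int S(\nu_{t,y})\,dy\le\int S(\nu^+_{0,y})\,dy$, the wrong direction. Excluding precisely this oscillation scenario is the content of the proposition on $L(0)$, so as written the step is circular. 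The paper closes it by a different mechanism tied to the PDE along the linearly degenerate direction: for a.e.\ $x\in L(0,I)$ the relevant characteristics are segments of speed $f'(I)$, and since $q-f'(I)\eta$ is constant on $I$, the dissipation formula \eqref{E_diss_form} of Section \ref{S_conc} (Lemma \ref{L_ac_part}) makes $t\mapsto\langle\nu_{t,x+f'(I)t},\eta\rangle$ BV in $t$ for convex $\eta$; hence the trace is attained pointwise a.e.\ along these lines, and strongly on $L(0)$ by continuity of translations in $L^1$. Some input of this type is what your argument is missing.
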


\begin{proof}
We use the same notation introduced at the end of the previous section. We prove separately the convergence in $\tilde L=L(0)\cap [-L,L]$ and $\tilde D= D(0)\cap [-L,L]$.
For every nontrivial $I\in \mathcal L_f$,  for $\mathcal L^1$-a.e. $x \in L(0,I)$ there exists the limit 
\begin{equation*}
\nu^+_{0,x}=\lim_{t\rightarrow 0}\nu_{t,x+f'(I)t},
\end{equation*}
because by entropy dissipation  the function $t \mapsto \nu_{t,x+f'(I)t}$ is $\BV_t$ for $\mathcal L^1$-a.e. $x \in L(0,I)$ when tested with $\mathcal C^2$ functions.
Since translations are continuous in $L^1$ it follows that
\begin{equation*}
\lim_{t\rightarrow 0}\int_{\tilde L} \mathfrak d(\nu_{t,x},\nu^+_{0,x})dx =0.
\end{equation*}
Hence it remains to prove the convergence on $D(0)$. It is sufficient to prove the claim with $\mathfrak d$ equal to the Wasserstein distance.
For every $x\in D(0)$, let $\nu_{0,x}^+=\delta_{u_0(x)}$ and consider a sequence $t_n\rightarrow 0$. 
We already know that $\nu_{t_n}\rightarrow \nu_0^+$ in the sense of Young measures by Proposition \ref{P_traces}: in particular this implies
\begin{equation*}
\begin{split}
\lim_{n\rightarrow \infty}\int_{\tilde D}\big\langle (w-u_0(x))^2,\nu_{t_n,x}\big\rangle dx=&~  
\lim_{n\rightarrow \infty}\left(\int_{\tilde D} \langle w^2,\nu_{t_n,x}\rangle dx + \int_{\tilde D}u_0^2(x)dx - 2\int_{\tilde D}u_0(x)\langle w,\nu_{t_n,x}\rangle dx\right) \\
=&~ 0
\end{split}
\end{equation*}
and this concludes the proof.
\end{proof}

We conclude with this extension of \cite{MR1771520}.
\begin{corollary}\label{C_Chen}
Suppose that $u$ is an entropy solution of $\eqref{E_cl}$ in the open set $\R^+\times \R$ and suppose that the initial datum is attained weakly* in 
$L^\infty$: for every sequence $t_n\rightarrow 0^+$
\begin{equation*}
u(t_n)\rightharpoonup u_0 \quad w^*-L^\infty.
\end{equation*}
Then the initial datum is attained in a strong sense: for every sequence $t_n\rightarrow 0^+$
\begin{equation*}
u(t_n)\rightarrow u_0 \quad s-L^1_\loc.
\end{equation*}
\end{corollary}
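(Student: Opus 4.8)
The plan is to reduce the statement to Proposition \ref{P_init}, applied to the Dirac measure-valued solution $\nu_{t,x}=\delta_{u(t,x)}$. First I would observe that $\nu_{t,x}=\delta_{u(t,x)}$ is a (Dirac) mv entropy solution of \eqref{E_cl} on $\R^+\times\R$, and that, by the existence theorem for complete families of boundaries recalled above, $\nu$ admits such a family; here one uses that $u$, being a bounded entropy solution on the open strip, coincides with the entropy solution of the Cauchy problem whose datum is its own trace at $t=0$ --- a trace that exists as a Young measure by Proposition \ref{P_traces} and is pinned down by DiPerna's measure-valued uniqueness. Consequently all the results of the previous sections apply to $\nu$, and in particular Remark \ref{R_Chen} gives that the time trace is Dirac: $\nu^+_{0,x}=\delta_{\bar u_0(x)}$ for some $\bar u_0\in L^\infty$.

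Next I would invoke Proposition \ref{P_init}, choosing for $\mathfrak d$ the quadratic Wasserstein distance $W_2$ on $\mathcal P([-M,M])$, which is bounded and metrizes the weak topology. Since $W_2(\delta_a,\delta_b)=|a-b|$, Proposition \ref{P_init} reads, for every $L>0$,
\[
\lim_{t\to 0^+}\int_{-L}^{L}|u(t,x)-\bar u_0(x)|\,dx=0,
\]
that is, $u(t)\to\bar u_0$ in $L^1_\loc(\R)$ as $t\to 0^+$. In particular $u(t)\rightharpoonup\bar u_0$ weakly$^*$ in $L^\infty$ as $t\to 0^+$; comparing with the hypothesis that $u(t_n)\rightharpoonup u_0$ along every sequence $t_n\to 0^+$, and using uniqueness of weak$^*$ limits, we get $\bar u_0=u_0$ almost everywhere. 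Hence $u(t)\to u_0$ in $L^1_\loc(\R)$ as $t\to 0^+$, which is precisely the assertion $u(t_n)\to u_0$ in $s-L^1_\loc$ for every sequence $t_n\to 0^+$.

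The only genuinely delicate step is the first one: verifying that $\delta_u$ falls within the scope of the earlier theory, i.e. that an arbitrary bounded entropy solution on $\R^+\times\R$ carries a complete family of boundaries and hence satisfies the hypotheses of Proposition \ref{P_init}. Everything afterwards is bookkeeping --- identifying the a priori merely Young-measure trace $\bar u_0$ with the given weak$^*$ limit $u_0$ (equivalently, recalling from Remark \ref{R_Chen} or Lemma \ref{L_init_average} that $\bar u_0$ is the weak$^*$ trace of $u$, which by hypothesis is $u_0$), and the elementary evaluation of the Wasserstein distance between two Dirac masses. Note that the hypothesis "for every sequence $t_n\to 0^+$" is used solely to force $\bar u_0=u_0$; the strong convergence itself comes for free from Proposition \ref{P_init}.
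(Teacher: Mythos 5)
Your overall architecture is the right one and matches the paper's: once $u$ is known to carry a complete family of boundaries, Remark \ref{R_Chen} gives that the trace at $t=0$ is a Dirac $\delta_{\bar u_0}$, Proposition \ref{P_init} (with the Wasserstein distance) gives $u(t)\to \bar u_0$ in $L^1_\loc$, and the weak$^*$ hypothesis identifies $\bar u_0$ with $u_0$. The gap is exactly at the step you yourself flag as ``the only genuinely delicate one''. You justify that $\delta_u$ falls within the scope of the theory by saying that $u$ coincides with the entropy solution of the Cauchy problem whose datum is its own $t=0$ trace, ``pinned down by DiPerna's measure-valued uniqueness''. This is circular. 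First, the trace $\nu^+_{0,x}$ produced by Proposition \ref{P_traces} is a priori only a Young measure, not an $L^\infty$ function (that it is Dirac is deduced in Remark \ref{R_Chen} only \emph{after} the structure theory, hence after a complete family of boundaries is available), whereas the existence theorem for complete families of boundaries is stated and proved (via wave-front tracking, Lemma \ref{L_wft}, and Proposition \ref{P_stability}) only for Cauchy problems with initial data $u_0\in L^\infty$. Second, and more importantly, DiPerna's (and Kruzkov's) uniqueness requires the initial datum to be attained in a strong (at least $L^1$-averaged) sense; with a datum attained only as a weak trace the comparison function $F(t)$ of Proposition \ref{P_szep} need not tend to $0$ as $t\to 0^+$, so uniqueness cannot be invoked. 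Since strong attainment of the datum is precisely the conclusion of the corollary (this is the whole point of extending \cite{MR1771520}), your reduction assumes what is to be proved.

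The paper closes this gap differently: since $t\mapsto u(t)$ is weakly continuous and $t \mapsto \|u(t)\|_{L^2}$ is non increasing (assuming for simplicity $u$ compactly supported), all but countably many times are points of \emph{strong} $L^1$-continuity of $t\mapsto u(t)$. Choosing such times $t_n\searrow 0$, Kruzkov uniqueness applies legitimately to the Cauchy problem with datum $u(t_n)$ (the datum is now attained strongly), so $u(\cdot+t_n,\cdot)$ coincides with that Cauchy solution and therefore possesses a complete family of boundaries; letting $n\to\infty$, $\delta_{u(\cdot+t_n,\cdot)}\to\delta_u$ in the sense of Young measures and the stability result, Proposition \ref{P_stability}, transfers the complete family of boundaries to $u$ itself. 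With that step supplied, the rest of your argument goes through as written.
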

\begin{proof}
By Remark \ref{R_Chen} it is sufficient to show that $u$ has a complete family of boundaries. In order to prove it, we construct a sequence of
entropy solutions that converges to the Dirac solution $\delta_u$ in the sense of Young measures.

Assume for simplicity $u$ compactly supported.
Observe that since $\| u(t)\|_{L^2(\R)}$ is non increasing with respect to $t$ and $u:\R^+\rightarrow L^1(\R)$ is weakly continuous, $u$ can have at most
countably many discontinuity points with respect the strong topology $s-L^1$. 
Consider the entropy solutions $u_n$ with initial datum $u(t_n)$ for a sequence $t_n\rightarrow 0$ of strong continuity points of $u$. Then by Kruzkov theorem,
$u_n(t)=u(t+t_n)$ and in particular this implies that $u:(0,+\infty)\rightarrow L^1(\R)$ is strongly continuous and it has a complete family of boundaries. Therefore by Proposition \ref{P_init}, $\delta_{u_n}\rightarrow \delta_u$ in the sense of Young measures and this concludes the proof.
\end{proof}

\section{Lagrangian representation and counterexamples}\label{S_Lagr}

\subsection{Lagrangian representation}
Here we deduce the existence of a Lagrangian representation.
\begin{proposition}\label{P_lagr_repr}
Let $\nu$ be a mv entropy solution with a complete family of boundaries. Then there exists a couple of functions $(\X,\U)$ such that
\begin{enumerate}
\item $\X:[0,+\infty)\times \R\rightarrow \R$ is continuous, $t\mapsto\X(t,y)$ is Lipschitz for every $y$ and $y\mapsto \X(t,y)$ is non-decreasing for every $t$;
\item $\U\in L^\infty(\R)$;
\item there exists a representative of $\nu$ such that for every $(t,x)\in \R^+\times \R\setminus J$
\begin{equation*}
\langle \nu_{t,x},\I\rangle =  \bar u, \quad \mbox{where } \bar u=\U(\X(t)^{-1}(x));
\end{equation*}
\item the flow $\X$ satisfies the characteristic equation: for every $y\in \R$ for $\mathcal L^1$-a.e. $t>0$ it holds
\begin{equation*}
D_t \X(t,y)=
\begin{cases}
f'(I^+_t) & \mbox{if }I^+_t=I^-_t, \\
\displaystyle{\frac{\langle \nu^+_t,f\rangle - \langle \nu^-_t,f\rangle}{\langle \nu^+_t,\Id\rangle - \langle \nu^-_t,\Id\rangle}} &  \mbox{if }I^+_t\ne I^-_t,
\end{cases}
\end{equation*}
where $I^\pm_t\in\mathcal L_f$ contains the support of the trace $\nu^\pm$ at the point $(t,\X(t,y))$ from the left and the right of $\X(\cdot,y)$ 
(see Remark \ref{R_traces}).
\end{enumerate}
\end{proposition}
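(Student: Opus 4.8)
The plan is to read off the flow $\X$ from the totally ordered family $\K_\gamma$, to read off the value function $\U$ from the (essentially constant) value of $\langle\nu,\Id\rangle$ along each characteristic, and then to verify (1)--(4) using the structure established in Sections \ref{S_struct} and \ref{S_conc}. So the construction is almost entirely a repackaging of results already proved; the only genuine work is in making the value function well defined.

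\emph{Construction of $\X$.} By the monotonicity property of Definition \ref{D_bd_fam} the set $\K_\gamma$ is totally ordered and, by the remark following that definition, it admits a monotone bijective parametrization $p:\R\to\K_\gamma$, $p(y)=\gamma_y$. First I would upgrade $p$ to a continuous map for the topology of local uniform convergence: the connectedness property rules out order-gaps in $\K_\gamma$, the completeness property (together with $K(t,x)\ne\emptyset$ for all $(t,x)$) gives $\bigcup_{\gamma\in\K_\gamma}\graph(\gamma)=\R^+\times\R$, and the curves in $\K_\gamma$ are equi-Lipschitz on compact time intervals, with constant $L:=\sup_{|w|\le M}|f'(w)|$, by the consistency property \eqref{E_relation} and boundedness of $\nu$ (and the construction of Lemma \ref{L_wft}). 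Then $\X(t,y):=\gamma_y(t)$ is jointly continuous, $L$-Lipschitz in $t$ for each $y$, and non-decreasing in $y$ for each $t$ (again by monotonicity), which is exactly (1); (2) will be clear once $\U$ is defined, since $\U$ takes values in $[-M,M]$.

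\emph{Construction of $\U$ and proof of (3).} If $(t,x)\notin J$ then, since $A_1\subset A\subset J$, there is a unique $\gamma\in\K_\gamma$ through $(t,x)$ and hence a single point $y=\X(t)^{-1}(x)$. I would set $\U(y)$ to be the common value of $t'\mapsto\langle\nu_{t',\gamma_y(t')},\Id\rangle$ along $\{t':(t',\gamma_y(t'))\notin J\}$. The point is that, for $\mathcal L^1$-a.e.\ $y$, this function is indeed constant: if $(t',\gamma_y(t'))\in B\setminus J$, then by Remark \ref{R_V_Riem} and Proposition \ref{P_V_Riem}(4) that $B$-component is a rarefaction whose characteristics are precisely the level sets of $v$, so $u$ is constant along $\gamma_y$ and equals the boundary value attached to $\gamma_y$ (Corollary \ref{C_uniq_K}); if $\gamma_y$ is a segment of $C\setminus J$ this constancy is Lemma \ref{L_aver_const}. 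The remaining $y$ form an $\mathcal L^1$-null set which, by the Lipschitz estimate on $y\mapsto\gamma_y(t)$ along $C$-segments (slope $\le 1/\e$, cf.\ Lemma \ref{L_Flip}) and the $\BV$ structure of $u\llcorner_B$, sweeps an $\mathcal L^2$-null subset of $\R^+\times\R$; I would redefine $\nu$ there and define $\U$ arbitrarily on it (for instance by the limit used in Lemma \ref{L_Flip}). With this representative and this $\U$ one gets $\langle\nu_{t,x},\Id\rangle=\U(\X(t)^{-1}(x))$ for every $(t,x)\notin J$, which is (3), and $\U\in L^\infty(\R)$ since $|\U|\le M$.

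\emph{Proof of (4), and the main obstacle.} Point (4) is Remark \ref{R_traces} applied to $\gamma=\X(\cdot,y)$: for $\mathcal L^1$-a.e.\ $t>0$ there are $I^\pm_t\in\mathcal L_f$ with $\supp\nu^\pm_t\subset I^\pm_t$ (Proposition \ref{P_struct_J}); if $I^+_t=I^-_t$, the consistency condition (5) of Definition \ref{D_bd_fam} forces $\dot\gamma(t)=f'(I^+_t)$, while if $I^+_t\ne I^-_t$ the denominators are nonzero (disjoint intervals) and testing the distributional identity $\langle\nu,\Id\rangle_t+\langle\nu,f\rangle_x=0$ with the trace formula of Proposition \ref{P_traces} along $\gamma$ gives
\begin{equation*}
\dot\gamma(t)=\frac{\langle\nu^+_t,f\rangle-\langle\nu^-_t,f\rangle}{\langle\nu^+_t,\Id\rangle-\langle\nu^-_t,\Id\rangle};
\end{equation*}
since $\X(\cdot,y)$ is Lipschitz, $D_t\X(t,y)=\dot\gamma(t)$ a.e. The delicate step is not (1) or (4) but the well-posedness of $\U$: promoting the a.e.-in-$t$, a.e.-in-$y$ constancy of $\langle\nu,\Id\rangle$ along characteristics to a statement valid off $J$ for a single fixed representative, i.e.\ controlling the $\mathcal L^2$-measure of the set swept by the exceptional characteristics, which is where the uniform Lipschitz bounds on the flow over $C$ and the $\BV$/Riemann structure over $B$ are essential.
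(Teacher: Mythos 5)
Your construction of $\X$ and your treatment of points (1), (2) and (4) are essentially the paper's. The gap is in point (3), and it sits exactly at the step you yourself flag as delicate, but not for the reason you give: you define $\U(y)$ as ``the common value'' of $t\mapsto\langle\nu_{t,\gamma_y(t)},\Id\rangle$ over $\{t:(t,\gamma_y(t))\notin J\}$, yet your verification of constancy is only piecewise --- Lemma \ref{L_aver_const} on the initial $C$-segment, Corollary \ref{C_uniq_K}/Proposition \ref{P_V_Riem} inside a $B$-component --- and nothing in the proposal shows that the constants obtained on different pieces of the \emph{same} characteristic agree. This is precisely where the nontrivial linearly degenerate components bite: on a $C$-segment Lemma \ref{L_aver_const} only gives a constant $\bar u(y)\in I(y)$, an average of $\nu$ over $I(y)$, which has no reason to coincide with the boundary value $w$ that would dictate the value along $\gamma_y$ in a later $B$-piece. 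The paper closes this with two observations you omit: (i) for fixed $y$, all values $\langle\nu_{t,\gamma_y(t)},\Id\rangle$ with $(t,\gamma_y(t))\notin J$ lie in a single $I_y\in\mathcal L_f$, because $(\gamma_y,\langle\nu_{t,\gamma_y(t)},\Id\rangle)$ is an admissible boundary on $(0,t)$ (cf.\ Lemma \ref{L_bd_const}, Proposition \ref{P_struct_Jc}); hence when $I_y$ is a singleton constancy across all pieces is automatic and there is nothing to glue; and (ii) when $I_y$ is nontrivial, Remark \ref{R_V_Riem} is invoked not merely to say $\Omega^m=\Omega$, but to conclude that $(t,\gamma_y(t))\in J$ for every $t>T_1(\gamma_y)$, so the only non-$J$ times are in $(0,T_1(\gamma_y))$ and Lemma \ref{L_aver_const} alone suffices. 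Without (i) and (ii), the ``common value'' may fail to exist as defined, so the well-posedness of $\U$ --- the heart of the proposition --- is not established.

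The issue you do address at length (that the $\mathcal L^1$-null set of exceptional $y$ sweeps an $\mathcal L^2$-null set, permitting a redefinition of the representative) is a secondary, fixable point; note only that the Lipschitz control on $y\mapsto\gamma_y(t)$ degenerates near the endpoints of the segments, so that argument needs the usual exhaustion in $T$ and in $t$ bounded away from $0$ and $T$. The essential missing content is the gluing/dichotomy argument above.
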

\begin{proof}
Since $\K_\gamma$ is a closed monotone family of Lipschitz curves that covers the whole $\R^+\times \R$, 
there exists a function $\X$ as in the statement such that for every $\gamma\in \K_\gamma$ there exists a unique $y\in \R$ for which 
$\gamma_y(t)=\X(t,y)$ in $(0,+\infty)$. So we only need to check that $\langle \nu_{t,\gamma_y(t)},\I\rangle$ is constant for $t$ such that 
$(t,\gamma_y(t))\in\R^+\times \R\setminus J$. Denote this set of times by $T_y$.

Since $(\gamma_y,\langle \nu_{t,\gamma_y(t)},\I\rangle)$ is an admissible boundary in $(0,t)$ for every $t\in T_y$, we have that there exists 
$I_y\in\mathcal L_f$
such that $\langle \nu_{t,\gamma_y(t)},\I\rangle\in I_y$ for every $t\in T_y$. This in particular implies the claim for all $y$ such that $I_y=\{u\}$ for 
some $u\in \R$, therefore it suffices to consider the set where $\nu$ takes values in a linearly degenerate component of the flux.
In this case Lemma \ref{L_aver_const} implies that the claim is true in $(0,T_1(\gamma_y))$ and from Remark \ref{R_V_Riem} it follows that 
$(t,\gamma_y(t))\in J$ for every $t>T_1(\gamma_y)$.
\end{proof}

\subsection{Counterexample 1}
Consider an entropy solution $u$ of \eqref{E_cl}.
Observe that at time 0 there is a set $J_0\subset \R$ at most countable such that  every point of $J_0$ is the starting point of two different curves.
For every $x\in \R\setminus J_0$ denote by $y_x=\X(0)^{-1}(x)$.
In Section \ref{S_struct} we saw that every $(t,x)\in\R^+\times \R$ either belongs to a segment starting from 0 or the rectifiable set $J$ or a 
domain of a Riemann problem with two boundaries.
Nevertheless it is in general not true that for $\mathcal L^1$-a.e. $x\in \R$ there exists $t_x>0$ such that $\partial_t\X(t,y_x)$ 
is constant in $(0,t_x)$. In particular it is not true that for $\mathcal L^1$-a.e. $x\in \R$ the value $u_0(x)$ is transported along a characteristic for a 
positive time.

The counterexample is the entropy solution $u$ of Burgers' equation
\begin{equation*}
u_t+\left(\frac{u^2}{2}\right)_x=0,
\end{equation*}
where the initial datum is the characteristic function of a Cantor set $C$
of positive measure, and for every $t>0$ the level set $\{u(t)=1\}$ has Lebesgue measure 0.

It is well-known that the function 
\begin{equation*}
U(t,x)=\int_{-\infty}^x u(t,z)dz
\end{equation*} 
is the viscosity solution of the Hamilton-Jacobi equation
\begin{equation*}
U_t+\left(\frac{U_x}{2}\right)^2=0.
\end{equation*}
In particular $U$ can be obtained by Lax formula:
\begin{equation}\label{E_Lax}
U(t,x) = \min_{y\in\R} \left\{ U(0,y) + \frac{|x-y|^2}{2t} \right\}.
\end{equation}
We provide an example of $C$ such that for $\mathcal L^1$-a.e. $y\in C$ there are no $(t,x)\in \R^+\times \R$ such that $y$ is the minimizer in
\eqref{E_Lax}.

\textbf{Claim}. Let $y$ be a point of density one for $C$. If there exists $(t,x)\in\R^+\times \R$ such that $y$ is a minimizer in \eqref{E_Lax}, then for every $\theta >0$,
\begin{equation}\label{E_square_density}
\frac{1}{\theta^2}\int_{y}^{y+\theta}\chi_{C^c}(z)dz \le \frac{1}{2t}.
\end{equation}
{\it Proof of the claim}. Let $\theta\in \R$, by minimality
\begin{equation*}
U(0,y)+\frac{|x-y|^2}{2t}\le U(0,y+\theta)+\frac{|x-y-\theta|^2}{2t}
\end{equation*}
therefore 
\begin{equation}\label{E_minimality}
U(0,y)-U(0,y+\theta)\le \frac{|x-y-\theta|^2}{2t}-\frac{|x-y|^2}{2t} = -\frac{(x-y)\theta}{t}+\frac{\theta^2}{2t}.
\end{equation}
Since \eqref{E_minimality} holds for every $\theta$ positive and negative and $U(0)$ has derivative equal to 1 at $y$ we get
\begin{equation*}
\frac{x-y}{t}=1, \qquad x=y+t. 
\end{equation*}
We get \eqref{E_square_density} from \eqref{E_minimality} observing that 
\begin{equation*}
\int_y^{y+\theta}\chi_{C^c}(z)dz=U(0,y)-U(0,y+\theta) +\theta.
\end{equation*}
The last step is the construction of a Cantor set $C$ of positive measure such that for every $y\in C$
\begin{equation*}
\limsup_{\theta\rightarrow 0}\frac{1}{\theta^2}\int_{y}^{y+\theta}\chi_{C^c}(z)dz =+\infty.
\end{equation*}

On the interval $C_0=[0,2]$ consider the standard Cantor construction where $C_n$ is obtained from $C_{n-1}$ removing the middle interval 
of size $3^{-n}$ in each connected component of $C_{n-1}$.
Then 
\begin{equation*}
C=\bigcap_{n\in \N}C_n
\end{equation*}
has Lebesgue measure equal to 1. Fix $\bar y\in C$, for every $n\in N$ let $y_n$ be the minimal $y >\bar y$ such that $y$ is the
left endpoint of a connected component of $C_n$.
Since the length of every connected component of $C_n$ is bounded by $2^{-n+1}$, by direct checking
\begin{equation*}
\frac{1}{(y_n-y)^2}\int_y^{y_n}\chi_{C^c}(z)dz \ge \frac{3^{-n}}{(2^{-n+1}+3^{-n})^2}\rightarrow +\infty.
\end{equation*}

\begin{figure}
\centering
\begin{minipage}{0.5\columnwidth}
\def\svgwidth{\columnwidth}
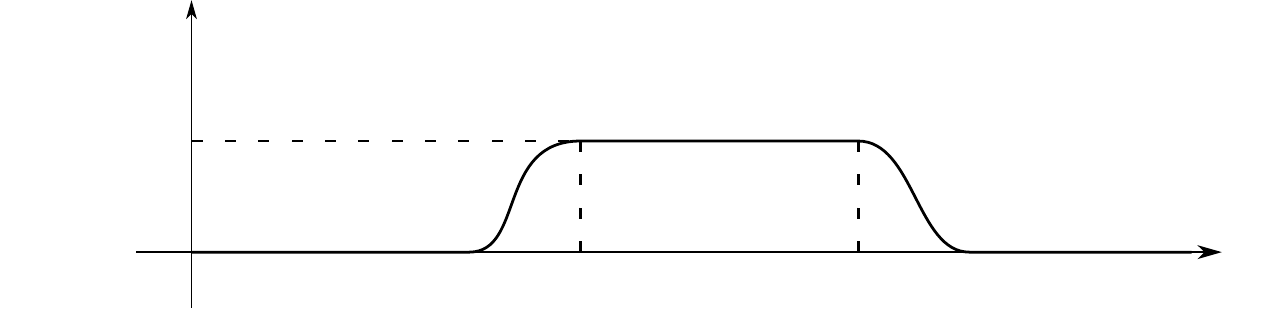
\caption{Flux $f^n_{a,L}$.}\label{F_flux_n}
\end{minipage}
\quad
\begin{minipage}{0.4\columnwidth}
\def\svgwidth{\columnwidth}
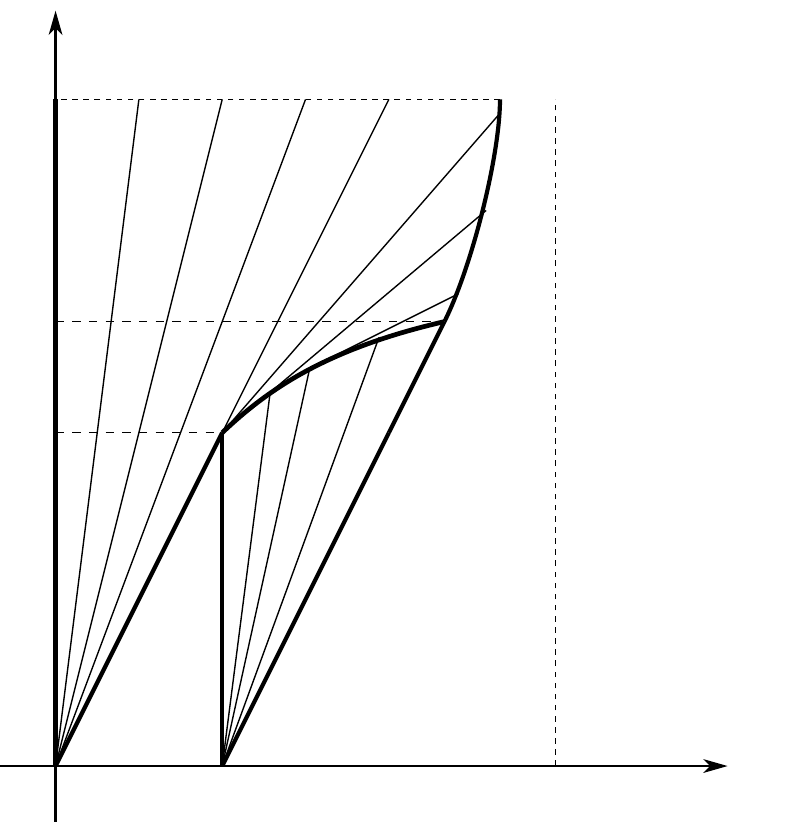
\caption{Solution to $u_0=2L\chi_{[0,d]}$ with flux $f^n_{a,L}$.}\label{F_sol_block}
\end{minipage}
\end{figure}


\begin{figure}
\centering
\begin{minipage}{0.45 \columnwidth}
\def\svgwidth{\columnwidth}
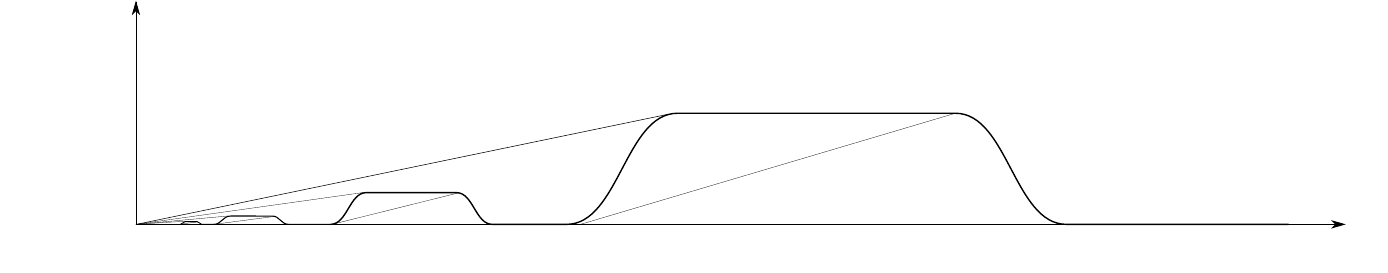
\caption{Flux $f$.}\label{F_flux_f}
\end{minipage}
\quad
\begin{minipage}{0.45\columnwidth}
\def\svgwidth{\columnwidth}
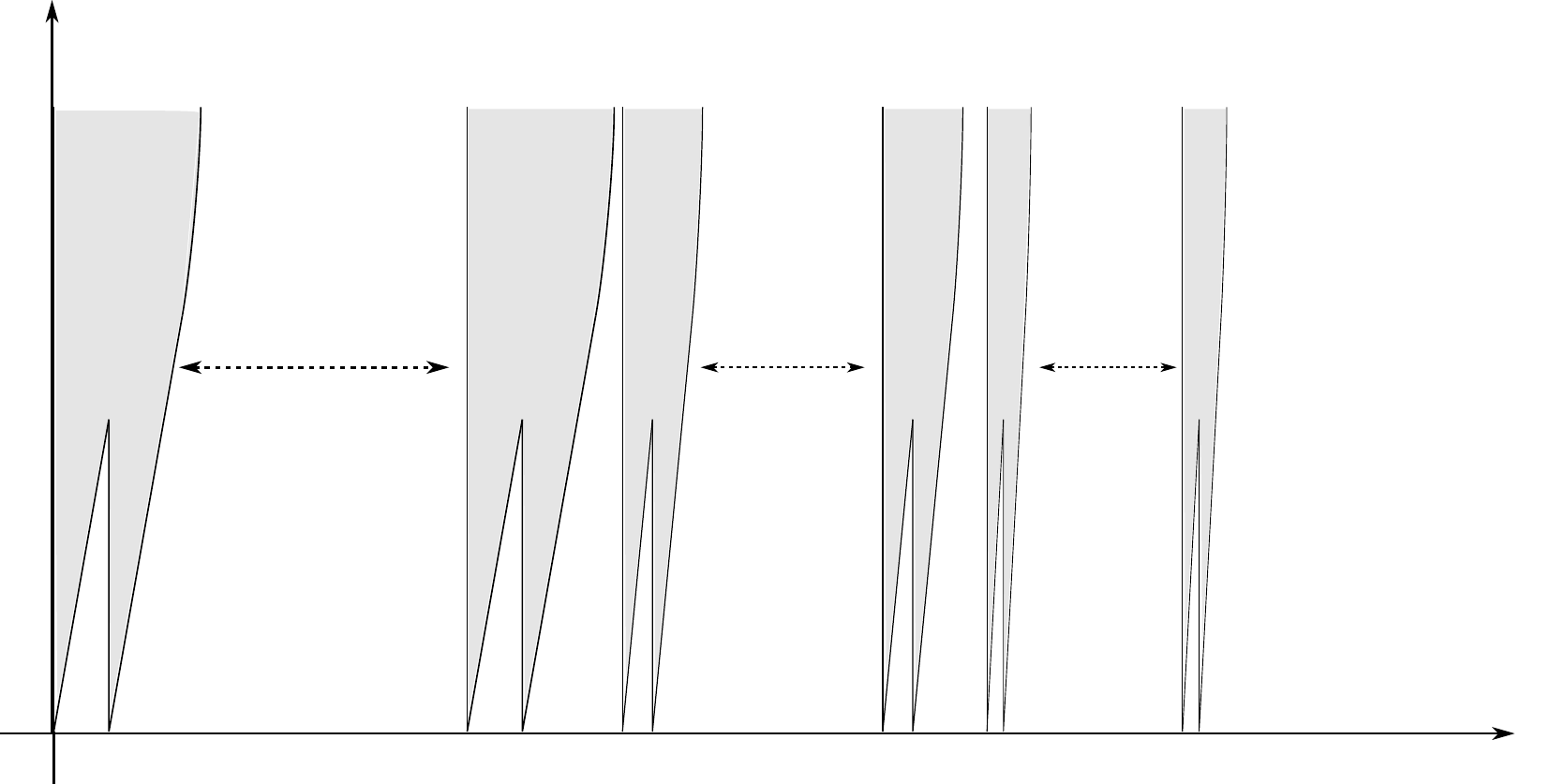
\caption{Solution with $\TV f'\circ u = +\infty$.}\label{F_sol_final}
\end{minipage}
\end{figure}

\subsection{Counterexample 2}
\label{Ss_counter_2}

Here we present an example of an $L^\infty$ entropy solution $u$ of \eqref{E_cl} such that $f'\circ u$ has no bounded variation locally in 
$\R^+\times \R$.

{\it The building block}. 
Consider a function $g\in C^\infty([-1,1])$ such that 
\begin{enumerate}
\item $g(-1)=0, \quad g(0)= \frac{1}{2}, \quad g(1)=1$;
\item $g$ is convex in $[-1,0]$ and concave in $[0,1]$;
\item $g'(0)=1$;
\item all derivatives vanishes at the points $-1$ and $1$;
\item $g-\frac{1}{2}$ is odd.
\end{enumerate}
Let $a,L>0$ and $n\in \N$ be parameters such that $3a\le L$ and consider the smooth flow $f^n_{a,L}$ as in Figure \ref{F_flux_n}:
\begin{equation*}
f^n_{a,L}(u)= \begin{cases}
0 & \mbox{if }u\le L-a, \\
a^ng\left(\frac{u-L}{a}\right) & \mbox{if }L-a<u\le L+a, \\
a^n  & \mbox{if }L+a<u\le 2L, \\
a^n g\left(\frac{L+a-u}{a}\right)  & \mbox{if }2L<u\le 2L+2a, \\
0  & \mbox{if }u>2L+2a.
\end{cases}
\end{equation*}

The initial datum is 
\begin{equation*}
u_0 = 2L \chi_{[0,d]},
\end{equation*}
where $d>0$ will be fixed below.
For $t$ small the solution is obtained solving separately the two Riemann problems (see Figure \ref{F_sol_block}): 
the problem in 0 has a first shock $[0,L-a]$ of velocity 0, then a rarefaction from $L-a$ to $L-b$ for some $b\in (0,a)$ and a second shock $[L-b,2L]$.
Let
\begin{equation*}
d= \frac{f(2L)-f(L-b)}{L+b}
\end{equation*}
be equal to the velocity of the second shock in 0. Since $b\in (0,a)$ we have 
\begin{equation*}
d\in \bigg( \frac{a^n}{L+a}, \frac{a^n}{L} \bigg).
\end{equation*}
The solution of the second
Riemann problem has the same structure. It follows that there is a cancellation in $(1,d)$ from which it starts the shock number 5 of Figure \ref{F_sol_block} . Let $t_1$ be the time
for which the shock 5 collides with the shock 4. Since the shock 4 has constant velocity equal to $d$ and the shock 5 has velocity 
$v(t)\in (\frac{a^{n-1}}{2}, a^{n-1})$, we have 
\begin{equation*}
t_1>1+\frac{2a}{L-2a}.
\end{equation*}
For every $t\in (t_1,2)$ the maximal velocity $v_{\max} (t)$ at time $t$ is the velocity of characteristics which enters in shock 6 at time $t$:
in particular $v_{\max} (t)(t-1)\ge d \ge\frac{a^n}{L+a}$.
Moreover observe that the solution $u(t)$ has support contained in $[0, 3d]\subset [0,\frac{3a^n}{L}]$ for every $t\in [0,2]$.
The estimate on the total variation is 
\begin{equation*}
\int_1^2\int_0^{\frac{3a^n}{L}}|D_x f'(u(t,x))|dx dt \ge \int_{1+\frac{2a}{L-2a}}^2 \frac{a^n}{(L+a)(t-1)}dt = \frac{a^n}{L+a}\log \left(\frac{L-2a}{2a}\right).
\end{equation*}

The point is that for $a \ll L$ in an interval of length of the order $\frac{a^n}{L}$ the total variation is of the order of 
$\frac{a^n}{L}\log \left(\frac{L}{a}\right)$.

{\it The general case}. Consider the flux (Figure \ref{F_flux_f})
\begin{equation*}
f=\sum_{n=1}^\infty f^n_{a_n,L_n}.
\end{equation*}
Observe that if $4L_{n+1}\le L_n$ the supports of $f^n_{a_n,L_n}$ are disjoint and $f\in C^\infty_c(\R)$.
The initial datum is obtained by placing side by side $N_1$ initial data of the form $2L_1\chi_{[0,d_1]}$, $N_2$ initial data of the form $2L_2\chi_{[0,d_2]}$ and so on, see Figure \ref{F_sol_final}. 

The condition
\begin{equation}\label{E_no_inter}
a_{n+1}^{n+1} < \frac{L_n}{a_n^n}L_{n-1}
\end{equation}
guarantees that for every $n$ the solution with initial datum $2L_n\chi_{[0,d_n]}$ with flux $f$ is the same as the solution with the same initial datum and flux
$f^n_{a_n,L_n}$.
In order to have infinite total variation in an interval of finite length, 
it suffices to provide three sequences $(a_n)_{n\in \N}$, $(L_n)_{n\in \N}$ and $(N_n)_{n\in \N}$ such that \eqref{E_no_inter} holds, $3a_n \le L_n$, $4L_{n+1}\le L_n$,
\begin{equation*}
\sum_{n=1}^\infty N_n\frac{a_n^n}{L_n}<+\infty \qquad \mbox{and}\qquad 
\sum_{n=1}^\infty N_n\frac{a_n^n}{L_n+a_n}\log  \left(\frac{L_n-2a_n}{2a_n}\right)=+\infty.
\end{equation*}
For example consider $a_n=4^{-n}$, $L_n=3\cdot 4^{-n}$ and $N_n$ equal to the integer part of $\frac{L_n}{n^2a_n^n}$.

%
%
%

%

%
%
%

\bibliographystyle{siam}
\bibliography{biblio}

\begin{thebibliography}{10}

\bibitem{AFP}
{\sc L.~Ambrosio, N.~Fusco, and D.~Pallara}, {\em Functions of Bounded
  Variation and Free Discontinuity Problems}, Oxford Science Publications,
  Clarendon Press, 2000.

\bibitem{AdL_note}
{\sc L.~Ambrosio and C.~D. Lellis}, {\em A note on admissible solutions of 1d
  scalar conservation laws and 2d hamilton-jacobi equations}, J. Hyperbolic
  Diff. Equ., 1 (2004), pp.~813--826.

\bibitem{MR750538}
{\sc G.~Anzellotti}, {\em Pairings between measures and bounded functions and
  compensated compactness}, Ann. Mat. Pura Appl. (4), 135 (1983), pp.~293--318
  (1984).

\bibitem{MR1036070}
{\sc J.~M. Ball}, {\em A version of the fundamental theorem for {Y}oung
  measures}, in P{DE}s and continuum models of phase transitions ({N}ice,
  1988), vol.~344 of Lecture Notes in Phys., Springer, Berlin, 1989,
  pp.~207--215.

\bibitem{Bardos_boundary}
{\sc C.~Bardos, A.~Y. le~Roux, and J.-C. N{{\'e}}d{{\'e}}lec}, {\em First order
  quasilinear equations with boundary conditions}, Comm. Partial Differential
  Equations, 4 (1979), pp.~1017--1034.

\bibitem{Beer}
{\sc G.~Beer}, {\em On the compactness theorem for sequences of closed sets},
  Mathematica Balkanica, 16 (2002), pp.~327--338.

\bibitem{MR2006606}
{\sc B.~Ben~Moussa and A.~Szepessy}, {\em Scalar conservation laws with
  boundary conditions and rough data measure solutions}, Methods Appl. Anal., 9
  (2002), pp.~579--598.

\bibitem{BiaMar1}
{\sc S.~Bianchini and E.~Marconi}, {\em On the concentration of entropy for
  scalar conservation laws}, Discrete Contin. Dyn. Syst. Ser. S, 9 (2016),
  pp.~73--88.

\bibitem{BYu}
{\sc S.~Bianchini and L.~Yu}, {\em Structure of entropy solutions to general
  scalar conservation laws in one space dimension}, J. Math. Anal. Appl., 428
  (2015), pp.~356--386.

\bibitem{MR1771520}
{\sc G.-Q. Chen and M.~Rascle}, {\em Initial layers and uniqueness of weak
  entropy solutions to hyperbolic conservation laws}, Arch. Ration. Mech.
  Anal., 153 (2000), pp.~205--220.

\bibitem{Cheng}
{\sc K.~S. Cheng}, {\em A regularity theorem for a nonconvex scalar
  conservation law}, J. Differential Equations, 61 (1986), pp.~79--127.

\bibitem{Dafermos}
{\sc C.~M. Dafermos}, {\em Hyperbolic conservation laws in continuum physics},
  vol.~325 of Grundlehren der Mathematischen Wissenschaften [Fundamental
  Principles of Mathematical Sciences], Springer-Verlag, Berlin, third~ed.,
  2010.

\bibitem{MR775191}
{\sc R.~J. DiPerna}, {\em Measure-valued solutions to conservation laws}, Arch.
  Rational Mech. Anal., 88 (1985), pp.~223--270.

\bibitem{Kruzhkov}
{\sc S.~N. Kru{\v{z}}kov}, {\em First order quasilinear equations with several
  independent variables.}, Mat. Sb. (N.S.), 81 (123) (1970), pp.~228--255.

\bibitem{dLR}
{\sc C.~D. Lellis and T.~Riviere}, {\em Concentration estimates for entropy
  measures}, Journal de Math{\'e}matiques Pures et Appliqu{\'e}es, 82 (2003).

\bibitem{Oleinik}
{\sc O.~A. Ole{\u\i}nik}, {\em Discontinuous solutions of non-linear
  differential equations}, Amer. Math. Soc. Transl. (2), 26 (1963),
  pp.~95--172.

\bibitem{MR996910}
{\sc A.~Szepessy}, {\em Measure-valued solutions of scalar conservation laws
  with boundary conditions}, Arch. Rational Mech. Anal., 107 (1989),
  pp.~181--193.

\end{thebibliography}

\end{document}